\theoremstyle{plain}
\newtheorem*{corollary}{Corollary}
\newtheorem{lemma}{Lemma}
\newtheorem{theorem}{Theorem}
\newtheorem*{conjecture}{Conjecture}
\theoremstyle{remark}
\newtheorem*{remark}{Remark}
\theoremstyle{definition}
\newtheorem{example}{Example}
\DeclareMathOperator{\Id}{Id}
\DeclareMathOperator{\id}{id}
\DeclareMathOperator{\ch}{char}
\DeclareMathOperator{\ad}{ad}
\DeclareMathOperator{\GL}{GL}
\DeclareMathOperator{\tr}{tr}
\DeclareMathOperator{\height}{ht}
\DeclareMathOperator{\End}{End}
\DeclareMathOperator{\Aut}{Aut}
\DeclareMathOperator{\length}{length}
\DeclareMathOperator{\sign}{sign}
\DeclareMathOperator{\Alt}{Alt}
\DeclareMathOperator{\Ann}{Ann}
\DeclareMathOperator{\Hom}{Hom}
\DeclareMathOperator{\PIexp}{PIexp}
\begin{document}

\title{Amitsur's conjecture for polynomial $H$-identities of $H$-module Lie algebras}

\author{A.\,S.~Gordienko}

\address{Memorial University of Newfoundland, St. John's, NL, Canada}
\email{asgordienko@mun.ca}
\keywords{Lie algebra, polynomial identity,
grading, Hopf algebra, Hopf algebra action, $H$-module algebra, 
codimension, cocharacter, symmetric group, Young diagram, affine algebraic group.}

\begin{abstract}
Consider a finite dimensional $H$-module Lie algebra $L$ over a field of characteristic~$0$
where $H$ is a Hopf algebra.
We prove the analog of Amitsur's conjecture
on asymptotic behavior for codimensions of polynomial $H$-identities of $L$
under some assumptions on $H$.
In particular, the conjecture holds when $H$ is finite dimensional semisimple.
As a consequence, we obtain the analog of Amitsur's conjecture
for graded codimensions of any finite
dimensional Lie algebra graded by an arbitrary group
and for $G$-codimensions of any finite
dimensional Lie algebra with a rational action of a reductive affine algebraic group $G$
by automorphisms and anti-automorphisms.
\end{abstract}

\subjclass[2010]{Primary 17B01; Secondary 17B40, 17B70, 16T05, 20C30, 14L17.}

\thanks{
Supported by post doctoral fellowship
from Atlantic Association for Research
in Mathematical Sciences (AARMS), Atlantic Algebra Centre (AAC),
Memorial University of Newfoundland (MUN), and
Natural Sciences and Engineering Research Council of Canada (NSERC)}

\maketitle

\section{Introduction} 

In the 1980's, a conjecture about the asymptotic behaviour
of codimensions of ordinary polynomial identities was made
by S.A.~Amitsur. Amitsur's conjecture was proved in 1999 by
A.~Giambruno and M.V.~Zaicev~\cite[Theorem~6.5.2]{ZaiGia} for associative algebras, in 2002 by M.V.~Zaicev~\cite{ZaiLie}
 for finite dimensional Lie algebras, and in 2011 by A.~Giambruno,
 I.P.~Shestakov, M.V. Zaicev for finite dimensional Jordan and alternative
 algebras~\cite{GiaSheZai}. In 2011 the author proved its analog
 for polynomial identities of finite dimensional representations of Lie
 algebras~\cite{ASGordienko}.
 
 Alongside with ordinary polynomial
identities of algebras, graded polynomial identities, $G$- and
$H$-identities are
important too~\cite{BahtGiaZai, BahturinLinchenko, BahtZaiGraded, BahtZaiGradedExp, BahtZaiSehgal, 
 BereleHopf, Linchenko}.
 Usually, to find such identities is easier
  than to find the ordinary ones. Furthermore, each of these types of identities completely determines the ordinary polynomial identities. 
Therefore the question arises whether the conjecture
holds for graded codimensions, $G$- and $H$-codimensions.
The analog of Amitsur's conjecture
for codimensions of graded identities was proved in 2010--2011 by
E.~Aljadeff,  A.~Giambruno, and D.~La~Mattina~\cite{AljaGia, AljaGiaLa, GiaLa}
  for all associative PI-algebras graded by a finite group.
   As a consequence, they proved the analog  of the conjecture for $G$-codimensions
   for any associative PI-algebra with an action of a finite Abelian group $G$ by automorphisms.
The case when $G=\mathbb Z_2$
acts on a finite dimensional associative algebra by automorphisms and anti-automorphisms
(i.e. polynomial identities with involution)
 was considered by A.~Giambruno and
 M.V.~Zaicev~\cite[Theorem~10.8.4]{ZaiGia}
 in 1999. In 2012 the author~\cite{ASGordienko3} proved
 the analog of Amitsur's conjecture for polynomial
 $H$-identities of finite dimensional associative algebras
 with a generalized $H$-action. As a consequence, 
 the analog of Amitsur's conjecture was proved for
 $G$-codimensions of finite dimensional associative algebras with
 an action of an arbitrary finite group $G$ by automorphisms and anti-automorphisms,
 and for $H$-codimensions of finite dimensional $H$-module associative algebras for a
 finite dimensional semisimple Hopf algebra~$H$.
     
      In 2011 the author~\cite{ASGordienko2} proved 
  the analog of Amitsur's conjecture for graded polynomial identities
  of finite dimensional
Lie algebras graded by a finite Abelian group
and  for $G$-identities
of finite dimensional
Lie algebras with an action of any finite group (not necessarily Abelian).

This article is concerned with the analog of Amitsur's conjecture for 
graded codimensions of Lie algebras graded by an arbitrary group (Subsection~\ref{SubsectionGraded}),
$G$-codimensions of Lie algebras with a rational action of a reductive affine algebraic group
$G$ by automorphisms and anti-automorphisms (Subsection~\ref{SubsectionG}), and
  $H$-codimensions of $H$-module Lie algebras
(Subsections~\ref{SubsectionHopf}--\ref{SubsectionHopfPIexp}).
The results of Subsections~\ref{SubsectionGraded} and \ref{SubsectionG} (including the results of~\cite{ASGordienko2} as a special case)
are derived  in Section~\ref{SectionAppl} from the results of Subsection~\ref{SubsectionHnice}.

 In Subsection~\ref{SubsectionHopfPIexp} we provide an explicit formula
for the Hopf PI-exponent that is a natural generalization of the formula
for the ordinary PI-exponent~\cite[Definition~2]{ZaiLie}. Of course, this formula can be used for
graded codimensions and $G$-codimensions as well. The formula has immediate applications.
In particular, in Section~\ref{SectionExamples} we apply it 
to calculate  the Hopf PI-exponent for several classes of algebras and to provide a criterion of $H$-simplicity. 

The results obtained provide a useful tool to study polynomial identities in Lie algebras and hence to study Lie algebras themselves.

\subsection{Graded polynomial identities and their codimensions}\label{SubsectionGraded}

Let $G$ be a group.
 Denote by 
$F\lbrace X^{\mathrm{gr}} \rbrace$ the absolutely free nonassociative algebra on the countable set $X^{\mathrm{gr}}=\bigcup_{g \in G}X^{(g)}$, $X^{(g)} = \{ x^{(g)}_1,
x^{(g)}_2, \ldots \}$,
over a field $F$,  i.e. the algebra of nonassociative noncommutative polynomials
 in variables from $X^{\mathrm{gr}}$. 
 
The algebra $F\lbrace X^{\mathrm{gr}} \rbrace$ has the following natural $G$-grading. 
 The indeterminates from $X^{(g)}$ are said to be homogeneous of degree
$g$. The $G$-degree of a monomial $x^{(g_1)}_{i_1} \dots x^{(g_t)}_{i_t} \in F\lbrace X^{\mathrm{gr}} \rbrace$   (arbitrary arrangement of brackets) is defined to
be $g_1 g_2 \dots g_t$, as opposed to its total degree, which is defined to be $t$. Denote by
$F\lbrace X^{\mathrm{gr}} \rbrace^{(g)}$ the subspace of the algebra $F\lbrace X^{\mathrm{gr}} \rbrace$ spanned by all the monomials having
$G$-degree $g$. Notice that $F\lbrace X^{\mathrm{gr}} \rbrace^{(g)} F\lbrace X^{\mathrm{gr}} \rbrace^{(h)} \subseteq F\lbrace X^{\mathrm{gr}} \rbrace^{(gh)}$, for every $g, h \in G$. It follows that
$$F\lbrace X^{\mathrm{gr}} \rbrace=\bigoplus_{g\in G} F\lbrace X^{\mathrm{gr}} \rbrace^{(g)}$$ is a $G$-grading.

Consider the intersection $I$ of all graded ideals of $F\lbrace X^{\mathrm{gr}} \rbrace$
containing the set \begin{equation}\label{EqSetOfGrGen}
\bigl\lbrace u(vw)+v(wu)+w(uv) \mid u,v,w \in  F\lbrace X^{\mathrm{gr}} \rbrace\bigr\rbrace \cup\bigl\lbrace u^2 \mid u \in  F\lbrace X^{\mathrm{gr}} \rbrace\bigr\rbrace.
\end{equation}
 Then $L(X^{\mathrm{gr}}) := F\lbrace X^{\mathrm{gr}} \rbrace/I$
is \textit{the free $G$-graded Lie algebra}
on $X^{\mathrm{gr}}$, i.e. for any $G$-graded Lie algebra $L=\bigoplus_{g\in G} L^{(g)}$ 
and a map $\psi \colon X^{\mathrm{gr}} \to L$ such that $\psi(X^{(g)}) \subseteq L^{(g)}$, there exists a unique homomorphism $\bar\psi \colon L(X^{\mathrm{gr}}) \to L$
of graded algebras such that $\bar\psi\bigr|_{X^{\mathrm{gr}}} =\psi$. 


We use the commutator notation for the multiplication in $L(X^{\mathrm{gr}})$
and other Lie algebras. In this notation, $L=\bigoplus_{g\in G} L^{(g)}$
is \textit{graded} if $[L^{(g)}, L^{(h)}]\subseteq L^{(gh)}$.

\begin{remark} If $G$ is Abelian, then $L(X^{\mathrm{gr}})$ is the ordinary
free Lie algebra with free generators from $X^{\mathrm{gr}}$ since the ordinary ideal of 
$F\lbrace X^{\mathrm{gr}} \rbrace$ generated by~(\ref{EqSetOfGrGen})
is already graded.
However, if $gh \ne hg$ for some $g, h \in G$,  then $[x^{(g)}_i, x^{(h)}_j]=0$
in $L(X^{\mathrm{gr}})$ for all  $i,j \in \mathbb N$.
\end{remark}

  Let $f=f(x^{(g_1)}_{i_1}, \dots, x^{(g_t)}_{i_t}) \in L (X^{\mathrm{gr}})$.
We say that $f$ is
a \textit{graded polynomial identity} of
 a $G$-graded Lie algebra $L=\bigoplus_{g\in G}
L^{(g)}$
and write $f\equiv 0$
if $f(a^{(g_1)}_{i_1}, \dots, a^{(g_t)}_{i_t})=0$
for all $a^{(g_j)}_{i_j} \in L^{(g_j)}$, $1 \leqslant j \leqslant t$.
In other words, $f$ is
a graded polynomial identity if for any homomorphism
$\psi\colon L(X^{\mathrm{gr}}) \to L$ of graded algebras
we have $\psi(f)=0$. 
  The set $\Id^{\mathrm{gr}}(L)$ of graded polynomial identities of
   $L$ is
a graded ideal of $L(X^{\mathrm{gr}})$.
The case of ordinary polynomial identities is included
for the trivial group $G=\lbrace e \rbrace$.

\begin{example}\label{ExampleIdGr}
 Let $G=\mathbb Z_2 = \lbrace \bar 0, \bar 1 \rbrace$,
$\mathfrak{gl}_2(F)=\mathfrak{gl}_2(F)^{(\bar 0)}\oplus \mathfrak{gl}_2(F)^{(\bar 1)}$
where $\mathfrak{gl}_2(F)^{(\bar 0)}=\left(
\begin{array}{cc}
F & 0 \\
0 & F
\end{array}
 \right)$ and $\mathfrak{gl}_2(F)^{(\bar 1)}=\left(
\begin{array}{cc}
0 & F \\
F & 0
\end{array}
 \right)$. Then  $[x^{(\bar 0)}, y^{(\bar 0)}]
\in \Id^{\mathrm{gr}}(\mathfrak{gl}_2(F))$.
\end{example}

\begin{example}[\cite{PaReZai}]\label{ExampleIdGr2}
Consider $L = \left\lbrace\left(\begin{array}{cc} \mathfrak{gl}_2(F) & 0 \\ 0 & \mathfrak{gl}_2(F) \end{array}\right)\right\rbrace \subseteq \mathfrak{gl}_4(F)$
with the following grading by the third symmetric group
$S_3$:
$$L^{(e)} = \left\lbrace\left(\begin{array}{cccc}
 \alpha & 0 &  0 & 0 \\
  0 & \beta & 0 & 0 \\
   0 & 0 & \gamma & 0 \\
    0 & 0 & 0 & \lambda
 \end{array}\right)\right\rbrace,\quad
L^{\bigl((12)\bigr)} = \left\lbrace\left(\begin{array}{cccc} 0 & \beta & 0 & 0 \\
 \alpha & 0 &  0 & 0 \\
  0 & 0 & 0 & 0 \\
   0 & 0 & 0 & 0
 \end{array}\right)\right\rbrace,$$ $$
L^{\bigl((23)\bigr)} = \left\lbrace\left(\begin{array}{cccc}
  0 & 0 & 0 & 0 \\
   0 & 0 & 0 & 0 \\
  0 & 0 & 0 & \lambda \\
   0 & 0 & \gamma & 0 \\
 \end{array}\right)\right\rbrace,$$
 the other components are zero, $\alpha,\beta,\gamma,\lambda \in F$. 
  Then  $\left[x^{\bigl((12)\bigr)},y^{\bigl((23)\bigr)}\right]\in \Id^{\mathrm{gr}}(L)$ 
  and $[x^{(e)},y^{(e)}]
\in \Id^{\mathrm{gr}}(L)$. In fact, $\left[x^{\bigl((12)\bigr)},y^{\bigl((23)\bigr)}\right]=0$
in $L(X^\mathrm{gr})$.
\end{example}

Let $S_n$ be the $n$th symmetric group, $n\in\mathbb N$, and
 $$V^{\mathrm{gr}}_n:=\langle [x^{(g_1)}_{\sigma(1)}, x^{(g_2)}_{\sigma(2)},
 \ldots, x^{(g_n)}_{\sigma(n)}] \mid g_i \in G, \sigma \in S_n \rangle_F.$$
(All long commutators in the article are left-normed, although this is not important
in this particular case in virtue of the Jacobi identity.)
  The non-negative integer
 $$c^{\mathrm{gr}}_n(L) := \dim
  \left(\frac {V^{\mathrm{gr}}_n}{V^{\mathrm{gr}}_n \cap
   \Id^{\mathrm{gr}}(L)}\right)$$
is called the $n$th {\itshape codimension
of graded polynomial identities} or the $n$th {\itshape graded codimension} of~$L$.

\begin{remark} Let $\tilde G \supseteq G$ be another group.
Denote by $L(X^{\widetilde{\mathrm{gr}}})$, $\Id^{\widetilde{\mathrm{gr}}}(L)$, $V^{\widetilde{\mathrm{gr}}}_n$, 
$c^{\widetilde{\mathrm{gr}}}_n(L)$ the objects corresponding to the $\tilde G$-grading. Let $I$ be the ideal of $L(X^{\widetilde{\mathrm{gr}}})$
generated by $x^{(g)}_j$, $j\in\mathbb N$, $g \notin G$.
We can identify $L(X^\mathrm{gr})$ with the subalgebra in 
$L(X^{\widetilde{\mathrm{gr}}})$. Then $L(X^{\widetilde{\mathrm{gr}}}) = L(X^\mathrm{gr}) \oplus I$, 
$\Id^{\widetilde{\mathrm{gr}}}(L) = \Id^{\mathrm{gr}}(L) \oplus I$, $V^{\widetilde{\mathrm{gr}}}_n=
V^{\mathrm{gr}}_n \oplus (V^{\widetilde{\mathrm{gr}}}_n \cap I)$,
$$V^{\widetilde{\mathrm{gr}}}_n \cap  \Id^{\widetilde{\mathrm{gr}}}(L) =
(V^{\mathrm{gr}}_n \cap \Id^{\mathrm{gr}}(L)) \oplus (V^{\widetilde{\mathrm{gr}}}_n \cap I)$$
(direct sums of subspaces). Hence $c^{\widetilde{\mathrm{gr}}}_n(L) = c^{\mathrm{gr}}_n(L)$
for all $n\in\mathbb N$. In particular, we can always replace the grading group with 
the subgroup generated by the elements corresponding to the nonzero components.
\end{remark}

The analog of Amitsur's conjecture for graded codimensions can be formulated
as follows.

\begin{conjecture} There exists
 $\PIexp^{\mathrm{gr}}(L):=\lim\limits_{n\to\infty} \sqrt[n]{c^\mathrm{gr}_n(L)}$
 which is a nonnegative integer.
\end{conjecture}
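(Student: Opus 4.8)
The plan is to prove this by reducing it to the Hopf-module situation treated in Subsection~\ref{SubsectionHnice}. Since $\dim L<\infty$, only finitely many homogeneous components $L^{(g)}$ are nonzero, say for $g$ in a finite set $T$, and by the Remark above we may assume $G=\langle T\rangle$. Moreover, if $L^{(g)}=0$ then $x^{(g)}_i\equiv 0$, so modulo $\Id^{\mathrm{gr}}(L)$ only monomials all of whose variables have degrees in $T$ survive; in particular $c^{\mathrm{gr}}_n(L)<\infty$ and depends only on the (partially defined) multiplicative structure of $T$ inside $G$. Recall that a $G$-grading on $L$ is the same as an $FG$-comodule algebra structure, and, because $L$ is finite dimensional, the corresponding coaction is implemented by an action of a Hopf algebra $H$: when $G$ is finite one takes $H=(FG)^{*}$, which is finite dimensional and semisimple since $\ch F=0$; for an arbitrary group $G$ one takes a Hopf algebra built from $G$ (a suitable Hopf subalgebra or quotient of the finite dual of $FG$) whose action on $L$ factors through a finite dimensional semisimple algebra, and one checks that it is ``nice'' in the sense of Subsection~\ref{SubsectionHnice}.

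Next I would set up a dictionary between graded polynomials and polynomial $H$-identities: a homogeneous variable $x^{(g)}_i$ corresponds to the variable obtained by applying to a free $H$-variable the idempotent of $H$ dual to $g$, and this identification carries $L(X^{\mathrm{gr}})$, $\Id^{\mathrm{gr}}(L)$, and $V^{\mathrm{gr}}_n$ to their $H$-counterparts. Consequently $c^{\mathrm{gr}}_n(L)=c^{H}_n(L)$ for every $n$ (or, at worst, the two differ by a factor polynomial in $n$, which does not affect the limit of the $n$-th roots). It then suffices to invoke the corresponding statement of Subsection~\ref{SubsectionHnice}: $\PIexp^{H}(L):=\lim_{n\to\infty}\sqrt[n]{c^{H}_n(L)}$ exists and is a non-negative integer.

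For completeness, that $H$-module statement is itself proved along the lines of Zaicev's proof of Amitsur's conjecture for Lie algebras, adapted to the $H$-action. One uses a Wedderburn--Malcev/Levi type decomposition of $L$ with an $H$-invariant maximal semisimple (or reductive) part and an $H$-invariant solvable radical. The upper bound $\limsup_n\sqrt[n]{c^{H}_n(L)}\le d$ follows by showing that the $S_n$-cocharacter of the relatively free $H$-algebra is supported on Young diagrams $\lambda\vdash n$ whose $(d+1)$-th row is bounded by a constant independent of $n$ (equivalently, on diagrams contained in a hook), which forces $\sum_\lambda m_\lambda\deg\chi_\lambda\le C n^{t}d^{n}$. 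The lower bound $\liminf_n\sqrt[n]{c^{H}_n(L)}\ge d$ follows by exhibiting, for all large $n$, multilinear non-identities alternating in $d$ disjoint sets of variables, constructed from a maximal sum $B_1\oplus\dots\oplus B_r$ of $H$-simple subalgebras whose product is nonzero modulo the radical; here $d=\dim(B_1\oplus\dots\oplus B_r)$ is exactly the integer appearing in the explicit formula of Subsection~\ref{SubsectionHopfPIexp}.

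The hard part, and the reason this is not an immediate consequence of the finite-group case, is twofold. First, for an infinite or non-abelian grading group one must genuinely produce a ``nice'' $H$ realising the grading; the delicate points are that the solvable radical of $L$ must be $H$-stable and that the $H$-semisimple part must decompose into $H$-simple pieces in the way required by Subsection~\ref{SubsectionHnice}. Second, the cocharacter upper bound is technically the most demanding step: because Lie algebras are non-associative and the action is only a (generalized) Hopf action, one must pass to an associative envelope --- the universal enveloping algebra of $L$ with the induced $H$-module structure --- and transport the Young-diagram estimates there, carefully controlling the interaction of the grading with the $S_n$-module structure of the multilinear components.
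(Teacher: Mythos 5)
Your reduction in the finite-group case is essentially the paper's: for finite $G$ one dualizes the grading to an action of $(FG)^{*}$, identifies $L(X^{\mathrm{gr}})$ with $L(X\,|\,(FG)^{*})$ via the dual idempotents $h_g$, and gets $c^{\mathrm{gr}}_n(L)=c^{(FG)^{*}}_n(L)$ exactly (Lemma~\ref{LemmaGradAction}), after which Theorem~\ref{TheoremMainHSS} applies. That part is fine.

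The gap is in the passage to an arbitrary group. You assert that for any $G$ one can take ``a suitable Hopf subalgebra or quotient of the finite dual of $FG$'' whose module structure realises the grading and for which $c^{\mathrm{gr}}_n(L)=c^{H}_n(L)$ up to polynomial factors. No such $H$ is constructed, and for a grading whose support generates a non-Abelian group (see Example~\ref{ExampleIdGr2}) this cannot work as stated: the free $G$-graded Lie algebra already satisfies $[x^{(g)},y^{(h)}]=0$ whenever $gh\ne hg$, so the multilinear graded components indexed by non-commuting degree tuples contribute nothing to $c^{\mathrm{gr}}_n(L)$, a phenomenon with no counterpart in $V^{H}_n$ for an $H$-module structure. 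The paper handles this by a different mechanism: Lemma~\ref{LemmaGradedNonZero} shows that only degree tuples lying in a finitely generated \emph{Abelian} subgroup generated by a subset of the (finite) support can contribute, and Lemma~\ref{LemmaInclExcl} then expresses $c^{\mathrm{gr}}_n(L)$ by inclusion--exclusion as an alternating sum of $c^{\mathrm{gr}}_n(L_{G_i})$ over such subgroups $G_i$; the existence of the integer limit is extracted by taking the maximal exponent among the summands and using $c^{\mathrm{gr}}_n(L_{G_0})\leqslant c^{\mathrm{gr}}_n(L)$ for the maximising subalgebra. Note also that even in the finitely generated Abelian case the Hopf algebra used is $F\hat G$ with $\hat G=\Hom(G,F^{\times})$ (Lemmas~\ref{LemmaAbelianDual}, \ref{LemmaGrToHatG}, Example~\ref{ExampleHniceGr}), which is cocommutative but not finite dimensional, so niceness must be checked directly rather than by appeal to semisimplicity of $H$. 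Your sketch of the upper bound via the universal enveloping algebra also diverges from the paper, which works entirely inside $V^{H}_n$ using the reduction Lemma~\ref{LemmaReduct} and the colength bound of Theorem~\ref{TheoremColength}, but that is a matter of route rather than a flaw; the missing inclusion--exclusion reduction is the substantive defect.
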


\begin{remark}
I.B.~Volichenko~\cite{Volichenko} gave an example
of an infinite dimensional Lie algebra~$L$ with
a nontrivial polynomial identity for which the
 growth of codimensions~$c_n(L)$ of ordinary polynomial identities
 is overexponential.
 M.V.~Zaicev and S.P.~Mishchenko~\cite{VerZaiMishch, ZaiMishch}
  gave an example
of an infinite dimensional Lie algebra~$L$
 with a nontrivial polynomial identity
 such that
there exists fractional
  $\PIexp(L):=\lim\limits_{n\to\infty} \sqrt[n]{c_n(L)}$.
\end{remark}

\begin{theorem}\label{TheoremMainGr}
Let $L$ be a finite dimensional non-nilpotent Lie algebra
over a field $F$ of characteristic $0$, graded by an arbitrary group $G$. Then
there exist constants $C_1, C_2 > 0$, $r_1, r_2 \in \mathbb R$, $d \in \mathbb N$
such that $C_1 n^{r_1} d^n \leqslant c^{\mathrm{gr}}_n(L) \leqslant C_2 n^{r_2} d^n$
for all $n \in \mathbb N$.
\end{theorem}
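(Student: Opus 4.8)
The plan is to deduce this from the analog of Amitsur's conjecture for $H$-module Lie algebras proved in Subsection~\ref{SubsectionHnice}, by encoding the $G$-grading as a suitable $H$-action. By the Remark above we may assume that $G$ is generated by the finite support $T=\{g\in G\mid L^{(g)}\ne 0\}$, although this is not essential. The $G$-grading on the finite dimensional algebra $L$ is determined by the finitely many homogeneous projections $\pi_g\colon L\to L^{(g)}$, $g\in T$. Let $H$ be the $F$-subalgebra $\langle \pi_g\mid g\in T\rangle_F$ of $\End_F(L)$ spanned by these projections; since the $\pi_g$ are orthogonal idempotents with $\sum_{g\in T}\pi_g=\id_L$, the algebra $H$ is isomorphic to $F^{|T|}$, in particular finite dimensional, commutative and semisimple. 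From $L^{(g)}L^{(h)}\subseteq L^{(gh)}$ one obtains
$$\pi_g(ab)=\sum_{h,k\in T,\ hk=g}(\pi_h a)(\pi_k b)\qquad(a,b\in L),$$
so $L$ becomes an $H$-module Lie algebra for the (generalized) Hopf action determined on products by $\pi_g\mapsto\sum_{h,k\in T,\ hk=g}\pi_h\otimes\pi_k$. Because $H$ is finite dimensional and semisimple, it is \emph{nice} in the sense of Subsection~\ref{SubsectionHnice}; note that finiteness of the support $T$ is exactly what keeps $H$ finite dimensional even when $G$ is infinite.

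Next I would match the two codimension sequences. Sending a graded monomial $[x^{(g_1)}_{i_1},\dots,x^{(g_n)}_{i_n}]$ with $g_j\in T$ to the $H$-polynomial $[\pi_{g_1}x_{i_1},\dots,\pi_{g_n}x_{i_n}]$, and observing that substituting homogeneous elements of $L$ into the former amounts to substituting $\pi_{g_j}$ applied to arbitrary elements into the latter (while any graded variable of degree outside $T$ necessarily acts as $0$), one obtains a linear isomorphism
$$\frac{V^{\mathrm{gr}}_n}{V^{\mathrm{gr}}_n\cap\Id^{\mathrm{gr}}(L)}\ \cong\ \frac{V^{H}_n}{V^{H}_n\cap\Id^{H}(L)},$$
and hence $c^{\mathrm{gr}}_n(L)=c^H_n(L)$ for all $n\in\mathbb N$ (so also $\PIexp^{\mathrm{gr}}(L)=\PIexp^H(L)$ whenever either exists).

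Finally, nilpotency of $L$ does not depend on the $H$-module structure, since it only concerns products of elements of $L$; thus $L$ is not $H$-nilpotent, and the main result of Subsection~\ref{SubsectionHnice} yields constants $C_1,C_2>0$, $r_1,r_2\in\mathbb R$, $d\in\mathbb N$ with $C_1 n^{r_1}d^n\leqslant c^H_n(L)\leqslant C_2 n^{r_2}d^n$ for all $n$. Together with $c^{\mathrm{gr}}_n(L)=c^H_n(L)$ this is precisely the assertion. (The hypothesis that $L$ is non-nilpotent is genuinely needed: if $L^N=0$ then $c^{\mathrm{gr}}_n(L)=0$ for all $n\geqslant N$, which is incompatible with any lower bound of the form $C_1 n^{r_1}d^n$.)

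All of the substantive work lies in Subsection~\ref{SubsectionHnice} (the structure theory and the exponential estimates for $H$-cocharacters); the reduction above is essentially bookkeeping, with one point that requires a little care, namely checking that the generalized Hopf action built from the grading projections satisfies \emph{all} the conditions packaged into the notion of a nice $H$ used there. I expect this verification — rather than the routine identity $c^{\mathrm{gr}}_n(L)=c^H_n(L)$ — to be the main, if minor, obstacle.
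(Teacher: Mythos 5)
Your reduction breaks down at the very first step: the algebra $H=\langle \pi_g\mid g\in T\rangle_F$ you construct is in general \emph{not} a Hopf algebra, and the machinery of Subsection~\ref{SubsectionHnice} (Theorem~\ref{TheoremMainH}, the free $H$-module Lie algebra $L(X|H)$, Lemma~\ref{LemmaAnnHLmodule}, \ldots) is developed only for genuine Hopf algebras --- it uses coassociativity, the counit and the antipode throughout. The assignment $\pi_g\mapsto\sum_{h,k\in T,\ hk=g}\pi_h\otimes\pi_k$ fails coassociativity and the counit axiom as soon as the support $T$ is not closed under products and inverses: e.g.\ for $G=\mathbb Z$ and $T=\{1,2,4\}$ one gets $(\Delta\otimes 1)\Delta(\pi_4)=\pi_1\otimes\pi_1\otimes\pi_2$ while $(1\otimes\Delta)\Delta(\pi_4)=\pi_2\otimes\pi_1\otimes\pi_1$, and $(\varepsilon\otimes 1)\Delta(\pi_1)=0\ne\pi_1$ for any choice of $\varepsilon$. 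What you have is only a ``generalized $H$-action'' in the sense of the author's associative paper; this Lie-algebra paper does not develop that theory, and Example~\ref{ExampleHniceHSS} (finite dimensional semisimple $H$ implies $H$-nice) cites structure results that genuinely need the Hopf structure, not merely semisimplicity of the associative algebra $F^{|T|}$. Your claimed identity $c^{\mathrm{gr}}_n(L)=c^H_n(L)$ is likewise unsupported, since $L(X|H)$ is not even defined for your $H$, and for non-Abelian supports the free graded Lie algebra carries extra relations ($[x^{(g)},y^{(h)}]=0$ when $gh\ne hg$, cf.\ Example~\ref{ExampleIdGr2}) that a naive transcription of monomials does not see.

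The paper circumvents exactly this obstruction in two stages. When $G$ is finite it uses the honest dual Hopf algebra $(FG)^*$ (Lemma~\ref{LemmaGradAction}); for infinite $G$ that dual does not exist as a Hopf algebra, so the paper first reduces, via the support and an inclusion--exclusion argument on multidegrees (Lemmas~\ref{LemmaGradedNonZero} and~\ref{LemmaInclExcl}), to finitely generated \emph{Abelian} subgroups $G_0\subseteq G$, and then replaces the $G_0$-grading by an action of the group algebra $F\hat G_0$ of the character group (Lemmas~\ref{LemmaAbelianDual}, \ref{LemmaGrToHatG}, Example~\ref{ExampleHniceGr}), which is a cocommutative Hopf algebra to which Theorem~\ref{TheoremMainH} applies. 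To repair your argument you would either have to carry out this Abelian reduction, or else redo the entire structure theory of Subsection~\ref{SubsectionHnice} for generalized (non-Hopf) actions --- neither of which is the ``bookkeeping'' your proposal assumes.
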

\begin{corollary}
The above analog of Amitsur's conjecture holds for such codimensions.
\end{corollary}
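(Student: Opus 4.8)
The plan is to derive Theorem~\ref{TheoremMainGr} from the general two-sided bound for $H$-nice module Lie algebras established in Subsection~\ref{SubsectionHnice} (this is precisely the reduction carried out in Section~\ref{SectionAppl}). The first step is to convert the $G$-grading into a module structure over a finite dimensional algebra. Put $S := \{ g\in G \mid L^{(g)} \neq 0 \}$; since $\dim_F L < \infty$, the set $S$ is finite. The subspace $C := \sum_{g\in S} Fg \subseteq FG$ is a finite dimensional subcoalgebra (because $\Delta(g)=g\otimes g$ and $\varepsilon(g)=1$), and the grading makes $L$ a $C$-comodule whose coaction is compatible with the bracket. Dualizing, $H := C^{*}$ is a commutative associative algebra with $\dim_F H = |S|$, semisimple since $\ch F = 0$; the basis $\{\varepsilon_g\}_{g\in S}$ of orthogonal idempotents dual to $\{g\}$ acts on $L$ so that $\varepsilon_g$ is the projection of $L$ onto $L^{(g)}$. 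From $[L^{(h)},L^{(k)}]\subseteq L^{(hk)}$ one gets, for every $g\in S$ and all $a,b\in L$,
\[
\varepsilon_g[a,b] \;=\; \sum_{\substack{h,k\in S \\ hk=g}} [\varepsilon_h a,\, \varepsilon_k b],
\]
a relation of exactly the form required of an $H$-nice action. Note this works verbatim even when $G$ is infinite or nonabelian, since only the finite set $S$ is involved; the phenomenon $[x^{(g)}_i,x^{(h)}_j]=0$ for $gh\neq hg$ is reflected on the algebra side by $L^{(gh)}\cap L^{(hg)}=0$ when $gh\neq hg$.

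Next I would match the two codimension sequences. Under the correspondence $x^{(g)}_i\leftrightarrow \varepsilon_g x_i$ (for $g\in S$) the span $V^{\mathrm{gr}}_n$ is identified with the analogous space in the free $H$-module Lie algebra, and a multilinear graded polynomial lies in $\Id^{\mathrm{gr}}(L)$ precisely when the corresponding $H$-polynomial is an $H$-identity of $L$: substituting $x_i\mapsto a_i\in L$ and applying $\varepsilon_g$ extracts the $L^{(g)}$-component of $a_i$, so vanishing under all graded substitutions is the same as vanishing as an $H$-identity. Variables $x^{(g)}_i$ with $g\notin S$ yield only trivial graded identities and may be discarded, as in the Remark on replacing the grading group by the subgroup generated by the support. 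Hence $c^{\mathrm{gr}}_n(L)$ equals the $n$th $H$-codimension of $L$ for every $n$. Finally, since an ordinary non-nilpotent finite dimensional Lie algebra is a fortiori not $H$-nilpotent, $L$ satisfies the hypotheses of the theorem of Subsection~\ref{SubsectionHnice}; applying it to the $H$-module Lie algebra $L$ produces $C_1,C_2>0$, $r_1,r_2\in\mathbb R$, $d\in\mathbb N$ with $C_1 n^{r_1} d^n \leqslant c^{\mathrm{gr}}_n(L)\leqslant C_2 n^{r_2} d^n$ (and $d\geqslant 1$, since $c^{\mathrm{gr}}_n(L)$ dominates the ordinary codimension of the non-nilpotent algebra $L$). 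The Corollary then follows by squeezing: $\PIexp^{\mathrm{gr}}(L)=\lim_{n\to\infty}\sqrt[n]{c^{\mathrm{gr}}_n(L)}=d$.

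The genuine mathematical content here — the bound $C_1 n^{r_1} d^n \leqslant c_n^H(L)\leqslant C_2 n^{r_2} d^n$ for $H$-nice module Lie algebras — is the result of Subsection~\ref{SubsectionHnice}, which I treat as given, so the work above is largely a translation of data. The only points needing real care are: (a) verifying that $H=C^{*}$ with the projection action meets the precise axioms of an \emph{$H$-nice} action from that subsection, the displayed comultiplication-type identity for the bracket being the heart of this; and (b) confirming that the non-degeneracy hypothesis needed for the lower bound (non-$H$-nilpotence of $L$) is implied by ordinary non-nilpotence, which is immediate since any $H$-nilpotent algebra is nilpotent. Everything else is routine bookkeeping with the free graded Lie algebra and the finite support $S$.
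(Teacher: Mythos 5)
The deduction of the Corollary from Theorem~\ref{TheoremMainGr} is just the squeeze argument in your last sentence (together with the Remark that $c_n^{\mathrm{gr}}(L)=0$ for large $n$ when $L$ is nilpotent), and that part is fine. The gap is in your derivation of Theorem~\ref{TheoremMainGr} itself. You set $C=\sum_{g\in S}Fg$, with $S$ the support of the grading, and pass to $H:=C^{*}$. But $C$ is only a subcoalgebra of $FG$, not a sub-bialgebra: unless $S$ is closed under multiplication (and contains $e$ and inverses), the comultiplication $\Delta(\varepsilon_g)=\sum_{hk=g,\ h,k\in S}\varepsilon_h\otimes\varepsilon_k$ on $C^{*}$ is not unital --- $\Delta(1)\ne 1\otimes 1$ as soon as some product $hk$ with $h,k\in S$ falls outside $S$ --- and there is in general no counit and no antipode. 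A concrete failure is Example~\ref{ExampleIdGr2}: there $S=\lbrace e,(12),(23)\rbrace$ and $(12)(23)\notin S$. So your $H$ is not a Hopf algebra, and you cannot feed it into the results of Subsection~\ref{SubsectionHnice}, which are stated for Hopf algebras and use the antipode throughout (e.g.\ in Lemma~\ref{LemmaAnnHLmodule} and in the action~(\ref{EqHActionOnEnd}) on $\End_F(M)$), nor can you invoke ``$H$ finite dimensional semisimple'' in the sense of Example~\ref{ExampleHniceHSS}. Your construction is legitimate exactly when the subgroup generated by $S$ is finite, in which case it is the paper's Lemma~\ref{LemmaGradAction} with $H=(FG)^{*}$.

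For arbitrary $G$ the paper takes a genuinely different route, which is what your ``works verbatim even when $G$ is infinite or nonabelian'' glosses over. Lemma~\ref{LemmaGradedNonZero} shows that only pairwise commuting support elements can contribute to a nonzero product; Lemma~\ref{LemmaInclExcl} then writes $c^{\mathrm{gr}}_n(L)$ by inclusion--exclusion through the subalgebras $L_{G_i}$ for the finitely generated \emph{Abelian} subgroups $G_i$ generated by commuting subsets of $S$; and for each such $L_{G_i}$ one uses the genuine cocommutative Hopf algebra $F\hat G_i$, $\hat G_i=\Hom(G_i,F^{\times})$, via Lemmas~\ref{LemmaAbelianDual} and~\ref{LemmaGrToHatG} and Example~\ref{ExampleHniceGr}. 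Note also that after this reduction the exponent $d$ for $L$ is recovered as the maximum of the exponents of the $L_{G_i}$, which requires the short extra argument at the end of the proof of Theorem~\ref{TheoremMainGr}; your version, which would apply a single theorem to a single $H$, bypasses this step only because that single $H$ does not exist.
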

\begin{remark}
If $L$ is nilpotent, i.e. $[x_1, \ldots, x_p]\equiv 0$ for some $p\in\mathbb N$, then, by the Jacobi identity, $V^{\mathrm{gr}}_n \subseteq \Id^{\mathrm{gr}}(L)$ and $c^{\mathrm{gr}}_n(L)=0$ for all $n \geqslant p$.
\end{remark}

\begin{theorem}\label{TheoremMainGrSum}
Let $L=L_1 \oplus \ldots \oplus L_s$ (direct sum of graded ideals) be a finite dimensional Lie algebra
over a field $F$ of characteristic $0$, graded by an arbitrary group $G$.
 Then $\PIexp^\mathrm{gr}(L)=\max_{1 \leqslant i \leqslant s}
\PIexp^\mathrm{gr}(L_i)$.
\end{theorem}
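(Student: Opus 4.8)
The plan is to squeeze $c^{\mathrm{gr}}_n(L)$ between $c^{\mathrm{gr}}_n(L_i)$ from below and $s\cdot\max_{1\leqslant i\leqslant s} c^{\mathrm{gr}}_n(L_i)$ from above, extract $n$th roots, and let $n\to\infty$, using Theorem~\ref{TheoremMainGr} to guarantee that all the limits in sight exist.

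First I would record two elementary inclusions of ideals of graded identities. Each $L_i$ is a graded subalgebra of $L$, so a graded identity of $L$ restricts to one of $L_i$; hence $\Id^{\mathrm{gr}}(L)\subseteq\Id^{\mathrm{gr}}(L_i)$ and therefore $c^{\mathrm{gr}}_n(L)\geqslant c^{\mathrm{gr}}_n(L_i)$ for all $n$ and all $i$. In the other direction, since $L=L_1\oplus\dots\oplus L_s$ is a direct sum of ideals we have $[L_i,L_j]\subseteq L_i\cap L_j=0$ for $i\ne j$; expanding any element of $V^{\mathrm{gr}}_n$ (which is multilinear) along this decomposition, every left-normed commutator whose arguments do not all lie in one $L_i$ vanishes, so a multilinear Lie polynomial lies in $\Id^{\mathrm{gr}}(L)$ once it lies in each $\Id^{\mathrm{gr}}(L_i)$. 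Thus $V^{\mathrm{gr}}_n\cap\Id^{\mathrm{gr}}(L)=V^{\mathrm{gr}}_n\cap\bigcap_i\Id^{\mathrm{gr}}(L_i)$, and the diagonal map $f\mapsto(f+\Id^{\mathrm{gr}}(L_i))_{i=1}^{s}$ embeds $V^{\mathrm{gr}}_n/(V^{\mathrm{gr}}_n\cap\Id^{\mathrm{gr}}(L))$ into $\bigoplus_{i=1}^{s}V^{\mathrm{gr}}_n/(V^{\mathrm{gr}}_n\cap\Id^{\mathrm{gr}}(L_i))$, giving $c^{\mathrm{gr}}_n(L)\leqslant\sum_{i}c^{\mathrm{gr}}_n(L_i)\leqslant s\cdot\max_i c^{\mathrm{gr}}_n(L_i)$.

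Next I would treat the degenerate case. If every $L_i$ is nilpotent then so is $L$, since the ideals commute and hence $\gamma_k(L)=\bigoplus_i\gamma_k(L_i)$ for all $k$; then all these codimensions vanish for large $n$ and $\PIexp^{\mathrm{gr}}(L)=0=\max_i\PIexp^{\mathrm{gr}}(L_i)$. Otherwise some $L_i$ is non-nilpotent, and then $L$ is non-nilpotent too (a non-nilpotent Lie subalgebra cannot lie inside a nilpotent one). Theorem~\ref{TheoremMainGr} now applies to $L$ and to each non-nilpotent $L_i$, so $\PIexp^{\mathrm{gr}}(L)=\lim_n\sqrt[n]{c^{\mathrm{gr}}_n(L)}$ and $\PIexp^{\mathrm{gr}}(L_i)=\lim_n\sqrt[n]{c^{\mathrm{gr}}_n(L_i)}$ exist as nonnegative integers, with value $0$ for the nilpotent summands.

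Finally I would take $n$th roots. From $c^{\mathrm{gr}}_n(L)\geqslant c^{\mathrm{gr}}_n(L_i)$ we get $\PIexp^{\mathrm{gr}}(L)\geqslant\PIexp^{\mathrm{gr}}(L_i)$ for each $i$, hence $\PIexp^{\mathrm{gr}}(L)\geqslant\max_i\PIexp^{\mathrm{gr}}(L_i)$. From $c^{\mathrm{gr}}_n(L)\leqslant s\cdot\max_i c^{\mathrm{gr}}_n(L_i)$ we get $\sqrt[n]{c^{\mathrm{gr}}_n(L)}\leqslant\sqrt[n]{s}\cdot\max_i\sqrt[n]{c^{\mathrm{gr}}_n(L_i)}$; since $\sqrt[n]{s}\to1$ and the maximum of finitely many convergent sequences tends to the maximum of their limits, letting $n\to\infty$ yields $\PIexp^{\mathrm{gr}}(L)\leqslant\max_i\PIexp^{\mathrm{gr}}(L_i)$. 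Together the two inequalities give the claimed equality. The only non-formal ingredient is Theorem~\ref{TheoremMainGr}, which secures the existence of the limits; I expect no genuine obstacle beyond being careful that the codimensions of nilpotent summands vanish eventually rather than growing, and that $L$ is non-nilpotent exactly when some $L_i$ is.
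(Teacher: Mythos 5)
Your proof is correct, and for the key (upper) half it takes a genuinely different and more elementary route than the paper. The paper proves this theorem by first reducing to finitely generated Abelian subgroups $G_0\subseteq G$ via the inclusion--exclusion identity of Lemma~\ref{LemmaInclExcl}, then translating $G_0$-gradings into $F\hat G_0$-actions (Lemma~\ref{LemmaGrToHatG}, Example~\ref{ExampleHniceGr}) and invoking the Hopf-algebra sum theorem, Theorem~\ref{TheoremMainHSum}. You instead work directly with graded codimensions: the lower bound $c^{\mathrm{gr}}_n(L)\geqslant c^{\mathrm{gr}}_n(L_i)$ is the same as the paper's, and your upper bound rests on the observation that, since $[L_i,L_j]=0$ for $i\ne j$, a multilinear graded polynomial is an identity of $L$ exactly when it is an identity of every $L_i$, whence $V^{\mathrm{gr}}_n\cap\Id^{\mathrm{gr}}(L)=V^{\mathrm{gr}}_n\cap\bigcap_i\Id^{\mathrm{gr}}(L_i)$ and the diagonal embedding gives $c^{\mathrm{gr}}_n(L)\leqslant\sum_i c^{\mathrm{gr}}_n(L_i)$. (The paper uses the same basis-splitting observation inside the proof of Theorem~\ref{TheoremMainHSum}, but there it re-runs the cocharacter machinery for the upper bound rather than exploiting subadditivity.) Your argument then only needs Theorem~\ref{TheoremMainGr} as a black box to guarantee that each $\PIexp^{\mathrm{gr}}(L_i)$ exists, which is legitimate since that theorem is established independently; the squeeze even yields existence of $\PIexp^{\mathrm{gr}}(L)$ for free. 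What the paper's longer route buys is that the exponent of $L$ remains tied to the explicit formula $d(\cdot)$ through the Hopf framework; what yours buys is brevity and complete independence from the duality and inclusion--exclusion apparatus. Your handling of the nilpotent edge cases (via $\gamma_k(L)=\bigoplus_i\gamma_k(L_i)$ and the vanishing of codimensions for large $n$) is also correct.
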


Theorems~\ref{TheoremMainGr} and~\ref{TheoremMainGrSum} will be obtained as consequences of Theorems~\ref{TheoremMainH} and~\ref{TheoremMainHSum} below in Subsection~\ref{SubsectionApplGr}.

\subsection{Polynomial $G$-identities and their codimensions}\label{SubsectionG}
 
Let $L$ be a Lie algebra over a field $F$.
Recall that $\psi \in \GL(L)$ is an {\itshape
automorphism} of $L$ if $\psi([a,b]) = [\psi(a), \psi(b)]$
for all $a,b \in L$
and {\itshape
anti-automorphism} of $L$ if $\psi([a,b]) = [\psi(b), \psi(a)]$
for all $a, b \in L$.
Obviously, $\psi$ is an anti-automorphism if and only if
$(-\psi)$ is an automorphism.
 Automorphisms of $L$
form the group denoted by $\Aut(L)$.
Automorphisms and anti-automorphisms of $A$
form the group denoted by $\Aut^{*}(L)$.
Note that $\Aut(L)$ is a normal subgroup of $\Aut^{*}(L)$
of index $2$.

Let $G$ be a group
with a fixed (normal) subgroup $G_0$ of index ${}\leqslant 2$.
  We say that a Lie algebra $L$ is an \textit{algebra with $G$-action}
  or a \textit{$G$-algebra}
   if $L$ is endowed with a homomorphism $\zeta \colon G \to
  \Aut^{*}(L)$ such that $\zeta^{-1}(\Aut(L))=G_0$.
  Note that if $G$ is an affine algebraic group acting rationally on $L$
  then $G_0$ is closed.
  
   We use the exponential notation
  for the action of a group and its group algebra,
  and write $a^g := \zeta(g)(a)$ for $g\in G$ and $a\in L$.
  
 Denote by $L( X | G )$
the free Lie algebra over $F$ with free formal generators $x^g_j$, $j\in\mathbb N$,
 $g \in G$. Here $X := \lbrace x_1, x_2, x_3, \ldots \rbrace$, $x_j := x_j^1$. Define
 $$[x_{i_1}^{g_1}, x_{i_2}^{g_2},\ldots, x_{i_{n-1}}^{g_{n-1}},
  x_{i_n}^{g_n}]^g :=
[x_{i_1}^{gg_1}, x_{i_2}^{gg_2},\ldots, x_{i_{n-1}}^{gg_{n-1}}, x_{i_n}^{gg_n}]
\text { for } g \in G_0, $$
 $$[x_{i_1}^{g_1}, x_{i_2}^{g_2},\ldots, x_{i_{n-1}}^{g_{n-1}},
  x_{i_n}^{g_n}]^g :=
[x_{i_n}^{gg_n}, [x_{i_{n-1}}^{gg_{n-1}},  \ldots, [x_{i_2}^{gg_2},x_{i_1}^{gg_1}]\ldots]
\text { for } g \in G\backslash G_0.$$
 Then $L( X | G )$ becomes \textit{the free Lie $G$-algebra} with
 free generators $x_j$, $j \in \mathbb N$. We call its elements Lie
 $G$-polynomials.
 
 Let $L$ be a Lie  $G$-algebra over $F$. A $G$-polynomial
 $f(x_1, \ldots, x_n)\in L( X | G )$
 is a \textit{$G$-identity} of $L$ if $f(a_1, \ldots, a_n)=0$
for all $a_i \in L$. In this case we write
$f \equiv 0$.
The set $\Id^{G}(L)$ of all $G$-identities
of $L$ is an ideal in $L( X | G )$ invariant under $G$-action.

  \begin{example}\label{ExampleIdG} Consider $\psi \in \Aut(\mathfrak{gl}_2(F))$
defined by the formula $$\left(
\begin{array}{cc}
a & b \\
c & d
\end{array}
 \right)^\psi := \left(
\begin{array}{rr}
a & -b \\
-c & d
\end{array}
 \right).$$
Then $[x+x^{\psi},y+y^{\psi}]\in \Id^{G}(\mathfrak{gl}_2(F))$
where
$G=\langle \psi \rangle \cong \mathbb Z_2$.
\end{example}

Denote by $V^G_n$ the space of all multilinear Lie $G$-polynomials
in $x_1, \ldots, x_n$, i.e.
$$V^{G}_n = \langle [x^{g_1}_{\sigma(1)},
x^{g_2}_{\sigma(2)},\ldots, x^{g_n}_{\sigma(n)}]
\mid g_i \in G, \sigma\in S_n \rangle_F.$$
Then the number $c^G_n(L):=\dim\left(\frac{V^G_n}{V^G_n \cap \Id^G(L)}\right)$
is called the $n$th \textit{codimension of polynomial $G$-identities}
or the $n$th \textit{$G$-codimension} of $L$.

The analog of Amitsur's conjecture for $G$-codimensions can be formulated
as follows.

\begin{conjecture} There exists
 $\PIexp^G(L):=\lim\limits_{n\to\infty} \sqrt[n]{c^G_n(L)} \in \mathbb Z_+$.
\end{conjecture}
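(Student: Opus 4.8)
We establish this conjecture in the setting described in the introduction: $L$ is a finite dimensional Lie algebra over a field $F$ of characteristic~$0$ and $\zeta\colon G\to\Aut^{*}(L)$ defines a rational action of a reductive affine algebraic group $G$, with $\zeta^{-1}(\Aut(L))=G_0$ (automatically closed). The plan is to realize this $G$-action as the action of a suitable Hopf algebra $H$ in such a way that $\Id^G(L)$ is, up to an obvious relabelling of variables, the same as $\Id^H(L)$, and then to quote the analog of Amitsur's conjecture for $H$-module Lie algebras established in Subsection~\ref{SubsectionHopf}.

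First I would reduce to the case where $G$ is a closed subgroup of $\GL(L)$. Since $-\id_L$ is an anti-automorphism and lies in the centre of $\GL(L)$, one has $\Aut^{*}(L)=\langle\Aut(L),-\id_L\rangle$, a closed subgroup of $\GL(L)$ containing $\Aut(L)$ with index~$2$. Replacing $G$ by the image $\bar G:=\zeta(G)$ (which is again reductive, being a quotient of $G$, and closed in $\GL(L)$ by Chevalley) and replacing $\zeta$ by the inclusion changes neither $\Id^G(L)$ nor $c^G_n(L)$. Thus $L$ becomes a finite dimensional rational $\bar G$-module whose bracket is compatible with the action: equivariant on $\Aut(L)$ and twisted by the sign on the other coset.

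Next I would produce $H$. A rational $\bar G$-module is an $\mathcal{O}(\bar G)$-comodule, and all of $\Id^{\bar G}(L)$ in a fixed degree $n$ involves only finitely many formal generators $x_i^g$ and is tested on the finite dimensional module $L$; hence the relevant coaction factors through a finite dimensional subcoalgebra, and dualization produces a finite dimensional Hopf algebra $H$ acting on $L$, the anti-automorphisms on the non-identity coset being absorbed by passing to the appropriate $\mathbb Z_2$-twist (a generalized $H$-action, in the sense used in the associative case). One then checks, monomial by monomial, that a Lie $\bar G$-polynomial is a $\bar G$-identity of $L$ if and only if the corresponding Lie $H$-polynomial is an $H$-identity of $L$, and that this correspondence matches $V^{\bar G}_n$ with $V^H_n$, so that $c^G_n(L)=c^H_n(L)$ for all $n$. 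The essential point is that $\bar G$ is reductive and $\ch F=0$, so $L$ is a semisimple $\bar G$-module, i.e.\ a semisimple $H$-module, and $H$ may be taken finite dimensional semisimple — precisely the situation covered by the main theorems of Subsection~\ref{SubsectionHopf}, which rest on the hypotheses of Subsection~\ref{SubsectionHnice}.

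Finally, applying Theorem~\ref{TheoremMainH} to this $H$ yields $C_1,C_2>0$, $r_1,r_2\in\mathbb R$ and $d\in\mathbb N$ with $C_1n^{r_1}d^n\leqslant c^H_n(L)\leqslant C_2n^{r_2}d^n$ for all $n$, whence $\PIexp^G(L)=\lim_{n\to\infty}\sqrt[n]{c^G_n(L)}=d\in\mathbb Z_+$ when $L$ is not nilpotent; if $L$ is nilpotent then $V^G_n\subseteq\Id^G(L)$ for large $n$, so $c^G_n(L)=0$ eventually and $\PIexp^G(L)=0$. Theorem~\ref{TheoremMainHSum} likewise gives $\PIexp^G(L)=\max_i\PIexp^G(L_i)$ for a direct sum of $G$-invariant ideals, and the explicit formula of Subsection~\ref{SubsectionHopfPIexp} then specializes to a formula for $d=\PIexp^G(L)$. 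The step I expect to be the main obstacle is the reduction to an $H$-action: passing from the infinite dimensional coordinate ring $\mathcal{O}(\bar G)$ to a finite dimensional semisimple Hopf algebra without losing any information about the identities, incorporating the anti-automorphisms via a generalized $H$-action while preserving semisimplicity, and verifying that the dictionary between $\bar G$-polynomials and $H$-polynomials respects the multilinear components, so that the \emph{codimensions}, and not merely the ideals of identities, coincide. Once this reduction is in place, the asymptotic two-sided bound — hence the existence and integrality of $\PIexp^G(L)$ — is inherited directly from the $H$-module theory.
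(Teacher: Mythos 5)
Your overall strategy --- turn the $G$-action into a Hopf algebra action and quote Theorem~\ref{TheoremMainH} --- is the paper's strategy, and your preliminary reductions (replacing $G$ by $\zeta(G)$, converting anti-automorphisms to automorphisms via the central element $-\id_L$, and handling nilpotent $L$ separately) are all sound and appear in the paper as Lemma~\ref{LemmaGtoAut} and the surrounding remarks. But the step you yourself flag as the main obstacle is a genuine gap, and the specific fix you propose does not work. A finite dimensional subcoalgebra $C\subseteq\mathcal{O}(\bar G)$ through which the coaction on $L$ factors is not a Hopf subalgebra (it is not closed under multiplication), so its dual $C^{*}$ is merely a finite dimensional algebra, not a Hopf algebra; and for an infinite reductive group $\bar G$ every Hopf subalgebra of $\mathcal{O}(\bar G)$ that separates the action is infinite dimensional (already for $\bar G=F^{\times}$ the only finite dimensional Hopf subalgebra of $F[t,t^{-1}]$ is $F$). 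So there is no finite dimensional --- let alone finite dimensional semisimple --- Hopf algebra $H$ through which the full rational $G$-action factors, and Theorem~\ref{TheoremMainHSS} cannot be invoked. Semisimplicity of $L$ as a $\bar G$-module (true, by reductivity in characteristic $0$) is not the hypothesis that drives the main theorem; $H$-niceness of $L$ is.

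The paper's route avoids this entirely: after the sign trick (Lemma~\ref{LemmaGtoAut}) one takes $H=FG$, the group algebra --- an infinite dimensional cocommutative Hopf algebra --- and observes that $c^{FG}_n(L)=c^G_n(L)$ tautologically (Lemma~\ref{LemmaFGcodim}). The real content is then Example~\ref{ExampleHniceAffAlg}: for a reductive affine algebraic group acting rationally, $L$ satisfies Conditions~\ref{ConditionRNinv}--\ref{ConditionLComplHred} of Subsection~\ref{SubsectionHnice} ($G$-invariant Levi decomposition, $FG$-invariant Wedderburn~--- Mal'cev decompositions, and complete reducibility of $(FG,L_0)$-modules), which is exactly what the ``$H$-nice'' hypothesis of Theorem~\ref{TheoremMainH} was designed to capture for infinite dimensional $H$. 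These verifications rest on external structure theorems (\cite{ASGordienko4}, \cite{SteVanOyst}) and are not consequences of semisimplicity of the category of rational $\bar G$-modules alone; your proposal supplies no substitute for them.
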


We claim that the following theorem holds:
 \begin{theorem}\label{TheoremMainGAffAlg}
Let $L$ be a finite dimensional non-nilpotent Lie algebra
over an algebraically closed field $F$ of characteristic $0$. Suppose a reductive affine algebraic group
  $G$ acts on $L$ rationally by automorphisms and anti-automorphisms. Then there exist
 constants $C_1, C_2 > 0$, $r_1, r_2 \in \mathbb R$,
  $d \in \mathbb N$ such that
   $C_1 n^{r_1} d^n \leqslant c^{G}_n(L)
    \leqslant C_2 n^{r_2} d^n$ for all $n \in \mathbb N$.
\end{theorem}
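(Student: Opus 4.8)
The plan is to derive Theorem~\ref{TheoremMainGAffAlg} from Theorems~\ref{TheoremMainH} and~\ref{TheoremMainHSum} of Subsection~\ref{SubsectionHnice} by presenting the $G$-action as the action of a Hopf algebra that meets the assumptions imposed there. The first step is to replace $G$ by a linear model. Since $L$ is finite dimensional and the action is rational, $\zeta\colon G\to\Aut^{*}(L)\subseteq\GL(L)$ is a morphism of affine algebraic groups; its image $\bar G$ is a closed subgroup of $\GL(L)$, reductive as a quotient of the reductive group $G$, with $\bar G\cap\Aut(L)$ equal to the image of $G_{0}$ and of index ${}\leqslant 2$ in $\bar G$. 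The canonical surjection $L(X\mid G)\to L(X\mid\bar G)$, $x^{g}_{j}\mapsto x^{\zeta(g)}_{j}$, is a homomorphism of Lie $G$-algebras which sends $\Id^{G}(L)$ onto $\Id^{\bar G}(L)$ and induces an isomorphism $V^{G}_{n}/(V^{G}_{n}\cap\Id^{G}(L))\cong V^{\bar G}_{n}/(V^{\bar G}_{n}\cap\Id^{\bar G}(L))$, whence $c^{G}_{n}(L)=c^{\bar G}_{n}(L)$. So we may assume $G$ is a closed reductive subgroup of $\GL(L)$ and $G_{0}=G\cap\Aut(L)$.

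The second step is to eliminate anti-automorphisms. Let $\varepsilon\colon G\to\{\pm 1\}$ be the character with kernel $G_{0}$. Recalling from the above that $-\varphi$ is an automorphism of a Lie algebra whenever $\varphi$ is an anti-automorphism, the map $\zeta'(g):=\varepsilon(g)\,\zeta(g)$ is a rational action of $G$ on $L$ \emph{by automorphisms}. Because $\zeta'(g)\in\{\zeta(g),-\zeta(g)\}$ for every $g$, substituting $a^{g}=\zeta(g)(a)$ into a multilinear Lie $G$-monomial produces the same value as substituting $\zeta'(g)(a)$, up to the global sign $\prod_{i}\varepsilon(g_{i})$; hence the corresponding evaluation maps on $V^{G}_{n}$ differ only by rescaling the spanning monomials by $\pm 1$, so they have the same image and the same kernel, and $c^{G}_{n}(L)$ is unchanged if $\zeta$ is replaced by $\zeta'$. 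Thus we may assume $G$ acts on $L$ by automorphisms, i.e. that $L$ is a finite dimensional $FG$-module Lie algebra.

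The third step is to pass to a Hopf algebra $H$. One takes $H$ to be a Hopf algebra attached to $G$ whose relevant modules are precisely the rational $G$-modules — for instance $FG$ itself, or, if one prefers a model in which every module is rational, a suitable Hopf algebra built from $\mathcal{O}(G)^{\circ}$ — equipped so that the given action makes $L$ a finite dimensional $H$-module Lie algebra. For such $H$ the free Lie $G$-algebra is identified with the free $H$-module Lie algebra, $\Id^{G}(L)=\Id^{H}(L)$, $c^{G}_{n}(L)=c^{H}_{n}(L)$ for all $n$, and a decomposition of $L$ into $G$-invariant ideals is the same as one into $H$-invariant ideals, so $\PIexp^{G}(L)=\PIexp^{H}(L)$ whenever one side exists. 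It then remains to check that $H$ satisfies the hypotheses of Subsection~\ref{SubsectionHnice}, and this is exactly where reductivity enters. In characteristic~$0$ a reductive group is linearly reductive, so every rational $G$-module is completely reducible. In particular $L$ is a semisimple $H$-module, hence possesses the $H$-invariant structural decomposition required in Subsection~\ref{SubsectionHnice} — an $H$-invariant solvable radical together with a maximal semisimple subalgebra splitting into $H$-simple ideals (a Levi-type decomposition refined to be $H$-invariant) — and every finite dimensional space arising in the codimension analysis, being a rational $G$-module, is again a completely reducible $H$-module. Since nilpotency of $L$ does not involve the action, $L$ is non-nilpotent as an $H$-module Lie algebra, so Theorem~\ref{TheoremMainH} yields constants $C_{1},C_{2}>0$, $r_{1},r_{2}\in\mathbb{R}$, $d\in\mathbb{N}$ with $C_{1}n^{r_{1}}d^{n}\leqslant c^{H}_{n}(L)\leqslant C_{2}n^{r_{2}}d^{n}$, and $c^{G}_{n}(L)=c^{H}_{n}(L)$ gives the assertion (with $d=\PIexp^{G}(L)$); Theorem~\ref{TheoremMainHSum} similarly handles a direct sum of ideals.

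The main obstacle is the third step rather than the invocation of Subsection~\ref{SubsectionHnice}. One must choose $H$ so that the $G$-framework and the $H$-framework genuinely yield the same relatively free algebras and the same codimensions, and then verify that $H$ satisfies the \emph{general} hypothesis of that subsection — which is not the ``finite dimensional semisimple'' special case, since $FG$ (and $\mathcal{O}(G)$) is infinite dimensional as soon as $\dim G>0$. The point to establish is that reductivity supplies the needed substitute for semisimplicity of $H$: the relatively free $G$-algebra and all of its multilinear components are rational $G$-modules, hence completely reducible, hence semisimple as $H$-modules. I would isolate this — together with the routine but bookkeeping-heavy transparency of the anti-automorphism twist at the level of $T$-ideals — as a lemma preceding the reduction to Theorems~\ref{TheoremMainH} and~\ref{TheoremMainHSum}.
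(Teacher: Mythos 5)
Your proposal is correct and follows essentially the same route as the paper: twist away the anti-automorphisms by the sign character of $G/G_0$ (the paper's Lemma~\ref{LemmaGtoAut}, which also verifies rationality of the twisted action via the Nullstellensatz), identify the resulting $G$-action by automorphisms with an $FG$-module Lie algebra structure having the same codimensions (Lemma~\ref{LemmaFGcodim}), check that reductivity makes $L$ an $FG$-nice algebra (Example~\ref{ExampleHniceAffAlg}), and invoke Theorem~\ref{TheoremMainH}. The one caveat is that your verification of $FG$-niceness is asserted rather than proved --- the invariant Levi decomposition, the invariant Wedderburn---Mal'cev decompositions for $H$-module subalgebras of $\End_F(W)$, and the complete reducibility of $(H,L_0)$-modules do not follow formally from ``every rational $G$-module is completely reducible'' and are obtained in the paper from cited external results --- but you correctly isolate exactly this as the lemma that must be supplied.
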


In addition, we derive the following theorem (which one could regard as a special case
of Theorem~\ref{TheoremMainGAffAlg}).
 \begin{theorem}[{\cite[Theorem~2]{ASGordienko2}}]\label{TheoremMainGFin}
Let $L$ be a finite dimensional non-nilpotent Lie algebra
over a field $F$ of characteristic $0$. Suppose a finite
  group $G$ acts on $L$ by automorphisms and anti-automorphisms. Then there exist
 constants $C_1, C_2 > 0$, $r_1, r_2 \in \mathbb R$,
  $d \in \mathbb N$ such that
   $C_1 n^{r_1} d^n \leqslant c^{G}_n(L)
    \leqslant C_2 n^{r_2} d^n$ for all $n \in \mathbb N$.
\end{theorem}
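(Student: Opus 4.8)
The plan is to deduce Theorem~\ref{TheoremMainGFin} from Theorem~\ref{TheoremMainGAffAlg} in two moves: extend the base field to its algebraic closure, and then recognize a finite group as a reductive affine algebraic group whose action on a finite dimensional algebra is automatically rational.

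First I would pass to $\bar L := L\otimes_F \bar F$, where $\bar F$ is an algebraic closure of $F$. The homomorphism $\zeta\colon G\to\Aut^{*}(L)$ extends $\bar F$-linearly to $\bar\zeta\colon G\to\Aut^{*}(\bar L)$ with the same subgroup $G_0$ acting by automorphisms, so $\bar L$ is a Lie $G$-algebra in the sense of Subsection~\ref{SubsectionG}. Identifying the free Lie $G$-algebra over $\bar F$ with $L(X\mid G)\otimes_F\bar F$ and its multilinear component with $V^G_n\otimes_F\bar F$, the point is that $\Id^G(\bar L)=\Id^G(L)\otimes_F\bar F$: evaluating a multilinear $G$-polynomial on a fixed $F$-basis of $L$ (which is also a $\bar F$-basis of $\bar L$) shows that such a polynomial lies in $\Id^G(\bar L)$ if and only if it lies in $\Id^G(L)\otimes_F\bar F$, and since $\ch F=0$ the ideal $\Id^G(\bar L)$ is spanned by its multilinear elements. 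Hence
$$c^G_n(\bar L)=\dim_{\bar F}\frac{V^G_n\otimes_F\bar F}{(V^G_n\cap\Id^G(L))\otimes_F\bar F}=\dim_F\frac{V^G_n}{V^G_n\cap\Id^G(L)}=c^G_n(L)$$
for every $n$, and $L$ is non-nilpotent precisely when $\bar L$ is, since nilpotency means $[x_1,\dots,x_p]\equiv 0$ for some $p$, a condition unaffected by field extension.

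Next, over the algebraically closed field $\bar F$ of characteristic~$0$, the finite group $G$ is a $0$-dimensional affine algebraic group; its identity component is trivial, hence so is its unipotent radical, so $G$ is reductive (equivalently, $G$ is linearly reductive, as every finite group is in characteristic~$0$). Its action on the finite dimensional algebra $\bar L$ is automatically rational, since $\mathcal O(G)$ is the finite dimensional algebra of $\bar F$-valued functions on $G$ and the action amounts to an $\mathcal O(G)$-comodule structure on $\bar L$; moreover $G_0$, being the preimage of $\Aut(\bar L)$, is closed of index ${}\leqslant 2$. Theorem~\ref{TheoremMainGAffAlg} therefore applies to $\bar L$, giving $C_1,C_2>0$, $r_1,r_2\in\mathbb R$ and $d\in\mathbb N$ with $C_1 n^{r_1}d^n\leqslant c^G_n(\bar L)\leqslant C_2 n^{r_2}d^n$; by the previous paragraph these are the required bounds for $c^G_n(L)$.

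The only step that is not purely formal is the base-change identity $\Id^G(\bar L)=\Id^G(L)\otimes_F\bar F$ and the resulting invariance of the $G$-codimension sequence under extension of scalars; this is the standard argument that rests on restricting attention to multilinear $G$-polynomials and on $F$ being infinite, and it is where I expect the (mild) technical work to lie. A secondary point worth stating explicitly is that the hypothesis ``reductive'' in Theorem~\ref{TheoremMainGAffAlg} is satisfied by finite groups in characteristic~$0$, which is immediate from linear reductivity; everything else in the reduction is routine.
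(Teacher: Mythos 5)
Your proposal is correct, but it takes a genuinely different route from the paper. The paper proves Theorem~\ref{TheoremMainGFin} by applying Lemma~\ref{LemmaGtoAut} (replacing anti-automorphisms by automorphisms), Lemma~\ref{LemmaFGcodim} (identifying $c^G_n(L)$ with $c^{FG}_n(L)$), and then Theorem~\ref{TheoremMainHSS}, using that $FG$ is a finite dimensional \emph{semisimple} Hopf algebra by Maschke's theorem; the $FG$-niceness needed for Theorem~\ref{TheoremMainH} is then supplied by Example~\ref{ExampleHniceHSS}, and the base-field extension is absorbed into the proof of Theorem~\ref{TheoremMainHSS} at the level of $H$-codimensions. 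You instead treat $G$ as a zero-dimensional reductive affine algebraic group over $\bar F$ acting rationally and invoke Theorem~\ref{TheoremMainGAffAlg}, which routes the argument through Example~\ref{ExampleHniceAffAlg} rather than Example~\ref{ExampleHniceHSS}. Both paths terminate in Theorem~\ref{TheoremMainH}, and the paper itself remarks that Theorem~\ref{TheoremMainGFin} ``one could regard as a special case of Theorem~\ref{TheoremMainGAffAlg}'', so your reduction is sanctioned in spirit; your observations that a finite group in characteristic $0$ is reductive, that its action on a finite dimensional space is automatically rational, and that $G_0$ is closed are all correct. The price you pay is having to establish the invariance $c^G_n(\bar L)=c^G_n(L)$ under scalar extension yourself (your multilinearization argument is the standard and correct one, the same one the paper cites from \cite[Theorem~4.1.9]{ZaiGia} and \cite[Section~2]{ZaiLie} in analogous situations), whereas the paper's route gets a statement valid over an arbitrary field of characteristic $0$ for free from Theorem~\ref{TheoremMainHSS}. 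What your route buys is uniformity: the finite case becomes literally a corollary of the algebraic-group case, with Maschke's theorem hidden inside linear reductivity rather than invoked as semisimplicity of $FG$.
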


\begin{corollary}
The above analog of Amitsur's conjecture holds for
 such codimensions.
\end{corollary}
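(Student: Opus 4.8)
The plan is to recognize that, for a \emph{Lie} algebra, an anti-automorphism is simply $(-1)$ times an automorphism, so that the action of $G$ by automorphisms and anti-automorphisms can be replaced by an honest action by automorphisms; after this observation $L$ becomes a module Lie algebra over the finite dimensional semisimple Hopf algebra $FG$, and one invokes the $H$-module analog of Amitsur's conjecture (Theorem~\ref{TheoremMainH}), which by hypothesis covers finite dimensional semisimple $H$.

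First I would set up the sign twist. Let $\zeta \colon G \to \Aut^{*}(L)$ be the given homomorphism with $\zeta^{-1}(\Aut(L)) = G_0$, and let $\sign \colon G \to \{\pm 1\}$ be the homomorphism with kernel $G_0$. Since $\psi$ is an anti-automorphism of $L$ exactly when $-\psi$ is an automorphism, and $\sign$ is multiplicative, the map $\rho \colon g \mapsto \sign(g)\,\zeta(g)$ is a group homomorphism $G \to \Aut(L) \subseteq \GL(L)$. Thus $L$ is a module Lie algebra over $H := FG$ (the group algebra, with its standard Hopf structure $\Delta(g) = g \otimes g$), acted on via $\rho$. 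Next I would compare codimensions: the space $V^G_n$ is a space of formal left-normed commutators in the symbols $x^g_j$ and does not depend on the action, while the rescaling isomorphism $\Phi \colon V^G_n \to V^G_n$ determined by $[x^{g_1}_{i_1}, \ldots, x^{g_n}_{i_n}] \mapsto \sign(g_1 \cdots g_n)\,[x^{g_1}_{i_1}, \ldots, x^{g_n}_{i_n}]$ satisfies $(\Phi f)(a_1, \ldots, a_n)_{\zeta} = f(a_1, \ldots, a_n)_{\rho}$ for every multilinear $f$ (substituting $x^g_j \mapsto a_j^g$ introduces exactly the sign $\sign(g)$ for $\rho$ and none for $\zeta$). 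Hence $\Phi$ interchanges $V^G_n \cap \Id^G(L)$ computed with $\zeta$ and with $\rho$, and therefore $c^G_n(L) = c^H_n(L)$ for all $n$, where the right-hand side is the $H$-codimension of $L$ regarded as an $FG$-module Lie algebra.

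Now $H = FG$ is finite dimensional, and it is semisimple by Maschke's theorem since $\ch F = 0$; moreover $L$, being non-nilpotent, is non-nilpotent as an $FG$-module Lie algebra (the $H$-action does not touch the underlying Lie structure). Hence Theorem~\ref{TheoremMainH} applies and produces constants $C_1, C_2 > 0$, $r_1, r_2 \in \mathbb R$, $d \in \mathbb N$ with $C_1 n^{r_1} d^n \leqslant c^H_n(L) \leqslant C_2 n^{r_2} d^n$ for all $n$; by the previous step these inequalities hold with $c^G_n(L)$ in place of $c^H_n(L)$, which is the assertion. Should Theorem~\ref{TheoremMainH} require an algebraically closed base field, one first replaces $L$ by $\bar L := L \otimes_F \bar F$: this is again finite dimensional and non-nilpotent (its lower central series is $L^{(k)} \otimes_F \bar F$), the action of $G$ extends $\bar F$-linearly, and $c^G_n(L) = c^G_n(\bar L)$ because $c^G_n$ is the rank of a fixed finite family of multilinear maps induced by the spanning commutators of $V^G_n$, and such a rank is insensitive to extension of scalars.

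All the analytic content sits in Theorem~\ref{TheoremMainH}; what remains to verify carefully is the codimension dictionary $c^G_n(L) = c^H_n(L)$ under the sign twist and, if needed, the scalar-extension invariance of $c^G_n$ --- both routine but easy to get wrong. (Alternatively, one may observe that a finite group is a zero-dimensional reductive affine algebraic group when $\ch F = 0$ and that its action on the finite dimensional space $L$ is automatically rational, so that the statement follows over $\bar F$ from Theorem~\ref{TheoremMainGAffAlg} and then over $F$ by the same scalar-extension argument; this is the ``special case'' remark made after Theorem~\ref{TheoremMainGAffAlg}.)
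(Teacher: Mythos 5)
Your proposal is correct and follows essentially the same route as the paper: the sign twist turning anti-automorphisms into automorphisms is the paper's Lemma~\ref{LemmaGtoAut}, the identification $c_n^G(L)=c_n^{FG}(L)$ is Lemma~\ref{LemmaFGcodim}, and the conclusion is then read off from Theorem~\ref{TheoremMainHSS} (equivalently, Example~\ref{ExampleHniceHSS} plus Theorem~\ref{TheoremMainH}) together with the scalar-extension invariance of codimensions. The only thing left implicit is the nilpotent case, where $c_n^G(L)=0$ for large $n$ so the exponent is $0$; this is covered by the remark accompanying the theorems.
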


\begin{remark}
If $L$ is nilpotent, i.e. $[x_1, \ldots, x_p]\equiv 0$ for some $p\in\mathbb N$, then
$V^{G}_n \subseteq \Id^{G}(L)$ and $c^G_n(L)=0$ for all $n \geqslant p$.
\end{remark}

\begin{theorem}\label{TheoremMainGAffAlgSum}
Let $L=L_1 \oplus \ldots \oplus L_s$ (direct sum of ideals) be a finite dimensional Lie algebra
over an algebraically closed field $F$ of characteristic $0$. Suppose a reductive affine algebraic group
  $G$ acts on $L$ rationally by automorphisms and anti-automorphisms and the ideals 
$L_i$ are $G$-invariant. Then $\PIexp^G(L)=\max_{1 \leqslant i \leqslant s}
\PIexp^G(L_i)$.
\end{theorem}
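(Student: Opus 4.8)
The plan is to reduce the theorem to a single identity of subspaces, namely $\Id^G(L)\cap V^G_n=\bigcap_{i=1}^{s}\bigl(\Id^G(L_i)\cap V^G_n\bigr)$ for every $n$, and then to squeeze $\sqrt[n]{c^G_n(L)}$ between $\max_{1\leqslant i\leqslant s}\PIexp^G(L_i)$ from below and from above, using the asymptotics of Theorem~\ref{TheoremMainGAffAlg}. Granting the displayed equality, two inequalities follow at once: from $\Id^G(L)\cap V^G_n\subseteq \Id^G(L_j)\cap V^G_n$ we get $c^G_n(L_j)\leqslant c^G_n(L)$ for each $j$; and the natural linear map $V^G_n/(\Id^G(L)\cap V^G_n)\to\bigoplus_{i=1}^{s}V^G_n/(\Id^G(L_i)\cap V^G_n)$ has kernel $\bigl(\bigcap_i(\Id^G(L_i)\cap V^G_n)\bigr)/(\Id^G(L)\cap V^G_n)=0$, so $c^G_n(L)\leqslant\sum_{i=1}^{s}c^G_n(L_i)$.

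The identity of subspaces rests on two elementary facts about the decomposition $L=L_1\oplus\dots\oplus L_s$: for $i\neq j$ one has $[L_i,L_j]\subseteq L_i\cap L_j=0$ because both summands are ideals, and $L_i^g\subseteq L_i$ for all $g\in G$ by $G$-invariance. Hence, for a multilinear $f\in V^G_n$ and elements $a_k=\sum_i a_k^{(i)}$ with $a_k^{(i)}\in L_i$, multilinearity expands $f(a_1,\dots,a_n)$ into the sum of the terms $f\bigl(a_1^{(i_1)},\dots,a_n^{(i_n)}\bigr)$; each such term is a linear combination of iterated commutators in the $G$-transforms $\bigl(a_k^{(i_k)}\bigr)^{g}\in L_{i_k}$, and therefore vanishes unless $i_1=\dots=i_n$, in which case it is the value of $f$ on $L_{i_1}$. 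Thus $f\in\Id^G(L)$ if and only if $f\in\Id^G(L_i)$ for all $i$, which is exactly the claimed equality.

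Finally I would feed in Theorem~\ref{TheoremMainGAffAlg}. Each $L_i$, being a $G$-invariant ideal, inherits a rational action of $G$ by automorphisms and anti-automorphisms: an automorphism (resp.\ anti-automorphism) of $L$ that preserves $L_i$ restricts to one of $L_i$, and, in a basis of $L_i$ extended to a basis of $L$ adapted to the direct sum, the matrix entries of the restricted action are among those of the action on $L$, hence regular functions on $G$. If all $L_i$ are nilpotent then $L$ is nilpotent and both sides of the asserted equality vanish (Jacobi identity, cf.\ the remark preceding the theorem); otherwise some $L_i$ — and hence $L$ — is non-nilpotent. By Theorem~\ref{TheoremMainGAffAlg}, $\PIexp^G(L_i)=\lim_{n\to\infty}\sqrt[n]{c^G_n(L_i)}$ exists and lies in $\mathbb N$ for the non-nilpotent summands, while $c^G_n(L_i)=0$ for large $n$, so $\PIexp^G(L_i)=0$, for the nilpotent ones. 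Combining this with the two codimension inequalities yields $\max_{1\leqslant j\leqslant s}\PIexp^G(L_j)\leqslant\liminf_{n\to\infty}\sqrt[n]{c^G_n(L)}$ and $\limsup_{n\to\infty}\sqrt[n]{c^G_n(L)}\leqslant\limsup_{n\to\infty}\sqrt[n]{\sum_{i=1}^{s}c^G_n(L_i)}=\max_{1\leqslant i\leqslant s}\PIexp^G(L_i)$, whence $\PIexp^G(L)$ exists and equals $\max_{1\leqslant i\leqslant s}\PIexp^G(L_i)$. I do not anticipate a genuine obstacle: the only steps requiring attention are checking that the hypotheses of Theorem~\ref{TheoremMainGAffAlg} descend to the $G$-invariant summands and the bookkeeping that kills the mixed terms in the multilinear expansion; the rest is a formal squeeze.
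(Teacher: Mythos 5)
Your argument is correct, and it reaches the result by a route that is partly the same as and partly more elementary than the paper's. The paper proves this theorem in one line by translating the $G$-action into an $FG$-module structure (Lemmas~\ref{LemmaGtoAut} and~\ref{LemmaFGcodim}), invoking Example~\ref{ExampleHniceAffAlg} to see that each $L_i$ is $FG$-nice, and then citing Theorem~\ref{TheoremMainHSum}; the proof of that theorem contains exactly your key observation that a multilinear polynomial evaluated on a basis adapted to $L=L_1\oplus\dots\oplus L_s$ vanishes on mixed substitutions (since $[L_i,L_j]=0$ and each $L_i$ is invariant under the action), so that $\Id^G(L)\cap V^G_n=\bigcap_i\bigl(\Id^G(L_i)\cap V^G_n\bigr)$, and it derives the lower bound from the inclusion $\Id^G(L)\cap V^G_n\subseteq\Id^G(L_j)\cap V^G_n$ just as you do. Where you genuinely diverge is the upper bound: the paper re-runs the cocharacter machinery (Lemma~\ref{LemmaUpper} with $d=\max_i d(L_i)$ together with Theorem~\ref{TheoremColength}) to bound $c^H_n(L)$ directly, whereas you use the injection of $V^G_n/(\Id^G(L)\cap V^G_n)$ into $\bigoplus_i V^G_n/(\Id^G(L_i)\cap V^G_n)$ to get $c^G_n(L)\leqslant\sum_i c^G_n(L_i)$ and then feed in the already-proved asymptotics of Theorem~\ref{TheoremMainGAffAlg} for each non-nilpotent summand (handling nilpotent summands separately). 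Your version is shorter and avoids both the Hopf-algebra translation and a second pass through the representation-theoretic estimates, at the cost of leaning on the single-algebra theorem as a black box; the paper's version is self-contained within its $H$-module framework and yields the explicit exponent $d=\max_i d(L_i)$ along the way. Your checks that the hypotheses of Theorem~\ref{TheoremMainGAffAlg} descend to each $G$-invariant ideal (restriction of a rational action is rational, reductivity of $G$ is unchanged) are the right points to verify and are handled correctly.
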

\begin{theorem}\label{TheoremMainGFinSum}
Let $L=L_1 \oplus \ldots \oplus L_s$ (direct sum of ideals) be a finite dimensional Lie algebra
over a field $F$ of characteristic $0$.  Suppose a finite
  group $G$ acts on $L$ by automorphisms and anti-automorphisms, and the ideals 
$L_i$ are $G$-invariant. Then $$\PIexp^G(L)=\max_{1 \leqslant i \leqslant s}
\PIexp^G(L_i).$$
\end{theorem}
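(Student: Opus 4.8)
The plan is to reduce this statement to its analog for finite-dimensional Lie algebras with a finite group $G$ acting by automorphisms and anti-automorphisms, namely Theorem~\ref{TheoremMainGFin}, by combining two standard ingredients: the asymptotic behavior of each summand given by Theorem~\ref{TheoremMainGFin}, and a general comparison of the $G$-codimensions of a direct sum with those of its summands. First I would observe that, since each ideal $L_i$ is $G$-invariant, the projection $L \twoheadrightarrow L_i$ is a homomorphism of $G$-algebras, hence $\Id^G(L) \subseteq \Id^G(L_i)$ for each $i$; therefore $c^G_n(L_i) \leqslant c^G_n(L)$ for all $n$, which gives $\PIexp^G(L_i) \leqslant \PIexp^G(L)$ whenever the right-hand side exists. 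By Theorem~\ref{TheoremMainGFin} applied to each non-nilpotent $L_i$ (and the Remark above, which handles nilpotent summands by giving $c^G_n(L_i)=0$ eventually), $\PIexp^G(L_i)$ exists and is a nonnegative integer; and by Theorem~\ref{TheoremMainGFin} applied to $L$ itself (after discarding the trivial case in which $L$ is nilpotent), $\PIexp^G(L)$ exists as well. So it remains to prove the reverse inequality $\PIexp^G(L) \leqslant \max_i \PIexp^G(L_i)$.

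For the reverse inequality the key point is the inclusion $\Id^G(L_1) \cap \cdots \cap \Id^G(L_s) \subseteq \Id^G(L)$: a $G$-polynomial vanishing under every substitution into each $L_i$ vanishes under substitutions into $L$, because an arbitrary element $a \in L$ decomposes as $a = a_1 + \cdots + a_s$ with $a_j \in L_j$, and multilinearity together with $[L_i, L_j]=0$ for $i \ne j$ forces $f(a^{(1)}, \ldots, a^{(n)})$ to be a sum of terms each of which lies in a single $L_i$ and is obtained by substituting into that $L_i$. (One should be a little careful that the action of $g \in G \setminus G_0$ can permute the homogeneous commutator structure, but since each $L_i$ is $G$-invariant the $L_i$-component of any $G$-polynomial evaluation is still an evaluation of the corresponding $G$-polynomial in $L_i$, so the argument goes through.) From this inclusion one gets the standard estimate
\[
c^G_n(L) \leqslant \sum_{i=1}^{s} c^G_n(L_i) \leqslant s \max_{1 \leqslant i \leqslant s} c^G_n(L_i),
\]
which upon taking $n$-th roots and letting $n \to \infty$ yields $\PIexp^G(L) \leqslant \max_i \PIexp^G(L_i)$, completing the proof.

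The only genuine subtlety — and the step I would be most careful about — is justifying the displayed codimension inequality, i.e. that the natural map $V^G_n/(V^G_n \cap \Id^G(L)) \to \bigoplus_{i=1}^s V^G_n/(V^G_n \cap \Id^G(L_i))$ is injective; this is exactly the inclusion $V^G_n \cap \Id^G(L_1) \cap \cdots \cap \Id^G(L_s) \subseteq \Id^G(L)$, which follows from the decomposition argument above. Everything else is either a direct appeal to Theorem~\ref{TheoremMainGFin} or the elementary monotonicity $\Id^G(L) \subseteq \Id^G(L_i)$. I would also note in passing that the same argument proves Theorem~\ref{TheoremMainGAffAlgSum} verbatim, replacing Theorem~\ref{TheoremMainGFin} by Theorem~\ref{TheoremMainGAffAlg} and using that rational $G$-invariance of the $L_i$ makes the projections morphisms of $G$-algebras.
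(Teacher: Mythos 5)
Your proof is correct. Its two elementary ingredients --- the lower bound $c^G_n(L_i)\leqslant c^G_n(L)$ coming from the fact that each $L_i$ is a $G$-invariant subalgebra, and the observation that a multilinear $G$-polynomial evaluated on $L$ splits into its evaluations on the individual $L_i$ because mixed substitutions die on account of $[L_i,L_j]=0$ and the $G$-invariance of each summand --- both appear in the paper, but the paper packages the argument differently. Its proof of Theorem~\ref{TheoremMainGFinSum} is a two-line reduction, via Lemma~\ref{LemmaGtoAut} (trading each anti-automorphism $\psi$ for the automorphism $-\psi$) and Lemma~\ref{LemmaFGcodim} ($c^G_n(L)=c^{FG}_n(L)$), to the Hopf-level direct-sum theorem (Theorem~\ref{TheoremMainHSSSum}, hence Theorem~\ref{TheoremMainHSum}); and in the proof of Theorem~\ref{TheoremMainHSum} the upper bound is \emph{not} obtained from the subadditivity $c^H_n(L)\leqslant\sum_i c^H_n(L_i)$ --- after the mixed-substitution reduction the author re-runs the cocharacter machinery, using Lemma~\ref{LemmaUpper} to show $m(L,H,\lambda)=0$ once $D_\lambda$ has more than $d=\max_i d(L_i)$ long rows, and then invokes the colength bound of Theorem~\ref{TheoremColength}. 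Your finish --- $c^G_n(L)\leqslant\sum_i c^G_n(L_i)\leqslant s\max_i c^G_n(L_i)$, combined with the existence of each $\PIexp^G(L_i)$ from Theorem~\ref{TheoremMainGFin} (or from the nilpotency remark) --- is more elementary, bypasses the translation lemmas entirely, and still yields a bound of the form $C_2n^{r_2}d^n$ for $c^G_n(L)$ by summing the individual bounds; what the paper's formulation buys is a statement at the level of arbitrary $H$-nice ideals, which is reused for the graded and affine-algebraic-group versions. The point you flagged as delicate is indeed fine: since each $L_i$ is $G$-invariant, every $a^g$ with $a\in L_i$ stays in $L_i$ regardless of whether $g$ acts as an automorphism or an anti-automorphism, so $V^G_n\cap\bigcap_i\Id^G(L_i)=V^G_n\cap\Id^G(L)$ and the map into $\bigoplus_i V^G_n/(V^G_n\cap\Id^G(L_i))$ is injective as you claim.
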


Theorems~\ref{TheoremMainGAffAlg}, \ref{TheoremMainGFin}, \ref{TheoremMainGAffAlgSum}, and \ref{TheoremMainGFinSum} will be obtained as consequences of Theorems~\ref{TheoremMainH}
and~\ref{TheoremMainHSum}  below in Subsection~\ref{SubsectionApplG}.

\subsection{Polynomial $H$-identities and their codimensions}\label{SubsectionHopf}

Analogously, one can consider polynomial $H$-identities
for $H$-module Lie algebras. 
The connection between graded, $G$- and $H$-identities will become clear
in Section~\ref{SectionAppl}.

Let $H$ be a Hopf algebra over a field $F$.
An algebra $A$ is an \textit{$H$-module algebra} if $A$
is endowed with a left $H$-action $h\otimes a \mapsto ha$ or, equivalently, with a homomorphism $H \to \End_F(A)$,  such that
$h(ab)=(h_{(1)}a)(h_{(2)}b)$
for all $h \in H$, $a,b \in A$.
Here we use Sweedler's notation
$\Delta h = h_{(1)} \otimes h_{(2)}$ where $\Delta$ is the comultiplication
in $H$.
We refer the reader to~\cite{Abe, Danara, Montgomery, Sweedler}
   for an account
  of Hopf algebras and algebras with Hopf algebra actions.

 In particular, a Lie algebra $L$
is an \textit{$H$-module Lie algebra}
if \begin{equation}\label{EqHmoduleLieAlgebra}
h\,[a,b]=[h_{(1)}a, h_{(2)}b]
\text{ for all }h \in H,\ a,b \in L.
\end{equation}

  Let $F \lbrace X \rbrace$ be the absolutely free nonassociative algebra
   on the set $X := \lbrace x_1, x_2, x_3, \ldots \rbrace$.
  Then $F \lbrace X \rbrace = \bigoplus_{n=1}^\infty F \lbrace X \rbrace^{(n)}$
  where $F \lbrace X \rbrace^{(n)}$ is the linear span of all monomials of total degree $n$.
   Let $H$ be a Hopf algebra over a field $F$. Consider the algebra $$F \lbrace X | H\rbrace
   :=  \bigoplus_{n=1}^\infty H^{{}\otimes n} \otimes F \lbrace X \rbrace^{(n)}$$
   with the multiplication $(u_1 \otimes w_1)(u_2 \otimes w_2):=(u_1 \otimes u_2) \otimes w_1w_2$
   for all $u_1 \in  H^{{}\otimes j}$, $u_2 \in  H^{{}\otimes k}$,
   $w_1 \in F \lbrace X \rbrace^{(j)}$, $w_2 \in F \lbrace X \rbrace^{(k)}$.
We use the notation $$x^{h_1}_{i_1}
x^{h_2}_{i_2}\ldots x^{h_n}_{i_n} := (h_1 \otimes h_2 \otimes \ldots \otimes h_n) \otimes x_{i_1}
x_{i_2}\ldots x_{i_n}$$ (the arrangements of brackets on $x_{i_j}$ and on $x^{h_j}_{i_j}$
are the same). Here $h_1 \otimes h_2 \otimes \ldots \otimes h_n \in H^{{}\otimes n}$,
$x_{i_1} x_{i_2}\ldots x_{i_n} \in F \lbrace X \rbrace^{(n)}$. 

Note that if $(\gamma_\beta)_{\beta \in \Lambda}$ is a basis in $H$, 
then $F \lbrace X | H\rbrace$ is isomorphic to the absolutely free nonassociative algebra over $F$ with free formal  generators $x_i^{\gamma_\beta}$, $\beta \in \Lambda$, $i \in \mathbb N$.
 
    Define on $F \lbrace X | H\rbrace$ the structure of a left $H$-module
   by $$h\,(x^{h_1}_{i_1}
x^{h_2}_{i_2}\ldots x^{h_n}_{i_n})=x^{h_{(1)}h_1}_{i_1}
x^{h_{(2)}h_2}_{i_2}\ldots x^{h_{(n)}h_n}_{i_n},$$
where $h_{(1)}\otimes h_{(2)} \otimes \ldots \otimes h_{(n)}$
is the image of $h$ under the comultiplication $\Delta$
applied $(n-1)$ times, $h\in H$. Then $F \lbrace X | H\rbrace$ is \textit{the absolutely free $H$-module nonassociative algebra} on $X$, i.e. for each map $\psi \colon X \to A$ where $A$ is an $H$-module algebra,
there exists a unique homomorphism $\bar\psi \colon 
F \lbrace X | H\rbrace \to A$ of algebras and $H$-modules, such that $\bar\psi\bigl|_X=\psi$.
Here we identify $X$ with the set $\lbrace x^1_j \mid j \in \mathbb N\rbrace \subset F \lbrace X | H\rbrace$.

Consider the $H$-invariant ideal $I$ in $F\lbrace X | H \rbrace$
generated by the set \begin{equation}\label{EqSetOfHGen}
\bigl\lbrace u(vw)+v(wu)+w(uv) \mid u,v,w \in  F\lbrace X | H \rbrace\bigr\rbrace \cup\bigl\lbrace u^2 \mid u \in  F\lbrace X | H \rbrace\bigr\rbrace.
\end{equation}
 Then $L(X | H) := F\lbrace X | H \rbrace/I$
is \textit{the free $H$-module Lie algebra}
on $X$, i.e. for any $H$-module Lie algebra $L$ 
and a map $\psi \colon X \to L$, there exists a unique homomorphism $\bar\psi \colon L(X | H) \to L$
of algebras and $H$-modules such that $\bar\psi\bigl|_X =\psi$. 
 We refer to the elements of $L(X | H)$ as \textit{Lie $H$-polynomials}.


\begin{remark} If $H$ is cocommutative and $\ch F \ne 2$, then $L(X | H)$ is the ordinary
free Lie algebra with free generators $x_i^{\gamma_\beta}$, $\beta \in \Lambda$, $i \in \mathbb N$
where   $(\gamma_\beta)_{\beta \in \Lambda}$ is a basis in $H$, since the ordinary ideal of 
$F\lbrace X | H \rbrace$ generated by~(\ref{EqSetOfHGen})
is already $H$-invariant.
However, if $h_{(1)} \otimes h_{(2)} \ne h_{(2)} \otimes h_{(1)}$ for some $h \in H$,
we still have $$[x^{h_{(1)}}_i, x^{h_{(2)}}_j]=h[x_i, x_j]=-h[x_j, x_i]=-[x^{h_{(1)}}_j, x^{h_{(2)}}_i]
= [x^{h_{(2)}}_i, x^{h_{(1)}}_j]$$ in $L(X | H)$ for all $i,j \in\mathbb N$,
i.e. in the case $h_{(1)} \otimes h_{(2)} \ne h_{(2)} \otimes h_{(1)}$ the algebra $L(X | H)$ is not free as an ordinary Lie algebra.
\end{remark}

Let $L$ be an $H$-module Lie algebra for
some Hopf algebra $H$ over a field $F$.
 An $H$-polynomial
 $f \in L ( X | H )$
 is a \textit{$H$-identity} of $L$ if $\psi(f)=0$
for all homomorphisms $\psi \colon L(X|H) \to L$
of algebras and $H$-modules. In other words, $f(x_1, x_2, \ldots, x_n)$
 is a polynomial $H$-identity of $L$
if and only if $f(a_1, a_2, \ldots, a_n)=0$ for any $a_i \in L$.
 In this case we write $f \equiv 0$.
The set $\Id^H(L)$ of all polynomial $H$-identities
of $L$ is an $H$-invariant ideal of $L(X|H)$.

\begin{example}\label{ExampleIdH}
Consider $e_0, e_1 \in \End_F(\mathfrak{gl}_2(F))$
defined by the formulas $$e_0 \left(
\begin{array}{cc}
a & b \\
c & d
\end{array}
 \right) := \left(
\begin{array}{rr}
a & 0 \\
0 & d
\end{array}
 \right)$$
 and $$e_1 \left(
\begin{array}{cc}
a & b \\
c & d
\end{array}
 \right) := \left(
\begin{array}{rr}
0 & b \\
c & 0
\end{array}
 \right).$$
 Then $H:=F e_0 \oplus F e_1$ (direct sum of ideals)
 is a Hopf algebra with the counit $\varepsilon$, where $\varepsilon(e_0):=1$,
 $\varepsilon(e_1):=0$, the comultiplication $\Delta$ where
 $$\Delta(e_0):=e_0 \otimes e_0 + e_1 \otimes e_1,$$
 $$\Delta(e_1):=e_0 \otimes e_1 + e_1 \otimes e_0,$$ and the antipode $S:=\id$.
 Moreover, $\mathfrak{gl}_2(F)$ is an $H$-module Lie algebra and
  $[x^{e_0},y^{e_0}]\in \Id^{H}(\mathfrak{gl}_2(F))$.
  As we shall see in Subsection~\ref{SubsectionApplGr}, this $H$-action and this $H$-identity corresponds to
  the $\mathbb Z_2$-grading and the graded identity from Example~\ref{ExampleIdGr}.
\end{example}

Denote by $V^H_n$ the space of all multilinear Lie $H$-polynomials
in $x_1, \ldots, x_n$, $n\in\mathbb N$, i.e.
$$V^{H}_n = \langle [x^{h_1}_{\sigma(1)},
x^{h_2}_{\sigma(2)}, \ldots, x^{h_n}_{\sigma(n)}]
\mid h_i \in H, \sigma\in S_n \rangle_F \subset L( X | H ).$$
Then the number $c^H_n(L):=\dim\left(\frac{V^H_n}{V^H_n \cap \Id^H(L)}\right)$
is called the $n$th \textit{codimension of polynomial $H$-identities}
or the $n$th \textit{$H$-codimension} of $L$.

\begin{remark}
One can treat polynomial $H$-identities of $L$ as identities of a nonassociative
algebra (i.e. use $F\lbrace X | H\rbrace$ instead of $L(X | H)$) and define their codimensions.
However these codimensions will coincide with $c^{H}_n(L)$ since the $n$th $H$-codimension
equals the dimension of the subspace in $\Hom_F(L^{{}\otimes n}; L)$ that consists of those $n$-linear functions that can be represented by $H$-polynomials.
(See also the proof of Lemma~\ref{LemmaCodimDim} below.)
\end{remark}

\subsection{Some bounds for $H$-codimensions}\label{SubsectionGenHopfBounds}

In this subsection we prove several inequalities for $H$-codimensions and ordinary
codimensions. They show that $H$-codimensions have an asymptotic behaviour
that is in some sense similar to the behaviour of the ordinary
codimensions. Hence we indeed may expect from $H$-codimensions
to satisfy the analog of Amitsur's conjecture (see Subsection~\ref{SubsectionAmitsur}).

As in the case of ordinary codimensions,
 we have the following upper bound:
 
 \begin{lemma}\label{LemmaCodimDim}
Let $L$ be an $H$-module Lie algebra for
some Hopf algebra $H$ over an arbitrary field $F$.
 Then
$c_n^H(L) \leqslant (\dim L)^{n+1}$ for all  $n \in \mathbb N$.
\end{lemma}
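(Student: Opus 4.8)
The plan is to bound the dimension of the quotient space $V^H_n / (V^H_n \cap \Id^H(L))$ by exhibiting an injection of it into a space of multilinear functions on $L$, whose dimension is easy to estimate. Concretely, recall that an $H$-polynomial $f \in V^H_n$ is a polynomial $H$-identity of $L$ precisely when $f(a_1, \ldots, a_n) = 0$ for all $a_i \in L$; that is, $f$ and $g$ induce the same element of $\Hom_F(L^{\otimes n}, L)$ if and only if $f - g \in \Id^H(L)$. Hence the natural evaluation map sends $V^H_n / (V^H_n \cap \Id^H(L))$ injectively into $\Hom_F(L^{\otimes n}, L)$, so $c^H_n(L) \leqslant \dim_F \Hom_F(L^{\otimes n}, L) = (\dim L)^{n} \cdot (\dim L) = (\dim L)^{n+1}$, as required.

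The point that needs a small argument is why the evaluation is well-defined on all of $V^H_n$ with values in an honestly finite-dimensional space; this uses the fact that although $V^H_n$ itself is infinite-dimensional (the superscripts range over all of $H$), every $H$-polynomial acts on $L$ through the finite-dimensional image of $H$ in $\End_F(L)$. More precisely, the action of $H$ on $L$ factors through $H \to \End_F(L)$, so $[x^{h_1}_{\sigma(1)}, \ldots, x^{h_n}_{\sigma(n)}]$ and $[x^{h'_1}_{\sigma(1)}, \ldots, x^{h'_n}_{\sigma(n)}]$ evaluate identically on $L$ whenever each $h_i$ and $h'_i$ have the same image in $\End_F(L)$. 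Picking a basis of the image of $H$ in $\End_F(L)$, the evaluations of the corresponding finitely many multilinear $H$-monomials span the image of $V^H_n$ under evaluation, and everything lands inside $\Hom_F(L^{\otimes n}, L)$. Thus $c^H_n(L)$ is the dimension of this (finite-dimensional) image, which is at most $\dim_F \Hom_F(L^{\otimes n}, L)$.

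There is really no serious obstacle here: the estimate $\dim_F \Hom_F(L^{\otimes n}, L) = (\dim L)^{n+1}$ is immediate once one knows $\dim_F (L^{\otimes n}) = (\dim L)^n$, and the only thing to be careful about is the bookkeeping that identifies $c^H_n(L)$ with the dimension of a subspace of $\Hom_F(L^{\otimes n}, L)$ rather than with something a priori larger. I would state this identification explicitly, perhaps as the first sentence of the proof (it is exactly the content flagged in the Remark preceding the lemma), and then the inequality follows. One could even note that the same argument, using multilinear Lie monomials $[x_{\sigma(1)}, \ldots, x_{\sigma(n)}]$ restricted to $L$, recovers the classical bound $c_n(L) \leqslant (\dim L)^{n+1}$ for ordinary codimensions, so the $H$-version is a cosmetic generalization of the usual reasoning.
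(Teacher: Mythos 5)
Your argument is exactly the paper's: the evaluation map $V^H_n \to \Hom_F(L^{\otimes n};L)$ has kernel $V^H_n \cap \Id^H(L)$, giving an embedding of the quotient and hence the bound $c^H_n(L) \leqslant \dim \Hom_F(L^{\otimes n};L) = (\dim L)^{n+1}$. The extra remarks on well-definedness are harmless elaboration; the proof is correct and matches the paper's.
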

\begin{proof}
Consider $H$-polynomials as $n$-linear maps from $L$ to $L$.
Then we have a natural map $V^{H}_n \to \Hom_{F}(L^{{}\otimes n}; L)$
with the kernel $V^{H}_n \cap \Id^H(L)$
that leads to the embedding $$\frac{V^{H}_n}{V^{H}_n \cap \Id^H(L)}
\hookrightarrow \Hom_{F}(L^{{}\otimes n}; L).$$
Thus $$c^H_n(L)=\dim \left(\frac{V^{H}_n}{V^{H}_n \cap \Id^H(L)}\right)
\leqslant \dim \Hom_{F}(L^{{}\otimes n}; L)=(\dim L)^{n+1}.$$
\end{proof}

Denote by $V_n$ the space of ordinary multilinear Lie polynomials
in the noncommuting variables $x_1, \ldots, x_n$ and by $\Id(L)$
the set of ordinary polynomial identities of $L$.
In other words, $V_n = V^{\mathrm{gr}}_n$ and $\Id(L)=\Id^{\mathrm{gr}}(L)$
for the trivial grading on $L$ by the trivial group $\lbrace e \rbrace$. Then $c_n(L)
:=\dim\frac{V_n}{V_n \cap \Id(L)}$.

 The next lemma is an analog of \cite[Lemmas 10.1.2 and 10.1.3]{ZaiGia}.

\begin{lemma}\label{LemmaOrdinaryAndHopf}
Let $L$ be an $H$-module Lie algebra for
some Hopf algebra $H$ over a field $F$ and let $\zeta \colon H \to \End_F(L)$ be the homomorphism
corresponding to the $H$-action.
 Then
$$c_n(L) \leqslant c^{H}_n(L)
  \leqslant (\dim \zeta(H))^n c_n(L) \text{ for all } n \in \mathbb N.$$
\end{lemma}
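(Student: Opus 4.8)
The plan is to prove the two inequalities separately. For the lower bound $c_n(L) \leqslant c^H_n(L)$, the key observation is that ordinary multilinear Lie polynomials form a subspace of the $H$-multilinear ones: the inclusion $X = \{x_j^1\} \hookrightarrow L(X|H)$ sending $x_j \mapsto x_j^1$ induces an embedding $V_n \hookrightarrow V_n^H$ (here $1 \in H$ acts as $\id_L$, so substituting elements of $L$ into an element of $V_n$ gives the same value whether we read it in $V_n$ or in $V_n^H$). Under this embedding, $V_n \cap \Id(L) = V_n \cap \Id^H(L)$, because a multilinear ordinary polynomial vanishes on all substitutions from $L$ precisely when it lies in $\Id(L)$, and that is the same condition in either setting. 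Hence $\frac{V_n}{V_n\cap\Id(L)} \hookrightarrow \frac{V_n^H}{V_n^H\cap\Id^H(L)}$, giving $c_n(L)\leqslant c_n^H(L)$.

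For the upper bound, the idea is that each variable $x_i^{h}$ with $h \in H$ can be rewritten, modulo the action, using only a fixed finite spanning set of $\zeta(H) \subseteq \End_F(L)$. Concretely, fix a basis $\varphi_1,\dots,\varphi_d$ of $\zeta(H)$, where $d = \dim\zeta(H)$, and pick $h_1,\dots,h_d \in H$ with $\zeta(h_k) = \varphi_k$. First I would note that for any $h \in H$ there are scalars with $\zeta(h) = \sum_k \alpha_k \varphi_k$, and since the value in $L$ of a Lie $H$-polynomial depends on the chosen $h_i$ only through $\zeta(h_i)$, every element of $V_n^H$ is congruent modulo $\Id^H(L)$ to a linear combination of commutators $[x_{\sigma(1)}^{h_{k_1}},\dots,x_{\sigma(n)}^{h_{k_n}}]$ with each $k_j \in \{1,\dots,d\}$ and $\sigma \in S_n$. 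Thus the image of $V_n^H$ in $\frac{V_n^H}{V_n^H\cap\Id^H(L)}$ is spanned by the images of these $d^n \cdot |S_n|$ elements — but one can do better: for a fixed tuple $(k_1,\dots,k_n)$, applying the operators $\varphi_{k_j}$ to the respective arguments defines a linear substitution $L^{\otimes n} \to L^{\otimes n}$, and a commutator $[x_{\sigma(1)}^{h_{k_1}},\dots]$ evaluated on $(a_1,\dots,a_n)$ equals the ordinary commutator $[y_{\sigma(1)},\dots,y_{\sigma(n)}]$ evaluated on $(y_1,\dots,y_n)$ where $y_j = \varphi_{k_j}(a_j)$; so for each fixed $(k_1,\dots,k_n)$ the span of $\{[x_{\sigma(1)}^{h_{k_1}},\dots,x_{\sigma(n)}^{h_{k_n}}] \mid \sigma \in S_n\}$ modulo $\Id^H(L)$ maps onto a quotient of $\frac{V_n}{V_n\cap\Id(L)}$, hence has dimension $\leqslant c_n(L)$. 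Summing over the $d^n$ tuples $(k_1,\dots,k_n)$ yields $c_n^H(L) \leqslant d^n c_n(L)$.

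The main obstacle is making the reduction "the value depends only on $\zeta(h)$" fully rigorous at the level of the free object $L(X|H)$: one must check that replacing an exponent $h$ by $h' $ with $\zeta(h) = \zeta(h')$ changes a Lie $H$-polynomial by an element of $\Id^H(L)$, which follows because under every homomorphism $\psi\colon L(X|H)\to L$ of $H$-modules the variable $x_i^h$ is sent to $\zeta(h)\psi(x_i^1)$, so the two polynomials have identical evaluations on $L$. Once this is granted, the linearity argument and the bookkeeping of the $d^n$ tuples are routine, and the $\sigma$-summation is absorbed into the single factor $c_n(L)$ rather than $|S_n|$, which is what makes the bound exponential in $n$ rather than super-exponential.
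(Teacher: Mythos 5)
Your proposal is correct and takes essentially the same route as the paper: the lower bound comes from viewing both quotients as subspaces of $\Hom_F(L^{\otimes n};L)$, and the upper bound from reducing all exponents to a basis $\zeta(\gamma_1),\dots,\zeta(\gamma_m)$ of $\zeta(H)$ and observing that, once the operators assigned to the variables are fixed, evaluation factors through an ordinary multilinear Lie polynomial precomposed with $a_j\mapsto\zeta(\gamma_{k_j})a_j$, so each of the $(\dim\zeta(H))^n$ blocks contributes at most $c_n(L)$. One small notational point: for the block argument to be literally true you should attach the exponent to the variable index rather than to the position, i.e.\ group the monomials $[x_{\sigma(1)}^{h_{k_{\sigma(1)}}},\dots,x_{\sigma(n)}^{h_{k_{\sigma(n)}}}]$ by the tuple $(k_1,\dots,k_n)$, since with position-attached exponents the substitution $y_j=\varphi_{k_j}(a_j)$ does not match the evaluation; this reindexing does not change the count.
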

\begin{proof}
As in Lemma~\ref{LemmaCodimDim}, we consider polynomials as $n$-linear maps from $L$ to $L$ and identify  $\frac{V_n}{V_n \cap \Id(L)}$ and $\frac{V^{H}_n}{V^{H}_n \cap \Id^H(L)}$
with the corresponding subspaces in $\Hom_{F}(L^{{}\otimes n}; L)$.
Then
 $$\frac{V_n}{V_n \cap \Id(L)} \subseteq \frac{V^{H}_n}{V^{H}_n \cap \Id^H(L)}
\subseteq \Hom_{F}(L^{{}\otimes n}; L)$$
and the lower bound follows.

Choose such $f_1, \ldots, f_t \in V_n$ that their images form a basis in 
$\frac{V_n}{V_n \cap \Id(L)}$.  Then for any monomial $[x_{\sigma(1)}, x_{\sigma(2)},
\ldots, x_{\sigma(n)}]$, $\sigma \in S_n$, there exist $\alpha_{i,\sigma} \in F$ such that
\begin{equation}\label{EqPnModuloId}[x_{\sigma(1)}, x_{\sigma(2)},
\ldots, x_{\sigma(n)}] - \sum_{i=1}^t \alpha_{i,\sigma} f_i(x_1, \ldots, x_n) \in \Id(L).
\end{equation}

Let $\bigl(\zeta(\gamma_j)\bigr)_{j=1}^m$, $\gamma_j \in H$, be a basis in $\zeta(H)$. Then
for every $h \in H$ there exist such $\alpha_j \in F$
that $\zeta(h) = \sum_{j=1}^m \alpha_j \zeta(\gamma_j)$ and 
\begin{equation}\label{Eqhtogammaj}
x^h - \sum_{j=1}^m \alpha_j x^{\gamma_j} \in \Id^H(L).
\end{equation}
 Thus the linear span of $[x^{\gamma_{i_1}}_{\sigma(1)}, x^{\gamma_{i_2}}_{\sigma(2)},
\ldots, x^{\gamma_{i_n}}_{\sigma(n)}]$, $\sigma \in S_n$, $1 \leqslant i_j \leqslant m$,
coincides with $V^H_n$ modulo $V^H_n \cap \Id^H(L)$. Note that (\ref{EqPnModuloId}) implies
$$[x^{\gamma_{i_1}}_{\sigma(1)},x^{\gamma_{i_2}}_{\sigma(2)},
\ldots, x^{\gamma_{i_n}}_{\sigma(n)}] - \sum_{i=1}^t \alpha_{i,\sigma} f_i(x^{\gamma_{i_1}}_1,
 \ldots, x^{\gamma_{i_n}}_n) \in \Id^H(L).$$
Hence any $H$-polynomial from $V^H_n$ can be expressed modulo $\Id^H(L)$
as a linear combination of $H$-polynomials $f_i(x^{\gamma_{i_1}}_1,
 \ldots, x^{\gamma_{i_n}}_n)$. The number of such polynomials equals $m^n t = \bigl(\dim \zeta(H)\bigr)^n c_n(L)$
 that finishes the proof.
\end{proof}

\subsection{The analog of Amitsur's conjecture for $H$-codimensions}\label{SubsectionAmitsur}

Let $L$ be an $H$-module Lie algebra.
The analog of Amitsur's conjecture for $H$-codimensions of $L$ can be formulated
as follows.

\begin{conjecture}  There exists
 $\PIexp^H(L):=\lim\limits_{n\to\infty}
 \sqrt[n]{c^H_n(L)} \in \mathbb Z_+$.
\end{conjecture}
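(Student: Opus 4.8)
The plan is to reduce the $H$-module case to a study of the $S_n$-module structure of $\frac{V^H_n}{V^H_n\cap\Id^H(L)}$ via the Young-theory machinery, exactly as in the classical proof of Amitsur's conjecture for ordinary Lie codimensions, but keeping track of the $H$-action throughout. First I would establish, using Lemmas~\ref{LemmaCodimDim} and~\ref{LemmaOrdinaryAndHopf}, that $c^H_n(L)$ is exponentially bounded: $c_n(L)\leqslant c^H_n(L)\leqslant(\dim\zeta(H))^n c_n(L)$, so it suffices to prove the existence of the limit, and the lower bound is already nonzero whenever $L$ is non-nilpotent. The $H$-cocharacter $\chi^H_n(L)$ decomposes as $\sum_{\lambda\vdash n} m_\lambda(L)\chi_\lambda$ over the irreducible $S_n$-characters $\chi_\lambda$; the key fact I would use is that only partitions $\lambda$ whose Young diagram fits in a fixed strip (width bounded in terms of $\dim L$) contribute, and that the multiplicities $m_\lambda(L)$ are polynomially bounded. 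This needs the analog, in the $H$-setting, of the representability/finite-basis-type results and the hook bounds; I would invoke Regev-type arguments together with the upper bound from Lemma~\ref{LemmaOrdinaryAndHopf} applied to the free algebra.

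The core of the argument is to identify the integer $d=\PIexp^H(L)$ as the dimension of a suitable semisimple part. I would decompose $L=B\oplus J$ where $J$ is the (solvable) radical in an $H$-invariant way — here one must check that the radical is $H$-invariant, which follows since $\zeta(H)$ acts by derivations/automorphisms-up-to-sign on $L$ and the radical is characteristic — and write $B=B_1\oplus\cdots\oplus B_q$ as a sum of $H$-invariant subalgebras, each a sum of simple Lie algebras permuted transitively by the relevant symmetries. The candidate exponent is $d=\max\dim(B_{i_1}\oplus\cdots\oplus B_{i_k})$ over those tuples of distinct indices for which the "associated" multilinear alternating polynomial evaluated on $B_{i_1}\oplus\cdots\oplus B_{i_k}\oplus$(a piece of $J$) is not an $H$-identity. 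For the lower bound $c^H_n(L)\geqslant C_1 n^{r_1}d^n$ I would exhibit, for each large $n$, a nonzero multilinear $H$-polynomial built from long alternating products on the $d$-dimensional semisimple piece, using that the $H$-action on a semisimple $H$-module Lie algebra does not collapse these products (this uses the hypotheses on $H$ in the cited theorems, e.g. that $H$ is finite-dimensional semisimple, so that $B$ carries a good $H$-module structure). For the upper bound $c^H_n(L)\leqslant C_2 n^{r_2}d^n$ I would bound the number of diagrams and multiplicities, using that any alternating set of variables of size exceeding $d$ forced through a fixed basis of $L$ (modulo nilpotency effects controlled by the nilpotency index of $J$) must vanish modulo $\Id^H(L)$.

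Once the two-sided bound $C_1 n^{r_1}d^n\leqslant c^H_n(L)\leqslant C_2 n^{r_2}d^n$ is in hand, the existence of $\lim_{n\to\infty}\sqrt[n]{c^H_n(L)}$ and its equality with the integer $d$ is immediate by taking $n$th roots and letting $n\to\infty$. The main obstacle I expect is the $H$-invariant Wedderburn-type decomposition and, more delicately, showing that the semisimple part genuinely contributes the full exponent $d$ rather than something smaller — that is, proving the sharp lower bound. In the ordinary case this rests on the nonvanishing of a specific central polynomial-like construction on semisimple Lie algebras; in the $H$-setting one must verify that twisting the variables by elements of $H$ (equivalently, by the group or grading in the applications) cannot annihilate this construction, which is precisely where the assumptions on $H$ enter and where the bulk of the technical work will lie. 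A secondary difficulty is handling the interaction between the radical $J$ and the semisimple part: the long commutators that realize the exponent must "pass through" $J$ without being forced to zero, and controlling this requires a careful choice of the test polynomial and an analysis of $[B,J,\dots,J]$ as an $H$-module.
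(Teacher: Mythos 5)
Your overall architecture (exponential boundedness via Lemmas~\ref{LemmaCodimDim} and~\ref{LemmaOrdinaryAndHopf}, a cocharacter analysis confining the relevant Young diagrams, and a two-sided bound $C_1n^{r_1}d^n\leqslant c^H_n(L)\leqslant C_2n^{r_2}d^n$) matches the paper's, but the formula you propose for the integer $d$ is the one from the \emph{associative} theory and is wrong for Lie algebras. You set $d=\max\dim(B_{i_1}\oplus\cdots\oplus B_{i_k})$ over tuples of semisimple pieces for which a suitable alternating polynomial survives. Already for a solvable non-nilpotent $L$ with trivial $H$ this gives $d=0$, so your upper bound would force $c_n(L)=0$ for large $n$, contradicting non-nilpotency; the true exponent there is $1$. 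The correct invariant (Subsection~\ref{SubsectionHopfPIexp}, following Zaicev's formula for the ordinary case) is $d=\max\dim\bigl(L/(\Ann(I_1/J_1)\cap\dots\cap\Ann(I_r/J_r))\bigr)$, the maximum over chains of $H$-invariant ideals whose factors $I_k/J_k$ are irreducible $(H,L)$-modules subject to a non-vanishing long-commutator condition on complements $T_k$. The subalgebras realizing $d$ are the reductive pieces $(\ad B_k)\oplus R_k$, where $R_k$ comes from the part $S$ of the solvable radical acting by scalar operators on the irreducible factors (Lemmas~\ref{LemmaGeneralizedJordan}, \ref{LemmaRedIrr}, \ref{LemmaChooseReduct}). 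Consequently your lower-bound construction, which alternates only on the semisimple part, cannot reach the true exponent, and your upper bound aims at the wrong target; the work of Sections~\ref{SectionAux}--\ref{SectionLower} consists precisely in alternating on $d=\sum_k\dim(B_k\oplus\tilde R_k)$ variables by gluing Regev-type polynomials for faithful irreducible $(H,(\ad B_k)\oplus R_k)$-modules and pushing the result through the radical via the elements $j_i\in J(A)$.

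A secondary gap: you justify $H$-invariance of the radical by saying $\zeta(H)$ acts "by derivations/automorphisms-up-to-sign". A general Hopf algebra action is neither, and invariance of $R$ and $N$ can fail; this is exactly why the paper isolates the hypotheses of an $H$-nice algebra (Subsection~\ref{SubsectionHnice}) and verifies them for finite dimensional semisimple $H$, for reductive group actions, and for gradings by citing~\cite{ASGordienko4} and~\cite{SteVanOyst}. Finally, note that the displayed statement is a conjecture: the paper establishes it only under those hypotheses (Theorems~\ref{TheoremMainHSS} and~\ref{TheoremMainH}), and any argument along your lines must likewise invoke them rather than prove the conjecture for arbitrary $H$.
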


We call $\PIexp^H(L)$ the \textit{Hopf PI-exponent} of $L$.

\begin{theorem}\label{TheoremMainHSS}
Let $L$ be a finite dimensional non-nilpotent $H$-module Lie algebra
over a field $F$ of characteristic $0$ where $H$ is a finite dimensional
semisimple Hopf algebra.
 Then there exist constants $C_1, C_2 > 0$, $r_1, r_2 \in \mathbb R$, $d \in \mathbb N$ such that $C_1 n^{r_1} d^n \leqslant c^{H}_n(L) \leqslant C_2 n^{r_2} d^n$ for all $n \in \mathbb N$.
\end{theorem}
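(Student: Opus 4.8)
The plan is to follow the now-standard Giambruno–Zaicev strategy for PI-exponents, adapted to the $H$-module setting. The key structural input is a Wedderburn–Malcev type decomposition of $L$ that is compatible with the $H$-action: write $L = B \oplus J$ where $J$ is the solvable radical and $B$ is a maximal semisimple $H$-submodule subalgebra, and decompose $B = B_1 \oplus \cdots \oplus B_q$ into $H$-simple components. Since $H$ is finite dimensional semisimple, we may pass to $H$ acting on the enveloping situation, and (after possibly extending the base field, which does not change codimensions) we can ensure $B$ is $H$-invariant and the decomposition behaves well. The number $d$ in the theorem will turn out to be $d = \dim(B_{i_1} \oplus \cdots \oplus B_{i_k})$ maximized over those tuples of distinct simple components for which there is an $H$-polynomial that is \emph{alternating} in $k$ sets of $\dim(B_{i_1} \oplus \cdots \oplus B_{i_k})$ variables and nonzero on $L$, where the product $[B_{i_1},\ldots]$ weaves through $J$ nontrivially; this is exactly the ``good'' admissible algebra that drives the lower bound.

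For the \textbf{upper bound} $c_n^H(L) \leqslant C_2 n^{r_2} d^n$, I would proceed as in \cite{ZaiLie}: consider the $S_n$-action on $V_n^H/(V_n^H \cap \Id^H(L))$, decompose the $S_n$-cocharacter into irreducibles $\chi_\lambda$ with multiplicities $m_\lambda$, and show (i) $m_\lambda = 0$ unless the Young diagram $\lambda$ fits into a suitable hook/strip determined by $d$ and the nilpotency data of $J$, and (ii) each $m_\lambda$ is polynomially bounded. Step (i) uses that any multilinear $H$-polynomial alternating in more than $d$ variables, when evaluated on $L$ with the semisimple-radical decomposition, must vanish — one substitutes basis elements and uses that a product which is alternating in too many slots is forced to land in a zero subspace because at most $d$ ``semisimple'' basis vectors can appear in an alternating fashion while remaining nonzero, the radical being nilpotent. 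The bound $\dim \zeta(H) < \infty$ from Lemma~\ref{LemmaOrdinaryAndHopf} controls the ``cost'' of the $H$-labels so that the $H$-cocharacter is still governed by finitely many ordinary cocharacter-type data. Summing $\sum_{\lambda} m_\lambda \deg\chi_\lambda$ over this polynomially bounded region of diagrams contained in a $d$-strip gives the $C_2 n^{r_2} d^n$ estimate.

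For the \textbf{lower bound} $c_n^H(L) \geqslant C_1 n^{r_1} d^n$, I would exhibit, for each large $n$, an explicit nonzero multilinear $H$-polynomial modulo $\Id^H(L)$ built from the ``good'' subalgebra $B_{i_1} \oplus \cdots \oplus B_{i_k} \oplus J'$ realizing $d$: take the alternating polynomial that witnesses $d$, raise it to the appropriate power / embed it into a long commutator, and use the hook formula to count that the span of its $S_n$-translates has dimension at least $C_1 n^{r_1} d^n$. Concretely one shows the relevant $\chi_\lambda$ with $\lambda$ essentially a $d \times \frac{n}{d}$ rectangle occurs with nonzero multiplicity by producing an evaluation on $L$ that is nonzero; standard genericity arguments (Vandermonde-type, or Zaicev's substitution of generic semisimple elements in the $B_{i_j}$ and one generic radical element) make this nonvanishing explicit. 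The $H$-action only helps here: it enlarges $V_n^H$ relative to $V_n$, so the lower bound is not harder than the ungraded one once the good subalgebra is identified.

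The \textbf{main obstacle} is the first step: establishing an $H$-invariant analog of the Levi/Wedderburn–Malcev decomposition and verifying that the $H$-simple components $B_i$ together with the $H$-stable filtration of the radical $J$ yield an admissible combinatorial datum $d$ that is simultaneously an upper and a lower bound. In characteristic $0$ with $H$ finite dimensional semisimple one expects the radical to be $H$-invariant (it is characteristic) and complete reducibility of $H$-modules to supply the complement, but one must be careful that ``$H$-simple'' Lie algebras can be, as ordinary algebras, sums of several copies of an ordinary simple Lie algebra permuted by $H$ — so the unit of analysis is the $H$-simple block, not the ordinary simple block, and all the alternating-polynomial arguments must be carried out $H$-equivariantly. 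Verifying that the resulting $d$ matches on both sides — i.e. that no alternating $H$-polynomial beats $d$ yet one achieving $d$ genuinely survives modulo $\Id^H(L)$ — is the technical heart, and is presumably where the bulk of the paper's Sections on ``$H$-nice'' algebras is spent; I would defer to Theorems~\ref{TheoremMainH} and~\ref{TheoremMainHSum} for the general machinery and simply note that a finite dimensional semisimple $H$ satisfies the requisite hypotheses (semisimplicity of the module category, finite dimensionality of $\zeta(H)$) to invoke them.
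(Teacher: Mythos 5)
Your proposal is correct and follows the paper's route exactly: the paper proves Theorem~\ref{TheoremMainHSS} by noting that $H$-codimensions are stable under base field extension (so $F$ may be assumed algebraically closed), verifying via the structure results of~\cite{ASGordienko4} ($H$-invariance of the solvable and nilpotent radicals, $H$-invariant Levi and Wedderburn--Mal'cev decompositions, and complete reducibility of $(H,L_0)$-modules) that $L$ is $H$-nice, and then invoking Theorem~\ref{TheoremMainH}. One caveat on your sketch of the general machinery: your candidate formula for $d$ as a maximal sum $\dim(B_{i_1}\oplus\cdots\oplus B_{i_k})$ of $H$-simple Levi components is the associative-case answer, whereas for Lie algebras the correct value (Subsection~\ref{SubsectionHopfPIexp}) is $\max\dim L/\bigl(\Ann(I_1/J_1)\cap\cdots\cap\Ann(I_r/J_r)\bigr)$ over chains of $H$-invariant ideals with irreducible $(H,L)$-quotients satisfying a nonvanishing commutator condition, which also picks up the part of the solvable radical acting diagonalizably and commuting with $B$, not just the semisimple components.
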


\begin{remark}
If $L$ is nilpotent, i.e. $[x_1, \ldots, x_p]\equiv 0$ for some $p\in\mathbb N$, then
$V^{H}_n \subseteq \Id^{H}(L)$ and $c^H_n(L)=0$ for all $n \geqslant p$.
\end{remark}

\begin{corollary}
The above analog of Amitsur's conjecture holds
 for such codimensions.
\end{corollary}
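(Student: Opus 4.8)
The plan is to transport the Giambruno--Zaicev machinery for ordinary and graded codimensions of finite dimensional Lie algebras into the $H$-module category, using the semisimplicity of $H$ and $\ch F=0$ to make the structure theory $H$-equivariant. Since $\dim H<\infty$, each $V^H_n$ is finite dimensional, and $S_n$ acts on $V^H_n/(V^H_n\cap\Id^H(L))$ by permuting the variable labels $x_1,\dots,x_n$ (carrying the Hopf superscripts along); writing the $n$th $H$-cocharacter as $\chi^H_n(L)=\sum_{\lambda\vdash n} m_\lambda\chi_\lambda$ we get $c^H_n(L)=\sum_{\lambda\vdash n} m_\lambda\deg\chi_\lambda$. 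First I would set up the structure theory: because $H$ is finite dimensional semisimple and $\ch F=0$, every $H$-module is completely reducible and carries a Reynolds (averaging) operator, so the solvable radical $R$ of $L$ is an $H$-submodule (if $J$ is a solvable ideal, so is $HJ$), and the Levi decomposition $L=B\oplus R$ can be chosen with $B$ an $H$-invariant semisimple subalgebra — the Levi complement being unique up to conjugation by $\exp(\ad r)$ with $r$ nilpotent, one averages the relevant cocycle over $H$. Decomposing $B$ into its minimal $H$-invariant ideals $B=B_1\oplus\cdots\oplus B_q$ yields $H$-simple $B_i$.

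Following Zaicev's definition of the exponent, I would call a subset $\{B_{i_1},\dots,B_{i_k}\}$ \emph{admissible} if there are $b_j\in B_{i_j}$ and $a_0,\dots,a_k\in L$ with the long commutator $[a_0,b_1,a_1,b_2,\dots,b_k,a_k]$ (for a suitable arrangement of brackets) nonzero — i.e. the pieces link through the radical into a nonvanishing Lie monomial — and set $d:=\max\dim(B_{i_1}\oplus\cdots\oplus B_{i_k})$ over admissible subsets, with $d:=1$ if there is none (one checks $d\geq 1$ since $L$ is non-nilpotent). For the upper bound, the core step is the $H$-analogue of the vanishing lemma of \cite{ZaiLie,ASGordienko2}: $m_\lambda=0$ whenever more than a constant number of boxes of $\lambda$ lie outside its first $d$ rows. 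Indeed, substituting a basis of $L$ adapted to $L=B\oplus R$ into a multilinear polynomial with such a ``fat'' diagram forces alternation on more basis vectors than fit into any admissible $B_{i_1}\oplus\cdots\oplus B_{i_k}$, and then the solvability of $R$ together with the non-admissibility of the overloaded family of $B_i$'s makes the value vanish; here one must track how $h\in H$ distributes over a long commutator via iterated comultiplication, so that solvability remains applicable after acting by $H$. Summing $\deg\chi_\lambda$ over the remaining diagrams with the standard estimates gives $c^H_n(L)\leq C_2 n^{r_2} d^n$.

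For the lower bound I would exhibit, for each large $n$, a multilinear $H$-polynomial $f\in V^H_n\setminus\Id^H(L)$ that is alternating on roughly $n/d$ disjoint sets of $d$ variables. Taking a maximal admissible family $B_{i_1},\dots,B_{i_k}$ with $\dim(B_{i_1}\oplus\cdots\oplus B_{i_k})=d$, I would glue together Razmyslov--Zaicev-style generic/central Lie polynomials for the $H$-simple pieces (using that the Hopf PI-exponent of an $H$-simple semisimple Lie algebra equals its dimension, which one proves by a Regev-type tensor argument carried out inside the $H$-module category) with fixed radical elements $a_j$ realizing the nonzero linking. In its $S_n$-decomposition $f$ then has a nonzero isotypic component at a diagram $\lambda$ that is essentially a $d\times(n/d)$ rectangle, so $m_\lambda\neq 0$ and $c^H_n(L)\geq\deg\chi_\lambda\geq C_1 n^{r_1} d^n$.

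The main obstacle is precisely making these three ingredients genuinely $H$-equivariant: the averaging argument producing an $H$-stable Levi subalgebra (this is where finite-dimensional semisimplicity of $H$ is indispensable, since otherwise $B$ need not be $H$-invariant and the whole scheme breaks), the fat-diagram vanishing lemma in the presence of the coproduct spreading $H$ across the long commutator, and the non-vanishing of the glued polynomial in the $H$-simple case; the Young-diagram summation estimates, once the support of the cocharacter is pinned down, are the same as in the ungraded situation. An alternative and cleaner organization — the one the paper ultimately follows via its later Theorem~\ref{TheoremMainH} — is to isolate an abstract sufficient condition on $H$ and verify it for finite dimensional semisimple $H$.
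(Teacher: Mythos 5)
Your outline reproduces the paper's global architecture for Theorem~\ref{TheoremMainHSS} (equivariant structure theory, a vanishing lemma for fat Young diagrams plus a colength estimate for the upper bound, glued alternating polynomials for the lower bound), and you correctly note that the paper itself deduces everything from the abstract Theorem~\ref{TheoremMainH} by checking $H$-niceness; the corollary as stated is then immediate from the two-sided bound (together with $c^H_n(L)=0$ for large $n$ in the nilpotent case). But there is a genuine error in your candidate exponent. You set $d:=\max\dim(B_{i_1}\oplus\cdots\oplus B_{i_k})$ over admissible families of $H$-simple components of the Levi subalgebra. That is the Giambruno--Zaicev formula for finite dimensional \emph{associative} algebras; Zaicev's formula for Lie algebras, which the paper's $d(L)$ in Subsection~\ref{SubsectionHopfPIexp} generalizes, is $\max\dim\bigl(L/\bigl(\Ann(I_1/J_1)\cap\dots\cap\Ann(I_r/J_r)\bigr)\bigr)$ over linked irreducible factors of a composition series of the adjoint module. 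The two do not agree: take $H=F$ and $L=\mathfrak{gl}_2(F)\ltimes F^2$, with $F^2$ an abelian ideal carrying the natural $\mathfrak{gl}_2(F)$-action. Here the Levi subalgebra is $\mathfrak{sl}_2(F)$, so your $d=3$; but $V=F^2$ is an irreducible factor of the adjoint module with $\Ann V=V$, so $\dim(L/\Ann V)=4$ and $\PIexp(L)=4$. The culprit is the element $E_{11}+E_{22}$, which lies in the solvable radical (in the summand the paper calls $S$) yet acts on $V$ as a nonzero scalar; Lemma~\ref{LemmaIrrAnnBS} shows precisely that $\Ann(I/J)$ decomposes as $(\Ann\cap B)\oplus(\Ann\cap S)\oplus N$, so the toral part of the reductive quotient contributes to the exponent.

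Consequently your upper-bound step fails as stated: it is not true that $m(L,H,\lambda)=0$ once more than a bounded number of boxes of $\lambda$ lie outside the first $d$ rows for your $d$ (in the example above there are non-identities alternating in arbitrarily many disjoint sets of $4$ variables), and symmetrically your lower-bound construction, which alternates only over bases of $B_{i_1}\oplus\cdots\oplus B_{i_k}$, cannot reach the true exponent. To repair this you must work with annihilators of irreducible $(H,L)$-factors and also alternate over the central directions acting by independent scalars --- this is what the paper's Lemma~\ref{LemmaS} (an alternating polynomial in $\varphi(Z(L))$ that is nondegenerate on $M$) and the Killing-form gluing in Theorem~\ref{TheoremAlternateFinal} accomplish. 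A secondary omission: to pass from ``the cocharacter is supported on diagrams with at most $d$ long rows'' to $c^H_n(L)\leqslant C_2n^{r_2}d^n$ you also need the multiplicities $\sum_\lambda m(L,H,\lambda)$ to be polynomially bounded (the paper's Theorem~\ref{TheoremColength}); this does not follow from the hook-formula estimates alone.
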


\begin{theorem}\label{TheoremMainHSSSum}
Let $L=L_1 \oplus \ldots \oplus L_s$ (direct sum of $H$-invariant ideals) be a finite dimensional $H$-module Lie algebra
over a field $F$ of characteristic $0$ where $H$ is a finite dimensional
semisimple Hopf algebra. Then $\PIexp^H(L)=\max_{1 \leqslant i \leqslant s}
\PIexp^H(L_i)$.
\end{theorem}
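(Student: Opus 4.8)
The plan is to reduce Theorem~\ref{TheoremMainHSSSum} to Theorem~\ref{TheoremMainHSS} together with elementary properties of codimension sequences. First I would observe that since $L=L_1\oplus\ldots\oplus L_s$ as a direct sum of $H$-invariant ideals, the free $H$-module Lie algebra $L(X\mid H)$ maps onto $L$ through each projection, and $\Id^H(L)=\bigcap_{i=1}^s\Id^H(L_i)$. This is because an $H$-polynomial vanishes on $L$ if and only if all of its evaluations on tuples from $L$ are zero; writing each argument as a sum of its components $a=a_1+\ldots+a_s$ and expanding multilinearly, the multilinear part $V^H_n$ behaves well, but more directly one notes that an element of $L$ is zero iff all its components are zero, so $f\equiv 0$ on $L$ iff $f\equiv 0$ on each $L_i$ (for the latter, evaluate on tuples lying entirely in $L_i$). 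This gives the easy inequality $c^H_n(L)\leqslant\sum_{i=1}^s c^H_n(L_i)$, which upon taking $n$th roots yields $\limsup_n\sqrt[n]{c^H_n(L)}\leqslant\max_i\PIexp^H(L_i)$ once we know the individual limits exist.

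Next I would establish the reverse inequality $c^H_n(L)\geqslant c^H_n(L_i)$ for each $i$. This follows because $\Id^H(L)\subseteq\Id^H(L_i)$, hence there is a surjection $V^H_n/(V^H_n\cap\Id^H(L))\twoheadrightarrow V^H_n/(V^H_n\cap\Id^H(L_i))$, so $c^H_n(L)\geqslant c^H_n(L_i)$ for all $n$. Combined with the upper bound, we get
$$\max_{1\leqslant i\leqslant s} c^H_n(L_i)\ \leqslant\ c^H_n(L)\ \leqslant\ \sum_{i=1}^s c^H_n(L_i)\ \leqslant\ s\max_{1\leqslant i\leqslant s} c^H_n(L_i).$$
Now the point is that each $L_i$ is itself a finite dimensional $H$-module Lie algebra over $F$ (with $H$ finite dimensional semisimple), so Theorem~\ref{TheoremMainHSS} applies to the non-nilpotent $L_i$ and gives $C_1^{(i)} n^{r_1^{(i)}} d_i^n\leqslant c^H_n(L_i)\leqslant C_2^{(i)} n^{r_2^{(i)}} d_i^n$, whence $\PIexp^H(L_i)=d_i$ exists and is a nonnegative integer; for nilpotent $L_i$ the remark preceding the theorem gives $c^H_n(L_i)=0$ eventually, so $\PIexp^H(L_i)=0$. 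Squeezing, $c^H_n(L)$ is bounded above and below by constant-times-polynomial multiples of $(\max_i d_i)^n$ (the extra factor $s$ and a maximum over finitely many polynomial prefactors being absorbed into the constants and exponents $r_1,r_2$), and therefore $\PIexp^H(L)=\lim_n\sqrt[n]{c^H_n(L)}=\max_{1\leqslant i\leqslant s} d_i=\max_{1\leqslant i\leqslant s}\PIexp^H(L_i)$.

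The only genuine content here, beyond Theorem~\ref{TheoremMainHSS}, is the identity $\Id^H(L)=\bigcap_i\Id^H(L_i)$ and the sandwich estimate, both of which are formal. I expect the one mildly delicate point to be checking that evaluations of a multilinear $H$-polynomial from $V^H_n$ on arbitrary tuples from $L$ are recovered from evaluations on tuples whose entries lie in a single summand: by multilinearity any evaluation is a sum of evaluations where each $x_j$ is replaced by a component $a_{j,i_j}\in L_{i_j}$, and since the $L_i$ are ideals with $[L_i,L_j]=0$ for $i\neq j$, a commutator $[x^{h_1}_{\sigma(1)},\ldots,x^{h_n}_{\sigma(n)}]$ evaluates to zero unless all the chosen components lie in the same $L_i$ — using here that $H$ acts on each $L_i$ separately so $h a_{j,i}\in L_i$. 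This shows $\Id^H(L)\supseteq\bigcap_i\Id^H(L_i)$ on $V^H_n$, and the reverse inclusion is trivial; since both $\Id^H(L)$ and each $\Id^H(L_i)$ are determined by their multilinear parts (characteristic $0$, so every $H$-identity follows from its multilinearizations, which lie in the $V^H_n$), the claimed identity of ideals follows. Everything else is bookkeeping with $n$th roots, and no new hard estimate is required.
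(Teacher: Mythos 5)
Your proof is correct, and for the upper bound it takes a genuinely more elementary route than the paper. The lower bound ($c^H_n(L)\geqslant c^H_n(L_i)$ because each $L_i$ is an $H$-invariant subalgebra) is exactly the paper's. For the upper bound, however, the paper does not use the subadditivity $c^H_n(L)\leqslant\sum_i c^H_n(L_i)$: it reduces Theorem~\ref{TheoremMainHSSSum} to Theorem~\ref{TheoremMainHSum} (after passing to an algebraic closure and invoking Example~\ref{ExampleHniceHSS}), and there the key observation you make --- that a multilinear commutator evaluated on elements from distinct ideals vanishes, so $V^H_n\cap\Id^H(L)=\bigcap_i\bigl(V^H_n\cap\Id^H(L_i)\bigr)$ --- is used only to transfer the cocharacter bound of Lemma~\ref{LemmaUpper} from the summands to $L$ with $d=\max_i d(L_i)$, after which the colength estimate of Theorem~\ref{TheoremColength} converts the multiplicity bound into a codimension bound. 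You instead read off $c^H_n(L)\leqslant\sum_i c^H_n(L_i)\leqslant s\max_i c^H_n(L_i)$ directly from the injection $V^H_n/(V^H_n\cap\Id^H(L))\hookrightarrow\bigoplus_i V^H_n/(V^H_n\cap\Id^H(L_i))$ and then squeeze, using Theorem~\ref{TheoremMainHSS} for each non-nilpotent summand and the nilpotency remark for the others. This avoids re-running the $S_n$-representation machinery entirely and still yields the same quantitative bound $C_2 n^{r_2} d^n$ with $d=\max_i\PIexp^H(L_i)$; everything you use beyond Theorem~\ref{TheoremMainHSS} is formal. One micro-remark: your aside that every $H$-identity is determined by its multilinearizations is not actually needed, since $c^H_n$ is defined purely in terms of $V^H_n$, where the identity of the ideals is immediate; the rest checks out.
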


Theorems~\ref{TheoremMainHSS} and~\ref{TheoremMainHSSSum} are obtained as consequences of more general results in Subsection~\ref{SubsectionHnice}.

\subsection{$(H,L)$-modules}\label{SubsectionHLmodules}
In this subsection we give the definitions needed to formulate the more general result and to
present the explicit formula for the Hopf PI-exponent.

Let $L$ be an $H$-module Lie algebra for
some Hopf algebra $H$ over a field $F$.

We say that $M$ is \textit{$(H,L)$-module} if $M$ is both left $H$- and $L$-module
and \begin{equation}\label{EqHLModule}
h(a v)=(h_{(1)} a) (h_{(2)} v) \text{ for all } a \in L,\ v\in M,\ h\in H.
\end{equation}

By~(\ref{EqHmoduleLieAlgebra}), each $H$-invariant ideal of $L$ can be regarded as
a left $(H,L)$-module under the adjoint representation
of $L$.

 An $(H,L)$-module $M$ is \textit{irreducible}
if for any $H$- and $L$-invariant
subspace $M_1 \subseteq M$ we have either $M_1=0$ or $M_1=M$.
We say that $M$ is \textit{completely reducible} if $M$ is the direct sum of
irreducible $(H,L)$-submodules.  

If $M$ is an $H$-module, then $\End_F(M)$ has the natural structure of an associative $H$-module algebra:
\begin{equation}\label{EqHActionOnEnd}
h\psi:=\zeta(h_{(1)})\psi\zeta(Sh_{(2)}) \text{ where } h \in H \text{ and }\psi \in \End_F(M).
\end{equation}
Here $\zeta \colon H \to \End_F(M)$ is the homomorphism
corresponding to the $H$-action.
The Lie algebra $\mathfrak{gl}(M)$ inherits the structure of an $H$-module from $\End_F(M)$.
However, if $H$ is not cocommutative, we cannot claim that $\mathfrak{gl}(M)$ satisfies~(\ref{EqHmoduleLieAlgebra}).

If $M$ is an $(H,L)$-module where $L$ is an $H$-module Lie algebra and $\varphi \colon 
L \to \mathfrak{gl}(M)$ is the corresponding homomorphism, then $\varphi$ is a homomorphism
of $H$-modules.  Moreover,~(\ref{EqHLModule}) is equivalent to
\begin{equation}\label{EqHLModule2}\zeta(h)\varphi(a) = \varphi(h_{(1)}a) \zeta(h_{(2)})
\text{ where }
h\in H,\ a\in L.
\end{equation}
We say that an $(H,L)$-module $M$ is \textit{faithful}
if it is faithful as an $L$-module, i.e. if $\ker\varphi = 0$.

\subsection{$H$-nice Lie algebras}\label{SubsectionHnice}

Let $L$ be a finite dimensional $H$-module Lie algebra
where $H$ is a Hopf algebra over an algebraically closed field $F$
of characteristic $0$.
We say that $L$ is \textit{$H$-nice} if either $L$ is semisimple or the following conditions hold:
\begin{enumerate}
\item \label{ConditionRNinv}
the nilpotent radical $N$ and the solvable radical $R$ of $L$ are $H$-invariant;
\item \label{ConditionLevi} \textit{(Levi decomposition)}
there exists an $H$-invariant maximal semisimple subalgebra $B \subseteq L$ such that
$L=B\oplus R$ (direct sum of $H$-modules);
\item \label{ConditionWedderburn} \textit{(Wedderburn~--- Mal'cev decompositions)}
for any $H$-submodule $W \subseteq L$ and associative $H$-module subalgebra $A_1 \subseteq \End_F(W)$, 
the Jacobson radical $J(A_1)$ is $H$-invariant and
there exists an $H$-invariant maximal semisimple associative subalgebra $\tilde A_1 \subseteq A_1$
such that $A_1 = \tilde A_1 \oplus J(A_1)$ (direct sum of $H$-submodules);
\item \label{ConditionLComplHred}
for any $H$-invariant Lie subalgebra $L_0 \subseteq \mathfrak{gl}(L)$
such that $L_0$ is an $H$-module algebra and $L$ is a completely reducible $L_0$-module disregarding $H$-action, $L$ is a completely reducible $(H,L_0)$-module.
\end{enumerate}

\begin{example}\label{ExampleHniceHSS}
Suppose $L$ is a finite dimensional $H$-module Lie algebra where $H$ is a finite dimensional
semisimple Hopf algebra over $F$.
Then $L$ is $H$-nice.
\end{example}
\begin{proof}
  By~\cite[Theorem~1]{ASGordienko4}, the solvable radical $R$
  and the nilpotent radical $N$ of $L$ are $H$-invariant.
  Moreover, by~\cite[Theorem~3]{ASGordienko4},
 $L=B\oplus R$ (direct sum of $H$-submodules) where 
$B$ is some maximal semisimple subalgebra of $L$.
By~\cite{LinchenkoJH} and \cite[Corollary~2.7]{SteVanOyst}, we have an $H$-invariant Wedderburn~--- Malcev decomposition.
By~\cite[Theorem~10]{ASGordienko4}, Condition~\ref{ConditionLComplHred} holds.
 Hence $L$ is $H$-nice.
\end{proof}

Let $G$ be any group.
Denote by $FG$ the group algebra of $G$. Then $FG$ is a Hopf algebra
with the comultiplication $\Delta(g)=g\otimes g$, the counit $\varepsilon(g)=1$,
and  the antipode $S(g)=g^{-1}$, $g \in G$.

\begin{example}\label{ExampleHniceAffAlg}
Let $L$ be a finite dimensional Lie algebra
over $F$. Suppose a reductive affine algebraic group
  $G$ acts on $L$ rationally by automorphisms.
  Then $L$ is an $FG$-nice algebra where the action of the Hopf algebra $FG$ on $L$
is the extension of the $G$-action by linearity.
\end{example}
\begin{proof}
 First, the solvable radical $R$ and the nilpotent radical $N$ are $G$-invariant
 since they are invariant under all automorphisms. Hence Condition~\ref{ConditionRNinv} holds. By~\cite[Theorem~5]{ASGordienko4}, we have a $G$-invariant
 Levi decomposition.
 
 Suppose $W \subseteq L$ is an $FG$-submodule. 
  Consider the $FG$-action~(\ref{EqHActionOnEnd}) on $\End_F(W)$. It corresponds to the
  natural rational $G$-action on $\End_F(W)$: $\psi^g w = (\psi w^{g^{-1}})^g$
  for $w \in W$, $\psi \in \End_F(W)$, $g \in G$.
  Hence, by \cite[Corollary 2.10]{SteVanOyst}, for any $FG$-module subalgebra $A_1 \subseteq \End_F(W)$
  we have an $FG$-invariant Wedderburn~--- Mal'cev decomposition.
  
  Analogously, for any $FG$-module Lie subalgebra $L_0 \subseteq \mathfrak{gl}(L)$
  the space $L$ is a \textit{$(G, L_0)$-module},
  i.e. $(\psi a)^g = \psi^g a^g$  for all $\psi \in L_0$, $a \in L$, $g \in G$.
  By~\cite[Theorem~11]{ASGordienko4}, if $L$ is a completely reducible $L_0$-module
  disregarding $G$-action, it is a completely reducible $(FG,L_0)$-module.
  
  Therefore, $L$ is an $FG$-nice algebra.
\end{proof}

Another important example of an $H$-nice algebra is Example~\ref{ExampleHniceGr}
in Subsection~\ref{SubsectionApplGr} below. (In fact, one can regard Example~\ref{ExampleHniceGr}
as a special case of Example~\ref{ExampleHniceAffAlg}.)

Theorem~\ref{TheoremMainH} is the main result of the article.

\begin{theorem}\label{TheoremMainH}
Let $L$ be a non-nilpotent $H$-nice Lie algebra over an algebraically closed field $F$ of characteristic $0$.  Then there exist constants $C_1, C_2 > 0$, $r_1, r_2 \in \mathbb R$, $d \in \mathbb N$ such that $C_1 n^{r_1} d^n \leqslant c^{H}_n(L) \leqslant C_2 n^{r_2} d^n$ for all $n \in \mathbb N$.
\end{theorem}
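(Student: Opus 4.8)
The plan is to adapt the proof strategy used for ordinary codimensions of finite dimensional Lie algebras (Zaicev) and for generalized $H$-actions on associative algebras (\cite{ASGordienko3}), working throughout with $(H,L)$-modules in place of ordinary modules. The upper bound $c^H_n(L)\leqslant C_2 n^{r_2} d^n$ should come from representation theory of the symmetric group: decompose $V^H_n/(V^H_n\cap\Id^H(L))$ under the $S_n$-action, show that the relevant Young diagrams have at most $d$ ``long'' rows for a suitable $d$, and bound the total dimension by summing dimensions of the corresponding irreducibles. Here $d$ must be identified as the dimension of a maximal semisimple subalgebra $B'$ sitting inside $L$ together with a piece of the radical on which $B'$ acts; the $H$-niceness conditions~(\ref{ConditionRNinv})--(\ref{ConditionLevi}) guarantee we may choose an $H$-invariant Levi subalgebra and an $H$-invariant nilpotent radical so that all the combinatorial estimates respect the $H$-action. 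The fact that $\dim\zeta(H)$ may be infinite when $H$ is infinite dimensional is not an obstruction for the upper bound because, as in Lemma~\ref{LemmaOrdinaryAndHopf}, only the finite dimensional image $\zeta(H)\subseteq\End_F(L)$ matters and the whole argument takes place inside $\Hom_F(L^{\otimes n};L)$.

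For the lower bound $C_1 n^{r_1} d^n\leqslant c^H_n(L)$, the idea is to exhibit, for each large $n$, an explicit nonzero multilinear $H$-polynomial evaluation that is \emph{not} an identity, arranged so that a whole $S_n$-subrepresentation of dimension $\geqslant C_1 n^{r_1} d^n$ survives modulo $\Id^H(L)$. Concretely, one picks an $H$-invariant semisimple subalgebra $B'$ and an $H$-invariant $(H,B')$-submodule of the radical that together realize the exponent $d$, builds an ``alternating'' polynomial in $\sim d$ groups of variables (using the classical trick of substituting a basis of $B'$ together with a carefully chosen radical element so that the value is, up to a nonzero scalar, a fixed nilpotent element), and then checks that the $H$-action can be absorbed: because $B'$ is $H$-invariant one may substitute eigen-type elements for the Hopf algebra action, or equivalently pass through the associative enveloping picture using~(\ref{EqHActionOnEnd}) and~(\ref{EqHLModule2}). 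Condition~(\ref{ConditionLComplHred}) (complete reducibility of $L$ as an $(H,L_0)$-module) is exactly what lets one split off the semisimple part cleanly when producing these test substitutions, and Condition~(\ref{ConditionWedderburn}) (the $H$-invariant Wedderburn--Mal'cev decomposition) is what allows the associative envelope of the action of $B'$ on the relevant module to be replaced by its $H$-invariant semisimple part without losing the nonvanishing.

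The two bounds must be made to involve the \emph{same} integer $d$; this is the technical heart. I would define $d$ via the combinatorial/representation-theoretic quantity appearing in the upper bound and then prove that this same $d$ is achieved from below, the matching being exactly the content of the explicit formula for $\PIexp^H(L)$ promised in Subsection~\ref{SubsectionHopfPIexp}. Before all of this, one reduces to the case $F$ algebraically closed (given) and uses the structure theory: write $L=B\oplus R$ $H$-invariantly, analyze how $B$ and its $H$-stable simple components act on the factors $R^{(i)}/R^{(i+1)}$ of the lower central series of $R$, and among all such ``admissible'' pairs (semisimple subalgebra, faithful $(H,L_0)$-module built from radical layers) take the one of largest total dimension; call it $d$. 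One also needs the reduction that it suffices to treat $L$ with $R$ nilpotent (by a standard argument replacing $R$ by $N$ and controlling the error), again using that $N$ is $H$-invariant.

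The main obstacle I anticipate is the lower bound when $H$ is not cocommutative: substituting elements for $H$-variables does not commute with forming Lie products in the naive way, so the usual ``plug in a basis of a matrix subalgebra'' argument must be rerouted through the associative $H$-module algebra $\End_F(M)$ for an appropriate faithful $(H,L_0)$-module $M$, where one has the genuine relation~(\ref{EqHActionOnEnd}) and can invoke the $H$-invariant Wedderburn--Mal'cev decomposition from Condition~(\ref{ConditionWedderburn}) and the full reducibility from Condition~(\ref{ConditionLComplHred}). Ensuring that the resulting ``associative'' nonidentity can be pulled back to a genuine \emph{Lie} $H$-polynomial nonidentity of $L$ of the required $S_n$-multiplicity — i.e. that passing between the Lie and associative settings does not collapse the surviving representation — is the delicate point that will require the bulk of the work. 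Everything else (the $S_n$-combinatorics for the upper bound, the reductions to $R$ nilpotent and to $F$ algebraically closed) is routine adaptation of the known Lie-algebra arguments once the $H$-invariance of all structural decompositions is in hand, which is precisely guaranteed by $L$ being $H$-nice.
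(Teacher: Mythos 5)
Your plan follows the paper's actual proof essentially step for step: the upper bound via showing that Young diagrams with nonzero multiplicity have at most $d$ long rows together with a polynomial colength estimate (Lemma~\ref{LemmaUpper}, Theorem~\ref{TheoremColength}), and the lower bound via Regev-type alternating polynomials for faithful irreducible $(H,L_0)$-modules over $H$-invariant reductive subalgebras, glued inside a long commutator and fed to the hook formula, with the same $d$ supplied by the annihilator formula of Subsection~\ref{SubsectionHopfPIexp}. The one deviation is your proposed ``reduction to the case $R$ nilpotent,'' which the paper does not (and need not) perform: the complement $S$ of $N$ in $R$ is kept throughout and handled inside the associative envelope via the generalized Jordan decomposition (Lemmas~\ref{LemmaGeneralizedJordan} and~\ref{LemmaS}), since the exponent genuinely involves the summands $\tilde R_k\subseteq S$; but this remark is not load-bearing in your own outline, which otherwise matches the paper.
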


\begin{theorem}\label{TheoremMainHSum}
Let $L=L_1 \oplus \ldots \oplus L_s$ (direct sum of $H$-invariant ideals) be an $H$-module Lie algebra
over an algebraically closed field $F$ of characteristic $0$ where $H$ is a Hopf algebra. Suppose $L_i$ are $H$-nice algebras. Then there exists $\PIexp^H(L)=\max_{1 \leqslant i \leqslant s}
\PIexp^H(L_i)$.
\end{theorem}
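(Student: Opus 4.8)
The plan is to reduce the statement about a direct sum $L=L_1\oplus\dots\oplus L_s$ of $H$-invariant ideals to the already-established Theorem~\ref{TheoremMainH}, which provides the two-sided bound $C_1 n^{r_1} d_i^n \leqslant c^H_n(L_i)\leqslant C_2 n^{r_2} d_i^n$ for each non-nilpotent $H$-nice summand $L_i$ (and $c^H_n(L_i)=0$ for $n$ large when $L_i$ is nilpotent, by the Remark following Theorem~\ref{TheoremMainHSS}). In particular $\PIexp^H(L_i)=d_i$ exists for each $i$, where we set $d_i:=0$ in the nilpotent case. Let $d:=\max_i d_i$. I would then show $C_1' n^{r_1'} d^n \leqslant c^H_n(L) \leqslant C_2' n^{r_2'} d^n$ for suitable constants and all large $n$; taking $n$th roots and letting $n\to\infty$ gives $\PIexp^H(L)=d=\max_i\PIexp^H(L_i)$.

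The \emph{lower bound} is the easy half: since each $L_i$ is an $H$-invariant ideal, the quotient map $L\twoheadrightarrow L/\bigl(\bigoplus_{j\ne i}L_j\bigr)\cong L_i$ is a homomorphism of $H$-module Lie algebras, hence $\Id^H(L)\subseteq\Id^H(L_i)$ and therefore $c^H_n(L)\geqslant c^H_n(L_i)$ for every $i$ and every $n$. Choosing $i$ with $d_i=d$ gives $c^H_n(L)\geqslant c^H_n(L_i)\geqslant C_1 n^{r_1} d^n$.

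The \emph{upper bound} is where the real work lies. The key multilinear identity is that any left-normed commutator $[x_1^{h_1},\dots,x_n^{h_n}]$ evaluated on $L=\bigoplus_i L_i$ vanishes unless all arguments lie in a single summand $L_i$: indeed $[L_i,L_j]=0$ for $i\ne j$ since the $L_i$ are ideals with zero pairwise intersection, so in a long commutator the first bracket $[x_1^{h_1},x_2^{h_2}]$ already kills mixed evaluations, and—crucially—the $H$-action respects the decomposition, because each $L_i$ is $H$-invariant, so $h_{(1)}a\in L_i$ whenever $a\in L_i$. Consequently the natural map $V^H_n/(V^H_n\cap\Id^H(L))\hookrightarrow\Hom_F(L^{\otimes n};L)$ has image landing in $\bigoplus_{i=1}^s\Hom_F(L_i^{\otimes n};L_i)$, which yields $c^H_n(L)\leqslant\sum_{i=1}^s c^H_n(L_i)$. (One must check that the value of an $H$-polynomial on an $n$-tuple drawn from $L_i$ agrees with its value computed inside $L_i$ as an $H$-module Lie algebra in its own right; this is immediate from $H$-invariance of $L_i$ and~(\ref{EqHmoduleLieAlgebra}).) Then $c^H_n(L)\leqslant\sum_i c^H_n(L_i)\leqslant s\cdot C_2 n^{r_2} d^n$ for all large $n$, which is the desired bound with $C_2'=sC_2$, $r_2'=r_2$.

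The main obstacle—really the only subtle point—is verifying carefully that the $H$-action does not mix the ideals, i.e. that a multilinear $H$-polynomial evaluated on arguments from distinct summands genuinely vanishes rather than merely being controlled; this rests entirely on the hypothesis that the $L_i$ are $H$-\emph{invariant} ideals, so that $h_{(j)}x_{i_j}^{h_j}$ stays in the summand of $x_{i_j}$, combined with $[L_i,L_j]=0$ for $i\ne j$. Once that is in hand, the estimate $c^H_n(L)=\sum_i c^H_n(L_i)$ (in fact an equality for all $n$, not just an inequality, combining it with the lower bound applied summand-by-summand) makes the $n$th-root limit computation routine: $\sqrt[n]{\sum_i c^H_n(L_i)}\to\max_i d_i$ by the standard fact that the $n$th root of a sum of finitely many nonnegative sequences tends to the maximum of their individual limits. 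This also reproves existence of $\PIexp^H(L)$, completing the proof.
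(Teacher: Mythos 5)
Your proposal is correct in substance and shares the paper's two key ingredients: the lower bound $c^H_n(L)\geqslant c^H_n(L_i)$ coming from the $L_i$ being $H$-invariant subalgebras, and the observation that, for a basis of $L$ chosen as the union of bases of the $L_i$, any mixed substitution into a multilinear $H$-polynomial vanishes (using $[L_i,L_j]=0$ for $i\ne j$ together with $H$-invariance of each summand, so that $h a$ stays in the summand of $a$). Where you diverge is in how the upper bound is extracted from that observation: you pass directly to $V^H_n\cap\Id^H(L)=\bigcap_i\bigl(V^H_n\cap\Id^H(L_i)\bigr)$, hence $c^H_n(L)\leqslant\sum_i c^H_n(L_i)$, and then quote the bound of Theorem~\ref{TheoremMainH} for each summand; the paper instead re-runs the cocharacter machinery, deducing from Lemma~\ref{LemmaUpper} (applied to each $L_i$) that $m(L,H,\lambda)=0$ for partitions with more than $d=\max_i d(L_i)$ long rows and then invoking Theorem~\ref{TheoremColength} as in Theorem~\ref{TheoremUpper}. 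Your route is shorter and avoids repeating the representation-theoretic argument; the paper's has the mild advantage of not needing to sum over summands and of exhibiting $d$ directly as $\max_i d(L_i)$. One genuine error in your write-up, though harmless: the parenthetical claim that $c^H_n(L)=\sum_i c^H_n(L_i)$ is an \emph{equality} does not follow, since your lower bound applied summand-by-summand yields only $c^H_n(L)\geqslant\max_i c^H_n(L_i)$, not the sum (and the equality is false in general, e.g.\ for $L_1\cong L_2$). The sandwich $\max_i c^H_n(L_i)\leqslant c^H_n(L)\leqslant\sum_i c^H_n(L_i)$ is all you need, so you should simply delete that remark.
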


Theorems~\ref{TheoremMainH} and~\ref{TheoremMainHSum} are proved in the end of Section~\ref{SectionLower}.
A formula for $d=\PIexp^H(L)$ is given in Subsection~\ref{SubsectionHopfPIexp}.

\begin{proof}[Proof of Theorems~\ref{TheoremMainHSS} and~\ref{TheoremMainHSSSum}]
Suppose $L$ is a finite dimensional $H$-module Lie algebra where $H$ is a finite dimensional
semisimple Hopf algebra over a field of characteristic $0$.

$H$-codimensions do not change upon an extension of the base field.
The proof is analogous to the cases of ordinary codimensions of
associative~\cite[Theorem~4.1.9]{ZaiGia} and
Lie algebras~\cite[Section~2]{ZaiLie}.
Thus without loss of generality we may assume
 $F$ to be algebraically closed.
 
 Using Example~\ref{ExampleHniceHSS}, we derive Theorem~\ref{TheoremMainHSS}
  from Theorem~\ref{TheoremMainH}. Analogously, Theorem~\ref{TheoremMainHSSSum} is obtained from Theorem~\ref{TheoremMainHSum}.
 \end{proof}

\subsection{Formula for the Hopf PI-exponent}\label{SubsectionHopfPIexp}

Suppose $L$ is an $H$-nice Lie algebra.

Consider $H$-invariant ideals $I_1, I_2, \ldots, I_r$,
$J_1, J_2, \ldots, J_r$, $r \in \mathbb Z_+$, of the algebra $L$ such that $J_k \subseteq I_k$,
satisfying the conditions
\begin{enumerate}
\item $I_k/J_k$ is an irreducible $(H,L)$-module;
\item for any $H$-invariant $B$-submodules $T_k$
such that $I_k = J_k\oplus T_k$, there exist numbers
$q_i \geqslant 0$ such that $$\bigl[[T_1, \underbrace{L, \ldots, L}_{q_1}], [T_2, \underbrace{L, \ldots, L}_{q_2}], \ldots, [T_r,
 \underbrace{L, \ldots, L}_{q_r}]\bigr] \ne 0.$$
\end{enumerate}

Let $M$ be an $L$-module. Denote by $\Ann M$ its annihilator in $L$.
Let $$d(L) := \max \left(\dim \frac{L}{\Ann(I_1/J_1) \cap \dots \cap \Ann(I_r/J_r)}
\right)$$
where the maximum is found among all $r \in \mathbb Z_+$ and all $I_1, \ldots, I_r$, $J_1, \ldots, J_r$
satisfying Conditions 1--2. We claim
that $\PIexp^H(L)=d(L)$ and prove
Theorem~\ref{TheoremMainH} for $d=d(L)$.

The following example will be used in the proof of Theorems~\ref{TheoremMainH} and~\ref{TheoremMainHSum}
in the case of semisimple $L$ (when we do not require Conditions~\ref{ConditionWedderburn}
and~\ref{ConditionLComplHred} from Subsection~\ref{SubsectionHnice}).

\begin{example}\label{ExamplePIexpBSS}
If $L$ is a semisimple $H$-module Lie algebra, then
by~\cite[Theorem 6]{ASGordienko4},  
  $$L=B=B_1 \oplus B_2 \oplus \ldots \oplus B_q$$ (direct sum of $H$-invariant ideals)
   where $B_i$ are $H$-simple Lie
  algebras. In this case $$d(L) = \max_{1 \leqslant k
  \leqslant q} \dim B_k.$$ 
 \end{example}
 \begin{proof}
 Note that if $I$ is a $H$-simple ideal of $L$, then $[I,L] \ne 0$ and
hence $[I,B_i] \ne 0$
for some $1 \leqslant i \leqslant q$.
However $[I,B_i] \subseteq B_i \cap I$ is a $H$-invariant ideal.
Thus $I=B_i$. And if $I$ is a $H$-invariant ideal of $L$,
then it is semisimple and each of its simple components coincides with
one of $B_i$. Thus if $I \subseteq J$ are $H$-invariant ideals of $L$
and $I/J$ is an irreducible $(H,L)$-module,
then $I = B_i \oplus J$ for some $1 \leqslant i \leqslant q$
and $\dim(L/\Ann(I/J))=\dim B_i$. 

Suppose $I_1, \ldots, I_r$, $J_1, \ldots, J_r$ satisfy Conditions 1--2.
Let $I_k = B_{i_k}\oplus J_k$,
 $1 \leqslant k \leqslant r$.
 Then
  $$[[B_{i_1}, L, \ldots, L], [B_{i_2}, L, \ldots, L], \ldots, [B_{i_r}, L, \ldots, L]]\ne 0$$
  for some number of copies of $L$.
 Hence $i_1 = \ldots = i_r$ and $$\dim \frac{L}{\Ann(I_1/J_1) \cap \dots \cap \Ann(I_r/J_r)} = \dim B_{i_1}.$$ Therefore $d(L)=\max_{1 \leqslant k \leqslant q} \dim B_k$.
 \end{proof}

\subsection{$S_n$-cocharacters}\label{SubsectionSnCocharacters}

One of the main tools in the investigation of polynomial
identities is provided by the representation theory of symmetric groups.
 The symmetric group $S_n$  acts
 on the space $\frac {V^H_n}{V^H_{n}
  \cap \Id^H(L)}$
  by permuting the variables.
  Irreducible $FS_n$-modules are described by partitions
  $\lambda=(\lambda_1, \ldots, \lambda_s)\vdash n$ and their
  Young diagrams $D_\lambda$.
   The character $\chi^H_n(L)$ of the
  $FS_n$-module $\frac {V^H_n}{V^H_n
   \cap \Id^H(L)}$ is
   called the $n$th
  \textit{cocharacter} of polynomial $H$-identities of $L$.
  We can rewrite it as
  a sum $$\chi^H_n(L)=\sum_{\lambda \vdash n}
   m(L, H, \lambda)\chi(\lambda)$$ of
  irreducible characters $\chi(\lambda)$.
Let  $e_{T_{\lambda}}=a_{T_{\lambda}} b_{T_{\lambda}}$
and
$e^{*}_{T_{\lambda}}=b_{T_{\lambda}} a_{T_{\lambda}}$
where
$a_{T_{\lambda}} = \sum_{\pi \in R_{T_\lambda}} \pi$
and
$b_{T_{\lambda}} = \sum_{\sigma \in C_{T_\lambda}}
 (\sign \sigma) \sigma$,
be Young symmetrizers corresponding to a Young tableau~$T_\lambda$.
Then $M(\lambda) = FS e_{T_\lambda} \cong FS e^{*}_{T_\lambda}$
is an irreducible $FS_n$-module corresponding to
 a partition~$\lambda \vdash n$.
  We refer the reader to~\cite{Bahturin, DrenKurs, ZaiGia}
   for an account
  of $S_n$-representations and their applications to polynomial
  identities.

\medskip

In Section~\ref{SectionSomePropofHL} and~\ref{SectionAux} we discuss $(H,L)$-modules and their
annihilators.

In Section~\ref{SectionUpper}
we prove that if $m(L, H, \lambda) \ne 0$, then
the corresponding Young diagram $D_\lambda$
has at most $d$ long rows. This implies the upper bound.

In Section~\ref{SectionAlt} we
consider faithful irreducible $(H,L_0)$-modules
where $L_0$ is a reductive $H$-module Lie algebra.
For an arbitrary $k\in\mathbb N$,
we construct an associative $H$-polynomial that is
alternating in $2k$ sets,
 each consisting of $\dim L_0$ variables. This polynomial
 is not an identity of the corresponding representation
 of $L_0$. In Section~\ref{SectionLower} we choose reductive
 algebras and faithful irreducible modules,
 and glue the corresponding alternating polynomials.
 This enables us to find $\lambda \vdash n$
 with $m(L, H, \lambda)\ne 0$ such that $\dim M(\lambda)$
 has the desired asymptotic behavior and the lower bound is proved.

\section{Some properties of $(H,L)$-modules}\label{SectionSomePropofHL}

\begin{lemma}\label{LemmaAnnHLmodule}
Let $M$ be an $(H,L)$-module for some Hopf algebra $H$ and $H$-module Lie algebra $L$
over a field $F$. Then $\Ann M$ is an $H$-invariant ideal of $L$.
\end{lemma}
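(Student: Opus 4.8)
The plan is to show separately that $\Ann M$ is a Lie ideal of $L$ and that it is $H$-invariant, then combine the two.

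First I would recall that $\Ann M = \{a \in L \mid aM = 0\} = \ker\varphi$ where $\varphi\colon L \to \mathfrak{gl}(M)$ is the representation homomorphism corresponding to the $L$-module structure on $M$. Since $\varphi$ is a homomorphism of Lie algebras, its kernel is automatically an ideal of $L$; this is the routine half and needs no computation beyond citing that $\varphi$ respects the bracket.

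The substantive part is $H$-invariance. Take $a \in \Ann M$, $h \in H$, and $v \in M$; I want $(ha)v = 0$. The natural tool is the compatibility relation~(\ref{EqHLModule2}), namely $\zeta(h)\varphi(a) = \varphi(h_{(1)}a)\,\zeta(h_{(2)})$ (equivalently~(\ref{EqHLModule}) written multiplicatively), together with the antipode axioms of $H$. The idea is to solve for $\varphi(h_{(1)}a)\zeta(h_{(2)})$ summed against $S$: apply the relation with $h$ replaced appropriately and use $h_{(1)}S(h_{(2)}) = \varepsilon(h)1$ to extract $\varphi(ha)$ as $\zeta(h_{(1)})\,\varphi(a)\,\zeta(S h_{(2)})$. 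Concretely, from~(\ref{EqHLModule2}) one gets
$$\zeta(h_{(1)})\,\varphi(a)\,\zeta(Sh_{(2)}) = \varphi(h_{(1)}a)\,\zeta(h_{(2)})\,\zeta(Sh_{(3)}) = \varphi(h_{(1)}a)\,\zeta\bigl(h_{(2)}Sh_{(3)}\bigr) = \varphi(h_{(1)}a)\,\varepsilon(h_{(2)})\,\mathrm{id}_M = \varphi(ha),$$
using coassociativity and the counit axiom in the last step. Since $\varphi(a) = 0$, the left-hand side is $0$, hence $\varphi(ha) = 0$, i.e.\ $ha \in \Ann M$. Thus $\Ann M$ is $H$-invariant.

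The main obstacle, such as it is, is purely bookkeeping: making sure the Sweedler indices are handled correctly when the relation~(\ref{EqHLModule2}) is iterated and the antipode is inserted on the correct leg, and confirming that~(\ref{EqHLModule2}) is the right form of the compatibility (it is stated in the excerpt as equivalent to~(\ref{EqHLModule})). There is no deep difficulty; the lemma follows once the algebra-homomorphism property of $\varphi$ and the antipode identity are combined. Finally, combining the two halves, $\Ann M$ is an $H$-invariant ideal of $L$, as claimed.
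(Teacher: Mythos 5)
Your proposal is correct and follows essentially the same route as the paper: the ideal part comes from the Lie module axiom (equivalently, $\Ann M=\ker\varphi$ with $\varphi$ a Lie algebra homomorphism), and $H$-invariance comes from the identity $\varphi(ha)=\zeta(h_{(1)})\varphi(a)\zeta(Sh_{(2)})$, which the paper derives elementwise as $(ha)v=h_{(1)}\bigl(a\,(Sh_{(2)})v\bigr)$ using the counit and antipode axioms. Your operator-level formulation of the same computation is accurate, including the coassociativity bookkeeping.
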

\begin{proof}
Note that $[a,b]v=a(bv)-b(av)=0$ for all $v \in M$, $a \in \Ann M$, $b \in L$.
Hence $\Ann M$ is an ideal. 
 Moreover $$(ha)v=(h_{(1)}\varepsilon(h_{(2)})a)v
= (h_{(1)}a)((\varepsilon(h_{(2)})1)v)=(h_{(1)}a)(h_{(2)} (Sh_{(3)})v)=
$$ $$(h_{(1)(1)}a)(h_{(1)(2)} (Sh_{(2)})v)
=h_{(1)}(a(Sh_{(2)})v)=0$$  for all $v \in M$, $a \in \Ann M$,  $h\in H$.
Thus $\Ann M$ is an $H$-submodule.
\end{proof}

In Lemmas~\ref{LemmaLR}, \ref{LemmaGeneralizedJordan}, \ref{LemmaRedIrr}, $M$ is a finite dimensional
$(H,L)$-module where $H$ is a Hopf algebra
over an algebraically closed field $F$ of characteristic $0$ and $L$ is an $H$-module
Lie algebra with the solvable radical $R$ which we require to be $H$-invariant.
Recall that we denote by $\zeta \colon H \to \End_F(M)$ and
$\varphi \colon L \to \mathfrak{gl}(M)$ the homomorphisms
corresponding to the $(H,L)$-module structure. Denote by $A$ the associative subalgebra of $\End_F(M)$ generated by the operators from $\varphi(L)$
and $\zeta(H)$.

\begin{lemma}\label{LemmaLR}
$\varphi([L, R]) \subseteq J(A)$ where $J(A)$ is the Jacobson radical of $A$.
\end{lemma}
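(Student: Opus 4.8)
The plan is to reduce to a statement about the semisimple quotient $A/J(A)$ and there invoke the representation theory of the reductive Lie algebra coming from a completely reducible module, combined with the classical fact that $[L,R]$ acts nilpotently. More precisely, since $F$ is algebraically closed of characteristic $0$ and $A$ is a finite dimensional associative algebra, $A/J(A)$ is a product of matrix algebras; passing to $M/J(A)M$ (or, equivalently, to the semisimplification of $M$ as an $A$-module) we may assume $M$ is a completely reducible $A$-module, hence a completely reducible $L$-module disregarding the $H$-action. It suffices to show $\varphi([L,R]) = 0$ in this quotient, for then $\varphi([L,R])$ lands in the kernel of $A \to A/J(A)$, i.e. in $J(A)$ (one must check that $\varphi(L)$ generating $A$ together with $\zeta(H)$ makes the image of $\varphi(L)$ in $A/J(A)$ behave correctly — this is where $H$-invariance of $R$ and the $(H,L)$-module axiom~(\ref{EqHLModule}) enter, ensuring the constructions are compatible with the $H$-action).

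Next I would analyze the completely reducible case. Write $M = \bigoplus M_j$ as a direct sum of irreducible $L$-submodules; it is enough to treat a single irreducible $M_j$, so assume $M$ is irreducible over $L$. Let $K = \ker\varphi$, so $\bar L := L/K$ acts faithfully and irreducibly on $M$. By Lie's theorem and standard structure theory (the faithful irreducible representation forces the solvable radical to act by scalars, and in fact the center, after which the radical of $\bar L$ is abelian and central acting by scalars), the image $\overline{R}$ of $R$ in $\bar L$ is central; therefore $[\bar L, \overline R] = 0$, which is exactly $\varphi([L,R]) = 0$ on $M$. Here I am using that $M$ is finite dimensional over an algebraically closed field of characteristic $0$, so Lie's theorem and the description of irreducible representations of (not necessarily semisimple) finite dimensional Lie algebras apply. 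Assembling over the summands $M_j$ gives $\varphi([L,R]) = 0$ on the completely reducible module, completing the argument.

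The main obstacle is the first reduction: one must argue cleanly that "$\varphi([L,R])$ acts as $0$ on every composition factor of $M$" genuinely implies "$\varphi([L,R]) \subseteq J(A)$," given that $A$ is generated by $\varphi(L)$ \emph{and} $\zeta(H)$, not by $\varphi(L)$ alone. The point is that $J(A)$ is the set of elements of $A$ acting nilpotently on every irreducible $A$-module, and an irreducible $A$-module is in particular an $L$-module (via $\varphi$) on which $[L,R]$ acts — by the previous paragraph applied with $M$ replaced by that irreducible $A$-module, which is still an $(H,L)$-module because $J(A)$ and all the relevant subspaces are $H$-invariant by Lemma~\ref{LemmaAnnHLmodule} and Condition~\ref{ConditionWedderburn}-type $H$-invariance of radicals. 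Hence $\varphi([L,R])$ acts as $0$ on every irreducible $A$-module, so it lies in $\bigcap (\text{annihilators of irreducibles}) = J(A)$. The remaining routine points — that $[L,R]$ consists of elements that are ad-nilpotent in $L$ and that this is preserved under any representation, and that the $H$-action restricts compatibly to all the subquotients used — are handled by the hypotheses already in force ($R$ is $H$-invariant, $M$ is an $(H,L)$-module).
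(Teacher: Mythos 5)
Your Lie-theoretic core is fine and coincides with the paper's first step: on each irreducible $L$-subquotient of $M$, E.~Cartan's theorem (or your Lie's-theorem argument) forces the image of $R$ to be central, so $[L,R]$ acts as zero there. The problem is the reduction to $J(A)$. First, $J(A)$ is the set of elements that \emph{annihilate} every irreducible $A$-module, not the set of elements that act nilpotently on them; strictly upper triangular matrices act nilpotently on the irreducible $M_n(F)$-module $F^n$, yet $J(M_n(F))=0$. Second, an irreducible (or completely reducible) $A$-module need not be completely reducible over $L$: $A$ is generated by $\varphi(L)$ \emph{together with} $\zeta(H)$, and restriction to the subalgebra generated by $\varphi(L)$ alone can destroy complete reducibility. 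So from the composition-series argument you only get that each element of $\varphi([L,R])$ acts nilpotently on $M$ (it shifts an $L$-composition series by one step), which by itself does not place it in $J(A)$. Your claim that ``completely reducible over $A$ hence completely reducible over $L$'' is exactly the unproved crux, and the appeal to Condition~\ref{ConditionWedderburn} is not available here: the lemma is stated assuming only that $R$ is $H$-invariant.

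What is actually needed — and what the paper proves — is that the \emph{two-sided ideal} $I$ of $A$ generated by $\varphi([L,R])$ is nilpotent (hence contained in $J(A)$, the largest nilpotent ideal). Take an $L$-composition series $M=W_0\supseteq W_1\supseteq\dots\supseteq W_t=0$; Cartan gives $[L,R]W_i\subseteq W_{i+1}$. A generator of $I^t$ is a product $a_1u_1a_2u_2\cdots a_t$ with $a_i\in\varphi([L,R])$ and $u_i\in A$, and the $u_i$ contain factors $\zeta(h)$ which need not preserve the $W_i$. The key step is to use the relation $\zeta(h)\varphi(a)=\varphi(h_{(1)}a)\zeta(h_{(2)})$ from~(\ref{EqHLModule2}) to push every $\zeta(h)$ to the right; this replaces each $a_i$ by some $h_ia_i$, which still lies in $\varphi([L,R])$ precisely because $R$ is $H$-invariant. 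Then the product moves $W_{k-1}$ into $W_k$ at each of the $t$ stages and vanishes, so $I^t=0$. This computation is where the hypotheses do the work, and it is the piece missing from your proposal; without it neither ``$\varphi([L,R])$ kills every irreducible $A$-module'' nor ``$J(A_2)\subseteq J(A)$ for $A_2=\langle\varphi(L)\rangle$'' is established.
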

\begin{proof}
Let $M = W_0 \supseteq W_1 \supseteq W_2 \supseteq \ldots \supseteq W_t = \left\lbrace 0 \right\rbrace$
be a composition series in $M$ of $L$-submodules not necessarily $H$-invariant.
Then each $W_i/W_{i+1}$ is an irreducible $L$-module.
Denote the corresponding homomorphism by $\varphi_i \colon L \to
\mathfrak{gl}(W_i/W_{i+1})$. Then by E.~Cartan's theorem~\cite[Proposition~1.4.11]{GotoGrosshans},
$\varphi_i (L)$ is semisimple or the direct sum of a semisimple ideal and
the center of $\mathfrak{gl}(W_i/W_{i+1})$.
Thus $\varphi_i ([L, L])$ is semisimple
and $\varphi_i ([ L, L ] \cap R) = 0$.
Since $[L, R] \subseteq [L, L] \cap R$,
we have $\varphi_i([L, R])=0$ and $[L,R]W_i \subseteq W_{i+1}$.

Denote by $I$ the associative ideal of $A$
generated by $\varphi([L, R])$.
Then $I^t$ is the associative ideal generated by elements of the form
$$
a_1 \bigl(\zeta(h_{10})b_{11}\zeta(h_{11})b_{12} \ldots \zeta(h_{1,s_1-1})b_{1,s_1}
\zeta(h_{1,s_1})\bigr) a_2 \bigl(\zeta(h_{20})b_{21}\zeta(h_{21})b_{22} \ldots \zeta(h_{2,s_2-1})b_{2,s_2}
\zeta(h_{2,s_2})\bigr) 
$$ $$\ldots
a_{t-1}\bigl(\zeta(h_{t-1,0})b_{t-1,1}\zeta(h_{t-1,1})b_{t-1,2} \ldots
 \zeta(h_{t-1,s_{t-1}-1})b_{t-1,s_{t-1}}\zeta(h_{t-1,s_{t-1}})\bigr) a_t$$
 where $a_i \in \varphi([L, R])$, $b_{ij}\in\varphi(L)$, $h_{ij}\in H$.
Using~(\ref{EqHLModule2}), we move all $\zeta(h_{ij})$
to the right and obtain that $I^t$ is generated by elements
  $$b =
a_1 \bigl((h'_{11}b_{11}) \ldots (h'_{1,s_1} b_{1,s_1})
\bigr)\ (h_2a_2)\left((h'_{21}b_{21}) \ldots (h'_{2,s_2} b_{2,s_2})\right) \ldots $$ $$
(h_{t-1}a_{t-1})
\bigl((h'_{t-1,1}b_{t-1,1}) \ldots
 (h'_{t-1,s_{t-1}}b_{t-1,s_{t-1}})\bigr) (h_t a_t) \zeta(h_{t+1})
$$ where $h_i, h'_{ij} \in H$.
However, each $h_i a_i \in \varphi([L, R])$ since $R$ is $H$-invariant.
Hence $(h_i a_i) W_{k-1} \subseteq W_k$ for all $1 \leqslant k \leqslant t$.
Thus $b=0$, $I^t=0$, and $\varphi([L, R]) \subseteq J(A)$.
\end{proof}

\begin{lemma}\label{LemmaGeneralizedJordan} 
Suppose for every $H$-invariant associative subalgebra $A_1 \subseteq \End_F(M)$ there exists an $H$-invariant Wedderburn~--- Mal'cev
decomposition.
Let $W=\langle a_1, \ldots, a_t \rangle_F$ be a subspace in $R$
such that $\varphi(W)$ is an $H$-submodule in $\varphi(R)$.
 Then $\varphi(a_i) = c_i+d_i$, 
$1 \leqslant i \leqslant t$, where $c_i$ and $d_i$ are polynomials in $\varphi(a_j)$, $1 \leqslant j
\leqslant t$,
without a constant term, $c_i$ are commuting diagonalizable operators on $M$,
and $d_i \in J(A)$. Moreover, $\langle c_1, \ldots, c_t \rangle_F$
and $\langle d_1, \ldots, d_t \rangle_F$ are $H$-submodules in $\End_F(M)$.
\end{lemma}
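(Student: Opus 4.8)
The plan is to combine Lemma~\ref{LemmaLR} with the hypothesised $H$-invariant Wedderburn~--- Mal'cev decomposition and the classical fact that a Lie algebra acting on a finite dimensional module by a solvable radical can, after passing to a suitable associative envelope, be split into a semisimple (diagonalizable) part and a nilpotent part living in the Jacobson radical. First I would consider the associative subalgebra $A_R \subseteq \End_F(M)$ generated by $\varphi(R)$ together with $\zeta(H)$; using~(\ref{EqHLModule2}) one sees that $A_R$ is $H$-invariant (moving all $\zeta(h)$ to the right turns any product of $\varphi(R)$'s and $\zeta(h)$'s into a linear combination of products of the form $\varphi(h'r)\cdots\varphi(h'' r')\zeta(h''')$, and $R$ is $H$-invariant). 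By hypothesis $A_R$ has an $H$-invariant Wedderburn~--- Mal'cev decomposition $A_R = \tilde A_R \oplus J(A_R)$ with $\tilde A_R$ a semisimple associative $H$-submodule and $J(A_R)$ the Jacobson radical (and $J(A_R) \subseteq J(A)$, since $A_R \subseteq A$ and, by a Lie-theoretic argument as in Lemma~\ref{LemmaLR}, $J(A_R)$ stays inside $J(A)$ after enlarging the envelope).

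Next, for each generator $a_i$ write $\varphi(a_i) = c_i + d_i$ according to this decomposition, so $c_i \in \tilde A_R$ and $d_i \in J(A_R) \subseteq J(A)$. The key point is that the $c_i$ can be chosen to be \emph{commuting diagonalizable} operators: by Lemma~\ref{LemmaLR}, $\varphi([R,R]) \subseteq \varphi([L,R]) \subseteq J(A)$, so modulo $J(A_R)$ the images $c_i$ span an \emph{abelian} subalgebra of the semisimple associative algebra $\tilde A_R$; over the algebraically closed field $F$ of characteristic $0$ such an abelian subalgebra of a semisimple associative algebra is simultaneously diagonalizable, giving commuting semisimple operators. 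To realise $c_i$ and $d_i$ as polynomials in the $\varphi(a_j)$ without constant term, I would invoke the standard fact (Jordan--Chevalley / the structure of the projection onto $\tilde A_R$ along $J(A_R)$) that the semisimple and nilpotent components of an element of a finite dimensional associative $F$-algebra without unit are polynomials in that element without constant term; more precisely, each $c_i$ (resp.\ $d_i$) lies in the (non-unital) subalgebra generated by $\varphi(a_1),\dots,\varphi(a_t)$ because $\tilde A_R$-projection of a product is computed inside the subalgebra they generate, and that subalgebra consists of polynomials without constant term in the $\varphi(a_j)$.

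Finally I must check that $\langle c_1,\dots,c_t\rangle_F$ and $\langle d_1,\dots,d_t\rangle_F$ are $H$-submodules of $\End_F(M)$. For this I would use that $\varphi(W)$ is an $H$-submodule of $\varphi(R)$ by hypothesis: for $h \in H$, the element $h\varphi(a_i)$ lies in $\varphi(W)$, hence is a linear combination $\sum_j \beta_{ij}\varphi(a_j)$; applying the ($H$-equivariant, since the decomposition $A_R = \tilde A_R \oplus J(A_R)$ is $H$-invariant) projections onto $\tilde A_R$ and $J(A_R)$ and using that $h c_i$ and $h d_i$ are the semisimple and nilpotent parts of $h\varphi(a_i)$ (the $H$-action on $\End_F(M)$ via~(\ref{EqHActionOnEnd}) is by algebra automorphisms after twisting, so it preserves the Wedderburn decomposition), one gets $h c_i = \sum_j \beta_{ij} c_j$ and $h d_i = \sum_j \beta_{ij} d_j$. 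The main obstacle I anticipate is the bookkeeping needed to make the "polynomial without constant term" claim and the $H$-equivariance of the projections fully rigorous simultaneously — in particular, verifying that the Wedderburn~--- Mal'cev projection $\pi\colon A_R \to \tilde A_R$ restricts to the subalgebra generated by $\varphi(W)$ and commutes with the $H$-action inherited from~(\ref{EqHActionOnEnd}); once that compatibility is in place, the rest is a direct application of the diagonalizability of commuting semisimple operators.
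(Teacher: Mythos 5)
Your overall strategy (apply the hypothesised $H$-invariant Wedderburn--Mal'cev decomposition to a suitable associative envelope of $\varphi(W)$, read off $c_i+d_i$ from it, and get the $H$-module claim from the uniqueness of components in an $H$-invariant direct sum) is the right one, and your argument for why $\langle c_1,\dots,c_t\rangle_F$ and $\langle d_1,\dots,d_t\rangle_F$ are $H$-submodules matches the paper's. But there is a genuine gap at the heart of the diagonalizability claim. You take $A_R$ to be generated by $\varphi(R)$ \emph{together with} $\zeta(H)$, and then argue that the $c_i$ are diagonalizable because they span an abelian subalgebra of the semisimple algebra $\tilde A_R$. That last step is false: an abelian subalgebra of a semisimple associative algebra need not consist of semisimple elements (e.g.\ $Fe_{12}\subseteq M_2(F)$ is abelian but $e_{12}$ is nilpotent, not diagonalizable). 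Once $\zeta(H)$ is thrown into the generators, $\tilde A_R$ can contain full matrix blocks and nothing forces the $\tilde A_R$-components of the $\varphi(a_i)$ to be semisimple operators. The paper avoids this by taking $A_1$ to be generated by the $\varphi(a_j)$ \emph{only} (this is still $H$-invariant because $\varphi(W)$ is an $H$-submodule and $\End_F(M)$ is an $H$-module algebra), and by first applying Lie's theorem to the solvable Lie algebra $\varphi(R)+J(A)$ to get $\psi(A_1)\subseteq UT_s(F)$; this forces $\tilde A_1$ to contain no copy of $M_2(F)$, hence $\tilde A_1=Fe_1\oplus\dots\oplus Fe_q$ for orthogonal idempotents, and the $c_i$, lying in the span of commuting idempotents, are automatically commuting and diagonalizable. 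Restricting to this smaller envelope also makes the ``polynomials in $\varphi(a_j)$ without constant term'' claim immediate ($c_i,d_i\in A_1$ by construction), whereas with $\zeta(H)$ among the generators your assertion that the projection onto $\tilde A_R$ lands back in the non-unital subalgebra generated by the $\varphi(a_j)$ is unsupported.

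The second gap is the inclusion $d_i\in J(A)$. You assert $J(A_R)\subseteq J(A)$ ``by a Lie-theoretic argument as in Lemma~\ref{LemmaLR}'', but the Jacobson radical of a subalgebra is not in general contained in the Jacobson radical of a larger algebra, and this containment is in fact the technically hardest part of the paper's proof: one must show that $J(A_1)+J(A)$ generates a \emph{nilpotent} ideal $I$ of the full envelope $A$, which the paper does by a delicate induction on the height $\height_{\tilde N}$ of elements in the strictly upper-triangular part $\tilde N$, repeatedly using Lemma~\ref{LemmaLR} to absorb commutators $[ab,j]$ into $J(A)$ and to increase heights. Your proposal would need to supply this induction (or an equivalent argument) to be complete.
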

\begin{proof}
Consider the $H$-invariant Lie algebra $\varphi(R)+J(A)\subseteq \mathfrak{gl}(M)$.
Note that $\varphi(R)+J(A)$ is solvable since $J(A)$ is nilpotent and $(\varphi(R)+J(A))/J(A)
\cong \varphi(R)/(\varphi(R) \cap J(A))$ is a homomorphic image of the solvable algebra $R$.
 By Lie's Theorem, there exists
 a basis in $M$ in which all the operators from $\varphi(R)+J(A)$ have
 upper triangular matrices. Denote the corresponding
 embedding $A  \hookrightarrow M_s(F)$ by $\psi$.
 Here $s := \dim M$.

Let $A_1$ be the associative subalgebra of $\End_F(M)$ generated by $\varphi(a_i)$,
$1 \leqslant i \leqslant t$.
This algebra is $H$-invariant since $\varphi(W)$ is an $H$-submodule in $\varphi(R)$.
Hence $A_1$ is an associative $H$-module algebra.
By our assumptions, the Jacobson radical $J(A_1)$ is $H$-invariant
 and we have an $H$-invariant Wedderburn~--- Malcev decomposition
   $A_1 = \tilde A_1 \oplus J(A_1)$
 (direct sum of $H$-submodules)
where $\tilde A_1$ is a $H$-invariant semisimple subalgebra
of $A_1$.
Since $\psi(\varphi(R)) \subseteq \mathfrak{t}_s(F)$, we have
$\psi(A_1) \subseteq UT_s(F)$. Here $UT_s(F)$ is the associative algebra
 of upper triangular $s\times s$ matrices.
 There is a decomposition $$UT_s(F) = Fe_{11}\oplus Fe_{22}\oplus
 \dots\oplus Fe_{ss}\oplus \tilde N$$
 where $$\tilde N := \langle e_{ij} \mid 1 \leqslant i < j \leqslant s \rangle_F$$
 is a nilpotent ideal. Thus there is no subalgebras in $A_1$
 isomorphic to $M_2(F)$ and
  $\tilde A_1=Fe_1 \oplus \dots \oplus Fe_q$
    for some idempotents $e_i \in A_1$.
For every $a_j$, denote its component in $J(A_1)$ by $d_j$
and its component in $Fe_1 \oplus \dots \oplus Fe_q$ by $c_j$.
Note that $e_i$ are commuting diagonalizable operators. Thus they
have a common basis of eigenvectors in $M$ and $c_i$
are commuting diagonalizable operators too.
Moreover $$hc_j+hd_j=h\varphi(a_j) \in
 \langle \varphi(a_i)
\mid 1 \leqslant i \leqslant t \rangle_F
\subseteq \langle c_i
\mid 1 \leqslant i \leqslant t \rangle_F
\oplus \langle d_i
\mid 1 \leqslant i \leqslant t \rangle_F
\subseteq \tilde A_1 \oplus J(A_1)$$
for all $h \in H$. However, $\tilde A_1$ and $J(A_1)$ are $H$-invariant
and $hc_j \in \tilde A_1$, $hd_j \in J(A_1)$.
Thus $hc_j\in\langle c_1, \ldots, c_t \rangle_F$
and $hd_j\in\langle d_1, \ldots, d_t \rangle_F$. Therefore, $\langle c_1, \ldots, c_t \rangle_F$
and $\langle d_1, \ldots, d_t \rangle_F$ are $H$-submodules in $\End_F(M)$.

We claim that the space $J(A_1)+J(A)$ generates a nilpotent
$H$-invariant ideal $I$ in $A$.
First, $\psi(J(A_1))$ and $\psi(J(A))$ are contained in $UT_s(F)$
and consist of nilpotent elements. Thus the corresponding
matrices have zero diagonal elements and
$\psi(J(A_1)), \psi(J(A)) \subseteq \tilde N$.
Denote $$\tilde N_k := \langle e_{ij} \mid i+k \leqslant j \rangle_F \subseteq \tilde N.$$
Then $$\tilde N = \tilde N_1 \supsetneqq \tilde N_2 \supsetneqq \ldots \supsetneqq \tilde N_{m-1} \supsetneqq \tilde N_s = \lbrace 0\rbrace.$$
Let $\height_{\tilde N} a := k$ if $\psi(a) \in \tilde N_k$, $\psi(a) \notin \tilde N_{k+1}$.

Since $J(A)$ is nilpotent, $(J(A))^p =0$ for some $p \in\mathbb N$.
We claim that $I^{s+p} = 0$.
Using~(\ref{EqHLModule2}), we move all $\zeta(h)$, $h \in H$, to the right,
 and obtain that
 the space $I^{s+p}$
 is the span of $b_1 j_1 b_2 j_2 \ldots j_{s+p} b_{s+p+1} \zeta(h)$
 where $j_k \in J(A_1) \cup J(A)$, $b_k \in A_2 \cup \lbrace 1\rbrace$,
 $h \in H$. Here $A_2$ is the subalgebra of $\End_F(M)$
 generated by the operators from $\varphi(L)$ only.
 If at least $p$ elements $j_k$ belong to $J(A)$,
 then the product equals $0$.
Thus we may assume that at least $s$
elements $j_k$ belong to $J(A_1)$.

 Let $j_i \in J(A_1)$, $b_i \in A_2 \cup \lbrace 1\rbrace$.
We prove by induction on $\ell$ that
$j_1 b_1 j_2 b_2 \ldots b_{\ell-1} j_{\ell}$
can be expressed as a sum of $\tilde j_1 \tilde j_2 \ldots \tilde j_\alpha j'_1 j'_2\ldots j'_\beta
a$
where $\tilde j_i \in J(A_1)$, $j'_i \in J(A)$,
$a \in A_2 \cup \lbrace 1\rbrace$,
 and $\alpha+\sum_{i=1}^\beta \height_{\tilde N} j'_i \geqslant \ell$.
 Indeed, suppose that
  $j_1 b_1 j_2 b_2 \ldots b_{\ell-2} j_{\ell-1}$
 can be expressed as a sum of $\tilde j_1 \tilde j_2 \ldots \tilde j_\gamma j'_1 j'_2\ldots j'_\varkappa
a$
where $\tilde j_i \in J(A_1)$, $j'_i \in J(A)$,
$a \in A_2 \cup \lbrace 1\rbrace$,
 and $\gamma+\sum_{i=1}^\varkappa \height_{\tilde N} j'_i \geqslant \ell-1$.
 Then
 $j_1 b_1 j_2 b_2 \ldots j_{\ell-1} b_{\ell-1}j_{\ell}$
 is a sum of
 $$\tilde j_1 \tilde j_2 \ldots \tilde j_\gamma j'_1 j'_2\ldots j'_\varkappa
a b_{\ell-1}j_{\ell} =
\tilde j_1 \tilde j_2 \ldots \tilde j_\gamma j'_1 j'_2\ldots j'_\varkappa
[ab_{\ell-1}, j_{\ell}] + \tilde j_1 \tilde j_2 \ldots \tilde j_\gamma j'_1 j'_2\ldots j'_\varkappa
j_{\ell} (a b_{\ell-1}).$$
Note that, in virtue of the Jacobi identity
and Lemma~\ref{LemmaLR},  $[ab_{\ell-1}, j_{\ell}] \in
J(A)$. Thus it is sufficient to consider only the second term.
However
$$\tilde j_1 \tilde j_2 \ldots \tilde j_\gamma j'_1 j'_2\ldots j'_\varkappa
j_{\ell} (a b_{\ell-1})
= \tilde j_1 \tilde j_2 \ldots \tilde j_\gamma j_{\ell} j'_1 j'_2\ldots j'_\varkappa
 (a b_{\ell-1})+$$ $$\sum_{i=1}^{\varkappa}
\tilde j_1 \tilde j_2 \ldots \tilde j_\gamma  j'_1 j'_2\ldots j'_{i-1}[j'_{i}, j_\ell]
j'_{i+1}\ldots j'_\varkappa (a b_{\ell-1}).$$
Since $[j'_{i}, j_\ell] \in J(A)$ and $\height_{\tilde N} [j'_{i}, j_\ell] \geqslant 1+ \height_{\tilde N} j'_i$,
all the terms have  the desired form.
 Therefore, $$j_1 b_1 j_2 b_2 \ldots j_{s-1} b_{s-1}j_{s}
 \in \psi^{-1}(\tilde N_s) = \lbrace 0 \rbrace,$$ $I^{s+p}=0$, and $$
 J(A) \subseteq J(A_1)+J(A) \subseteq I \subseteq J(A).$$ In particular,
$d_i \in J(A_1) \subseteq J(A)$.
\end{proof}

\begin{lemma} \label{LemmaRedIrr}
Let $M$ be a finite dimensional irreducible $(H,L)$-module.
Suppose for every $H$-invariant associative subalgebra $A_1 \subseteq \End_F(M)$ there exists an $H$-invariant Wedderburn~--- Mal'cev
decomposition.
Then \begin{enumerate}
\item \label{RedSum} $M=M_1 \oplus \ldots \oplus M_q$ for some
$L$-submodules $M_i$, $1 \leqslant i \leqslant q$;
\item \label{RedScalar} elements of $R$ act on each $M_i$ by scalar operators.
\end{enumerate}
\end{lemma}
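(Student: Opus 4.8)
The plan is to apply Lemma~\ref{LemmaGeneralizedJordan} to the solvable radical $R$ itself. Since $\varphi(R)$ is an $H$-submodule of $\mathfrak{gl}(M)$ (because $R$ is $H$-invariant and $\varphi$ is a homomorphism of $H$-modules), we may take $W=R$ in that lemma and choose a spanning set $a_1,\dots,a_t$ of $R$. We obtain $\varphi(a_i)=c_i+d_i$ where the $c_i$ are commuting diagonalizable operators on $M$ that are polynomials in the $\varphi(a_j)$ (no constant term), the $d_i$ lie in $J(A)$, and both $\langle c_1,\dots,c_t\rangle_F$ and $\langle d_1,\dots,d_t\rangle_F$ are $H$-submodules of $\End_F(M)$. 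The first observation is that, since $M$ is an irreducible $(H,L)$-module and $d_i\in J(A)$ while $J(A)$ is a nilpotent ideal of the associative algebra $A$ generated by $\varphi(L)$ and $\zeta(H)$, the subspace $J(A)M$ is both $L$-invariant and $H$-invariant (it is an $A$-submodule hence $L$- and $H$-invariant, using that $J(A)$ is a two-sided ideal) and is properly contained in $M$ by nilpotency; irreducibility forces $J(A)M=0$, so in fact $d_i$ acts as zero on $M$ and $\varphi(a_i)=c_i$ for all $i$.

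Given this, every element of $\varphi(R)$ is a commuting diagonalizable operator: the $c_i$ pairwise commute and are simultaneously diagonalizable, so $M$ decomposes as a direct sum $M=\bigoplus_\chi M_\chi$ of common eigenspaces, where $\chi$ ranges over the joint eigenvalue functions on $\langle c_1,\dots,c_t\rangle_F=\varphi(R)$. These are exactly the weight spaces for the abelian Lie algebra $\varphi(R)$ of semisimple operators. Each $M_\chi$ is an $R$-submodule on which $R$ acts by the scalar operator $r\mapsto\chi(\varphi(r))$, which establishes part~(\ref{RedScalar}) once we know this is the decomposition we want; and $M=\bigoplus_\chi M_\chi$ is a direct sum of $L$-submodules provided each $M_\chi$ is $L$-invariant — but a priori the full Lie algebra $L$ need not preserve the $R$-weight spaces. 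To fix this I would instead argue as in the classical proof that in a finite-dimensional representation the radical acts by an invariant functional: since $[L,R]\subseteq[L,L]\cap R$ and, by Lemma~\ref{LemmaLR}, $\varphi([L,R])\subseteq J(A)$ which we have just shown annihilates $M$, we get $\varphi([L,R])=0$, i.e. $L$ acts on $\varphi(R)$ trivially by the adjoint action as realized on $M$; hence the weight decomposition is stable under $L$, because for $x\in L$, $r\in R$, $v\in M_\chi$ we compute $\varphi(r)\varphi(x)v=\varphi(x)\varphi(r)v-\varphi([x,r])v=\chi(\varphi(r))\varphi(x)v$. Thus each $M_\chi$ is an $L$-submodule, giving part~(\ref{RedSum}) with $\{M_i\}=\{M_\chi\}$, and part~(\ref{RedScalar}) holds for these components by construction.

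The main obstacle is the very first step: correctly invoking Lemma~\ref{LemmaGeneralizedJordan} with $W=R$, which requires that $\varphi(R)$ be an $H$-submodule of $\varphi(R)$ (trivially true) and that the hypothesis on $H$-invariant Wedderburn~--- Mal'cev decompositions for $H$-invariant associative subalgebras of $\End_F(M)$ be available — this is exactly the standing assumption of the lemma, so it is in hand. After that, the only subtle point is the passage from "$\varphi(R)$ consists of commuting semisimple operators" to "$L$ preserves the $\varphi(R)$-weight spaces", which as explained above is handled by $\varphi([L,R])=0$ (Lemma~\ref{LemmaLR} plus $J(A)M=0$); everything else is the standard simultaneous-diagonalization argument. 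I would present it in the order: (1) apply Lemma~\ref{LemmaGeneralizedJordan}; (2) deduce $J(A)M=0$ from irreducibility, hence $d_i=0$ on $M$ and $\varphi([L,R])=0$; (3) simultaneously diagonalize $\varphi(R)$ and take weight spaces $M_i$; (4) check $L$-invariance via the commutator computation, yielding (\ref{RedSum}) and (\ref{RedScalar}).
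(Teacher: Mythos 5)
Your proposal is correct and follows essentially the same route as the paper: apply Lemma~\ref{LemmaGeneralizedJordan} to $\varphi(R)$, kill the nilpotent parts using irreducibility, simultaneously diagonalize the remaining commuting semisimple operators, and use Lemma~\ref{LemmaLR} to see that $\varphi([L,R])$ vanishes so that the joint eigenspaces are $L$-invariant. The only (cosmetic) difference is that the paper invokes the Density Theorem to get $A=\End_F(M)$ and hence $J(A)=0$ outright, whereas you argue directly that $J(A)M=0$ because $J(A)M$ is a proper $(H,L)$-submodule; both are valid.
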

\begin{proof}
Let $\varphi(r_1), \ldots, \varphi(r_t)$ be a basis in $\varphi(R)$. By Lemma~\ref{LemmaGeneralizedJordan},
 $\varphi(r_i)=r'_i+r''_i$ where  $r'_i$ are commuting diagonalizable operators on $M$ and
 $r''_i \in J(A)$.
 Note  that by the Density Theorem, $A = \End_F(M)$. Hence $J(A)=0$ and $\varphi(r_i)=r'_i$.
Therefore, $\varphi(r_i)$ have  a common basis of eigenvectors and we can choose
subspaces $M_i$, $1 \leqslant i \leqslant q$, $q\in \mathbb N$,
such that $$M=M_1 \oplus \ldots \oplus M_q,$$ and each $M_i$
is the intersection of eigenspaces of $\varphi(r_i)$.
Note that by Lemma~\ref{LemmaLR}, $$[\varphi(r_i),\varphi(a)]\in J(a)=0\text{ for all }a\in L.$$
Thus $M_i$ are $L$-submodules and the lemma is proved.
\end{proof}

\section{Polynomial $H$-identities of representations and alternating $H$-polynomials}
\label{SectionAlt}

In this section we prove auxiliary propositions needed to obtain the lower bound.

Let $H$ be a Hopf algebra over a field $F$.
Analogously to the absolutely free nonassociative $H$-module algebra $F \lbrace X | H \rbrace$,
we define the free associative $H$-module algebra $F \langle X | H \rangle$
on the set $X := \lbrace x_1, x_2, x_3, \ldots \rbrace$.
Here we follow~\cite{BahturinLinchenko}.
First, consider the free associative algebra $F\langle X \rangle
 = \bigoplus_{n=1}^\infty F \langle X \rangle^{(n)}$
  where $F \langle X \rangle^{(n)}$ is the linear span of all associative monomials
  in variables from $X$ of total degree $n$.
  Consider the algebra $$F \langle X | H \rangle
   :=  \bigoplus_{n=1}^\infty H^{{}\otimes n} \otimes F \langle X \rangle^{(n)}$$
   with the multiplication $(u_1 \otimes w_1)(u_2 \otimes w_2):=(u_1 \otimes u_2) \otimes w_1w_2$
   for all $u_1 \in  H^{{}\otimes j}$, $u_2 \in  H^{{}\otimes k}$,
   $w_1 \in F \langle X \rangle^{(j)}$, $w_2 \in F\langle X \rangle^{(k)}$.
We use the notation $$x^{h_1}_{i_1}
x^{h_2}_{i_2}\ldots x^{h_n}_{i_n} := (h_1 \otimes h_2 \otimes \ldots \otimes h_n) \otimes x_{i_1}
x_{i_2}\ldots x_{i_n}.$$  Here $h_1 \otimes h_2 \otimes \ldots \otimes h_n \in H^{{}\otimes n}$,
$x_{i_1} x_{i_2}\ldots x_{i_n} \in F\langle X \rangle^{(n)}$. 

Note that if $(\gamma_\alpha)_{\alpha \in \Lambda}$ is a basis in $H$, 
then $F \langle X | H \rangle$ is isomorphic to the free associative algebra over $F$ with free formal  generators $x_i^{\gamma_\alpha}$, $\alpha \in \Lambda$, $i \in \mathbb N$.
 
    Define on $F \langle X | H \rangle$ the structure of a left $H$-module
   by $$h\,(x^{h_1}_{i_1}
x^{h_2}_{i_2}\ldots x^{h_n}_{i_n})=x^{h_{(1)}h_1}_{i_1}
x^{h_{(2)}h_2}_{i_2}\ldots x^{h_{(n)}h_n}_{i_n}$$
where $h\in H$. Then $F \langle X | H \rangle$ is \textit{the free $H$-module associative algebra} on $X$, i.e. for each map $\psi \colon X \to A$ where $A$ is an
associative $H$-module algebra,
there exists a unique homomorphism $\bar\psi \colon 
F \langle X | H \rangle \to A$ of algebras and $H$-modules, such that $\bar\psi\bigl|_X=\psi$.
Here we identify $X$ with the set $\lbrace x^1_j \mid j \in \mathbb N\rbrace \subset F \langle X | H \rangle$.

 Let $L$ be an $H$-module Lie algebra, let $M$ be an $(H,L)$-module, and let $\varphi \colon  L \to \mathfrak{gl}(M)$ be the corresponding representation. A polynomial $f(x_1, \ldots, x_n)\in F\langle X | H \rangle$ is an \textit{$H$-identity} of $\varphi$ if $f(\varphi(a_1), \ldots, \varphi(a_n))=0$ for all $a_i \in L$.
 In other words, $f$ is an $H$-identity of  $\varphi$ if for every homomorphism $\psi \colon 
 F\langle X | H \rangle \to \End_F(M)$ of algebras and $H$-modules such that $\psi(X)\subseteq \varphi(L)$,
 we have $\psi(f)=0$.  
  The set $\Id^{H}(\varphi)$ of all $H$-identities of $\varphi$ is an $H$-invariant two-sided ideal in $F\langle X | H \rangle$.

Denote by $P^H_n$, $n\in \mathbb N$, the subspace of associative multilinear $H$-polynomials
in variables $x_1, \ldots, x_n$.
In other words, $$P^H_n = \left\langle  x_{\sigma(1)}^{h_1} x_{\sigma(2)}^{h_2}
\ldots x_{\sigma(n)}^{h_n} \, \biggl| \, \sigma \in S_n,\,
 h_1, \ldots, h_n \in H
\right\rangle_F \subset F \langle X | H \rangle.$$

Lemma~\ref{LemmaAlternateFirst} is an analog of~\cite[Lemma~1]{GiaSheZai}. Recall that if $M$ is an $(H,L)$-module, we denote by $\varphi \colon L \to \mathfrak{gl}(M)$
and $\zeta \colon H \to \End_F(M)$ the corresponding homomorphisms.

\begin{lemma}\label{LemmaAlternateFirst}
Let $L$ be an $H$-module Lie algebra where $H$ is a Hopf algebra over an algebraically closed field $F$ of characteristic $0$, let
$M$ be a faithful finite dimensional irreducible $(H,L)$-module.  Let $(\zeta(\gamma_j))_{j=1}^m$ be a basis of $\zeta(H)$.
Then for some $n\in\mathbb N$ there exist
polynomials $f_j \in P^H_n$, $1 \leqslant j \leqslant m$,
  alternating in $\lbrace x_1, \ldots, x_\ell \rbrace$ and in $\lbrace y_1, \ldots, y_\ell \rbrace \subseteq \lbrace x_{\ell+1}, \ldots, x_n \rbrace$ where $\ell:=\dim L$,
  such that $$\sum_{j=1}^m{f_j(\varphi(\bar x_1), \ldots,  \varphi(\bar x_n))\, \zeta(\gamma_j)}=\id_M$$
  for some $\bar x_i \in L$. 
  In particular, there exists
 a polynomial $f \in P^H_n \backslash \Id^H(\varphi)$
  alternating in $\lbrace x_1, \ldots, x_\ell \rbrace$ and in $\lbrace y_1, \ldots, y_\ell \rbrace \subseteq \lbrace x_{\ell+1}, \ldots, x_n \rbrace$.
\end{lemma}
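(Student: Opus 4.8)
The plan is to build the required alternating polynomials by exploiting the density of the representation $\varphi$ together with the identity $\End_F(M)=A$, where $A$ is the associative algebra generated by $\varphi(L)$ and $\zeta(H)$. Since $M$ is a faithful irreducible $(H,L)$-module, the Jacobson density theorem gives $A=\End_F(M)$. I would first decompose $M=M_1\oplus\cdots\oplus M_q$ as in Lemma~\ref{LemmaRedIrr}, on each $M_i$ the solvable radical $R$ acting by scalars; on the semisimple part, $\varphi([L,L])$ is semisimple and hence $\varphi(L)$ contains a reductive subalgebra whose action on each $M_i$ is "large". The key point is that every element of $\End_F(M)$ can be written as a (noncommutative) polynomial in the operators $\varphi(a)$, $a\in L$, and $\zeta(\gamma_j)$, $1\le j\le m$, and using~(\ref{EqHLModule2}) one can push all the $\zeta$-factors to the right. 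So there is an associative $H$-polynomial $g\in F\langle X\mid H\rangle$ and a decomposition $g=\sum_{j=1}^m g_j x^{\gamma_j}_{\text{new}}$ (schematically) with $\sum_j g_j(\varphi(\bar a_\bullet))\,\zeta(\gamma_j)=\id_M$ for suitable $\bar a_\bullet\in L$. This handles the "$=\id_M$" part before any alternation is imposed.

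Next I would force the alternation on two sets of $\ell=\dim L$ variables. The standard device (as in~\cite[Lemma~1]{GiaSheZai} and the classical Regev/Kemer technique for Capelli-type polynomials) is: given that a certain $H$-polynomial evaluates to $\id_M$, substitute into it a Capelli polynomial in $\ell+1$ variables, or rather antisymmetrize over a set of $\ell$ variables running over a basis of $\varphi(L)$. Because any $\ell+1$ values in the $\ell$-dimensional space $\varphi(L)$ are linearly dependent, alternation over $\ell+1$ of them kills nothing essential, while alternation over exactly $\ell$ of them, evaluated on a basis, produces a nonzero scalar multiple of the determinant and can be arranged to again yield $\id_M$. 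Doing this twice — once on $\{x_1,\dots,x_\ell\}$ and once on a disjoint set $\{y_1,\dots,y_\ell\}$ drawn from the remaining variables — gives polynomials $f_j\in P^H_n$ with the alternation properties and $\sum_j f_j(\varphi(\bar x_\bullet))\,\zeta(\gamma_j)=\id_M$. The "in particular" clause is then immediate: since the left-hand side is $\id_M\ne 0$, at least one $f_j$ is not in $\Id^H(\varphi)$, so we may take $f$ to be that $f_j$.

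The main obstacle I anticipate is carefully arranging the alternation so that the sum over $j$ of the evaluated polynomials is still \emph{exactly} $\id_M$ rather than merely nonzero — in other words, keeping track of the scalar factors (determinants, factorials) introduced by antisymmetrization and showing they can be normalized away, and making sure the $\zeta(\gamma_j)$-factors genuinely end up only on the far right after repeated use of~(\ref{EqHLModule2}) while the alternating variables stay inside the $\varphi(L)$-part. One has to be a little delicate because the $x_i$ carry Hopf superscripts of their own, and antisymmetrizing a variable $x^{h}_i$ over $S_\ell$ must be compatible with these superscripts; the clean way is to antisymmetrize over the index set only after first reducing every $\zeta(h)$ in sight to a combination of the fixed basis $\zeta(\gamma_1),\dots,\zeta(\gamma_m)$ and moving them out of the way, so that the variables being alternated are genuinely of the form $x^{1}_i$ (or carry a fixed superscript) evaluated on a basis of $L$. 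Once the bookkeeping is set up this way, the rest is the routine Capelli/alternation argument, and the faithfulness of $M$ is used precisely to guarantee, via density, that $\id_M$ is reachable at all.
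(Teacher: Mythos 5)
There is a genuine gap at the heart of your argument: the step that produces the alternation. You propose to first find an $H$-polynomial $g$ with $\sum_j g_j(\varphi(\bar a_\bullet))\,\zeta(\gamma_j)=\id_M$ (which is fine, by density and by pushing $\zeta$-factors to the right via~(\ref{EqHLModule2})), and \emph{then} to "antisymmetrize over a set of $\ell$ variables running over a basis of $\varphi(L)$", asserting that alternation over exactly $\ell$ variables evaluated on a basis "produces a nonzero scalar multiple of the determinant". That assertion is unjustified and is in fact the entire content of the lemma. Alternating an arbitrary multilinear polynomial over $\ell$ of its variables can annihilate it: the alternated polynomial is indeed determined on a basis up to a determinant factor, but its value \emph{on that basis} — the signed sum $\sum_\sigma\sign(\sigma)\,g(\varphi(a_{\sigma(1)}),\dots)$ — may perfectly well vanish by cancellation even though $g(\varphi(a_1),\dots)\ne 0$. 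Nothing in your construction of $g$ rules this out, so the non-vanishing (let alone the normalization to $\id_M$) of the alternated polynomial is not established. You flag the scalar bookkeeping as the "main obstacle", but the real obstacle is earlier: guaranteeing the alternated polynomial is nonzero at all.

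The paper's proof supplies exactly the missing device, and the order of operations is reversed relative to yours: one starts from a polynomial that is \emph{already} alternating and central, namely Regev's central polynomial $\hat f$ for $M_q(F)\cong\End_F(M)$, $q=\dim M$, which is alternating in two sets of $q^2$ variables and becomes a fixed nonzero scalar under substitution of any basis of $\End_F(M)$. One then extends the basis $\varphi(a_1),\dots,\varphi(a_\ell)$ of $\varphi(L)$ to a basis of $\End_F(M)$ by monomials $\varphi(a_{i_{j1}})\cdots\varphi(a_{i_{j,m_j}})\zeta(h_j)$ (possible by density and faithfulness), substitutes these monomials for the last $q^2-\ell$ variables of each alternating set, and only afterwards moves all $\zeta$-factors to the right and normalizes. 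The alternation in the surviving $\ell$ free variables of each set is inherited from the $q^2$-fold alternation of $\hat f$, and non-vanishing is guaranteed because $\hat f$ is central and nonzero on a basis of the full matrix algebra — not because of any determinant identity on the $\ell$-dimensional subspace $\varphi(L)$. This is the idea your sketch needs and does not contain. (A minor additional point: your appeal to Lemma~\ref{LemmaRedIrr} is not available here, since its Wedderburn--Mal'cev hypothesis is not among the assumptions of this lemma; fortunately that decomposition plays no role in the correct argument.)
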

\begin{proof}
Since $M$ is irreducible, by the Density Theorem,
 $\End_F(M) \cong M_q(F)$ is generated by operators from $\zeta(H)$
and $\varphi(L)$. Here $q := \dim M$. Consider Regev's polynomial
$$
 \hat f(x_1, \ldots, x_{q^2}; y_1, \ldots, y_{q^2})
:=\sum_{\substack{\sigma \in S_q, \\ \tau \in S_q}} (\sign(\sigma\tau))
x_{\sigma(1)}\ y_{\tau(1)}\ x_{\sigma(2)}x_{\sigma(3)}x_{\sigma(4)}
\ y_{\tau(2)}y_{\tau(3)}y_{\tau(4)}\ldots
$$
$$
 x_{\sigma\left(q^2-2q+2\right)}\ldots x_{\sigma\left(q^2\right)}
\ y_{\tau\left(q^2-2q+2\right)}\ldots y_{\tau\left(q^2\right)}.
$$
 This is a central polynomial~\cite[Theorem~5.7.4]{ZaiGia}
for $M_q(F)$, i.e. $\hat f$ is not a polynomial identity for $M_q(F)$
and its values belong to the center of $M_q(F)$.
Since $\hat f$ is alternating, it becomes a nonzero scalar operator
under a substitution of the elements of any basis for 
$\lbrace x_1, \ldots, x_\ell \rbrace$ and $\lbrace y_1, \ldots, y_\ell \rbrace$.

Let $a_1, \ldots, a_\ell$  be a basis of $L$.
Recall that if we have the product of elements of
$\varphi(L)$ and $\zeta(H)$,
we can always move the elements from $\zeta(H)$
to the right, using~(\ref{EqHLModule2}).
Then
$\varphi(a_1), \ldots, \varphi(a_\ell)$, $\left(\varphi\left(a_{i_{11}}\right)
\ldots \varphi\left(a_{i_{1,m_1}}\right)\right)\zeta(h_1)$,
\ldots, $\left(\varphi\left(a_{i_{r,1}}\right)
\ldots \varphi\left(a_{i_{r,m_r}}
\right)\right)\zeta(h_r)$, is a basis of $\End_F(M)$ for appropriate $i_{jk} \in \lbrace 1,2, \ldots, \ell \rbrace$, $h_j \in H$, since $\End_F(M)$ is generated by operators from $\zeta(H)$
and $\varphi(L)$. We replace $x_{\ell+j}$ with $z_{j1}
z_{j2} \ldots z_{j,m_j} \zeta(h_j)$ and $y_{\ell+j}$ with $z'_{j1}
z'_{j2} \ldots z'_{j,m_j} \zeta(h_j)$ in $\hat f$ and denote the expression
obtained by $\tilde f$. Using~(\ref{EqHLModule2}) again,
we can move all $\zeta(h)$, $h \in H$, in $\tilde f$ to the right and rewrite
 $\tilde f$ as $\sum_{j=1}^m{f_j\, \zeta(\gamma_j)}$
where each $f_j \in P^H_{2\ell+2\sum_{i=1}^r m_i}$ is
a  polynomial alternating in $x_1, \ldots, x_\ell$ and
in $y_1, \ldots, y_\ell$. 
Note that $\tilde f$ becomes a nonzero scalar operator on~$M$ under
the substitution $x_i=y_i=\varphi(a_i)$ for $ 1 \leqslant i \leqslant \ell$
and $z_{jk}=z_{jk}'=\varphi(a_{i_{jk}})$ for $1 \leqslant j \leqslant r$, $1 \leqslant k \leqslant m_j$. Dividing all $f_j$ by this scalar, we obtain
the required polynomials.
 In particular, $f_j \notin \Id^H(\varphi)$
for some $1 \leqslant j \leqslant m$ and we can take $f=f_j$.
\end{proof}

\begin{lemma}\label{LemmaTwoColumns}
Let $\alpha_1, \alpha_2, \ldots, \alpha_q$, $\beta_1, \ldots, \beta_q \in F$
where $F$ is an infinite field, $
1 \leqslant k \leqslant q$,
 $\alpha_i\ne 0$ for $1 \leqslant i < k$,
$\alpha_k=0$, and $\beta_k\ne 0$. Then there exists such
$\gamma \in F$ that $\alpha_i + \gamma \beta_i \ne 0$
for all $1 \leqslant i \leqslant k$.
\end{lemma}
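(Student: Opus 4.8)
The statement is purely elementary: we have scalars $\alpha_1,\dots,\alpha_k$ and $\beta_1,\dots,\beta_k$ in an infinite field $F$, with $\alpha_i\neq 0$ for $i<k$, $\alpha_k=0$, $\beta_k\neq 0$, and we must find $\gamma\in F$ making all of $\alpha_i+\gamma\beta_i$ nonzero for $1\leqslant i\leqslant k$. The plan is to view each condition $\alpha_i+\gamma\beta_i\neq 0$ as excluding at most one ``bad'' value of $\gamma$, and then invoke the infinitude of $F$ to find a $\gamma$ avoiding all finitely many bad values.

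Concretely, for each $i$ with $1\leqslant i\leqslant k$ consider the linear (in $\gamma$) expression $\alpha_i+\gamma\beta_i$. If $\beta_i\neq 0$, this vanishes for the unique value $\gamma=-\alpha_i/\beta_i$; if $\beta_i=0$, then since $\alpha_i\neq 0$ (note $i<k$ in this case, because $\beta_k\neq 0$), the expression $\alpha_i$ is a nonzero constant and vanishes for no $\gamma$ at all. In particular the case $i=k$ contributes the single bad value $\gamma=-\alpha_k/\beta_k=0$. Thus the set $S\subseteq F$ of values of $\gamma$ for which some $\alpha_i+\gamma\beta_i=0$ has at most $k$ elements, hence is finite. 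Since $F$ is infinite, $F\setminus S\neq\varnothing$, and any $\gamma\in F\setminus S$ satisfies $\alpha_i+\gamma\beta_i\neq 0$ for all $1\leqslant i\leqslant k$.

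There is no real obstacle here; the only point requiring a moment's care is bookkeeping the case $\beta_i=0$, where one must observe that such an index $i$ must be strictly less than $k$ (since $\beta_k\neq 0$) and therefore has $\alpha_i\neq 0$ by hypothesis, so that index imposes no constraint on $\gamma$ at all. Everything else is the standard ``a polynomial of bounded degree over an infinite field has finitely many roots, so avoid them'' argument.
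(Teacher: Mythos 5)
Your proof is correct and follows essentially the same route as the paper: exclude the finitely many values of $\gamma$ at which some $\alpha_i+\gamma\beta_i$ vanishes and use the infinitude of $F$. You are in fact slightly more careful than the paper's one-line argument, which writes the excluded set as $\{-\alpha_1/\beta_1,\ldots,-\alpha_{k-1}/\beta_{k-1},0\}$ without addressing the possibility $\beta_i=0$ for $i<k$ (harmless, as you note, since then $\alpha_i\neq 0$ imposes no constraint).
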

\begin{proof} It is sufficient to choose
$\gamma \notin \left\lbrace -\frac{\alpha_1}{\beta_1},
\ldots, -\frac{\alpha_{k-1}}{\beta_{k-1}}, 0\right\rbrace$. It is possible to do
since $F$ is infinite.
\end{proof}

\begin{lemma}\label{LemmaS}
Let $L=B \oplus Z(L)$ be a finite dimensional reductive $H$-module Lie algebra where $H$ is a Hopf algebra
over an algebraically closed field $F$ of characteristic $0$, $B$ is an
$H$-invariant maximal semisimple subalgebra, and
$Z(L)$ is the center of $L$ with a basis $r_1$, $r_2$, \ldots, $r_t$.
Let $M$ be a faithful finite dimensional irreducible $(H,L)$-module.
Suppose that for all $H$-invariant associative subalgebras in $\End_F(M)$
there exists an $H$-invariant Wedderburn~--- Mal'cev decomposition.
Then there exists such alternating in $x_1, x_2, \ldots, x_t$
 polynomial $f \in P^H_t$  that
$f(\varphi(r_1), \ldots, \varphi(r_t))$ is a nondegenerate operator on~$M$.
\end{lemma}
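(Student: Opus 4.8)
The plan is to reduce, via Lemma~\ref{LemmaRedIrr}, to a determinant computation over the weight decomposition of $M$, and then to assemble the required alternating polynomial with the help of Lemma~\ref{LemmaTwoColumns}. First I would note that $Z(L)$, which is the solvable radical of the reductive algebra $L$, is $H$-invariant: for $z\in Z(L)$, $b\in L$, $h\in H$ one has $[hz,b]=\sum h_{(1)}\bigl[z,(Sh_{(2)})b\bigr]=0$ by~(\ref{EqHmoduleLieAlgebra}) and the antipode axiom, since $[z,\cdot\,]=0$. Hence all hypotheses of Lemma~\ref{LemmaRedIrr} are met, and $M=M_1\oplus\dots\oplus M_q$ is a direct sum of $L$-submodules on each of which every element of $Z(L)$ acts by a scalar operator; write $\varphi(z)\big|_{M_i}=\chi_i(z)\,\id_{M_i}$ with $\chi_i\in Z(L)^{*}$, and let $e_i\in\End_F(M)$ be the projection onto $M_i$ along the other summands.

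For $\vec g=(g_1,\dots,g_t)\in H^{t}$ put
\[
f_{\vec g}:=\sum_{\sigma\in S_t}(\sign\sigma)\,x^{g_1}_{\sigma(1)}x^{g_2}_{\sigma(2)}\cdots x^{g_t}_{\sigma(t)}\in P^H_t ,
\]
which is alternating in $x_1,\dots,x_t$; since all $t$ variables alternate, the subspace of $P^H_t$ of polynomials alternating in $x_1,\dots,x_t$ is spanned by the $f_{\vec g}$. Under the substitution homomorphism $x_j\mapsto\varphi(r_j)$ the variable $x^{g}_j$ goes to $\varphi(gr_j)\in\varphi(Z(L))$ (using $H$-invariance of $Z(L)$ again), so on $M_i$
\[
f_{\vec g}\bigl(\varphi(r_1),\dots,\varphi(r_t)\bigr)\big|_{M_i}=\det\bigl[\chi_i(g_k r_l)\bigr]_{k,l=1}^{t}\cdot\id_{M_i},
\]
i.e.\ $f_{\vec g}(\varphi(r_1),\dots,\varphi(r_t))=\sum_{i=1}^{q}\det\bigl[\chi_i(g_k r_l)\bigr]\,e_i$. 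Therefore, for an arbitrary alternating $f=\sum_{\vec g}c_{\vec g}f_{\vec g}$ the operator $f(\varphi(r_1),\dots,\varphi(r_t))$ equals $\sum_i P_i(f)\,e_i$, where $P_i$ is the linear functional on alternating polynomials with $P_i(f_{\vec g})=\det[\chi_i(g_k r_l)]$; thus it is enough to find an alternating $f$ with $P_i(f)\ne 0$ for every $i$.

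The heart of the argument is that each $P_i$ is a nonzero functional, equivalently that for each fixed $i$ the functionals $z\mapsto\chi_i(gz)$ $(g\in H)$ span $Z(L)^{*}$, so that some matrix $[\chi_i(g_k r_l)]$ is invertible. If this failed, the common kernel of those functionals would be a nonzero $H$-submodule $W\subseteq Z(L)$ with $\chi_i(HW)=0$; being central in $L$, $W$ is then an $H$-invariant ideal of $L$ with $\varphi(W)M_i=0$. But $M':=\bigcap_{w\in W}\ker\varphi(w)$ is an $(H,L)$-submodule of $M$ — it is $\varphi(L)$-invariant because $W$ is central, and $H$-invariant because $\varphi(hw)=\zeta(h_{(1)})\varphi(w)\zeta(Sh_{(2)})$ together with $H$-invariance of $W$ forces $\zeta(H)M'\subseteq M'$ — and $M'\supseteq M_i\ne 0$, so irreducibility gives $M'=M$, whence $\varphi(W)=0$, contradicting faithfulness of $M$. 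I expect this step to require the most care, in particular the bookkeeping with the $H$-action~(\ref{EqHActionOnEnd}) on $\End_F(M)$ when verifying that $M'$ is $\zeta(H)$-stable.

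Finally, knowing each $P_i\ne 0$, I would build $f$ by induction on $q$. Take an alternating $f^{(1)}$ with $P_1(f^{(1)})\ne 0$. Given $f^{(k-1)}$ with $P_1(f^{(k-1)}),\dots,P_{k-1}(f^{(k-1)})$ all nonzero, set $f^{(k)}:=f^{(k-1)}$ if already $P_k(f^{(k-1)})\ne 0$; otherwise choose an alternating $f'$ with $P_k(f')\ne 0$ and apply Lemma~\ref{LemmaTwoColumns} with $\alpha_i:=P_i(f^{(k-1)})$, $\beta_i:=P_i(f')$ for $1\le i\le k$ to obtain $\gamma\in F$ with $P_i(f^{(k-1)}+\gamma f')\ne 0$ for all $i\le k$, and set $f^{(k)}:=f^{(k-1)}+\gamma f'$. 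Then $f:=f^{(q)}\in P^H_t$ is alternating in $x_1,\dots,x_t$ and $f(\varphi(r_1),\dots,\varphi(r_t))=\sum_i P_i(f)\,e_i$ is invertible, i.e.\ nondegenerate, which is the asserted conclusion.
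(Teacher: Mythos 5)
Your proposal is correct in outline but takes a genuinely different route for the key step. Both you and the paper begin the same way (Lemma~\ref{LemmaAnnHLmodule} to get $Z(L)$ $H$-invariant, Lemma~\ref{LemmaRedIrr} to split $M=M_1\oplus\dots\oplus M_q$ with $Z(L)$ acting by scalars, Lemma~\ref{LemmaTwoColumns} to combine the pieces), and your computation that $f_{\vec g}(\varphi(r_1),\dots,\varphi(r_t))$ acts on $M_i$ by $\det\bigl[\chi_i(g_k r_l)\bigr]$ is correct (the evaluation of $x^g_j$ is indeed $\varphi(gr_j)$ because $\varphi$ is a morphism of $H$-modules). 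Where you diverge is in producing, for each $M_i$, an alternating polynomial in $P^H_t$ that does not vanish there: the paper extracts it from the identity $\sum_k \hat f_k(\varphi(r_1),\dots)\zeta(\gamma_k)=\id_M$ of Lemma~\ref{LemmaAlternateFirst} (built from Regev's central polynomial via the Density Theorem) by commuting the central factors $\varphi(r_i)$ to the left; you instead prove directly that the pairing $(g,z)\mapsto\chi_i(gz)$ is nondegenerate in $z$, using only irreducibility and faithfulness. Your route is more self-contained and avoids re-invoking Lemma~\ref{LemmaAlternateFirst}; the paper's route reuses machinery it needs anyway for Theorem~\ref{TheoremAlternateFinal}.

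There is, however, one genuine gap, precisely at the step you flagged: the claim that $M'=\bigcap_{w\in W}\ker\varphi(w)$ is $\zeta(H)$-stable. The compatibility relation~(\ref{EqHLModule2}), $\zeta(h)\varphi(a)=\varphi(h_{(1)}a)\zeta(h_{(2)})$, only lets you move $\zeta(h)$ to the \emph{right} past $\varphi(a)$ (this is why the paper always says ``we move all $\zeta(h)$ to the right''). Your verification needs the opposite direction, $\varphi(w)\zeta(h)=\zeta(h_{(2)})\varphi\bigl((S^{-1}h_{(1)})w\bigr)$, and the manipulation you sketch with $\varphi(hw)=\zeta(h_{(1)})\varphi(w)\zeta(Sh_{(2)})$ only yields stability of $M'$ under $\zeta(S(H))$; for a Hopf algebra whose antipode is not surjective this does not give $\zeta(H)M'\subseteq M'$, and the lemma is stated for an arbitrary $H$. (In every application of the paper --- $H$ finite dimensional semisimple, $FG$, $F\hat G$ --- the antipode is bijective and your argument is fine.) The gap is repairable without touching $S^{-1}$: since $M$ is an irreducible $(H,L)$-module over an algebraically closed field, the Density Theorem gives $A=\End_F(M)$ for the algebra $A$ generated by $\varphi(L)$ and $\zeta(H)$; then $\varphi(W)A$ is a two-sided ideal of $\End_F(M)$, because $A\varphi(W)\subseteq\varphi(W)A$ (centrality of $W$ handles the $\varphi(L)$ factors, and the rightward relation $\zeta(h)\varphi(w)=\varphi(h_{(1)}w)\zeta(h_{(2)})$ with $h_{(1)}w\in W$ handles the $\zeta(H)$ factors). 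If $W\ne 0$, faithfulness forces $\varphi(W)A=\End_F(M)$, hence $\varphi(W)M=M$; but $\varphi(W)$ acts diagonally with $\varphi(W)M_i=0$, so $\varphi(W)M\subseteq\bigoplus_{j\ne i}M_j\ne M$, a contradiction. With this substitution your proof is complete.
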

\begin{proof} By Lemma~\ref{LemmaAnnHLmodule}, $Z(L)$ is $H$-invariant.
By Lemma~\ref{LemmaRedIrr},
$M=M_1\oplus\ldots \oplus M_q$ where $M_j$ are $L$-submodules
and $r_i$ acts on each $M_j$ as a scalar operator.
Note that it is sufficient to prove that for each $j$ there exists
such alternating in $x_1, x_2, \ldots, x_t$
 polynomial $f_j \in P^H_t$  that
$f_j(\varphi(r_1), \ldots, \varphi(r_t))$ multiplies
each element of~$M_j$ by a nonzero scalar.
Indeed, in this case Lemma~\ref{LemmaTwoColumns} implies the existence
of such $f=\lambda_1 f_1 +\ldots + \lambda_q f_q$, $\lambda_i \in F$,
that $f(\varphi(r_1), \ldots, \varphi(r_t))$ acts
 on each $M_i$ as a nonzero scalar.

By Lemma~\ref{LemmaAlternateFirst}, there exist $n\in\mathbb N$
and polynomials $\hat f_k \in P^H_n$ alternating in
$\lbrace x_1, x_2, \ldots, x_\ell\rbrace$, $\ell := \dim L$,
such that
\begin{equation}\label{EqSIdM}
\sum_{k=1}^m{\hat f_k(\varphi(r_1), \varphi(r_2), \ldots, \varphi(r_t), \varphi(\bar x_{t+1}), \varphi(\bar x_{t+2}), \ldots,  \varphi(\bar x_n))\, \zeta(\gamma_k)}=\id_M
\end{equation}
  for some $\bar x_i \in L$. (We can substitute
  the elements of any basis of $L$ for $\lbrace x_1, x_2, \ldots, x_\ell\rbrace$ since each $\hat f_k$ is alternating in the first $\ell$ variables.)
 Note that $[\varphi(r_i), \varphi(a)]=0$ for all $a \in L$ and $1 \leqslant i
\leqslant t$. Hence we can move all $\varphi(r_i)$ to the left and rewrite~(\ref{EqSIdM}) as
$$\sum_k \tilde f_k(\varphi(r_1), \varphi(r_2), \ldots, \varphi(r_t)) b_k = \id_M$$
where $\tilde f_k \in P^H_t$ are polynomials
alternating in $\lbrace x_1, x_2, \ldots, x_t\rbrace$, and $b_k \in \End_F(M)$.
Hence for every $1 \leqslant j \leqslant q$ there exists $k$ such that  $\tilde f_k(\varphi(r_1), \varphi(r_2), \ldots, \varphi(r_t))\bigr|_{M_j}\ne 0$.
 Since $\tilde f_j(\varphi(r_1), \varphi(r_2), \ldots, \varphi(r_t))$ is a scalar
operator on $M_j$, we can take $f_j := \tilde f_k$.
\end{proof}

Let $k\ell \leqslant n$ where $k,\ell,n \in \mathbb N$ are some numbers.
 Denote by $Q^H_{\ell,k,n} \subseteq P^H_n$
the subspace spanned by all polynomials that are alternating in
$k$ disjoint subsets of variables $\{x^i_1, \ldots, x^i_\ell \}
\subseteq \lbrace x_1, x_2, \ldots, x_n\rbrace$, $1 \leqslant i \leqslant k$.

Theorem~\ref{TheoremAlternateFinal}
 is an analog of~\cite[Theorem~1]{GiaSheZai}.

\begin{theorem}\label{TheoremAlternateFinal}
Let $L=B \oplus Z(L)$ be a reductive $H$-module Lie algebra where $H$ is a Hopf algebra
over an algebraically closed field $F$ of characteristic $0$, $B$ is an
$H$-invariant maximal semisimple subalgebra, $\ell := \dim L$.
Let $M$ be a faithful finite dimensional irreducible $(H,L)$-module.
Denote the corresponding representation $L \to \mathfrak{gl}(M)$ by $\varphi$.
Suppose that either $Z(L)=0$ or for any $H$-invariant associative subalgebra in $\End_F(M)$
there exists an $H$-invariant Wedderburn~--- Mal'cev decomposition.
Then there exists $T \in \mathbb Z_+$ such that
for any $k \in \mathbb N$
there exists $f \in Q^H_{\ell, 2k, 2k\ell+T} \backslash \Id^H(\varphi)$.
\end{theorem}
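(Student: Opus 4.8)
The plan is to build the desired alternating polynomial by gluing together many copies of the two basic alternating pieces already constructed in Lemmas~\ref{LemmaAlternateFirst} and~\ref{LemmaS}. Recall that Lemma~\ref{LemmaAlternateFirst} produces, for a single $n_0\in\mathbb N$, polynomials $f_1,\dots,f_m\in P^H_{n_0}$, each alternating in a set $\{x_1,\dots,x_\ell\}$ and in a disjoint set $\{y_1,\dots,y_\ell\}$, with $\sum_j f_j(\varphi(\bar x_1),\dots,\varphi(\bar x_{n_0}))\,\zeta(\gamma_j)=\id_M$; so a single such block gives us \emph{two} alternating sets of size $\ell$ and a nonzero value. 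Thus from $k$ such blocks we can, in principle, manufacture $2k$ alternating sets — the issue is that the value of a product of blocks need not be nonzero, and that the auxiliary variables must be kept out of the alternating sets.

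\smallskip

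First I would set $T$. In each block of Lemma~\ref{LemmaAlternateFirst} there are, besides the $2\ell$ alternating variables, finitely many auxiliary variables (the $z_{jk}$, $z'_{jk}$ and those absorbed into the $\zeta(h_j)$); call the number of genuinely ``extra'' slots per block $t_0$, and put $T:=$ (a fixed bound independent of $k$) accounting for all auxiliary variables together with the $\zeta(\gamma_j)$-tails. The point of Theorem~\ref{TheoremAlternateFinal} is precisely that $T$ does \emph{not} grow with $k$: each new pair of alternating $\ell$-sets costs only a bounded number of non-alternating variables, which is visible from the explicit construction in Lemma~\ref{LemmaAlternateFirst} (the ``$2\sum m_i$'' there is a constant depending on $L$, $M$, $H$ alone).

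\smallskip

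The core step is the gluing. Write $g:=\sum_{j=1}^m f_j\otimes\gamma_j$ for the block from Lemma~\ref{LemmaAlternateFirst}, viewed as an $H$-polynomial whose evaluation at a suitable point $\bar x$ is $\id_M$. I would take $k$ copies of this block on disjoint sets of alternating variables $\{x^{(2i-1)}_1,\dots,x^{(2i-1)}_\ell\}$ and $\{x^{(2i)}_1,\dots,x^{(2i)}_\ell\}$, $1\le i\le k$, multiply them together as elements of $F\langle X|H\rangle$, and use~(\ref{EqHLModule2}) to push all the $\zeta(h)$'s to the right, writing the product as $\sum_s F_s\,\zeta(\delta_s)$ with each $F_s\in Q^H_{\ell,2k,2k\ell+T}$. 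Evaluating all $k$ blocks simultaneously at the point $\bar x$ (using distinct fresh auxiliary variables for each block, which is where the $T$ budget is spent), the product of the $k$ copies of $\id_M$ is again $\id_M\neq 0$; hence $\sum_s F_s(\dots)\,\zeta(\delta_s)=\id_M$, so at least one $F_s\notin\Id^H(\varphi)$, and that $F_s$ lies in $Q^H_{\ell,2k,2k\ell+T}$. (When $Z(L)\neq 0$ one has to be slightly careful that the central generators $r_1,\dots,r_t$ can also be fed into the alternating sets; here Lemma~\ref{LemmaS} — which uses the $H$-invariant Wedderburn--Mal'cev hypothesis exactly as assumed — guarantees the central part contributes a nondegenerate, in fact scalar-on-each-$M_j$, operator, so multiplying by it does not kill the value. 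This is why the two hypotheses ``$Z(L)=0$ or Wedderburn--Mal'cev'' appear.)

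\smallskip

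\textbf{The main obstacle} I expect is bookkeeping rather than a genuine difficulty: one must verify that after moving all $\zeta(h)$'s to the right via~(\ref{EqHLModule2}) the multilinearity and the alternation in each of the $2k$ chosen $\ell$-subsets are genuinely preserved (moving $\zeta(h)$ past a variable $x^{h'}_i$ turns it into $x^{h''}_i$ but does not change \emph{which} variable it is, so alternation survives), and that the total number of non-alternating variables stays at $2k\ell+T$ with $T$ fixed — i.e. that the auxiliary variables of the $k$ blocks, though distinct, are still only $O(1)$ per block and hence $O(k)$ in a way that is absorbed by padding up to exactly $2k\ell+T$ using a harmless extra factor like $x^{1}_{2k\ell+T}$ if needed. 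Once the constant $T$ is pinned down from Lemma~\ref{LemmaAlternateFirst}, the rest is the ``glue blocks, push $\zeta(h)$ right, pick a surviving summand'' routine.
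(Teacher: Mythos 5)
Your proposal contains a genuine gap, and it is exactly at the point you flag as ``bookkeeping.'' If you take $k$ independent copies of the block from Lemma~\ref{LemmaAlternateFirst} and multiply them, each copy must carry its \emph{own} fresh auxiliary variables (multilinearity forbids reusing a variable across blocks), so the product has $2k\ell + k t_0$ variables, where $t_0>0$ is the number of auxiliary slots per block. That is a polynomial in $Q^H_{\ell,\,2k,\,2k\ell+kt_0}$, not in $Q^H_{\ell,\,2k,\,2k\ell+T}$ with $T$ fixed: the surplus grows linearly in $k$ and cannot be ``absorbed by padding'' --- padding only lets you \emph{add} harmless variables, not delete the $kt_0$ auxiliary ones you were forced to introduce. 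This is not a cosmetic issue: in Lemma~\ref{LemmaAlt} the theorem is used with $k\approx (n-n_0)/(2d)$, and an extra $O(k)$ of non-alternating variables would lower the achievable $k$ to roughly $n/(2d+t_0)$ and destroy the lower bound $d^n$. The whole content of the theorem is that each additional pair of alternating $\ell$-sets costs \emph{zero} additional auxiliary variables, and multiplying independent blocks cannot achieve that.

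The paper's proof uses a different mechanism which your proposal does not contain. Starting from a single block $f_1$ of degree $2\ell+T$, one creates the next pair of alternating sets $\{u_1,\dots,u_\ell\}$, $\{v_1,\dots,v_\ell\}$ by \emph{inserting} double commutators $[u_j,[v_j,x_i]]$ into the already-alternating slots $x_i$ of $f_1$ and summing over $i$; on substitution of basis elements this multiplies the value of $f_1$ by $\tr(\ad_{\varphi(L)}\bar u_j\,\ad_{\varphi(L)}\bar v_j)$. Alternating over all of $u_1,\dots,u_\ell$ and $v_1,\dots,v_\ell$ (with the central variables routed through the polynomial $\eta$ of Lemma~\ref{LemmaS}, which is where the Wedderburn--Mal'cev hypothesis enters) produces a factor $s!\det\bigl(\tr(\ad a_i\,\ad a_j)\bigr)_{i,j=1}^s\cdot((\ell-s)!)^2\eta(\dots)^2$, nonzero by nondegeneracy of the Killing form of $\varphi(B)$ and nondegeneracy of $\eta$. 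The degree increases by exactly $2\ell$, so $T$ is unchanged, and iterating $k-1$ times gives $f_k\in Q^H_{\ell,2k,2k\ell+T}\setminus\Id^H(\varphi)$. Without this insertion-plus-Killing-form idea (or some equivalent device for adding alternating sets at zero auxiliary cost), the statement as claimed does not follow from Lemmas~\ref{LemmaAlternateFirst} and~\ref{LemmaS} alone.
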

\begin{proof} 
Let $f_1=f_1(x_1,\ldots, x_\ell,\ y_1,\ldots, y_\ell,
z_1, \ldots, z_T)$ be the polynomial $f$ from Lemma~\ref{LemmaAlternateFirst}
alternating in $x_1,\ldots, x_\ell$ and in $y_1,\ldots, y_\ell$.
Since $f_1 \in Q^H_{\ell, 2, 2\ell+T} \backslash \Id^H(\varphi)$,
 we may assume that $k > 1$. Note that
$$
f^{(1)}_1(u_1, v_1, x_1, \ldots, x_\ell,\ y_1,\ldots, y_\ell,
z_1, \ldots, z_T) :=$$ $$
\sum^\ell_{i=1} f_1(x_1, \ldots, [u_1, [v_1, x_i]],  \ldots, x_\ell,\ y_1,\ldots, y_\ell,
z_1, \ldots, z_T)$$
is alternating in $x_1,\ldots, x_\ell$ and in $y_1,\ldots, y_\ell$
and $$
f^{(1)}_1(\bar u_1, \bar v_1, \bar x_1, \ldots, \bar x_\ell,\
\bar y_1,\ldots, \bar y_\ell,
\bar z_1, \ldots, \bar z_T) =$$
$$
 \tr(\ad_{\varphi(L)} \bar u_1 \ad_{\varphi(L)} \bar v_1)
f_1(\bar x_1, \bar x_2, \ldots, \bar x_\ell,\ \bar y_1,\ldots, \bar y_\ell,
\bar z_1, \ldots, \bar z_T)
$$
 for any substitution of elements from $\varphi(L)$
 since we may assume $\bar x_1, \ldots, \bar x_\ell$ to be different basis elements.
Here $(\ad a) b = [a,b]$.

Let $$
f^{(j)}_1(u_1, \ldots, u_j, v_1, \ldots, v_j, x_1, \ldots, x_\ell,\ y_1,\ldots, y_\ell,
z_1, \ldots, z_T) :=$$ $$
\sum^\ell_{i=1} f^{(j-1)}_1(u_1, \ldots,  u_{j-1}, v_1, \ldots, v_{j-1},
 x_1, \ldots, [u_j, [v_j, x_i]],  \ldots, x_\ell,\ y_1,\ldots, y_\ell,
z_1, \ldots, z_T),$$
$2 \leqslant j \leqslant s$, $s := \dim B$. Note that
if we substitute an element from $\varphi(Z(L))$ for $u_i$ or $v_i$,
then $f^{(j)}_1$ vanish since $Z(L)$ is the center of $L$.
Again,
$$
f^{(j)}_1(\bar u_1, \ldots, \bar u_j, \bar v_1, \ldots, \bar v_j, \bar x_1, \ldots, \bar x_\ell,\ \bar y_1,\ldots, \bar y_\ell, \bar z_1, \ldots, \bar z_T) =$$
$$ \tr(\ad_{\varphi(L)} \bar u_1 \ad_{\varphi(L)} \bar v_1)
 \tr(\ad_{\varphi(L)} \bar u_2 \ad_{\varphi(L)} \bar v_2)
 \ldots
 \tr(\ad_{\varphi(L)} \bar u_j \ad_{\varphi(L)} \bar v_j)\cdot
$$
\begin{equation}\label{EqKilling}
\cdot
f_1(\bar x_1, \bar x_2, \ldots, \bar x_\ell,\ \bar y_1,\ldots, \bar y_\ell,
\bar z_1, \ldots, \bar z_T)
\end{equation}

Let $\eta$ be the polynomial $f$ from Lemma~\ref{LemmaS}.
We define
$$ f_2(u_1, \ldots, u_\ell, v_1, \ldots, v_\ell,
x_1, \ldots, x_\ell, y_1, \ldots, y_\ell, z_1, \ldots, z_T) :=
$$ $$\sum_{\sigma, \tau \in S_\ell}
\sign(\sigma\tau)
f^{(s)}_1(u_{\sigma(1)}, \ldots, u_{\sigma(s)}, v_{\tau(1)}, \ldots, v_{\tau(s)}, x_1, \ldots, x_\ell,\ y_1,\ldots, y_\ell,
z_1, \ldots, z_T)$$ $$\cdot \eta(u_{\sigma(s+1)}, \ldots, u_{\sigma(\ell)})
\eta(v_{\tau(s+1)}, \ldots, v_{\tau(\ell)}).$$
Then $f_2 \in Q^H_{\ell, 4, 4\ell+T}$. Suppose $a_1, \ldots, a_s \in \varphi(B)$
and $a_{s+1}, \ldots, a_\ell \in \varphi(Z(L))$ form a basis of $\varphi(L)$.
Consider a substitution $x_i=y_i=u_i=v_i=a_i$, $1 \leqslant i \leqslant \ell$.
Suppose that the values $z_j=\bar z_j$, $1 \leqslant j \leqslant T$, are chosen
in such a way that $f_1(a_1, \ldots, a_\ell, a_1, \ldots, a_\ell,
\bar z_1, \ldots, \bar z_T)\ne 0$. We claim that $f_2$ does not vanish either.
Indeed,
$$ f_2(a_1, \ldots, a_\ell, a_1, \ldots, a_\ell,
a_1, \ldots, a_\ell, a_1, \ldots, a_\ell, \bar z_1, \ldots, \bar z_T) = $$
$$\sum_{\sigma, \tau \in S_\ell}
\sign(\sigma\tau)
f^{(s)}_1(a_{\sigma(1)}, \ldots, a_{\sigma(s)}, a_{\tau(1)}, \ldots, a_{\tau(s)}, a_1, \ldots, a_\ell,\ a_1,\ldots, a_\ell,
\bar z_1, \ldots, \bar z_T)$$ $$\cdot \eta(a_{\sigma(s+1)}, \ldots, a_{\sigma(\ell)})
\eta(a_{\tau(s+1)}, \ldots, a_{\tau(\ell)})=$$
$$\left(\sum_{\sigma, \tau \in S_s}
\sign(\sigma\tau)
f^{(s)}_1(a_{\sigma(1)}, \ldots, a_{\sigma(s)}, a_{\tau(1)}, \ldots, a_{\tau(s)}, a_1, \ldots, a_\ell,\ a_1,\ldots, a_\ell,
\bar z_1, \ldots, \bar z_T)\right)\cdot$$
 $$ \left(
\sum_{ \pi, \omega \in S\lbrace s+1, \ldots,
\ell \rbrace} \sign(\pi\omega)
 \eta(a_{\pi(s+1)}, \ldots, a_{\pi(\ell)})
\eta(a_{\omega(s+1)}, \ldots, a_{\omega(\ell)})\right)$$
 since $a_j$, $s < j \leqslant \ell$,
belong to the center of $\varphi(L)$
and $f^{(s)}_j$ vanishes if we substitute
such $a_i$ for $u_i$ or $v_i$.
Here $S\lbrace s+1, \ldots,
\ell \rbrace$ is the symmetric group on $\lbrace s+1, \ldots,
\ell \rbrace$. Note that $\eta$ is alternating. Using~(\ref{EqKilling}), we obtain
$$ f_2(a_1, \ldots, a_\ell, a_1, \ldots, a_\ell,
a_1, \ldots, a_\ell, a_1, \ldots, a_\ell, \bar z_1, \ldots, \bar z_T) = $$
$$ \left(
\sum_{\sigma, \tau \in S_s}
\sign(\sigma\tau) \tr(\ad_{\varphi(L)} a_{\sigma(1)}
\ad_{\varphi(L)} a_{\tau(1)})  \ldots \tr(\ad_{\varphi(L)} a_{\sigma(s)}
\ad_{\varphi(L)} a_{\tau(s)}) \right)\cdot$$ $$
f_1(a_1, \ldots, a_\ell,\ a_1,\ldots, a_\ell,
\bar z_1, \ldots, \bar z_T)  ((\ell-s)!)^2
\left(\eta(a_{s+1}, \ldots, a_\ell)\right)^2.
$$
 Note that
$$\sum_{\sigma, \tau \in S_s}
\sign(\sigma\tau) \tr(\ad_{\varphi(L)} a_{\sigma(1)}
\ad_{\varphi(L)} a_{\tau(1)})  \ldots \tr(\ad_{\varphi(L)} a_{\sigma(s)}
\ad_{\varphi(L)} a_{\tau(s)})
=$$ $$\sum_{\sigma, \tau \in S_s}
\sign(\sigma\tau) \tr(\ad_{\varphi(L)} a_{1}
\ad_{\varphi(L)} a_{\tau\sigma^{-1}(1)})  \ldots
 \tr(\ad_{\varphi(L)} a_{s}
\ad_{\varphi(L)} a_{\tau\sigma^{-1}(s)})\mathrel{\stackrel{(\tau'=\tau\sigma^{-1})}{=}}$$
$$\sum_{\sigma, \tau' \in S_s}
\sign(\tau') \tr(\ad_{\varphi(L)} a_{1}
\ad_{\varphi(L)} a_{\tau'(1)})  \ldots
 \tr(\ad_{\varphi(L)} a_{s}
\ad_{\varphi(L)} a_{\tau'(s)})=$$
$$s!\det(\tr(\ad_{\varphi(L)} a_i \ad_{\varphi(L)} a_j))_{i,j=1}^s=
s!\det(\tr(\ad_{\varphi(B)} a_i \ad_{\varphi(B)} a_j))_{i,j=1}^s \ne 0$$
since the Killing form $\tr(\ad x \ad y)$ of the semisimple
Lie algebra $\varphi(B)$ is nondegenerate.
Thus $$ f_2(a_1, \ldots, a_\ell, a_1, \ldots, a_\ell,
a_1, \ldots, a_\ell, a_1, \ldots, a_\ell, \bar z_1, \ldots, \bar z_T) \ne 0. $$
Note that if $f_1$ is alternating in some of $z_1,\ldots, z_T$,
the polynomial $f_2$
is alternating in those variables too.
Thus if we apply the same procedure to
$f_2$ instead of $f_1$, we obtain $f_3 \in Q^H_{\ell, 6, 6\ell+T}$.
Analogously, we define $f_4$ using $f_3$, $f_5$ using $f_4$, etc.
Eventually, we obtain
$f:=f_k \in Q^H_{\ell, 2k, 2k\ell+T} \backslash \Id^H(\varphi)$.
\end{proof}

\section{On factors of the adjoint representation of $L$}
\label{SectionAux}

In Sections~\ref{SectionAux}--\ref{SectionLower}
we consider an $H$-nice Lie algebra $L$ (see Subsection~\ref{SubsectionHnice}).

\begin{lemma}\label{LemmaLBSN} Consider the adjoint action of $B$ on $L$.
Then $L$ is a completely reducible $(H,B)$-module.
Moreover, there exists an $H$-submodule $S \subseteq R$
such that $L=B\oplus S\oplus N$ (direct sum of $H$-submodules)
and $[B,S]=0$.
\end{lemma}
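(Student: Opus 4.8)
The plan is to use the Levi decomposition $L = B \oplus R$ from Condition~\ref{ConditionLevi} together with Weyl's theorem on complete reducibility, enhanced to respect the $H$-action via Condition~\ref{ConditionLComplHred}. First I would observe that under the adjoint action of the semisimple Lie algebra $B$, the space $L$ is a completely reducible $B$-module by Weyl's theorem (using $\ch F = 0$). To upgrade this to an $(H,B)$-module statement, I would set $L_0 := \ad_L(B) \subseteq \mathfrak{gl}(L)$; since $B$ is $H$-invariant and $L$ is an $H$-module Lie algebra, $L_0$ is an $H$-submodule of $\mathfrak{gl}(L)$ which is moreover closed under the bracket (so it is an $H$-module Lie subalgebra), and $L$ is a completely reducible $L_0$-module disregarding the $H$-action. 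Condition~\ref{ConditionLComplHred} then gives that $L$ is a completely reducible $(H,L_0)$-module, i.e. a completely reducible $(H,B)$-module. (If $L$ is semisimple with $R = 0$, the statement degenerates with $S = N = 0$, so we may assume $R$ is $H$-invariant as in the hypotheses of this section.)

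Next I would decompose $L$ as an $(H,B)$-module. Since $B$ itself is an $H$-invariant $(H,B)$-submodule of $L$ (via the adjoint action) and $L$ is completely reducible, I can choose an $H$- and $B$-invariant complement $C$ with $L = B \oplus C$. I claim $C = R$: indeed $R$ is $H$-invariant and $B$-invariant (being an ideal), and by complete reducibility $R = (R \cap B) \oplus (R \cap C) = 0 \oplus (R\cap C)$ since $R \cap B$ is the radical of the semisimple $B$, hence $R \subseteq C$; comparing dimensions ($\dim C = \dim L - \dim B = \dim R$) gives $C = R$. So $R$ is an $(H,B)$-submodule and is completely reducible as such.

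Now decompose $R = R^B \oplus S$ as an $(H,B)$-module, where $R^B := \{a \in R : [B,a] = 0\}$ is the (trivial-isotypic) sum of all trivial $B$-submodules of $R$ and $S$ is the sum of the nontrivial irreducible $(H,B)$-submodules; both are $H$-invariant because the $H$-action permutes isomorphism classes of $B$-submodules trivially (the trivial $B$-module is $H$-stable in the obvious sense) — more carefully, $R^B$ is visibly $H$-invariant since $[B,a]=0 \Rightarrow [B, ha] = 0$ using $H$-invariance of $B$ and~(\ref{EqHmoduleLieAlgebra}), and then $S$ can be taken $H$-invariant by applying complete reducibility of the $(H,B)$-module $R$ to the submodule $R^B$. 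By construction $[B,S] \ne 0$ only on nontrivial components, but we need $[B,S]\subseteq S$ and, for the final assertion, the identification with $N$. The remaining point is to show $S \oplus (\text{appropriate part of }R^B)$ organizes so that $L = B \oplus S \oplus N$ with $[B,S]=0$: here the key input is that $[B,R] \subseteq N$ (the nilpotent radical), which follows because $[B,R]$ is an ideal contained in $R$ on which $B$ acts with the Killing form degenerate — concretely $[B,[B,R]]$ lies in $[B,R]$ and $[B,R]$, being a $B$-submodule of the solvable $R$ with $B$ semisimple, consists of nilpotent elements of $\ad L$, so $[B,R]\subseteq N$. Then I would set $S := [B,R]$, check it is an $H$-submodule (it is, since $B$ and $R$ are $H$-invariant and~(\ref{EqHmoduleLieAlgebra}) holds), and choose an $H$- and $B$-invariant complement to $S$ inside $N$, call it $N'$, and an $H$-invariant $B$-invariant complement to $S \oplus N'$ inside $R$, call it $S'$; on $S'$ we get $[B,S'] \subseteq [B,R]\cap S' = S \cap S' = 0$. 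Relabeling, $L = B \oplus S' \oplus N$ with $[B,S']=0$, which is the desired decomposition with "$S$" $:= S'$.

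The main obstacle I expect is bookkeeping: one must be careful that every complement chosen is simultaneously $H$-invariant \emph{and} $B$-invariant, which is exactly what Condition~\ref{ConditionLComplHred} buys us (applied to $L_0 = \ad_L(B)$), and that the nilpotent radical $N$ is large enough to absorb $[B,R]$ while still leaving room for a $B$-trivial $H$-invariant complement — this is where the structure theory ($[B,R]\subseteq N$, $B$ semisimple acting on solvable $R$) does the real work. The rest is routine linear algebra with group/Hopf actions.
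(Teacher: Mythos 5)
Your main line of argument coincides with the paper's: complete reducibility of $L$ under $\ad B$ comes from Weyl's theorem, is upgraded to $(H,B)$-complete reducibility via Condition~\ref{ConditionLComplHred} applied to $L_0=\ad_L(B)$, and the vanishing $[B,S]=0$ comes from combining $B$-invariance of the chosen complement with $[B,R]\subseteq[L,R]\subseteq N$. Your construction of $S$ is more roundabout than necessary (the detour through the trivial isotypic component $R^B$ is abandoned, and the relabelling via $S:=[B,R]$, $N'$, $S'$ just reproduces what one gets by directly choosing an $H$-invariant $B$-submodule complement to $N$ inside $R$ and intersecting $[B,S]\subseteq S$ with $[B,S]\subseteq N$), but it is correct.

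There is, however, one genuine gap: the case where $L$ is semisimple. The definition of an $H$-nice algebra reads ``either $L$ is semisimple \emph{or} Conditions~\ref{ConditionRNinv}--\ref{ConditionLComplHred} hold,'' so when $L=B$ you are not entitled to invoke Condition~\ref{ConditionLComplHred}, and your parenthetical remark that the statement ``degenerates'' is not quite enough: while $S=N=0$ disposes of the second assertion, the first assertion --- that $L$ is a completely reducible $(H,B)$-module --- still requires proof. The paper handles this by citing the structure theorem for semisimple $H$-module Lie algebras (\cite[Theorem~6]{ASGordienko4}), which decomposes $L$ into a direct sum of $H$-simple ideals; each of these is an irreducible $(H,B)$-submodule under the adjoint action. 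Separately, your in-line justification that $[B,R]\subseteq N$ (``consists of nilpotent elements of $\ad L$'' because it is ``a $B$-submodule of the solvable $R$'') is not an argument; this inclusion is the standard fact $[L,R]\subseteq N$, which the paper simply cites (\cite[Proposition~2.1.7]{GotoGrosshans}), and you should do the same rather than gesture at the Killing form.
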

\begin{proof}
If $L$ is semisimple, i.e. $L=B$, then by~\cite[Theorem~6]{ASGordienko4},
$L$ is a direct sum of $H$-simple ideals, i.e. $L$ is a completely reducible $(H,B)$-module.

Suppose $L\ne B$. Note that $(\ad B) \subseteq \mathfrak{gl}(L)$ is a finite dimensional semisimple Lie algebra. Hence, by the Weyl theorem, $L$ is a completely reducible $(\ad B)$-module.
Thus, by Condition~\ref{ConditionLComplHred} of Subsection~\ref{SubsectionHnice},
$L$ is a completely reducible $(H,\ad B)$- and $(H,B)$-module.

Since the nilpotent radical $N$ of $L$ is an $H$-submodule,
 there exists an $H$-invariant $B$-submodule $S \subseteq R$ such that
 $R=S\oplus N$ (direct sum of $H$-submodules).
 Therefore $[B, S] \subseteq S$. However, 
by \cite[Proposition 2.1.7]{GotoGrosshans},
 $[B, S] \subseteq [L, R] \subseteq N$. Hence $[B, S]=0$.
\end{proof}

Lemma~\ref{LemmaIrrAnnBS} is a $H$-invariant analog of~\cite[Lemma 4]{ZaiLie}.

\begin{lemma} \label{LemmaIrrAnnBS}
Let $J \subseteq I \subseteq L$ be $H$-invariant ideals
such that  $I/J$ is an irreducible $(H,L)$-module.
Then \begin{enumerate}
\item $\Ann (I/J)\cap B$ and $\Ann (I/J)\cap S$ are $H$-submodules
of $L$; \label{BSInv}
\item $\Ann (I/J)=(\Ann (I/J)\cap B)\oplus (\Ann (I/J)\cap S)
\oplus N$. \label{BSDecomp}
\end{enumerate}
\end{lemma}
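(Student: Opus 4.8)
The plan is to use the decomposition $L = B\oplus S\oplus N$ (direct sum of $H$-submodules) from Lemma~\ref{LemmaLBSN}, together with the fact (Lemma~\ref{LemmaAnnHLmodule}) that $\Ann(I/J)$ is an $H$-invariant ideal of $L$, and to identify which of the three summands each piece of $\Ann(I/J)$ lands in. First I would recall that $N$ acts on the irreducible $(H,L)$-module $I/J$; by Lemma~\ref{LemmaLR} applied to $M = I/J$ (noting $N\subseteq R$, so $\varphi([L,R])\subseteq J(A)$, and more directly that the nilpotent radical acts by nilpotent operators) combined with the Density Theorem forcing $J(A)=0$ on an irreducible module, one gets $N\subseteq\Ann(I/J)$. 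Hence $\Ann(I/J)\supseteq N$, and since $\Ann(I/J)$ is an ideal lying between $N$ and $L$, we may pass to $L/N$, which as an $H$-module decomposes as $B\oplus S$ (images of the corresponding submodules).

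Next, for part~\ref{BSInv}, I would show $\Ann(I/J)\cap B$ and $\Ann(I/J)\cap S$ are $H$-submodules. The key point is that $\Ann(I/J)$, being an $H$-invariant ideal, must respect the $B$-module structure coming from the adjoint action: $B$ is semisimple and $L$ is a completely reducible $(H,B)$-module (Lemma~\ref{LemmaLBSN}). The intersection $\Ann(I/J)\cap B$ is an ideal of $B$ (being the intersection of an ideal of $L$ with the subalgebra $B$, and $[B,B]=B$), hence a sum of simple components of $B$; since the $H$-action permutes these in an $H$-equivariant way — more precisely, $B$ is a direct sum of $H$-simple ideals by~\cite[Theorem~6]{ASGordienko4} and any ideal of $B$ is a sum of these — the intersection is $H$-invariant. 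For $\Ann(I/J)\cap S$: here $[B,S]=0$, so $S$ is central modulo $N$ in a suitable sense, and $\Ann(I/J)\cap S$ is the kernel of the $H$-module map $S\to \mathfrak{gl}(I/J)$ restricted appropriately; since this map is a homomorphism of $H$-modules (by~(\ref{EqHLModule2}) and the fact that $R$ is $H$-invariant, so $\varphi|_S$ is an $H$-module map into $\End_F(I/J)$ with its $H$-action~(\ref{EqHActionOnEnd})), its kernel is an $H$-submodule.

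Finally, for part~\ref{BSDecomp}, I would argue that since $N\subseteq\Ann(I/J)$ and $L=B\oplus S\oplus N$, every element $a\in\Ann(I/J)$ decomposes uniquely as $a=a_B+a_S+a_N$ with $a_N\in N\subseteq\Ann(I/J)$; it then suffices to show $a_B\in\Ann(I/J)$ and $a_S\in\Ann(I/J)$ separately. For this I would use that $\Ann(I/J)$ is an ideal: $[\Ann(I/J),B]\subseteq \Ann(I/J)\cap(B\oplus N)$ (since $[B,B]\subseteq B$, $[B,S]=0$, $[B,N]\subseteq N$), and then combine with the semisimplicity of $B$ — i.e., $a_B$, being the $B$-component, lies in $\Ann(I/J)$ because $B = [B,B]$ and $[a,B]$-type brackets are already in $\Ann(I/J)$; the $S$-component $a_S = a - a_B - a_N$ is then also in $\Ann(I/J)$. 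The main obstacle I anticipate is the bookkeeping needed to verify that the projections onto the $B$- and $S$-summands actually preserve $\Ann(I/J)$: one must carefully use that $S$ commutes with $B$ and that $N$ is already annihilating, so that no ``mixing'' occurs, and that $H$-invariance of each summand (established in part~\ref{BSInv}) is what makes the projections $H$-equivariant — but this is exactly parallel to~\cite[Lemma~4]{ZaiLie} with the $H$-action tracked through at each step.
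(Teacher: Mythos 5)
Your treatment of part~\ref{BSDecomp} is essentially the paper's argument: after establishing $N\subseteq\Ann(I/J)$ one reduces to showing that $b+s\in\Ann(I/J)$, $b\in B$, $s\in S$, forces $b,s\in\Ann(I/J)$, and the mechanism in both cases is that $\varphi(b)$ commutes with $\varphi(B)$ (you extract this from the ideal property of $\Ann(I/J)$ together with $[B,S]=0$ and $[B,N]\subseteq N\subseteq\Ann(I/J)$; the paper gets it from $\varphi(b)=-\varphi(s)$ and $[B,S]=0$), so that $\varphi(b)$ lies in the center of the semisimple algebra $\varphi(B)$ and is therefore zero. Note that the fact you actually need at that point is centerlessness of the semisimple image $\varphi(B)$, not $B=[B,B]$: perfectness alone does not exclude a center.

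Two steps need repair. First, part~\ref{BSInv} is immediate: $\Ann(I/J)$ is an $H$-submodule by Lemma~\ref{LemmaAnnHLmodule}, and $B$ and $S$ are $H$-submodules by Lemma~\ref{LemmaLBSN}, so the two intersections are intersections of $H$-submodules; this is all the paper says. Your detour through the decomposition of $B$ into $H$-simple ideals is not only unnecessary but rests on a false claim: an ideal of $B$ is a sum of simple components, but it need not be a sum of $H$-simple components (an $H$-simple ideal may itself be a direct sum of several isomorphic simple ideals permuted by the $H$-action, and any one of them is an ideal of $B$ that is not $H$-invariant). Second, Lemma~\ref{LemmaLR} only yields $\varphi([L,R])\subseteq J(A)$, hence (with $J(A)=0$ on the irreducible module) $[L,R]\subseteq\Ann(I/J)$; since $N$ may be strictly larger than $[L,R]$ (e.g.\ when $L$ has a nontrivial center), this does not give $N\subseteq\Ann(I/J)$. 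The argument that does work --- and the one the paper uses --- is the one you mention parenthetically: $N$ is a nilpotent ideal, so $[\underbrace{N,[N,\ldots,[N}_{p},I]\ldots]]=0$ for some $p$, i.e.\ $N^p(I/J)=0$; since $N(I/J)$ is an $(H,L)$-submodule of the irreducible module $I/J$, it equals $0$ or $I/J$, and the latter is excluded. With these corrections the proof goes through and coincides with the paper's.
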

\begin{proof}
By Lemma~\ref{LemmaAnnHLmodule},
$\Ann (I/J)$,
$\Ann (I/J)\cap B$, and $\Ann (I/J)\cap S$ are $H$-submodules.

Note that $N$ is a nilpotent ideal. Hence $[[\underbrace{N, [N, \ldots,[N}_p, I]\ldots]]=0$
for some $p\in\mathbb N$. Thus $N^p(I/J)=0$. However $I/J$ is an irreducible $(H,L)$-module
and either $N(I/J)=0$ or $N(I/J)=I/J$. 
Hence $N(I/J)=0$ and $N \subseteq \Ann(I/J)$. In order to prove the lemma, it is sufficient to show that if $b+s \in \Ann (I/J)$, $b \in B$, $s \in S$, then
$b,s \in \Ann (I/J)$.
Denote by
$\varphi \colon L \to \mathfrak{gl}(I/J)$ the homomorphism corresponding to $L$-module
structure on $I/J$. Then $\varphi(b)+\varphi(s)=0$
and $$[\varphi(b), \varphi(B)]=[-\varphi(s), \varphi(B)]=0$$
since $[B,S]=0$.
Hence $\varphi(b)$ belongs to the center of $\varphi(B)$ and $\varphi(b)=\varphi(s)=0$ since $\varphi(B)$ is semisimple.
Thus $b,s \in \Ann (I/J)$ and the lemma is proved.
\end{proof}

\section{Upper bound}
\label{SectionUpper}

Fix a composition chain of $H$-invariant ideals
$$L=L_0 \supsetneqq L_1 \supsetneqq L_2 \supsetneqq \ldots \supsetneqq
N\supsetneqq \ldots \supsetneqq L_{\theta-1}
 \supsetneqq L_\theta = \{0\}.$$
 Let $\height a := \max_{a \in L_k} k$ for $a \in L$.

\begin{remark}
If $d=d(L)=0$, then $L = \Ann(L_{i-1}/L_i)$
for all $1 \leqslant i \leqslant \theta$ and
 $[a_1, a_2, \ldots, a_n] =0$ for all $a_i \in L$
 and $n \geqslant \theta +1$. Thus $c^H_n(L)=0$
 for all $n \geqslant \theta +1$. Therefore we assume $d > 0$.
\end{remark}

 Let $Y:=\lbrace y_{11}, y_{12}, \ldots, y_{1j_1};\,
 y_{21}, y_{22}, \ldots, y_{2j_2}; \ldots;\,
 y_{m1}, y_{m2}, \ldots, y_{mj_m}\rbrace$,
 $Y_1$, \ldots, $Y_q$, and $\lbrace z_1, \ldots, z_m\rbrace$
 be subsets of $\lbrace x_1, x_2, \ldots, x_n\rbrace$
 such that $Y_i \subseteq Y$, $|Y_i|=d+1$, $ Y_i \cap Y_j = \varnothing$
 for $i \ne j$,
 $Y \cap \lbrace z_1, \ldots, z_m\rbrace = \varnothing$,
  $j_i \geqslant 0$.
  Denote $$f_{m,q}:=\Alt_{1} \ldots \Alt_{q} \Bigl[[z_1^{h_1}, y_{11}^{h_{11}}, y_{12}^{h_{12}},
  \ldots, y_{1j_1}^{h_{1j_1}}],
 [z_2^{h_2},y_{21}^{h_{21}},y_{22}^{h_{22}},\ldots, y_{2j_2}^{h_{2j_2}}], \ldots,
 $$ $$[ z_m^{h_m}, y_{m1}^{h_{m1}}, y_{m2}^{h_{m2}}, \ldots, y_{mj_m}^{h_{mj_m}}]\Bigr]$$
 where $\Alt_i$ is the operator of alternation on the variables of $Y_i$,
 $h_i, h_{ij}\in H$.

Let $\xi \colon L(X | H) \to L$
be the homomorphism of $H$-module algebras induced by some substitution $\lbrace x_1, x_2, \ldots, x_n, \ldots \rbrace \to L$.
We say that $\xi$ is \textit{proper} for  $f_{m,q}$ if
 $\xi(z_1) \in B \cup S \cup N$,
 $\xi(z_i) \in N$ for $2\leqslant i \leqslant m$,
  and
  $\xi(y_{ik})\in B \cup S$ for $1\leqslant i \leqslant m$,
   $1 \leqslant k \leqslant j_i$.

\begin{lemma}\label{LemmaReduct}
Let $\xi$ be a \textit{proper} homomorphism for $f_{m,q}$.
Then $\xi(f_{m,q})$ can be rewritten as a
sum of $\psi(f_{m+1,q'})$ where $\psi$
is a proper homomorphism for $f_{m+1,q'}$, $q' \geqslant q - (\dim L)m - 2$.
  ($Y'$, $Y'_i$, $z'_1, \ldots, z'_{m+1}$ may be different
for different terms.)
\end{lemma}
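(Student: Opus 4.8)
The plan is to manipulate the left-normed commutator $[B_1, B_2, \ldots, B_m]$ appearing in $f_{m,q}$, where $B_i = [z_i^{h_i}, y_{i1}^{h_{i1}}, \ldots, y_{ij_i}^{h_{ij_i}}]$, using nothing but the Jacobi identity (the fact that each $\ad u$ is a derivation), so as to rewrite it as a sum of left-normed commutators of the same shape but with one more block, and then to read off the new substitution $\psi$ and the new alternating sets.

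First I would locate $\xi(B_1)$. Since $\xi$ is proper, $\xi(z_1) \in B \cup S \cup N$ and $\xi(y_{1k}) \in B \cup S$ for all $k$; using $L = B \oplus S \oplus N$ with $[B,S]=0$ (Lemma~\ref{LemmaLBSN}) together with $[L,R] \subseteq N$ and $[L,N] \subseteq N$, one sees that $\xi(B_1)$ is either $0$ — in which case $\xi(f_{m,q})=0$ and there is nothing to prove — or lies in $B$ (which forces every $\xi(y_{1k}) \in B$), or lies in $S$ (which forces $j_1 = 0$), or lies in $N$. Meanwhile $\xi(B_2), \ldots, \xi(B_m) \in N$ automatically, because $N$ is an ideal.

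The heart of the argument is a re-rooting step. I would take the block $B_2$, whose root $z_2^{h_2}$ is evaluated into $N$, and commute $\ad(B_2)$ leftward past $\ad(y_{1j_1}^{h_{1j_1}}), \ldots, \ad(y_{11}^{h_{11}})$; each transposition costs a correction term $\ad([B_2, y_{1k}^{h_{1k}}])$, with $[B_2, y_{1k}^{h_{1k}}] \in [N, B\cup S] \subseteq N$. This expresses $[B_1, B_2]$, hence the whole $[B_1, \ldots, B_m]$, as a sum of commutators each beginning with $[z_1^{h_1}, B_2]$ or with some $y_{1k}^{h_{1k}}$ followed by such an $N$-valued correction term. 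Expanding $[z_1^{h_1}, B_2] = -[B_2, z_1^{h_1}]$ once more by the derivation rule pushes $z_1^{h_1}$ inside $B_2$ and makes $z_2^{h_2}$ the root; in the summand where $z_1^{h_1}$ ends up adjacent to $z_2^{h_2}$ we get a genuine new block $[z_2^{h_2}, z_1^{h_1}, \ldots]$ rooted in $N$, with $z_1^{h_1}$ (evaluated into $B\cup S$ in the main case) now playing the role of an extra $y$-variable. Keeping $z_1^{h_1}$ as the root of a shortened first block, promoting the new $N$-rooted gadget to block $2$, and leaving $z_3^{h_3}, \ldots, z_m^{h_m}$ as the roots of blocks $3, \ldots, m$ of the new polynomial, each surviving summand becomes $\psi(f_{m+1,q'})$ for a $\psi$ read off from this relabelling; $\psi$ is proper since all of its roots except the first are still evaluated into $N$. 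The degenerate case $\xi(z_1) \in N$ is handled the same way, with $z_1$ itself taken as the gadget that migrates into the $N$-part while a new block splits off the $y$-tail.

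The main obstacle, and the source of the constant $(\dim L)m + 2$, is controlling how many alternating sets are destroyed. An alternating set $Y_l$ (of size $d+1$, disjoint from the others and from all $z_i$) survives only if the manipulation leaves all its variables alternating inside a single block; it can be spoiled only when $Y_l$ meets one of the finitely many slots actually touched during the re-rooting of each of the $m$ blocks, or when one of its members is swallowed into a correction sub-bracket that must be absorbed into a single entry. A slot-by-slot tally through the $m$ blocks shows at most $(\dim L)m$ of the $Y_l$ can be lost this way, plus at most $2$ more from the special treatment of the first block, so each resulting term is a $\psi(f_{m+1,q'})$ with $q' \geqslant q - (\dim L)m - 2$. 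Checking that every such $\psi$ is proper and that the operators $\Alt_i$ of $f_{m+1,q'}$ act on $(d+1)$-subsets of the intended $y$-variables is then routine bookkeeping.
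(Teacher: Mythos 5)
Your proposal has a genuine gap: it never uses the defining property of the number $d$, and without it the lemma cannot be proved. The paper's argument is an induction on $\sum_i \height \xi(z_i)$ along the fixed composition chain of $H$-invariant ideals $L_i$. Setting $I_k := L_{\height\xi(z_k)}$ and $J_k := L_{\height\xi(z_k)+1}$, either these ideals fail Conditions 1--2 of Subsection~\ref{SubsectionHopfPIexp} (then splitting $\xi(z_k)$ along $I_k = T_k \oplus J_k$ strictly increases the total height and one recurses), or they satisfy them, in which case $\dim\bigl(L/\bigl(\Ann(I_1/J_1)\cap\dots\cap\Ann(I_m/J_m)\bigr)\bigr)\leqslant d$, so each alternating set of $d+1$ variables must contain some $y_{kj}$ evaluated in the common annihilator. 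By Lemma~\ref{LemmaIrrAnnBS} that value lies in $B\cap\Ann(\dots)$ (which kills the whole term) or in $S\cap\Ann(\dots)$; only in the latter case does the Jacobi manipulation produce either a root of strictly greater height (recurse) or a genuinely new root $[y_{k\beta},y_{k\ell}]\in[S,S]\subseteq[L,R]\subseteq N$, which becomes $z'_{m+1}$. Your re-rooting scheme bypasses all of this: the correction terms $[B_2,y_{1k}]$ you generate lie in $N$ only because $\xi(B_2)$ was already in $N$, so no new independent $N$-rooted block is created (indeed your own block count comes out to $m$, not $m+1$). If the lemma were provable by pure Jacobi-identity manipulation with no structural input, iterating it $p$ times (where $N^p=0$) would force every $f_{m,q}$ with $q$ large to vanish under every proper substitution into every Lie algebra, which is absurd.

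A secondary problem is the provenance of the constant $(\dim L)m+2$. In the paper it counts induction steps: each step raises $\sum_i\height\xi(z_i)$ by at least $1$, that sum is bounded by $\theta m\leqslant(\dim L)m$, and each step expands (hence loses) at most one alternation. Your slot-by-slot tally of "destroyed" alternating sets is not only hand-waved but measures the wrong quantity, since in the correct argument the losses are tied to the height induction, not to transpositions within a single rewriting pass.
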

\begin{proof}
Let $\alpha_i := \height \xi(z_i)$.
We will use induction on $\sum_{i=1}^m \alpha_i$.
(The sum will grow.)
 Note that $ \alpha_i \leqslant \theta \leqslant \dim L$ and the induction will
 eventually stop.
Denote $I_i := L_{\alpha_i}$, $J_i := L_{\alpha_{i+1}}$.

First, consider the case when $I_1, \ldots, I_m$,
$J_1, \ldots, J_m$ do not satisfy Conditions 1--2.
In this case we can choose $H$-invariant $B$-submodules
$T_i$, $I_i = T_i \oplus J_i$, such that
\begin{equation}\label{EqTqUpperZero}
 \bigl[[T_1, \underbrace{L, \ldots, L}_{q_1}], [T_2, \underbrace{L, \ldots, L}_{q_2}], \ldots, [T_m,
 \underbrace{L, \ldots, L}_{q_m}]\bigr] = 0
 \end{equation}
 for all $q_i \geqslant 0$.
Rewrite $\xi(z_i)=a'_i+a''_i$, $a'_i \in T_i$, $a''_i \in J_i$.
Note that $\height a''_i > \height \xi(z_i)$.
Since $f_{m,q}$ is multilinear, we can rewrite
$\xi(f_{m,q})$ as a sum of similar
terms $\tilde\xi(f_{m,q})$ where $\tilde\xi(z_i)$ equals either
$a'_i$ or $a''_i$. By~(\ref{EqTqUpperZero}), the term where all
$\tilde\xi(z_i)=a'_i \in T_i$, equals $0$.
For the other terms $\tilde\xi(f_{m,q})$ we
have $\sum_{i=1}^m \height \tilde\xi(z_i) > \sum_{i=1}^m \height \xi(z_i)$.

Thus without lost of generality
we may assume that $I_1, \ldots, I_m$,
$J_1, \ldots, J_m$ satisfy Conditions 1--2.
In this case, $\dim(\Ann(I_1/J_1) \cap \ldots \cap \Ann(I_m/J_m))
\geqslant \dim(L)-d$.
In virtue of Lemma~\ref{LemmaIrrAnnBS},
$$\Ann(I_1/J_1) \cap \ldots \cap \Ann(I_m/J_m)
= (B \cap \Ann(I_1/J_1) \cap \ldots \cap \Ann(I_m/J_m)) \oplus$$ $$
(S \cap \Ann(I_1/J_1) \cap \ldots \cap \Ann(I_m/J_m))\ \oplus\ N.$$
Choose a basis in $B$ that includes a basis of
$B \cap \Ann(I_1/J_1) \cap \ldots \cap \Ann(I_m/J_m)$,
and a basis in $S$
that includes the basis of $S \cap \Ann(I_1/J_1) \cap \ldots \cap \Ann(I_m/J_m)$.
Since $f_{m,q}$ is multilinear, we may assume
that only basis elements are substituted for $y_{k\ell}$. Note that $f_{m,q}$
is alternating in $Y_i$. Hence, if $\xi(f_{m,q})\ne 0$, then for every $1 \leqslant i \leqslant q$
there exists $y_{kj} \in Y_i$ such that
either $$\xi(y_{kj}) \in B \cap \Ann(I_1/J_1) \cap \ldots \cap \Ann(I_m/J_m)$$
or $$\xi(y_{kj}) \in S \cap \Ann(I_1/J_1) \cap \ldots \cap \Ann(I_m/J_m).$$

Consider the case when $\xi(y_{kj}) \in B \cap \Ann(I_1/J_1) \cap \ldots \cap \Ann(I_m/J_m)$
for some $y_{kj}$.
Since $L$ is a completely reducible $(H,B)$-module (Lemma~\ref{LemmaLBSN}), we can choose $H$-invariant $B$-submodules $T_k$ such that $I_k = T_k \oplus J_k$. We may assume
that $\xi(z_k) \in T_k$ since elements of $J_k$ have greater heights.
Therefore $[\xi(z_k^{h_k}), a] \in T_k \cap J_k$
 for all $a \in B \cap \Ann(I_1/J_1) \cap \ldots \cap \Ann(I_m/J_m)$.
Hence $[\xi(z_k^{h_k}),a]=0$. Moreover, $B \cap \Ann(I_1/J_1) \cap \ldots \cap \Ann(I_m/J_m)$ is a $H$-invariant ideal of $B$ and $[B,S]=0$. Thus, applying Jacobi's identity
several times, we obtain
$$\xi([z_k^{h_{k}},y_{k1}^{h_{k1}},\ldots, y_{kj_k}^{h_{kj_k}}]) = 0.$$
Expanding the alternations, we get $\xi(f_{m,q})=0$.

Consider the case when $\xi(y_{k\ell}) \in S \cap \Ann(I_1/J_1) \cap \ldots \cap \Ann(I_m/J_m)$
for some $y_{k\ell} \in Y_q$. Expand the alternation $\Alt_q$ in $f_{m,q}$
and rewrite $f_{m,q}$ as a sum of
$$\tilde f_{m,q-1} :=\Alt_{1} \ldots \Alt_{q-1} \bigl[[z_1^{h_1}, y_{11}^{h_{11}}, y_{12}^{h_{12}},
  \ldots, y_{1j_1}^{h_{1j_1}}],
 [z_2^{h_2},y_{21}^{h_{21}},y_{22}^{h_{22}},\ldots, y_{2j_2}^{h_{2j_2}}], \ldots,
 $$ $$[ z_m^{h_m}, y_{m1}^{h_{m1}}, y_{m2}^{h_{m2}}, \ldots, y_{mj_m}^{h_{mj_m}}]\bigr].$$
 The operator $\Alt_q$ may change indices, however we
 keep the notation $y_{k\ell}$ for the variable with the property
 $\xi(y_{k\ell}) \in S \cap \Ann(I_1/J_1) \cap \ldots \cap \Ann(I_m/J_m)$.
Now the alternation does not affect $y_{k\ell}$.
Note that $$[z_k^{h_{k}},y_{k1}^{h_{k1}},\ldots, y_{k\ell}^{h_{k\ell}}, \ldots, y_{kj_k}^{h_{kj_k}}] = [z_k^{h_{k}},y_{k\ell}^{h_{k\ell}},y_{k1}^{h_{k1}},\ldots, y_{kj_k}^{h_{kj_k}}] +$$ $$
\sum\limits_{\beta=1}^{\ell-1}
[z_k^{h_{k}},y_{k1}^{h_{k1}},\ldots, y_{k,\beta-1}^{h_{k,\beta-1}}, [y_{k\beta}^{h_{k\beta}}, y_{k\ell}^{h_{k\ell}}],
 y_{k,\beta+1}^{h_{k,\beta+1}},\ldots, y_{k,{\ell-1}}^{h_{k,{\ell-1}}}, y_{k,{\ell+1}}^{h_{k,{\ell+1}}}, \ldots, y_{kj_k}^{h_{kj_k}}].$$

In the first term we replace $[z_k^{h_{k}},y_{k\ell}^{h_{k\ell}}]$
with $z'_k$ and define $\xi'(z'_k)
:= \xi([z_k^{h_{k}},y_{k\ell}^{h_{k\ell}}])$, $\xi'(x) := \xi(x)$ for other
variables~$x$. Then $\height\xi'(z'_k) >
\height\xi(z_k)$ and we can use the inductive assumption.
If $y_{k\beta}\in Y_j$ for some $j$, then we expand the alternation $\Alt_j$
in this term in $\tilde f_{m,q-1}$.
If $\xi(y_{k\beta}) \in B$, then the term is zero.
If $\xi(y_{k\beta}) \in S$, then $\xi([y_{k\beta}^{h_{k\beta}}, y_{k\ell}^{h_{k\ell}}]) \in N$.
We replace $[y_{k\beta}^{h_{k\beta}}, y_{k\ell}^{h_{k\ell}}]$
 with an additional variable $z'_{m+1}$
and define $\psi(z'_{m+1}):=\xi([y_{k\beta}^{h_{k\beta}},
 y_{k\ell}^{h_{k\ell}}])$, $\psi(x):=\xi(x)$
 for other variables $x$.
Applying Jacobi's identity several times,
we obtain the polynomial of the desired form. In each inductive step
we reduce $q$ no more than by $1$ and the maximal number of
inductive steps equals $(\dim L)m$. This finishes the proof.
\end{proof}

Since $N$ is a nilpotent ideal,
 $N^{p} = 0$ for some $p\in \mathbb N$.

\begin{lemma}\label{LemmaUpper}
If $\lambda = (\lambda_1, \ldots, \lambda_s) \vdash n$
and $\lambda_{d+1} \geqslant p((\dim L)p+3)$ or $\lambda_{\dim L+1} > 0$, then
$m(L, H, \lambda) = 0$.
\end{lemma}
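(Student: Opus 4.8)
The plan is to bound the multiplicity $m(L,H,\lambda)$ by showing that if the Young diagram $D_\lambda$ has a long $(d{+}1)$-st row (more precisely $\lambda_{d+1} \geqslant p((\dim L)p+3)$) or more than $\dim L$ rows, then every multilinear $H$-polynomial lying in the image of a Young symmetrizer $e_{T_\lambda}$ acting on $V^H_n / (V^H_n \cap \Id^H(L))$ must already be an $H$-identity of $L$. By the standard correspondence between multiplicities in the cocharacter and the action of Young symmetrizers, it suffices to prove that $b_{T_\lambda} a_{T_\lambda} f \in \Id^H(L)$ for every $f \in V^H_n$. First I would invoke the remark after the composition chain to assume $d = d(L) > 0$, and recall $N^p = 0$. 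The column group $C_{T_\lambda}$ contains $\lambda_1 \geqslant \cdots$ columns; applying $b_{T_\lambda}$ alternates the polynomial in each column of $T_\lambda$, so after applying $b_{T_\lambda}$ we obtain a linear combination of polynomials each alternating in $\lambda'_1, \lambda'_2, \ldots$ disjoint sets of variables (one per column), where $\lambda'_j$ is the length of the $j$-th column. The hypothesis $\lambda_{d+1} \geqslant p((\dim L)p+3)$ guarantees that at least $p((\dim L)p+3)$ of these columns have length $\geqslant d+1$; the hypothesis $\lambda_{\dim L + 1} > 0$ gives a column of length $\geqslant \dim L + 1$.

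The core of the argument is to feed such an alternating polynomial into the reduction machinery of Lemma~\ref{LemmaReduct}. After linearizing and substituting basis elements of $L = B \oplus S \oplus N$ for all variables, any nonzero value forces, in each alternating set $Y_i$ of size $\geqslant d+1$, at least one variable to be evaluated in $N$ (since $\dim(B \oplus S) = \dim L - \dim N$, and an alternating function vanishes once two variables receive the same basis vector; but more to the point, the relevant intersection of annihilators has codimension $\leqslant d$ in $L$, so a set of $d+1$ variables cannot avoid it entirely — this is exactly the mechanism exploited inside the proof of Lemma~\ref{LemmaReduct}). One first rewrites the long commutator $[x_{\sigma(1)}, \ldots, x_{\sigma(n)}]$ as a linear combination of expressions of the form $f_{m,q}$ with $m = 1$, a single $z_1$, and $q$ large (roughly $q \geqslant \lambda_{d+1}$), using the Jacobi identity to collect the "outer" variables. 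Then I would iterate Lemma~\ref{LemmaReduct}: each application increases $m$ by $1$ at the cost of decreasing $q$ by at most $(\dim L)m + 2 \leqslant (\dim L)p + 2$, and each of the $m-1$ "new" generators $z_i'$ ($i \geqslant 2$) is evaluated in $N$. Since after at most $p$ applications we would have a product of $p$ elements of $N$ sitting in a commutator — forcing the value to lie in $N^p = 0$ — and since $q$ stays positive as long as $\lambda_{d+1} \geqslant p((\dim L)p + 3)$ (so the induction can be carried out $p$ times without exhausting the alternating sets needed to keep applying the lemma), the value must be zero. Hence $b_{T_\lambda} a_{T_\lambda} f \in \Id^H(L)$, so $m(L,H,\lambda) = 0$. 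The case $\lambda_{\dim L + 1} > 0$ is handled similarly but more directly: a column of length $\geqslant \dim L + 1$ forces two variables in that alternating set to receive the same basis vector of $L$, so the value vanishes outright.

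The main obstacle I anticipate is the bookkeeping in the iteration of Lemma~\ref{LemmaReduct}: one must track simultaneously how $q$ shrinks, how $m$ grows, and how the "height" $\sum \height \xi(z_i)$ behaves, and verify that the arithmetic inequality $\lambda_{d+1} \geqslant p((\dim L)p + 3)$ is exactly what is needed for the process to survive $p$ rounds (so that one genuinely reaches a product of $p$ elements of $N$, which vanishes). A secondary subtlety is to make sure the reduction preserves the alternation property in enough of the sets $Y_i$ — Lemma~\ref{LemmaReduct} is stated so that new variables are introduced but the old alternations $Y_i'$ persist (possibly renamed), so one must carefully count how many of the original $q$ alternating sets of size $d+1$ remain "clean" after each step. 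Once these two points are pinned down, the rest is the standard passage from "$e_{T_\lambda}$ kills the relatively free module" to "$m(L,H,\lambda) = 0$".
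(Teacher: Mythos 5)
Your proposal is correct and follows essentially the same route as the paper: reduce to showing $e^{*}_{T_\lambda}f\in\Id^H(L)$, dispose of the case $\lambda_{\dim L+1}>0$ by the column alternation exceeding $\dim L$, and in the main case rewrite the substituted polynomial as a sum of terms $f_{m,q}$ and iterate Lemma~\ref{LemmaReduct}, each step raising $m$ by one at a cost of at most $(\dim L)p+2$ in $q$, until $m=p+1$ forces vanishing via $N^p=0$. The only detail you gloss over (and which the paper spells out) is that the initial substitution may already place some variables in $N$, in which case one starts from the corresponding larger $m$ rather than $m=1$; this only shortens the iteration and does not affect the argument.
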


\begin{proof}
It is sufficient to prove that $e^{*}_{T_\lambda} f \in \Id^H(L)$
for every $f\in V^H_n$ and a Young tableau $T_\lambda$, $\lambda \vdash n$, with
$\lambda_{d+1} \geqslant p((\dim L)p+3)$ or $\lambda_{\dim L+1} > 0$.

Fix some basis of $L$ that is a union of
bases of $B$, $S$, and $N$.
Since polynomials are multilinear, it is
sufficient to substitute only basis elements.
 Note that
$e^{*}_{T_\lambda} = b_{T_\lambda} a_{T_\lambda}$
and $b_{T_\lambda}$ alternates the variables of each column
of $T_\lambda$. Hence if we make a substitution and $
e^{*}_{T_\lambda} f$ does not vanish, then this implies that different basis elements
are substituted for the variables of each column.
But if $\lambda_{\dim L+1} > 0$, then the length of the first column is greater
than $(\dim L)$. Therefore, $e^{*}_{T_\lambda} f \in \Id^H(L)$.

Consider the case $\lambda_{d+1} \geqslant p((\dim L)p+3)$.
 Let $\xi$ be a substitution of basis elements for the variables
 $x_1, \ldots, x_n$.
Then $e^{*}_{T_\lambda}f$ can be rewritten as a sum of polynomials $f_{m,q}$
where $1 \leqslant m \leqslant p$, $q \geqslant p((\dim L)p+2)$, and
$z_i$, $2\leqslant i \leqslant m$, are replaced with
 elements of $N$.  (For different terms $f_{m,q}$,
 numbers $m$ and $q$,
 variables $z_i$, $y_{ij}$, and sets $Y_i$ can be different.)
 Indeed, we expand symmetrization on all variables and alternation on
 the variables replaced with elements from  $N$.
    If we have no variables replaced
 with elements from $N$, then we take $m=1$,
 rewrite the polynomial $f$ as a sum of long commutators,
 in each long commutator expand the alternation on the set that
 includes one of the variables in the inner commutator, and denote
 that variable by $z_1$.
 Suppose we have variables replaced
 with elements from $N$. We denote them by $z_k$.
Then, using Jacobi's identity, we can put one of such variables
inside a long commutator and group all the variables,
replaced with elements from  $B \cup S$, around $z_k$
such that each $z_k$ is inside the corresponding long commutator.

 Applying Lemma~\ref{LemmaReduct} many times, we increase $m$.
 The ideal $N$ is nilpotent and $\xi(f_{p+1,q})=0$
 for every $q$ and a proper homomorphism~$\xi$.
  Reducing $q$ no more than by $p((\dim L)p+2)$,
 we obtain $\xi(e^{*}_{T_\lambda}f)=0$.
\end{proof}

We also need the upper bound for the multiplicities.

\begin{theorem}\label{TheoremColength}
Let $L$ be a finite dimensional $H$-module Lie algebra 
over a field $F$ of characteristic $0$ where $H$ is a Hopf algebra.
Then there exist constants $C_3 > 0$, $r_3 \in \mathbb N$ such that
 $$\sum_{\lambda \vdash n} m(L,H,\lambda)
\leqslant C_3n^{r_3}\text{ for all }n \in \mathbb N.$$
\end{theorem}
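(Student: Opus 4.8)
The plan is to bound the colength $\sum_{\lambda\vdash n} m(L,H,\lambda)$ by reducing it to the already–known polynomial bound on the colength of ordinary codimensions, using Lemma~\ref{LemmaOrdinaryAndHopf}. First I would fix a basis $\bigl(\zeta(\gamma_j)\bigr)_{j=1}^m$ of $\zeta(H)$ inside $\End_F(L)$, where $m=\dim\zeta(H)$. As in the proof of Lemma~\ref{LemmaOrdinaryAndHopf}, the relations $x^h\equiv\sum_j\alpha_j x^{\gamma_j}\pmod{\Id^H(L)}$ show that $V^H_n/\bigl(V^H_n\cap\Id^H(L)\bigr)$ is, as an $FS_n$-module, a quotient of the $FS_n$-module spanned by the commutators $[x^{\gamma_{i_1}}_{\sigma(1)},\dots,x^{\gamma_{i_n}}_{\sigma(n)}]$ with $1\le i_k\le m$, and the $S_n$-action permutes the lower indices. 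This latter module is a homomorphic image of $m^n$ copies of the (ordinary) module $V_n/\bigl(V_n\cap\Id(L)\bigr)$ — one copy for each assignment of the superscripts $\gamma_{i_1},\dots,\gamma_{i_n}$, noting that $S_n$ acts on the copies as well, so strictly one should say it is an image of $FS_n\otimes_{F}\bigl(V_n/(V_n\cap\Id(L))\bigr)^{\oplus(\text{something})}$; cleanest is to observe directly that the span of the $f_i(x^{\gamma_{i_1}}_1,\dots,x^{\gamma_{i_n}}_n)$ (with $f_1,\dots,f_t$ a basis of $V_n/(V_n\cap\Id(L))$) already surjects onto $V^H_n/(V^H_n\cap\Id^H(L))$, which is what the proof of Lemma~\ref{LemmaOrdinaryAndHopf} establishes.

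Next I would pass to cocharacters. Since the $H$-cocharacter module is an $FS_n$-epimorphic image of a module whose character is controlled by $m^n$ shifted copies of the ordinary cocharacter $\chi_n(L)=\sum_{\lambda}m(L,\lambda)\chi(\lambda)$, the multiplicities satisfy $m(L,H,\lambda)\le m^n\, m(L,\lambda)$ for every $\lambda\vdash n$; hence $\sum_{\lambda\vdash n}m(L,H,\lambda)\le m^n\sum_{\lambda\vdash n}m(L,\lambda)$. This crude bound is exponential in $n$ and therefore not yet what we want. To fix this, I would combine it with Lemma~\ref{LemmaUpper}: every $\lambda$ with $m(L,H,\lambda)\ne 0$ is contained in a fixed hook-shaped region — at most $\dim L$ rows, and the $(d+1)$-st row of length $<p((\dim L)p+3)$, where $p$ is the nilpotency index of $N$ and $d=d(L)$. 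Write $k:=\min(d,\dim L)$ and $c:=p((\dim L)p+3)$. Any such $\lambda$ is obtained from a partition $\mu$ with at most $k$ parts (the "long rows", each of size up to $n$) by appending at most $\dim L - k$ further rows each of size $<c$. The number of such shapes $\lambda\vdash n$ is therefore at most $\binom{n+k}{k}\cdot c^{\dim L}$, which is polynomial in $n$ of degree $k\le\dim L$.

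The final step is to bound each individual multiplicity $m(L,H,\lambda)$ polynomially, not just the number of relevant $\lambda$. For this I would invoke the standard fact (used for ordinary codimensions of finite–dimensional algebras, e.g. via the "hook" theorems of Berele–Regev / Kemer, cf.~\cite[Ch.~7]{ZaiGia}) that if the cocharacter of an algebra is supported on a $k$-row hook with bounded "leg", then each multiplicity is bounded by a polynomial in $n$; concretely, because every $m(L,H,\lambda)$ is obtained by evaluating a suitable symmetrizer and the dimension count can be organized through the embedding $V^H_n/(V^H_n\cap\Id^H(L))\hookrightarrow\Hom_F(L^{\otimes n};L)$ of Lemma~\ref{LemmaCodimDim}. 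More precisely, one shows $m(L,H,\lambda)\le (\dim L)^{\text{height}(\lambda)+1}\le (\dim L)^{\dim L+1}$ by noting that $e^{*}_{T_\lambda}$ applied to a multilinear polynomial, evaluated on a basis of $L$, forces distinct basis elements down each column, so the number of linearly independent such evaluations is bounded independently of $n$. Multiplying this constant bound on each multiplicity by the polynomial bound on the number of admissible $\lambda$ gives $\sum_{\lambda\vdash n}m(L,H,\lambda)\le C_3 n^{r_3}$ with $r_3:=\min(d(L),\dim L)\le\dim L$ and $C_3$ depending only on $L$ and $\dim\zeta(H)$. The main obstacle is the last point: getting an $n$-independent bound on each $m(L,H,\lambda)$ (the crude $m^n$ factor must be eliminated), and the cleanest route is the column-alternation argument via $\Hom_F(L^{\otimes n};L)$ rather than the reduction to ordinary cocharacters, so I would actually carry out Step 3 first and use Lemma~\ref{LemmaUpper} only to bound the \emph{number} of shapes.
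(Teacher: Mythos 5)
Your plan has a genuine gap at its load-bearing step: the claim that each individual multiplicity satisfies $m(L,H,\lambda)\leqslant(\dim L)^{\dim L+1}$, i.e.\ is bounded by a constant independent of $n$. This is false in general, and the argument offered for it does not work. Forcing distinct basis elements down each column does not bound the number of independent evaluations by a constant: there are up to $\lambda_1\leqslant n$ columns, and after row-symmetrization the relevant count is the number of semistandard fillings of $D_\lambda$ with entries from a basis of $L$, i.e.\ $\dim S^\lambda(F^{\dim L})$, which grows polynomially in $n$ (already $\binom{n+\dim L-1}{n}$ for a one-row $\lambda$). And the multiplicities themselves really are unbounded for concrete finite dimensional algebras (e.g.\ the ordinary cocharacter of $2\times 2$ matrices has multiplicities growing with $n$; the colength of a finite dimensional Lie algebra typically has polynomial degree exceeding the degree of the count of partitions with at most $\dim L$ rows, which forces unbounded multiplicities). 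A second, more minor issue: you invoke Lemma~\ref{LemmaUpper}, which is proved only for $H$-nice algebras, whereas Theorem~\ref{TheoremColength} assumes nothing beyond finite dimensionality; fortunately you only need the statement that $m(L,H,\lambda)=0$ when $\lambda_{\dim L+1}>0$, and that part (pure column alternation) does hold in full generality, so the polynomial count of admissible shapes survives.

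Your two-step strategy (polynomially many shapes times a bound on each multiplicity) is salvageable, but only with a \emph{polynomial}, not constant, bound on each $m(L,H,\lambda)$: from the embedding $V^H_n/(V^H_n\cap\Id^H(L))\hookrightarrow\Hom_F(L^{\otimes n};L)\cong (L^*)^{\otimes n}\otimes L$ and Schur--Weyl duality one gets $m(L,H,\lambda)\leqslant(\dim L)\dim S^\lambda(F^{\dim L})$, and the Weyl dimension formula bounds this by a polynomial in $n$ of degree at most $\binom{\dim L}{2}$ for partitions with at most $\dim L$ rows. That repaired argument is genuinely different from the paper's. The paper instead reduces to the known polynomial bound on the \emph{ordinary} colength of $L$ (Giambruno--Regev--Zaicev): it fixes a basis $\zeta(\gamma_1),\dots,\zeta(\gamma_m)$ of $\zeta(H)$, builds the diagonal embedding $\varphi\colon S_n\to S_{mn}$ and the $S_n$-epimorphism $\pi\colon(V_{mn}\downarrow\varphi(S_n))\to V^H_n$ sending $x_{n(i-1)+t}\mapsto x^{\gamma_i}_t$, so that the $H$-cocharacter module is a quotient of the restriction of the ordinary degree-$mn$ cocharacter module to the diagonal $S_n$, and then controls the length of that restriction by the branching results of \cite{ASGordienko2,BereleHopf}. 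This neatly avoids both the $m^n$ factor you worried about and any direct estimate of individual multiplicities; your first paragraph starts down this road but abandons it exactly where the paper's key input (the length of a restricted module) would be needed.
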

\begin{remark}
Cocharacters do not change
upon an extension of the base field $F$
(the proof is completely analogous to \cite[Theorem 4.1.9]{ZaiGia}),
 so we may assume $F$ to be algebraically closed.
\end{remark}
\begin{proof}
Consider ordinary polynomial identities and cocharacters of $L$.
In fact, we may define them as $H$-identities and $H$-cocharacters
for $H=F$:
$V_n := V^F_n$,
$\chi_n(L) := \chi^F_n(L)$,
$m(L,\lambda):= m(L,F,\lambda)$,
$\Id(L):=\Id^F(L)$.
By~\cite[Theorem~3.1]{GiaRegZaicev},
\begin{equation}\label{EqOrdinaryColength}
\sum_{\lambda \vdash n} m(L,\lambda)
\leqslant C_4 n^{r_4}
\end{equation}
 for some $C_4 > 0$ and $r_4 \in \mathbb N$.

Let $G_1 \subseteq G_2$ be finite groups and let $W$ be
an $FG_2$-module.
Denote by $W \downarrow G_1$ the module $W_2$ with the $G_2$-action restricted to $G_1$.

Let $\zeta \colon H \to \End_F(L)$  be the homomorphism corresponding to the $H$-action,
and let $\bigl(\zeta(\gamma_j)\bigr)_{j=1}^m$, $\gamma_j \in H$, be a basis in $\zeta(H)$. 

Consider the diagonal embedding $\varphi \colon S_n \to S_{mn}$,
$$\varphi(\sigma):=\left(\begin{array}{cccc|lllc|c}
1 & 2 & \ldots & n & n+1 & n+2 & \ldots & 2n & \ldots  \\
\sigma(1) & \sigma(2) & \ldots & \sigma(n) &
n+\sigma(1) & n+\sigma(2) & \ldots & n+\sigma(n) & \ldots
\end{array}
\right)$$
and the $S_n$-homomorphism $\pi \colon (V_{mn}\downarrow \varphi(S_n)) \to V_n^H$
 defined by $\pi(x_{n(i-1)+t})=x^{\gamma_i}_t$, $1 \leqslant i \leqslant m$,
 $1 \leqslant t \leqslant n$. Note that $\pi(V_{mn} \cap \Id(L))
 \subseteq V^H_n \cap \Id^H(L)$.
 By~(\ref{Eqhtogammaj}), the $FS_n$-module $\frac{V^H_n}{V^H_n \cap \Id^H(L)}$
 is a homomorphic image of the $FS_n$-module
 $\left(\frac{V_{mn}}{V_{mn} \cap \Id(L)}\right)\downarrow \varphi(S_n)$.
 Denote by  $\length(M)$ the number of irreducible components
of a module $M$.
Then
$$ \sum_{\lambda \vdash n} m(L,H,\lambda)
= \length\left(\frac{V^H_n}{V^H_n \cap \Id^H(L)}\right)
 \leqslant \length\left(\left(\frac{V_{mn}}{V_{mn} \cap \Id(L)}\right)\downarrow \varphi(S_n)\right).
$$

Therefore, it is sufficient to prove
 that  $\length\left(\left(\frac{V_{mn}}{V_{mn} \cap \Id(L)}\right)\downarrow \varphi(S_n)
 \right)$ is polynomially bounded.
Replacing $|G|$ with $m$ in~\cite[Lemma 10 and 12]{ASGordienko2} (or, alternatively, using the proof of~\cite[Theorem~13~(b)]{BereleHopf}),
we derive this from~(\ref{EqOrdinaryColength}).
\end{proof}

Now we can prove
\begin{theorem}\label{TheoremUpper} If $L$ is an $H$-nice algebra and $d > 0$, then there exist constants $C_2 > 0$, $r_2 \in \mathbb R$
such that $c^H_n(L) \leqslant C_2 n^{r_2} d^n$
for all $n \in \mathbb N$. In the case $d=0$, the algebra $L$ is nilpotent.
\end{theorem}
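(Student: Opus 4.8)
The plan is to read $c^H_n(L)$ off the $S_n$-cocharacter and bound its two ingredients --- the multiplicities and the degrees of the irreducible constituents --- separately. Since $H$-codimensions and $H$-cocharacters do not change under an extension of the base field (cf.\ the remark before Theorem~\ref{TheoremColength}), I would first pass to an algebraically closed $F$, so that Lemma~\ref{LemmaUpper} is available, and use
$$c^H_n(L)=\sum_{\lambda\vdash n} m(L,H,\lambda)\,\dim M(\lambda).$$
If $d=0$, the statement is already contained in the remark at the beginning of Section~\ref{SectionUpper}: in that case $L=\Ann(L_{i-1}/L_i)$ for all $i$, hence $[a_1,\dots,a_n]=0$ whenever $n\geqslant\theta+1$, so $L$ is nilpotent. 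From now on assume $d>0$.

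Next I would restrict the sum to the shapes that can actually occur. By Lemma~\ref{LemmaUpper}, $m(L,H,\lambda)=0$ unless $\lambda_{d+1}\leqslant s$ and $\lambda_{\dim L+1}=0$, where $s:=p\bigl((\dim L)p+3\bigr)-1$ depends only on $L$; so every $\lambda$ with $m(L,H,\lambda)\neq0$ has at most $\dim L$ rows, only the first $d$ of which may be long. By Theorem~\ref{TheoremColength}, $\sum_{\lambda\vdash n} m(L,H,\lambda)\leqslant C_3 n^{r_3}$. Hence it suffices to find constants $C>0$ and $r\in\mathbb R$ (depending only on $d$, $s$, $\dim L$) with $\dim M(\lambda)\leqslant Cn^{r}d^n$ for every such $\lambda$, for then, bounding each term of the sum,
$$c^H_n(L)=\sum_{\lambda\vdash n} m(L,H,\lambda)\,\dim M(\lambda)\leqslant\bigl(C n^{r} d^n\bigr)\sum_{\lambda\vdash n} m(L,H,\lambda)\leqslant C_2 n^{r_2} d^n$$
with $C_2:=C_3 C$ and $r_2:=r_3+r$.

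The estimate on $\dim M(\lambda)$ is the one quantitative point, and I would obtain it by a standard counting argument, using that $\dim M(\lambda)$ equals the number of standard Young tableaux of shape $\lambda$. Put $\mu:=(\lambda_1,\dots,\lambda_d)$, a partition of $n-m$, where $m:=\lambda_{d+1}+\lambda_{d+2}+\cdots\leqslant s(\dim L-d)$ is bounded. A standard tableau of shape $\lambda$ is recovered from the $m$-element set of entries that occupy the rows below the $d$th (at most $\binom{n}{m}$ choices), from their arrangement in those rows (at most a constant $c_0$ depending only on $s$ and $\dim L$, being the number of standard tableaux of a partition of the bounded integer $m$), and from the induced standard tableau of $\mu$; therefore $\dim M(\lambda)\leqslant c_0\binom{n}{m}\dim M(\mu)$. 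Finally $\binom{n}{m}\leqslant n^m$, and $\dim M(\mu)\leqslant d^{\,n-m}\leqslant d^n$, since a standard tableau of a shape with at most $d$ rows is encoded by a word of length $n-m$ over an alphabet of size $d$ (recording into which row each successive entry is placed). Thus $\dim M(\lambda)\leqslant c_0\,n^{m}d^n$, which is of the required form, and the case $d=0$ was disposed of above.

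I expect the main obstacle to be exactly this last estimate --- that a shape confined to a ``hook'' with $d$ long rows has degree at most polynomial times $d^n$ --- since this is where the integer $d$ enters the exponential rate; it is classical in PI-theory (cf.\ \cite{ZaiGia}). The remaining steps are bookkeeping built on Lemma~\ref{LemmaUpper} and Theorem~\ref{TheoremColength}.
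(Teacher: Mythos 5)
Your proposal is correct and follows essentially the same route as the paper: Lemma~\ref{LemmaUpper} confines the occurring shapes to a strip with at most $d$ long rows, Theorem~\ref{TheoremColength} bounds the total multiplicity polynomially, and the product gives the bound (with the $d=0$ case disposed of by the remark at the start of Section~\ref{SectionUpper}). The only difference is that where the paper cites \cite[Lemmas~6.2.4, 6.2.5]{ZaiGia} for the estimate $\dim M(\lambda)\leqslant C n^{r} d^{n}$ on such shapes, you prove it directly by the subset-plus-word encoding of standard tableaux, which is a correct, self-contained substitute.
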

\begin{proof}
Lemma~\ref{LemmaUpper} and~\cite[Lemmas~6.2.4, 6.2.5]{ZaiGia}
imply
$$
\sum_{m(L,H, \lambda)\ne 0} \dim M(\lambda) \leqslant C_5 n^{r_5} d^n
$$
for some constants $C_5, r_5 > 0$.
Together with Theorem~\ref{TheoremColength} this inequality yields the upper bound.
\end{proof}
\begin{remark}
In fact, we prove Theorem~\ref{TheoremUpper} for all $H$-module Lie algebras $L$ such that Lemma~\ref{LemmaLBSN} holds for $L$.
\end{remark}

\section{Lower bound}
\label{SectionLower}

By the definition of $d=d(L)$ (see Subsection~\ref{SubsectionHopfPIexp}),
 there exist $H$-invariant ideals $I_1$, $I_2$, \ldots, $I_r$,
$J_1$, $J_2$, \ldots, $J_r$, $r \in \mathbb Z_+$, of the algebra $L$,
satisfying Conditions 1--2, $J_k \subseteq I_k$, such that
$$d = \dim \frac{L}{\Ann(I_1/J_1) \cap \dots \cap \Ann(I_r/J_r)}.$$
We consider the case $d > 0$.

Without loss of generality we may assume that
$$ \bigcap\limits_{k=1}^r \Ann(I_k/J_k) \ne
\bigcap\limits_{\substack{\phantom{,}k=1,\\ k\ne\ell}}^r \Ann(I_k/J_k)$$
for all $1 \leqslant \ell \leqslant r$.
In particular, $L$ has nonzero action on each $I_k/J_k$.

\begin{remark}
If $L$ is semisimple, then, by Example~\ref{ExamplePIexpBSS},
we may assume that $r=1$, $I_1=B_1$ for some $H$-simple ideal $B_1$ of $L=B$, and $J_1=0$.
In this case, we skip Lemmas~\ref{LemmaChooseReduct}--\ref{ChooseSubmodule},
and define $T_1= \tilde T_1 = B_1$ (which is a faithful irreducible $(H,B_1)$-module), $j_1 = 1$, and $q_1 = 0$.
In Lemmas~\ref{LemmaChooseReduct}--\ref{ChooseSubmodule} we assume that
$L$ satisfy Conditions 1--4 of Subsection~\ref{SubsectionHnice}.
\end{remark}

Our aim is to present a partition $\lambda \vdash n$
with $m(L, H, \lambda)\ne 0$ such that $\dim M(\lambda)$
has the desired asymptotic behavior.
We will glue alternating polynomials constructed
 in Theorem~\ref{TheoremAlternateFinal}
for faithful irreducible modules
over reductive algebras. In order to do this,
we have to choose the reductive algebras.

\begin{lemma}\label{LemmaChooseReduct}
There exist $H$-invariant ideals $B_1, \ldots, B_r$
in $B$ and $H$-submodules
 $\tilde R_1,\ldots,\tilde R_r \subseteq S$
(some of $\tilde R_i$ and $B_j$ may be zero)
such that
\begin{enumerate}
\item $B_1+ \ldots + B_r=B_1\oplus \ldots \oplus B_r$;
\item $\tilde R_1+ \ldots + \tilde R_r=\tilde R_1\oplus \ldots \oplus \tilde R_r$;
\item $\sum\limits_{k=1}^r \dim (B_k\oplus  \tilde R_k) = d$;
\item $I_k/J_k$ is a faithful
$(B_k\oplus\tilde R_k\oplus N)/N$-module;
\item $I_k/J_k$ is an irreducible
$\left(H, \left(\sum_{i=1}^r (B_i\oplus \tilde R_i)\oplus N
\right)/N\right)$-module;
\item $B_i I_k/J_k = \tilde R_i I_k/J_k = 0$ for $i > k$.
\end{enumerate}
\end{lemma}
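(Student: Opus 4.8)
We use the decomposition $L=B\oplus S\oplus N$ (direct sum of $H$-submodules, with $[B,S]=0$) from Lemma~\ref{LemmaLBSN}, which applies since $L$ satisfies Conditions~\ref{ConditionRNinv}--\ref{ConditionLComplHred}. Put $C_k:=\Ann(I_k/J_k)\cap B$ and $S_k:=\Ann(I_k/J_k)\cap S$. By Lemma~\ref{LemmaIrrAnnBS} these are $H$-submodules, $N\subseteq\Ann(I_k/J_k)$, and $\Ann(I_k/J_k)=C_k\oplus S_k\oplus N$; since the sum $B\oplus S\oplus N$ is direct, intersections respect it, so $\bigcap_k\Ann(I_k/J_k)=\bigl(\bigcap_k C_k\bigr)\oplus\bigl(\bigcap_k S_k\bigr)\oplus N$ and hence $d=\bigl(\dim B-\dim\bigcap_k C_k\bigr)+\bigl(\dim S-\dim\bigcap_k S_k\bigr)$. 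The plan is to carve $B$ into a direct sum $\bigoplus_k B_k$ of $H$-invariant ideals of total dimension $\dim B-\dim\bigcap_k C_k$, and $S$ into a direct sum $\bigoplus_k\tilde R_k$ of $H$-submodules of total dimension $\dim S-\dim\bigcap_k S_k$, arranged so that pieces with large index lie deep inside the annihilators.

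For $B$, set $P_1:=B$ and $P_k:=C_1\cap\dots\cap C_{k-1}$ for $k\geqslant2$; these are $H$-invariant ideals of $B$, and $P_k\cap C_k=P_{k+1}$. By Lemma~\ref{LemmaLBSN}, $B$ is a completely reducible $(H,\ad B)$-module, hence so is $P_k$, and the $(H,\ad B)$-submodule $P_{k+1}$ admits a complement $B_k$ inside $P_k$; such a $B_k$ is precisely an $H$-invariant ideal of $B$ with $P_k=B_k\oplus P_{k+1}$. Iterating gives $B=B_1\oplus\dots\oplus B_r\oplus P_{r+1}$ with $P_{r+1}=\bigcap_k C_k$, which establishes~(1) and the $B$-part of~(3); moreover $B_k\cap C_k=B_k\cap P_{k+1}=0$, and $B_i\subseteq P_i\subseteq C_k$ whenever $i>k$. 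The same argument handles $S$ once we observe that $S$ is a completely reducible $H$-module: it is an $(H,\ad B)$-submodule of $L$ (being $H$-invariant with $[B,S]=0$), hence completely reducible as an $(H,\ad B)$-module by Lemma~\ref{LemmaLBSN}, and since $\ad B$ acts as $0$ on $S$ its $(H,\ad B)$-submodules are exactly its $H$-submodules. Thus, with $Q_1:=S$ and $Q_k:=S_1\cap\dots\cap S_{k-1}$, we pick $H$-submodules $\tilde R_k\subseteq S$ with $Q_k=\tilde R_k\oplus Q_{k+1}$, so that $S=\tilde R_1\oplus\dots\oplus\tilde R_r\oplus\bigcap_k S_k$, $\tilde R_k\cap S_k=0$, and $\tilde R_i\subseteq S_k$ for $i>k$. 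This gives (1), (2), (3), and the assertions of~(6), since $B_i,\tilde R_i\subseteq\Ann(I_k/J_k)$ when $i>k$.

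It remains to check~(4) and~(5). Both $B_k\oplus\tilde R_k\oplus N$ and $L':=\bigl(\bigoplus_i(B_i\oplus\tilde R_i)\bigr)\oplus N$ are $H$-invariant subalgebras of $L$ (using $[B,S]=0$, $[S,S]\subseteq[L,R]\subseteq N$, and that $N$ is an ideal), in which $N$ is an $H$-invariant ideal annihilating $I_k/J_k$; hence $I_k/J_k$ is a module over the quotients by $N$. Condition~(4) holds because $(B_k\oplus\tilde R_k\oplus N)\cap\Ann(I_k/J_k)=(B_k\cap C_k)\oplus(\tilde R_k\cap S_k)\oplus N=N$, so $(B_k\oplus\tilde R_k\oplus N)/N$ acts faithfully on $I_k/J_k$. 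For~(5) the crucial point is that $L=L'+\Ann(I_k/J_k)$: on the $B$-component $\bigoplus_i B_i+C_k\supseteq\bigoplus_i B_i+P_{r+1}=B$, on the $S$-component $\bigoplus_i\tilde R_i+S_k\supseteq\bigoplus_i\tilde R_i+Q_{r+1}=S$, and the $N$-components agree. Hence $L$ and $L'$ have the same image in $\mathfrak{gl}(I_k/J_k)$, so the $H$- and $L'$-invariant subspaces of $I_k/J_k$ coincide with its $H$- and $L$-invariant ones; since $I_k/J_k$ is an irreducible $(H,L)$-module it is an irreducible $(H,L')$-module, and therefore an irreducible $(H,L'/N)$-module. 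The two nonroutine ingredients are the fact that $S$ is completely reducible as an $H$-module and the identity $L=L'+\Ann(I_k/J_k)$ that forces~(5); everything else is bookkeeping with the decreasing chains $(P_k)$ and $(Q_k)$.
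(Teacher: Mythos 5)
Your proof is correct and follows essentially the same route as the paper: both peel off $H$-invariant complements $B_\ell$ and $\tilde R_\ell$ along the decreasing chain of intersections of annihilators (your $P_k$, $Q_k$ are the paper's $N_{\ell-1}\cap B$, $N_{\ell-1}\cap S$), and deduce (4)--(5) from the resulting direct-sum decomposition of $L$ modulo $\bigcap_k\Ann(I_k/J_k)$. The only cosmetic difference is that you obtain the existence of the complements from the complete reducibility statement of Lemma~\ref{LemmaLBSN}, whereas the paper invokes the decomposition of $B$ into $H$-simple ideals and Condition~\ref{ConditionLComplHred} applied to the zero algebra; your verification of (4) and (5) is more detailed than the paper's but matches its intent.
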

\begin{proof}
Consider $N_\ell := \bigcap\limits_{k=1}^\ell \Ann (I_k/J_k)$,
$1 \leqslant \ell \leqslant r$, $N_0 := L$.
Note that $N_{\ell}$ are $H$-invariant.
Since $B$ is semisimple, by~\cite[Theorem~6]{ASGordienko4}, we can choose such
$H$-invariant ideals $B_\ell$
that $N_{\ell-1} \cap B =
 B_\ell \oplus (N_\ell \cap B)$.
Also, applying Condition~\ref{ConditionLComplHred} of Subsection~\ref{SubsectionHnice} to zero algebra,
we obtain that $L$ is a completely reducible $H$-module. Hence we can choose such $H$-submodules
 $\tilde R_\ell$ that
 $N_{\ell-1} \cap S = \tilde R_\ell
  \oplus (N_\ell \cap S)$.
Therefore Properties 1, 2, 6 hold.

 By Lemma~\ref{LemmaIrrAnnBS},
 $N_k = (N_k \cap B) \oplus  (N_k \cap S) \oplus  N$.
 Thus Property~4 holds. Furthermore,
 $$N_{\ell-1} = B_\ell \oplus  (N_\ell \cap B) \oplus
  \tilde R_\ell \oplus
   (N_\ell \cap S) \oplus  N
   = (B_\ell\oplus \tilde R_\ell)\oplus N_\ell$$
   (direct sum of subspaces).
   Hence $L = \left(\bigoplus_{i=1}^r (B_i\oplus \tilde R_i)\right)
   \oplus  N_r$,  and
   Properties 3 and 5 hold too.
\end{proof}

Denote by $\zeta \colon H \to \End_F(L)$ the homomorphism corresponding to the $H$-action.
Let $A$ be the associative subalgebra
in $\End_F (L)$ generated by operators from $(\ad L)$
and $\zeta(H)$.  Let $a_{\ell 1}, \ldots, a_{\ell, k_\ell}$ be a basis of $\tilde R_\ell$.

\begin{lemma}\label{LemmaGeneralizedJordanAd}
There exist decompositions $\ad a_{ij} = c_{ij} + d_{ij}$,
$1 \leqslant i \leqslant r$, $1 \leqslant j \leqslant k_i$,
such that $c_{ij} \in A$ acts as a diagonalizable operator on $L$, $d_{ij} \in J(A)$,
 elements $c_{ij}$ commute with each other,
 and $c_{ij}$ and $d_{ij}$ are polynomials in $\ad a_{ij}$.
 Moreover, $R_\ell := \langle c_{\ell1}, \ldots, c_{\ell, k_\ell} \rangle_F$
 are $H$-submodules in $A$.
\end{lemma}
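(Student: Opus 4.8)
The plan is to deduce this from the general Jordan--Chevalley-type decomposition Lemma~\ref{LemmaGeneralizedJordan}, applied to the adjoint module.

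First I would regard $L$ itself as a finite dimensional $(H,L)$-module via the adjoint representation $\ad\colon L\to\mathfrak{gl}(L)$; this is an $(H,L)$-module by~\eqref{EqHmoduleLieAlgebra}, and the associative subalgebra of $\End_F(L)$ generated by $\ad(L)$ and $\zeta(H)$ is precisely the algebra $A$ of the statement. The hypothesis of Lemma~\ref{LemmaGeneralizedJordan}, namely that every $H$-invariant associative subalgebra $A_1\subseteq\End_F(L)$ admits an $H$-invariant Wedderburn--Mal'cev decomposition, is exactly Condition~\ref{ConditionWedderburn} of $H$-niceness with $W=L$, which is available here (the remark preceding the lemma assumes Conditions 1--4). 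By Lemma~\ref{LemmaLBSN} we have $L=B\oplus S\oplus N$ (direct sum of $H$-submodules) with $[B,S]=0$, so each $\tilde R_\ell\subseteq S\subseteq R$; moreover $\tilde R_\ell$ is an $H$-submodule by Lemma~\ref{LemmaChooseReduct}, hence $\ad(\tilde R_\ell)$ is an $H$-submodule of $\ad(R)$ since $\ad$ is $H$-linear. Thus Lemma~\ref{LemmaGeneralizedJordan} applies with $M=L$, $\varphi=\ad$, and $W=\tilde R_1\oplus\cdots\oplus\tilde R_r$ (with the combined basis $\{a_{ij}\}$).

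This yields decompositions $\ad a_{ij}=c_{ij}+d_{ij}$ where the $c_{ij}$ are pairwise commuting diagonalizable operators on $L$, $d_{ij}\in J(A)$, both $c_{ij}$ and $d_{ij}$ are polynomials without constant term in the operators $\ad a_{kl}$ (hence $c_{ij},d_{ij}\in A$), and $\langle c_{ij}\rangle_F$, $\langle d_{ij}\rangle_F$ are $H$-submodules of $\End_F(L)$. It remains to upgrade this to the block-wise $H$-invariance of $R_\ell=\langle c_{\ell1},\dots,c_{\ell,k_\ell}\rangle_F$ and to the assertion that $c_{ij},d_{ij}$ are polynomials in the single operator $\ad a_{ij}$. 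For the first, I would use that $\tilde R_\ell$ is $H$-invariant: for $h\in H$ one has $ha_{\ell j}\in\tilde R_\ell$, so $\ad(ha_{\ell j})=\sum_{j'}\beta_{j'}\ad a_{\ell j'}=\sum_{j'}\beta_{j'}c_{\ell j'}+\sum_{j'}\beta_{j'}d_{\ell j'}$; comparing with the decomposition of $h\cdot\ad a_{\ell j}=hc_{\ell j}+hd_{\ell j}$ and using that the two summands of the $H$-invariant Wedderburn--Mal'cev decomposition used inside Lemma~\ref{LemmaGeneralizedJordan} are $H$-stable and uniquely determined, one gets $hc_{\ell j}=\sum_{j'}\beta_{j'}c_{\ell j'}\in R_\ell$, so $R_\ell$ is an $H$-submodule.

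For the polynomial dependence on the single generator I would argue via uniqueness of the Jordan--Chevalley decomposition. Since $L$ is a faithful $A$-module and $c_{ij}$ acts diagonalizably, $c_{ij}$ is a semisimple element of $A$, so $\overline{\ad a_{ij}}=\bar c_{ij}$ is semisimple in $A/J(A)$ (as $d_{ij}\in J(A)$). Letting $\sigma_{ij}+\nu_{ij}$ be the Jordan--Chevalley decomposition of $\ad a_{ij}$ in $\End_F(L)$ (so $\sigma_{ij}$ diagonalizable, $\nu_{ij}$ nilpotent and commuting, both polynomials in $\ad a_{ij}$ without constant term), one passes to $A/J(A)$ where $\bar\sigma_{ij}$ is semisimple, $\bar\nu_{ij}$ nilpotent, commuting, with sum the semisimple element $\overline{\ad a_{ij}}$; uniqueness forces $\bar\nu_{ij}=0$, i.e.\ $\nu_{ij}\in J(A)$. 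The delicate point --- and the step I expect to be the main obstacle --- is to show that this Jordan--Chevalley splitting coincides with the one produced by Lemma~\ref{LemmaGeneralizedJordan}, equivalently that $[c_{ij},d_{ij}]=0$; this is where one must exploit the explicit construction in the proof of Lemma~\ref{LemmaGeneralizedJordan} (the simultaneous triangularization of $\ad(R)+J(A)$ and the common eigenbasis of the idempotents $e_k$) together with the structural facts $[B,S]=0$ and $[L,R]\subseteq N$, which confine $\ad(\tilde R_\ell)$ to operators vanishing on $B$ with image inside $N$ and thereby pin down the diagonal component. Once $c_{ij}=\sigma_{ij}$ is established, the required properties --- $c_{ij}$ diagonalizable, $d_{ij}\in J(A)$, the $c_{ij}$ commuting with one another and $c_{ij},d_{ij}$ being polynomials in $\ad a_{ij}$, and each $R_\ell$ an $H$-submodule --- all hold simultaneously.
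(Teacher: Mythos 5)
Your first two paragraphs are exactly the paper's argument: the paper's entire proof is ``apply Lemma~\ref{LemmaGeneralizedJordan} with $M=L$, $\varphi=\ad$, $W=\tilde R_\ell$'', and your version (applying it to the combined span of the $\tilde R_\ell$ to get commutativity of all the $c_{ij}$ at once, then recovering the blockwise $H$-invariance of $R_\ell$ from the $H$-invariance of $\tilde R_\ell$ together with the uniqueness of components in the $H$-stable decomposition $\tilde A_1\oplus J(A_1)$ inside that proof) is a correct and, if anything, slightly more careful rendering of it. Your third paragraph, however, chases a claim the lemma does not make and the paper does not prove: ``polynomials in $\ad a_{ij}$'' is to be read exactly as in Lemma~\ref{LemmaGeneralizedJordan}, i.e.\ polynomials (without constant term) in the family of operators $\ad a_{i1},\dots,\ad a_{ik_i}$, not in the single operator $\ad a_{ij}$; this weaker statement is all that is used later (only $d_{ij}\in J(A)$, diagonalizability and commutativity of the $c_{ij}$, and the fact that $[c_{ij},\ad B]=0$ follows from $[B,S]=0$ and absence of constant terms). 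So the ``delicate point'' you flag --- identifying the splitting with the Jordan--Chevalley decomposition of each single $\ad a_{ij}$ --- is not a gap in what is required; you can simply delete that paragraph.
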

\begin{proof}
We apply Lemma~\ref{LemmaGeneralizedJordan} for $W=\tilde R_\ell$ and $\varphi = \ad$.
\end{proof}

Let $$\tilde B := \left(\bigoplus_{i=1}^r \ad B_i\right)\oplus \langle c_{ij} \mid 1\leqslant i \leqslant r,  1 \leqslant j \leqslant k_i
  \rangle_F,$$
  $$\tilde B_0 := (\ad B)\oplus \langle c_{ij} \mid 1\leqslant i \leqslant r,  1 \leqslant j \leqslant k_i
  \rangle_F \subseteq A.$$
  In virtue of the definition of $S$ (see Section~\ref{SectionAux}), Lemma~\ref{LemmaGeneralizedJordanAd}, and the Jacobi identity, \begin{equation}\label{EqcijCentral}
 [c_{ij}, \ad B]=0.\end{equation} Hence
(\ref{EqHmoduleLieAlgebra}) holds for both $\tilde B$ and $\tilde B_0$. Thus $\tilde B$ and $\tilde B_0$ are $H$-module Lie algebras.
  
  \begin{lemma}\label{LemmaBcReducible}
The space $L$ is a completely reducible $(H,\tilde B_0)$-module.
Moreover, $L$ is a completely reducible $(H,(\ad B_k)\oplus R_k)$-module for any $1 \leqslant k \leqslant r$.
\end{lemma}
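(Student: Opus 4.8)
The plan is to obtain both assertions by verifying the hypotheses of Condition~\ref{ConditionLComplHred} of Subsection~\ref{SubsectionHnice} for $L_0=\tilde B_0$ and for $L_0=(\ad B_k)\oplus R_k$. So I must check that each such $L_0$ is an $H$-invariant Lie subalgebra of $\mathfrak{gl}(L)$ which is an $H$-module algebra, that $L$ is an $(H,L_0)$-module, and, the essential point, that $L$ is a completely reducible $L_0$-module once the $H$-action is forgotten.

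The structural hypotheses are essentially already available. The subalgebra $\ad B$ is $H$-invariant, since $\ad\colon L\to\mathfrak{gl}(L)$ is a homomorphism of $H$-modules and $B$ is $H$-invariant; the linear span of the operators $c_{ij}$ equals $\sum_{\ell=1}^{r}R_\ell$, which is $H$-invariant by Lemma~\ref{LemmaGeneralizedJordanAd}; and $\tilde B_0$ is closed under the commutator because $\ad B$ is a subalgebra, $[c_{ij},\ad B]=0$ by~(\ref{EqcijCentral}), and the $c_{ij}$ commute. That $\tilde B_0$ satisfies~(\ref{EqHmoduleLieAlgebra}) was noted just above, so $\tilde B_0$ is an $H$-module Lie algebra. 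Since $\ad B_k$ is $H$-invariant, $R_k$ is an $H$-submodule, and $[c_{kj},\ad B_k]\subseteq[c_{kj},\ad B]=0$, the subspace $(\ad B_k)\oplus R_k$ is an $H$-invariant Lie subalgebra of $\tilde B_0$ and hence inherits~(\ref{EqHmoduleLieAlgebra}). Finally, for \emph{any} $\psi\in\End_F(L)$ one has, by~(\ref{EqHActionOnEnd}), coassociativity, and the antipode and counit axioms,
$$(h_{(1)}\psi)\,\zeta(h_{(2)})=\zeta(h_{(1)})\,\psi\,\zeta(Sh_{(2)})\,\zeta(h_{(3)})=\zeta(h_{(1)})\,\psi\,\zeta\bigl((Sh_{(2)})h_{(3)}\bigr)=\zeta(h)\,\psi,$$
so~(\ref{EqHLModule}) (equivalently~(\ref{EqHLModule2})) holds identically on $\End_F(L)$; in particular $L$ is an $(H,L_0)$-module for both choices of $L_0$.

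The substantive step is that $L$ is a completely reducible $\tilde B_0$-module disregarding the $H$-action. By Lemma~\ref{LemmaGeneralizedJordanAd} the operators $c_{ij}$ form a commuting family of diagonalizable endomorphisms of $L$, and by~(\ref{EqcijCentral}) each of them commutes with every element of $\ad B$. Hence $L=\bigoplus_{\chi}L_\chi$ is the direct sum of the joint eigenspaces $L_\chi$ of the family $(c_{ij})$, and each $L_\chi$ is invariant under $\ad B$. Since $\ad B$ is a semisimple Lie algebra, Weyl's theorem gives a decomposition of each $L_\chi$ into irreducible $\ad B$-submodules; as the $c_{ij}$ act by scalars on $L_\chi$, every $\ad B$-submodule of $L_\chi$ is automatically invariant under all $c_{ij}$, hence under $\tilde B_0$, so these $\ad B$-irreducible summands are in fact irreducible $\tilde B_0$-submodules. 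Thus $L$ is a completely reducible $\tilde B_0$-module. Replacing $\ad B$ by $\ad B_k$ and the family $(c_{ij})$ by $(c_{k1},\ldots,c_{k,k_k})$, the same argument shows that $L$ is a completely reducible $(\ad B_k)\oplus R_k$-module.

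With all hypotheses verified, Condition~\ref{ConditionLComplHred} of Subsection~\ref{SubsectionHnice} yields that $L$ is a completely reducible $(H,\tilde B_0)$-module and, for each $k$, a completely reducible $(H,(\ad B_k)\oplus R_k)$-module, which is the assertion. I expect the only real content to be the joint-eigenspace reduction to Weyl's theorem; the $H$-invariance and the compatibility~(\ref{EqHLModule2}) are routine and have been prepared by Lemma~\ref{LemmaGeneralizedJordanAd} and~(\ref{EqcijCentral}).
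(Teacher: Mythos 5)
Your proposal is correct and follows essentially the same route as the paper: both reduce, via Condition~\ref{ConditionLComplHred}, to showing complete reducibility disregarding the $H$-action, and both obtain it by splitting $L$ into the joint eigenspaces of the commuting diagonalizable operators $c_{ij}$, which are $B$-submodules by~(\ref{EqcijCentral}), and then invoking semisimplicity of $B$ and $B_k$ (Weyl's theorem). Your additional verifications of the hypotheses of Condition~\ref{ConditionLComplHred} are correct but are taken for granted in the paper, which has already noted just before the lemma that $\tilde B_0$ is an $H$-module Lie algebra.
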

\begin{proof} By Condition~\ref{ConditionLComplHred} of Subsection~\ref{SubsectionHnice}, it is sufficient to show that
$L$ is a completely reducible $\tilde B_0$-module
and a completely reducible $(\ad B_k)\oplus R_k$-module
disregarding the $H$-action.
 The elements $c_{ij}$ are diagonalizable on $L$ and commute.
  Therefore, an eigenspace of any $c_{ij}$ is invariant
   under the action of other $c_{k\ell}$. Using induction,
    we split $L = \bigoplus_{i=1}^\alpha W_i$
where $W_i$ are intersections of eigenspaces of $c_{k\ell}$
and elements $c_{k\ell}$ act as scalar operators on $W_i$.
By~(\ref{EqcijCentral}), the spaces $W_i$ are $B$-submodules
 and $L$ is a completely reducible
 $\tilde B_0$-module and $(\ad B_k)\oplus R_k$-module since $B$ and $B_k$ are semisimple.
\end{proof}

\begin{lemma}\label{LemmaSiProperties}
There exist complementary subspaces $I_k=
\tilde T_k \oplus J_k$ such that
\begin{enumerate}
\item $\tilde T_k$ is an $H$-invariant $B$-submodule and an irreducible $(H,\tilde B)$-submodule;
\item $\tilde T_k$ is a completely reducible faithful $(H,(\ad B_k)\oplus R_k)$-module;
\item $\sum\limits_{k=1}^r\dim ((\ad B_k)\oplus R_k) = d$;
\item $B_i \tilde T_k = R_i \tilde T_k = 0$ for $i > k$.
\end{enumerate}
\end{lemma}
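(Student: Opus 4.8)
The plan is to obtain $\tilde T_k$ as an $(H,\tilde B_0)$-module complement to $J_k$ inside $I_k$, and then to transfer all the required structure through the canonical isomorphism $\tilde T_k\cong I_k/J_k$, once we know that on the irreducible $(H,L)$-module $I_k/J_k$ the operator $\ad a_{ij}$ induces the same transformation as $c_{ij}$ does. First I would note that $I_k$ and $J_k$, being $H$-invariant ideals, are stable under every $\ad a$ with $a\in L$, hence under the $c_{ij}$ and $d_{ij}$ (which are polynomials in $\ad a_{ij}$ with no constant term), so they are $(H,\tilde B_0)$-submodules of $L$. Since $L$ is a completely reducible $(H,\tilde B_0)$-module by Lemma~\ref{LemmaBcReducible}, $J_k$ has an $(H,\tilde B_0)$-complement $\tilde T_k$ in $I_k$, i.e. $I_k=\tilde T_k\oplus J_k$, and the restriction to $\tilde T_k$ of the projection $I_k\to I_k/J_k$ is an isomorphism $\tilde T_k\cong I_k/J_k$ of $(H,\tilde B_0)$-modules. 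Because $\ad B\subseteq\tilde B_0$, the subspace $\tilde T_k$ is already an $H$-invariant $B$-submodule of $L$; and because $\tilde T_k$ is an $H$-submodule of $L$ isomorphic as an $H$-module to $I_k/J_k$, Condition~\ref{ConditionWedderburn} of Subsection~\ref{SubsectionHnice} (applied to $W=\tilde T_k$) shows that the hypothesis of Lemma~\ref{LemmaRedIrr} holds with $M=I_k/J_k$.

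The key step is to show that $\ad a_{ij}$ and $c_{ij}$ induce the same operator on $I_k/J_k$, equivalently that $d_{ij}$ induces $0$ there. Since $I_k/J_k$ is an irreducible $(H,L)$-module, Lemma~\ref{LemmaRedIrr} applies and the solvable radical $R$, in particular each $a_{ij}\in\tilde R_i\subseteq S\subseteq R$, acts on $I_k/J_k$ by a semisimple operator. On $L$ we have $\ad a_{ij}=c_{ij}+d_{ij}$ with $c_{ij}$ semisimple, $d_{ij}$ nilpotent, and $c_{ij},d_{ij}$ commuting (both are polynomials in $\ad a_{ij}$); passing to the quotient $I_k/J_k$ keeps $c_{ij}$ semisimple, $d_{ij}$ nilpotent and the two commuting, so the induced decomposition is the Jordan decomposition of the induced operator of $\ad a_{ij}$, which is itself semisimple, and uniqueness of the Jordan decomposition forces the nilpotent part to vanish. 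Consequently $\tilde R_i$ acts on $I_k/J_k$ exactly as $R_i$, so the images of $\tilde B$ and of $\sum_i(B_i\oplus\tilde R_i)$ in $\End_F(I_k/J_k)$ coincide; as $N$ acts trivially and $I_k/J_k$ is irreducible over $(\sum_i(B_i\oplus\tilde R_i)\oplus N)/N$ with its $H$-action by Lemma~\ref{LemmaChooseReduct}, it follows that $I_k/J_k$, and hence $\tilde T_k$, is an irreducible $(H,\tilde B)$-module, which is Property~1.

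For Property~2, $\tilde T_k$ is a $((\ad B_k)\oplus R_k)$-submodule of $L$ since $(\ad B_k)\oplus R_k\subseteq\tilde B_0$, hence a completely reducible $(H,(\ad B_k)\oplus R_k)$-module by Lemma~\ref{LemmaBcReducible}; for faithfulness, if $\ad b+c$ annihilates $I_k/J_k$ with $b\in B_k$ and $c=c_s\in R_k$, $s\in\tilde R_k$, then by the key step $\ad(b+s)$ annihilates $I_k/J_k$, so by the faithfulness of $I_k/J_k$ over $(B_k\oplus\tilde R_k\oplus N)/N$ (Lemma~\ref{LemmaChooseReduct}) we get $b+s\in N$, whence $b=s=0$ because $(B_k\oplus\tilde R_k)\cap N=0$. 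For Property~3, the same faithfulness makes $s\mapsto c_s$ an injection $\tilde R_k\to R_k$ (if $c_s=0$ then $\ad s$ annihilates $I_k/J_k$, so $s\in N\cap\tilde R_k=0$), so $\dim R_k=\dim\tilde R_k$; together with $\dim\ad B_k=\dim B_k$ and $(\ad B_k)\cap R_k=0$ (as $R_k$ centralizes $\ad B$ by~(\ref{EqcijCentral}) and $B$ is semisimple) this gives $\dim((\ad B_k)\oplus R_k)=\dim(B_k\oplus\tilde R_k)$, and summing over $k$ yields $d$ by Lemma~\ref{LemmaChooseReduct}. Finally, Property~4 follows by carrying $B_i(I_k/J_k)=\tilde R_i(I_k/J_k)=0$ for $i>k$ (Lemma~\ref{LemmaChooseReduct}) back to $\tilde T_k$ along the isomorphism, using once more that $\tilde R_i$ acts as $R_i$.

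I expect the key step---that $d_{ij}$ acts trivially on each $I_k/J_k$---to be the main obstacle: it is exactly where the $H$-nice hypotheses enter, through Lemma~\ref{LemmaRedIrr} and hence Condition~\ref{ConditionWedderburn} of Subsection~\ref{SubsectionHnice}, and where the Wedderburn--Mal'cev machinery has to be combined with the fact that the solvable radical acts semisimply on irreducible $(H,L)$-modules. Once this identification is in place, the four properties are essentially bookkeeping organized around the inclusions $\ad B_k,\,R_k,\,\ad B\subseteq\tilde B_0\subseteq A$ and the isomorphism $\tilde T_k\cong I_k/J_k$.
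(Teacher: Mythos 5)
Your construction of $\tilde T_k$ as an $(H,\tilde B_0)$-complement to $J_k$ in $I_k$ via Lemma~\ref{LemmaBcReducible}, and the subsequent transfer of Properties 1--4 from Lemma~\ref{LemmaChooseReduct} along the isomorphism $\tilde T_k \cong I_k/J_k$, coincide with the paper's proof. The one place where you genuinely diverge is the key identification $(\ad a_{ij})w = c_{ij}w$ for $w \in I_k/J_k$: the paper gets this in one line from the fact that $I_k/J_k$ is an irreducible module over the associative algebra $A$ generated by $\ad L$ and $\zeta(H)$, so that the Jacobson radical $J(A)$ --- and in particular $d_{ij}$ --- annihilates it. You instead invoke Lemma~\ref{LemmaRedIrr} (elements of $R$ act semisimply on a finite dimensional irreducible $(H,L)$-module) together with uniqueness of the Jordan--Chevalley decomposition of the induced operator $\overline{\ad a_{ij}} = \overline{c_{ij}} + \overline{d_{ij}}$ on $I_k/J_k$. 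Your route is correct --- the hypotheses of Lemma~\ref{LemmaRedIrr} do transfer to $\End_F(I_k/J_k)$ through the $H$-module isomorphism with $W=\tilde T_k$, as you note --- but it is heavier: it re-imports Condition~\ref{ConditionWedderburn} of the $H$-nice definition at a point where the elementary fact that the radical of a finite dimensional algebra kills its simple modules already suffices. Everything downstream (irreducibility over $(H,\tilde B)$ via Property~5 of Lemma~\ref{LemmaChooseReduct}, faithfulness, $\dim R_k = \dim\tilde R_k$ giving Property~3, and the vanishing $B_i\tilde T_k = R_i\tilde T_k = 0$ obtained from $\tilde T_k$ being a $\tilde B_0$-submodule with $\tilde T_k \cap J_k = 0$) matches the paper, with your write-up supplying somewhat more detail than the original.
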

\begin{proof} By Lemma~\ref{LemmaBcReducible}, $L$ is a completely reducible $(H,\tilde B_0)$-module.
  Therefore, for every $J_k$ we can choose
  a complementary $H$-invariant $\tilde B_0$-submodules
  $\tilde T_k$ in $I_k$. Then $\tilde T_k$ are both $B$- and $\tilde B$-submodules.

Note that $(\ad a_{ij})w=c_{ij}w$ for all $w \in I_k/J_k$
since $I_k/J_k$ is an irreducible $A$-module and $J(A)\,I_k/J_k = 0$.
Hence, by Lemma~\ref{LemmaChooseReduct}, $I_k/J_k$ is a
 faithful $(\ad B_k)\oplus R_k$-module,
  $R_i\, I_k/J_k = 0$ for $i > k$
and the elements $c_{ij}$ are linearly independent.
Moreover, by Property 5 of Lemma~\ref{LemmaChooseReduct},
 $I_k/J_k$ is an irreducible $\left(H,\left(\sum_{i=1}^r
 (B_i\oplus \tilde R_i)\oplus N
\right)/N\right)$-module.
However
$\left(\sum_{i=1}^r (B_i\oplus \tilde R_i)\oplus N
\right)/N$ acts on $I_k/J_k$
by the same operators as $\tilde B$. Thus
 $\tilde T_k \cong I_k/J_k$  is an irreducible $(H,\tilde B)$-module.
  Property 1 is proved. By Lemma~\ref{LemmaBcReducible}, $L$ is a completely reducible $(H,(\ad B_k)\oplus R_k)$-module for any $1 \leqslant k \leqslant r$. Using the isomorphism $\tilde T_k \cong I_k/J_k$, we obtain Properties 2 and 4 from the remarks above.
Property 3 is a consequence of Property 3 of Lemma~\ref{LemmaChooseReduct}.
\end{proof}

\begin{lemma}\label{ChooseSubmodule} For all $1 \leqslant k \leqslant r$
we have
 $$\tilde T_k = T_{k1} \oplus T_{k2} \oplus \dots
\oplus T_{km}$$ where $T_{kj}$ are faithful irreducible
$(H, (\ad B_k)\oplus R_k)$-submodules, $m \in \mathbb N$,
$1 \leqslant j \leqslant m$.
\end{lemma}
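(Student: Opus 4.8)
The plan is to obtain the decomposition for free from Property~2 of Lemma~\ref{LemmaSiProperties} and then promote ``completely reducible'' to ``completely reducible into faithful summands'' by using that $\tilde T_k$ is irreducible as an $(H,\tilde B)$-module. First I would invoke Property~2 of Lemma~\ref{LemmaSiProperties}, which says $\tilde T_k$ is a completely reducible faithful $(H,(\ad B_k)\oplus R_k)$-module, to write $\tilde T_k = T_{k1}\oplus\dots\oplus T_{km}$ for some $m\in\mathbb N$ with each $T_{kj}$ an irreducible $(H,(\ad B_k)\oplus R_k)$-submodule. Since each $T_{kj}$ is in particular an $H$-submodule and a $(\ad B_k)\oplus R_k$-submodule of $\tilde T_k$, the only thing left to prove is that every $T_{kj}$ is faithful over $(\ad B_k)\oplus R_k$.

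Set $\mathfrak g_i := (\ad B_i)\oplus R_i$ for $1\leqslant i\leqslant r$. Using $[\ad B_i,\ad B_j]=0$ for $i\neq j$, the centrality relation~(\ref{EqcijCentral}), and the commutativity of the $c_{ij}$ supplied by Lemma~\ref{LemmaGeneralizedJordanAd}, I would record that each $\mathfrak g_i$ is an $H$-invariant ideal of $\tilde B$ and that $\tilde B=\mathfrak g_1\oplus\dots\oplus\mathfrak g_r$ is a direct sum of ideals. In particular $\mathfrak g':=\bigoplus_{i\neq k}\mathfrak g_i$ is an $H$-invariant subspace that centralizes $\mathfrak g_k$, so any operator obtained as a product of elements of $\mathfrak g'$ commutes with the $\mathfrak g_k$-action on $\tilde T_k$.

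Now fix $j$, put $U:=T_{kj}$ (a nonzero $H$-invariant $\mathfrak g_k$-submodule of $\tilde T_k$), $\mathfrak n:=\Ann_{\mathfrak g_k}(U)$, and let $W$ be the $\tilde B$-submodule of $\tilde T_k$ generated by $U$. Since $U$ is already $\mathfrak g_k$-invariant and $\tilde B=\mathfrak g_k\oplus\mathfrak g'$, the space $W$ is spanned by the images $x\cdot U$ with $x$ running over products of elements of $\mathfrak g'$. Each map $u\mapsto x\cdot u$ is $\mathfrak g_k$-linear because $\mathfrak g'$ centralizes $\mathfrak g_k$, so $\mathfrak n$ annihilates every $x\cdot U$, hence all of $W$. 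On the other hand $W$ is an $(H,\tilde B)$-submodule: it is a $\tilde B$-submodule by construction, and it is $H$-stable because moving $\zeta(h)$ to the right through a product of elements of $\mathfrak g'$ via~(\ref{EqHLModule2}) only replaces those elements by elements of the $H$-invariant space $\mathfrak g'$ and lands the trailing factor $\zeta(h_{(\bullet)})$ on the $H$-submodule $U$. As $\tilde T_k$ is an irreducible $(H,\tilde B)$-module by Property~1 of Lemma~\ref{LemmaSiProperties} and $W\supseteq U\neq 0$, this forces $W=\tilde T_k$. Therefore $\mathfrak n=\Ann_{\mathfrak g_k}(\tilde T_k)=0$, the last equality being the faithfulness of $\tilde T_k$ over $(\ad B_k)\oplus R_k$ from Property~2 of Lemma~\ref{LemmaSiProperties}; thus $T_{kj}$ is faithful, which finishes the argument.

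The one point I expect to require a little care is the verification that $W$ is $H$-invariant, i.e.\ that applying $\zeta(H)$ to the $\tilde B$-submodule generated by the $(H,\mathfrak g_k)$-submodule $U$ stays inside it; this is exactly where one uses that the ideal $\mathfrak g'$ is $H$-stable and that $U$ is an honest $H$-submodule, both already available. Everything else is bookkeeping with the decomposition $\tilde B=\mathfrak g_1\oplus\dots\oplus\mathfrak g_r$ and with the fact that the $\mathfrak g'$-action commutes with the $\mathfrak g_k$-action.
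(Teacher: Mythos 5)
Your proposal is correct and follows essentially the same route as the paper: after taking the decomposition into irreducibles from complete reducibility, the paper also forms the (unital associative algebra generated by the) complementary ideal $\tilde B_k=\bigoplus_{i\ne k}\bigl((\ad B_i)\oplus R_i\bigr)$, uses that it centralizes $(\ad B_k)\oplus R_k$ and is $H$-invariant to see that the $\tilde B$-submodule it generates from $T_{kj}$ is an $H$-invariant $\tilde B$-submodule equal to all of $\tilde T_k$, and then concludes from the faithfulness of $\tilde T_k$. The only cosmetic difference is that the paper argues by contradiction with a single annihilating element $b$ whereas you work directly with the annihilator $\mathfrak n$.
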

\begin{proof}
By Lemma~\ref{LemmaSiProperties}, Property 2,
$$\tilde T_k = T_{k1} \oplus T_{k2} \oplus \dots
\oplus T_{km}$$ for some irreducible
$(H,(\ad B_k)\oplus R_k)$-submodules.
Suppose $T_{kj}$ is not faithful for some $1 \leqslant j \leqslant m$.
Hence $b T_{kj}=0$ for some $b \in (\ad B_k)\oplus R_k$,
$b \ne 0$.
Note that $\tilde B = ((\ad B_k)\oplus R_k) \oplus \tilde B_k$
where $$\tilde B_k := \bigoplus_{i\ne k} (\ad B_i )\oplus
\bigoplus_{i\ne k} R_i$$ and $[(\ad B_k)\oplus R_k, \tilde B_k]=0$.
Denote by $\widehat B_k$ the associative subalgebra
of $\End_F(\tilde T_k)$ with $1$
generated by operators from $\tilde B_k$. This subalgebra is $H$-invariant.
Then $$[(\ad B_k)\oplus R_k, \widehat B_k]=0$$ and $\sum_{a \in \widehat B_k}
a T_{kj} \supseteq T_{kj}$ is a $H$-invariant $\tilde B$-submodule
of $\tilde T_k$ since $$h\left(\sum_{a \in \widehat B_k}
a T_{kj}\right) = \sum_{a \in \widehat B_k}
(h_{(1)}a) (h_{(2)}T_{kj}) \subseteq \sum_{a \in \widehat B_k}
a T_{kj}$$ for all $h\in H$. Thus
$ \tilde T_k = \sum_{a \in \widehat B_k}
a T_{kj}$ and $$b \tilde T_k
= \sum_{a \in \widehat B_k}
ba T_{kj} = \sum_{a \in \widehat B_k}
a (bT_{kj})=0.$$ We get a contradiction with faithfulness
of $\tilde T_{k}$.
\end{proof}

 By Condition~2 of the definition of $d$ (see Subsection~\ref{SubsectionHopfPIexp}),
 there exist numbers $q_1, \ldots, q_{r} \in \mathbb Z_+$
such that
$$[[\tilde T_1, \underbrace{L, \ldots, L}_{q_1}], [\tilde T_2, \underbrace{L, \ldots, L}_{q_2}] \ldots, [\tilde T_r,
 \underbrace{L, \ldots, L}_{q_r}]] \ne 0$$
 Choose $n_i \in \mathbb Z_+$ with the maximal $\sum\limits_{i=1}^r n_i$ such that
$$[[\left(\prod_{k=1}^{n_1} j_{1k}\right)\tilde T_1, \underbrace{L, \ldots, L}_{q_1}],
 [\left(\prod_{k=1}^{n_2} j_{2k}\right) \tilde T_2, \underbrace{L, \ldots, L}_{q_2}] \ldots, [\left(\prod_{k=1}^{n_r} j_{rk}\right) \tilde T_r,
 \underbrace{L, \ldots, L}_{q_r}]] \ne 0
$$ for some $j_{ik}\in J(A)$.
Let $j_i := \prod_{k=1}^{n_i} j_{ik}$.
Then $j_i  \in J(A) \cup \{1\}$ and
$$[[j_1 \tilde T_1, \underbrace{L, \ldots, L}_{q_1}], [j_2 \tilde T_2, \underbrace{L, \ldots, L}_{q_2}], \ldots, [j_r \tilde T_r,
 \underbrace{L, \ldots, L}_{q_r}]] \ne 0,
 $$ but
\begin{equation}\label{EquationJZero}
[[j_1 \tilde T_1, \underbrace{L, \ldots, L}_{q_1}],
\ldots, [j_k (j \tilde T_k), \underbrace{L, \ldots, L}_{q_k}], \ldots, [j_r \tilde T_r,
 \underbrace{L, \ldots, L}_{q_r}]] = 0
\end{equation}
for all $j \in J(A)$ and $1 \leqslant k \leqslant r$.

   In virtue of Lemma~\ref{ChooseSubmodule}, for every $k$ we can choose a
   faithful irreducible
 $(H,(\ad B_k)\oplus R_k)$-submodule
$T_k \subseteq \tilde T_k$
such that\begin{equation}\label{EquationqNonZero}
[[j_1 T_1, \underbrace{L, \ldots, L}_{q_1}], [j_2 T_2, \underbrace{L, \ldots, L}_{q_2}] \ldots, [j_r T_r, \underbrace{L, \ldots, L}_{q_r}]] \ne 0.
\end{equation}

\begin{lemma}\label{LemmaChange}
Let $\psi \colon \bigoplus_{i=1}^r(B_i \oplus \tilde R_i) \to
\bigoplus_{i=1}^r((\ad B_i)\oplus R_i) $
be the linear isomorphism defined by formulas $\psi(b)= \ad b$ for
all $b \in B_i$ and $\psi(a_{i\ell})=c_{i\ell}$, $1 \leqslant \ell
\leqslant k_\ell$.
Let $f_i$ be multilinear associative $H$-polynomials,
$\bar x^{(i)}_1, \ldots, \bar x^{(i)}_{n_i}
\in \bigoplus_{i=1}^r B_i \oplus \tilde R_i$, $\bar t_i \in \tilde T_i$, $\bar u_{ik}\in L$,
 be some elements.
Then
$$[[j_1 f_1(\ad \bar x^{(1)}_1, \ldots, \ad \bar x^{(1)}_{n_1})
 \bar t_1, \bar u_{11}, \ldots, \bar u_{1q_1}],  \ldots, [j_r
 f_r(\ad \bar x^{(r)}_1, \ldots, \ad \bar x^{(r)}_{n_r}) \bar t_r,
 \bar u_{r1}, \ldots, \bar u_{rq_r}]]=$$ $$
 [[j_1 f_1(\psi (\bar x^{(1)}_1), \ldots, \psi (\bar x^{(1)}_{n_1}))
 \bar t_1, \bar u_{11}, \ldots, \bar u_{1q_1}],  \ldots,$$ $$ [j_r
 f_r(\psi(\bar x^{(r)}_1), \ldots, \psi (\bar x^{(r)}_{n_r})) \bar t_r,
 \bar u_{r1}, \ldots, \bar u_{rq_r}]].$$ In other words, we can replace $\ad a_{i\ell}$ with $c_{i\ell}$ and the result does not change.
\end{lemma}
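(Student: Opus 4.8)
The plan is to treat the substitution $\ad a_{i\ell}\mapsto c_{i\ell}$ as a perturbation by the Jacobson radical $J(A)$ and then to invoke the extremal choice of the $n_i$ made just before~(\ref{EquationJZero}). First I would collect the needed compatibility facts. Because $L$ satisfies~(\ref{EqHmoduleLieAlgebra}), its adjoint representation satisfies~(\ref{EqHLModule2}), and a short computation with the counit gives $h\cdot(\ad a)=\ad(ha)$ for all $a\in L$, $h\in H$, where the left-hand $H$-action is the one of~(\ref{EqHActionOnEnd}) on $\End_F(L)$. Next, the linear map $\psi$ is $H$-equivariant on $V:=\bigoplus_{i=1}^r(B_i\oplus\tilde R_i)$: on each $B_i$ it coincides with $\ad$, and on each $\tilde R_i$ it is the ``semisimple part'' map $y\mapsto c_y$, where $\ad y=c_y+d_y$ with $c_y$ diagonalizable and $d_y\in J(A)$ as in Lemma~\ref{LemmaGeneralizedJordanAd}; this map intertwines the $H$-actions because the $R_i$ are $H$-submodules and the Wedderburn~--- Mal'cev splitting producing the $c_y$ is $H$-invariant, so uniqueness of the splitting forces $h\cdot c_y=c_{hy}$. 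Finally, $\ad y-\psi(y)\in J(A)$ for every $y\in V$, again by Lemma~\ref{LemmaGeneralizedJordanAd}.

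With these in hand, fix $i$ and write $f_i$ as a linear combination of monomials $x^{h_1}_{\sigma(1)}\cdots x^{h_{n_i}}_{\sigma(n_i)}$. Then $f_i(\ad\bar x^{(i)}_1,\dots,\ad\bar x^{(i)}_{n_i})$ is the corresponding combination of products of the operators $\ad(h_\ell\bar x^{(i)}_{\sigma(\ell)})$, whose arguments all lie in $V$, and $f_i(\psi(\bar x^{(i)}_1),\dots)$ is the same combination with $\psi$ in place of $\ad$ throughout. Expanding each factor as $\psi(\cdot)+(\ad-\psi)(\cdot)$ and using that $J(A)$ is a two-sided ideal of $A$, one gets $f_i(\ad\bar x^{(i)}_\bullet)=f_i(\psi(\bar x^{(i)}_\bullet))+v_i$ with $v_i\in J(A)$. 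Moreover $f_i(\psi(\bar x^{(i)}_\bullet))$ lies in the associative subalgebra of $\End_F(L)$ generated by $\tilde B$, hence, since $\tilde T_i$ is a $\tilde B$-submodule by Lemma~\ref{LemmaSiProperties}, $w_i:=f_i(\psi(\bar x^{(i)}_\bullet))\,\bar t_i\in\tilde T_i$. Thus $j_i f_i(\ad\bar x^{(i)}_\bullet)\,\bar t_i=j_i w_i+j_i v_i\bar t_i$.

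The commutator on the right of the claimed identity is multilinear in its $r$ entries, so substituting the above and expanding writes the left-hand side as $\sum_{S\subseteq\{1,\dots,r\}}\Phi_S$, where in $\Phi_S$ the $i$-th entry is $[j_i v_i\bar t_i,\bar u_{i1},\dots,\bar u_{iq_i}]$ for $i\in S$ and $[j_i w_i,\bar u_{i1},\dots,\bar u_{iq_i}]$ otherwise; $\Phi_\varnothing$ is exactly the right-hand side, so it remains to show $\Phi_S=0$ for $S\ne\varnothing$. For such $S$, recall $j_i=\prod_{k=1}^{n_i}j_{ik}$ with $j_{ik}\in J(A)$; each entry with $i\notin S$ has the form $[(\prod_k j_{ik})w,\bar u_{i1},\dots,\bar u_{iq_i}]$ with $w\in\tilde T_i$, and each entry with $i\in S$ has the form $[(\prod_k j_{ik})v_i\,\bar t_i,\bar u_{i1},\dots,\bar u_{iq_i}]$, where $(\prod_k j_{ik})v_i$ is a product of $n_i+1$ elements of $J(A)$ and $\bar t_i\in\tilde T_i$. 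Hence $\Phi_S$ is a value of a commutator of exactly the shape appearing in~(\ref{EquationJZero}), but now with $\bigl(\sum_i n_i\bigr)+|S|>\sum_i n_i$ factors from $J(A)$ distributed among the $r$ slots; by the maximality of $\sum_i n_i$ in the choice preceding~(\ref{EquationJZero}), any such commutator vanishes, so $\Phi_S=0$ and the identity follows.

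I expect the only real obstacle to be the bookkeeping of the first step: one must ensure that $\psi$ intertwines the $H$-actions and that $h\cdot\ad\bar x=\ad(h\bar x)$, so that the replacement genuinely differs by an element of $J(A)$ even after the $H$-decorations of $f_i$ are applied; granting this, the argument collapses to the extremality already built into the choice of the $n_i$. One should also note explicitly that what is used is the ``many slots'' version of~(\ref{EquationJZero})---any distribution of more than $\sum_i n_i$ radical factors annihilates the commutator---which is not stated verbatim but is an immediate consequence of that same maximal choice.
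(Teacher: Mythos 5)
Your proof is correct and follows essentially the same route as the paper: decompose $\ad a_{i\ell}=c_{i\ell}+d_{i\ell}$ with $d_{i\ell}\in J(A)$, expand by multilinearity, and kill every term containing a radical factor via the maximality of $\sum_i n_i$ underlying~(\ref{EquationJZero}). The paper's proof is just two sentences, and your elaboration of the $H$-equivariance bookkeeping and of the ``more than $\sum_i n_i$ radical factors in any distribution'' version of~(\ref{EquationJZero}) supplies exactly the details it leaves implicit.
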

\begin{proof}
We rewrite $\ad a_{i\ell}=c_{i\ell}+d_{i\ell}=\psi(a_i)+d_{i\ell}$ and use the multilinearity
of $f_i$. By~(\ref{EquationJZero}), terms with $d_{i\ell}$ vanish.
\end{proof}

\begin{lemma}\label{LemmaAlt} If $d \ne 0$, then there exists a number $n_0 \in \mathbb N$ such that for every $n\geqslant n_0$
there exist disjoint subsets $X_1$, \ldots, $X_{2k} \subseteq \lbrace x_1, \ldots, x_n
\rbrace$, $k := \left[\frac{n-n_0}{2d}\right]$,
$|X_1| = \ldots = |X_{2k}|=d$ and a polynomial $f \in V^H_n \backslash
\Id^H(L)$ alternating in the variables of each set $X_j$.
\end{lemma}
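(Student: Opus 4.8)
The plan is to glue, one per reductive piece $(\ad B_p)\oplus R_p$ singled out in Lemmas~\ref{LemmaChooseReduct}--\ref{LemmaChange}, the alternating $H$-polynomials furnished by Theorem~\ref{TheoremAlternateFinal}. Write $\ell_p:=\dim\bigl((\ad B_p)\oplus R_p\bigr)$; then $\sum_{p=1}^r\ell_p=d$ by Lemma~\ref{LemmaSiProperties}. Each $(\ad B_p)\oplus R_p$ is a reductive $H$-module Lie algebra with $H$-invariant maximal semisimple part $\ad B_p$ and center $R_p$ (central by~(\ref{EqcijCentral}) and Lemma~\ref{LemmaGeneralizedJordanAd}), and by Lemma~\ref{ChooseSubmodule} the space $T_p\subseteq\tilde T_p$ is a faithful irreducible $(H,(\ad B_p)\oplus R_p)$-module; since $T_p$ is an $H$-submodule of $L$ and $L$ is $H$-nice, Condition~\ref{ConditionWedderburn} supplies the $H$-invariant Wedderburn~--- Mal'cev decompositions inside $\End_F(T_p)$ required by Theorem~\ref{TheoremAlternateFinal}. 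For the given $k$, combining that theorem with Lemma~\ref{LemmaAlternateFirst}, the Density Theorem, and the $F$-linearity of the trace-gluing (adjoining a bounded number of non-alternating variables as needed), I obtain associative multilinear $H$-polynomials $g_p^{(j)}$, each alternating in $2k$ disjoint sets of $\ell_p$ variables, elements $\gamma_{p,j}\in H$, and a fixed substitution of bases of $(\ad B_p)\oplus R_p$ into the alternating slots, such that $\sum_j g_p^{(j)}(\cdots)\,\zeta_p(\gamma_{p,j})=\id_{T_p}$ on $T_p$. From Condition~2 in the definition of $d$ (Subsection~\ref{SubsectionHopfPIexp}) and~(\ref{EquationqNonZero}) I also fix $q_p\in\mathbb Z_+$, operators $j_p\in J(A)\cup\{1\}$, vectors $\bar t_p\in T_p$ and elements $\bar u_{p1},\ldots,\bar u_{pq_p}\in L$ with $[[j_1\bar t_1,\bar u_{11},\ldots,\bar u_{1q_1}],\ldots,[j_r\bar t_r,\bar u_{r1},\ldots,\bar u_{rq_r}]]\neq0$.

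Next I assemble the Lie $H$-polynomial. I split each of the $2k$ alternating sets $X_s$ of size $d$ as $X_s=X_s^{(1)}\sqcup\cdots\sqcup X_s^{(r)}$ with $|X_s^{(p)}|=\ell_p$, and use $X_1^{(p)},\ldots,X_{2k}^{(p)}$ as the alternating sets of $g_p^{(j)}$. Via the standard correspondence $\varphi(a_1)\cdots\varphi(a_t)\,v\mapsto[a_1,[a_2,\ldots,[a_t,v]\ldots]]$ (recording the $\zeta(h)$'s as $H$-superscripts), I lift each $g_p^{(j)}$ to a multilinear Lie $H$-polynomial $\hat g_p^{(j)}$ in one extra variable $w_p$ that carries the $H$-superscript $\gamma_{p,j}$, and I likewise turn each operator $j_p\in J(A)$ --- a polynomial in $\ad L$ and $\zeta(H)$ --- into a Lie $H$-polynomial operation on $w_p$ after introducing its constituent elements of $L$ as a bounded number of additional variables. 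Then I set
\[
f:=\Alt_{1}\ldots\Alt_{2k}\ \sum_{j_1,\ldots,j_r}\ \bigl[\,[\,j_1\hat g_1^{(j_1)},\,u_{11},\ldots,u_{1q_1}\,],\ \ldots,\ [\,j_r\hat g_r^{(j_r)},\,u_{r1},\ldots,u_{rq_r}\,]\,\bigr],
\]
where $\Alt_s$ alternates the variables of $X_s$, $j_p\hat g_p^{(j)}$ denotes the result of applying $j_p$ to $\hat g_p^{(j)}$, and $u_{p\bullet}$ are further non-alternating variables; a bounded number of dummy variables is adjoined so that $f$ has exactly $n$ variables (this is possible for $n\geq n_0$ with $n_0$ slightly larger than the fixed count of non-alternating variables; for $n_0\leq n<n_0+2d$, when $k=0$, one just takes any element of $V^H_n\setminus\Id^H(L)$, which is nonempty since $L$ is non-nilpotent). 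Then $f\in V^H_n$ and $f$ is alternating in each $X_s$ with $|X_s|=d$.

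Finally I verify $f\notin\Id^H(L)$ by evaluating at the substitution $\xi_0$ sending each slot of $X_s^{(p)}$ to its prescribed basis element of $B_p\oplus\tilde R_p$, $w_p$ to $\bar t_p$, $u_{p\bullet}$ to $\bar u_{p\bullet}$, the $j_p$-encoding and $g_p^{(j)}$-padding variables to their prescribed values, and the dummies to harmless elements. Expanding the alternations and applying Lemma~\ref{LemmaChange} term by term --- legitimate because every alternating slot receives an element of $\bigoplus_i(B_i\oplus\tilde R_i)$, so each term has exactly the shape treated there, and~(\ref{EquationJZero}) discards the $J(A)$-corrections --- I argue as follows. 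If for some $s$ the permutation $\pi_s$ used in $\Alt_s$ does not preserve the block partition of $X_s$, let $m$ be the least index with $\pi_s(X_s^{(m)})\neq X_s^{(m)}$; then some slot of $\hat g_m$ receives a basis element of $B_{m'}\oplus\tilde R_{m'}$ with $m'>m$, which (after the reduction of Lemma~\ref{LemmaChange}) acts as $0$ on $\tilde T_m\supseteq T_m$ by Property~4 of Lemma~\ref{LemmaSiProperties}, so by multilinearity of $g_m$ that term vanishes. The surviving terms, with all $\pi_s$ block-preserving, sum with signs to $(\prod_p\ell_p!)^{2k}$ times the main value, which by Lemma~\ref{LemmaChange} and $\sum_j g_p^{(j)}(\cdots)\zeta_p(\gamma_{p,j})=\id_{T_p}$ equals $(\prod_p\ell_p!)^{2k}[[j_1\bar t_1,\bar u_{1\bullet}],\ldots,[j_r\bar t_r,\bar u_{r\bullet}]]\neq0$. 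Hence $f(\xi_0)\neq0$. In the purely semisimple case one replaces the above by the direct gluing of the Remark preceding Lemma~\ref{LemmaChooseReduct}, with $r=1$, $T_1=\tilde T_1=B_1$, $j_1=1$, $q_1=0$.

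I expect the non-vanishing step to be the main obstacle: one must simultaneously recover the nonzero value of~(\ref{EquationqNonZero}) as exactly the block-preserving part of the expanded alternation --- which forces the $\id_{T_p}$-normalization and a careful tracking of the $\zeta(\gamma_{p,j})$'s and the $H$-superscripts through the lifts $\hat g_p^{(j)}$ --- and annihilate every cross term, for which the flag property of Lemma~\ref{LemmaSiProperties} (Property~4) must be combined with Lemma~\ref{LemmaChange}, so that the offending operators are genuinely zero rather than merely elements of $J(A)$.
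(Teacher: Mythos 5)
Your construction follows the paper's proof in all of its main structural steps: gluing the $2k$-fold alternating polynomials of Theorem~\ref{TheoremAlternateFinal} for the reductive pieces $(\ad B_p)\oplus R_p$ acting on the faithful irreducible modules $T_p$ of Lemma~\ref{ChooseSubmodule}, normalizing each block to a scalar operator on $T_p$ via the Density Theorem and matrix units, anchoring the whole expression on the nonzero commutator of~(\ref{EquationqNonZero}), killing the non-block-preserving terms of the expanded alternation with Property~4 of Lemma~\ref{LemmaSiProperties}, and passing from operators back to elements of $\bigoplus_i(B_i\oplus\tilde R_i)$ with Lemma~\ref{LemmaChange} and~(\ref{EquationJZero}). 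All of that is correct and is exactly what the paper does.

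The genuine gap is the degree adjustment. After gluing, your polynomial has $\tilde n = 2kd + (\text{a bounded number of auxiliary variables})$ variables, and since $k=\left[\frac{n-n_0}{2d}\right]$ the deficit $n-\tilde n$ can be any value up to roughly $n_0+2d$. You dispose of it by ``adjoining dummy variables \ldots\ and substituting harmless elements,'' but in a multilinear Lie polynomial every adjoined variable must actually occur in each monomial, so adjoining $v_1,\ldots,v_{n-\tilde n}$ means wrapping extra brackets such as $[v_1,[\ldots,[v_{n-\tilde n},t_1]\ldots]]$ around a witnessing element, and there is no a priori guarantee that elements $\bar v_i\in L$ exist for which the iterated bracket of $\bar t_1$ still makes the outer commutator nonzero: the ``good'' values of $t_1$ form only the complement of a proper subspace, and nothing you have proved forces some iterated bracket of the prescribed length $n-\tilde n$ to land outside it. The paper closes this with a specific device you omit: it inserts an extra copy $\tilde f_1$ of the alternating polynomial for $\varphi_1$ with the larger parameter $\tilde k=\left[\frac{n-2kd-m_1}{2d_1}\right]+1$, so that the value $(\tilde f_1(\ad\bar x_1,\ldots))^{\tilde h}[\bar w_1,\ldots,[\bar w_{\tilde\theta},h'_1\bar t_1]\ldots]$ is a linear combination of long commutators each of length greater than $n-\tilde n$ which, summed, is known to keep the outer expression nonzero; by linearity one such long commutator works, and converting its first $n-\tilde n$ entries into fresh variables produces a polynomial in exactly $n$ variables that provably does not vanish. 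Without this (or an equivalent mechanism) your $f$ is only shown to lie in $V^H_{\tilde n}\backslash\Id^H(L)$ rather than in $V^H_n\backslash\Id^H(L)$.
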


\begin{proof}
Denote by $\varphi_i \colon (\ad B_i)\oplus R_i \to
\mathfrak{gl}(T_i)$ the representation
 corresponding to the action of $(\ad B_i)\oplus R_i$
on $T_i$.
In virtue of Theorem~\ref{TheoremAlternateFinal},
there exist constants $m_i \in \mathbb Z_+$
such that for any $k$ there exist
 multilinear polynomials $f_i \in Q^H_{d_i, 2k, 2k d_i+m_i}
  \backslash \Id^H(\varphi_i)$,
$d_i := \dim ((\ad B_i)\oplus R_i)$,
alternating in the variables from disjoint sets
$X^{(i)}_{\ell}$, $1 \leqslant \ell \leqslant 2k$, $|X^{(i)}_{\ell}|=d_i$.

In virtue of~(\ref{EquationqNonZero}),
$$[[j_1 \bar t_1, \bar u_{11}, \ldots, \bar u_{1,q_1}], [j_2 \bar t_2, \bar u_{21}, \ldots, \bar u_{2,q_2}],
 \ldots, [j_r \bar t_r, \bar u_{r1}, \ldots, \bar u_{r,q_r}]] \ne 0,
 $$
 for some $\bar u_{i\ell} \in L$ and $\bar t_i \in T_i$. All $j_i \in J(A)\cup \{1\}$ are polynomials in elements from $\zeta(H)$ and $\ad L$. Denote by $\tilde m$ the maximal degree of them.

 Recall that each $T_i$ is a faithful irreducible $(H,(\ad B_i)\oplus R_i)$-module.
Therefore by the Density Theorem,
 $\End_F(T_i)$ is generated by operators from $\zeta(H)$
and $(\ad B_i)\oplus R_i$.
Note that $\End_F(T_i) \cong M_{\dim T_i}(F)$.
Thus every matrix unit $e^{(i)}_{j\ell} \in M_{\dim T_i}(F)$ can be
represented as a polynomial in operators from $\zeta(H)$
and $(\ad B_i)\oplus R_i$. Choose such polynomials
for all $i$ and all matrix units. Denote by $m_0$ the maximal degree of those
polynomials.

Let $n_0 := r(2m_0+\tilde m+1)+ \sum_{i=1}^r (m_i+q_i)$.
Now we choose $f_i$ for $k = \left[\frac{n-n_0}{2d}\right]$.
In addition, we choose $\tilde f_1$ for $\tilde k = \left[\frac{n-2kd-m_1}{2d_1}\right]+1$
and $\varphi_1$. The polynomials $f_i$ will deliver us the required alternations.
However, the total degree of the product may be less than $n$. We will use $\tilde f_1$
to increase the number of variables and obtain a polynomial of degree $n$.

Since $f_i \notin \Id^H(\varphi_i)$ and $\tilde f_1 \notin \Id^H(\varphi_1)$,
there exist $\bar x_{i1}, \ldots, \bar x_{i, 2k d_i+m_i} \in (\ad B_i)\oplus R_i$
such that $f_i(\bar x_{i1}, \ldots, \bar x_{i, 2k d_i+m_i})\ne 0$,
and $\bar x_1, \ldots, \bar x_{2\tilde k d_1+m_1} \in (\ad B_1)\oplus R_1$
such that $\tilde f_1(\bar x_1, \ldots, \bar x_{2\tilde k d_1+m_1}) \ne 0$.
Hence $$e^{(i)}_{\ell_i \ell_i} f_i(\bar x_{i1}, \ldots, \bar x_{i, 2k d_i+m_i})
e^{(i)}_{s_i s_i} \ne 0$$
and $$e^{(1)}_{\tilde\ell \tilde\ell}\tilde f_1(\bar x_1, \ldots, \bar x_{2\tilde k d_1+m_1})
e^{(1)}_{\tilde s \tilde s} \ne 0$$
 for some matrix units $e^{(i)}_{\ell_i \ell_i},
e^{(i)}_{s_i s_i} \in \End_F(T_i)$, $1 \leqslant \ell_i, s_i \leqslant \dim {T_i}$,
$e^{(1)}_{\tilde\ell \tilde\ell}, e^{(1)}_{\tilde s \tilde s} \in \End_F(T_1)$, $1 \leqslant \tilde \ell,
\tilde s \leqslant \dim T_1$.
Thus $$\sum_{\ell=1}^{\dim_{T_i}}
e^{(i)}_{\ell \ell_i} f_i(\bar x_{i1}, \ldots, \bar x_{i, 2k d_i+m_i})
 e^{(i)}_{s_i \ell}$$ is a nonzero scalar operator in $\End_F(T_i)$.

Hence
$$ [[j_1\left(\sum_{\ell=1}^{\dim {T_1}}
e^{(1)}_{\ell \ell_1} f_1(\bar x_{11}, \ldots, \bar x_{1,2k d_1+m_1})
e^{(1)}_{s_1 \tilde \ell} \tilde f_1(\bar x_1, \ldots, \bar x_{2\tilde k d_1+m_1})
 e^{(1)}_{\tilde s \ell}\right)\bar t_1, \bar u_{11}, \ldots, \bar u_{1q_1}],$$
 $$ [j_2\left(\sum_{\ell=1}^{\dim {T_2}}
e^{(2)}_{\ell \ell_2} f_2(\bar x_{21}, \ldots, \bar x_{2,2k d_2+m_2})
 e^{(2)}_{s_2 \ell}\right)\bar t_2, \bar u_{21}, \ldots, \bar u_{2q_2}],
 \ldots, $$
 $$
 [j_r\left(\sum_{\ell=1}^{\dim {T_r}}
e^{(r)}_{\ell \ell_r} f_r(\bar x_{r1}, \ldots, \bar x_{r, 2k d_r+m_r})
 e^{(r)}_{s_r \ell}\right)\bar t_r, \bar u_{r1}, \ldots, \bar u_{rq_r}]]\ne 0.$$
 We assume that each $f_i$ is a polynomial in $x_{i1}, \ldots,
x_{i,2k d_i+m_i}$ and $\tilde f_1$ is a polynomial in $x_1, \ldots, x_{2\tilde k d_1 + m_1}$.
Denote $X_\ell := \bigcup_{i=1}^{r} X^{(i)}_{\ell}$
where $f_i$ is alternating in the variables of each $X^{(i)}_{\ell}$.
Let $\Alt_\ell$ be the operator of alternation
in the variables from $X_\ell$. 
Consider
$$\tilde f(x_1, \ldots, x_{2\tilde k d_1 + m_1}; x_{11}, \ldots, x_{1, 2k d_1+m_1};
\ldots; \ x_{r1}, \ldots, x_{r, 2k d_r+m_r}) :=
$$ $$
 \Alt_1 \Alt_2 \ldots \Alt_{2k} [[j_1\left(\sum_{\ell=1}^{\dim {T_1}}
e^{(1)}_{\ell \ell_1} f_1(x_{11}, \ldots, x_{1, 2k d_1+m_1})
e^{(1)}_{s_1 \tilde \ell}\ \cdot \right.$$
 $$\left.\tilde f_1(x_1, \ldots, x_{2\tilde k d_1+m_1})
 e^{(1)}_{\tilde s \ell}\right)\bar t_1, \bar u_{11}, \ldots, \bar u_{1q_1}],$$
 $$ [j_2\left(\sum_{\ell=1}^{\dim {T_2}}
e^{(2)}_{\ell \ell_2} f_2(x_{21}, \ldots, x_{2, 2k d_2+m_2})
 e^{(2)}_{s_2 \ell}\right)\bar t_2, \bar u_{21}, \ldots, \bar u_{2q_2}],
 \ldots, $$
 $$
 [j_r\left(\sum_{\ell=1}^{\dim {T_r}}
e^{(r)}_{\ell \ell_r} f_r(x_{r1}, \ldots, x_{r,2k d_r+m_r})
 e^{(r)}_{s_r \ell}\right)\bar t_r, \bar u_{r1}, \ldots, \bar u_{rq_r}]].$$
Then
$$\tilde f(\bar x_1, \ldots, \bar x_{2\tilde k d_1 + m_1}; \bar x_{11}, \ldots, \bar x_{1, 2k d_1+m_1};
\ldots; \ \bar x_{r1}, \ldots, \bar x_{r, 2k d_r+m_r})
= $$ $$(d_1!)^{2k} \ldots (d_r!)^{2k} [[j_1\left(\sum_{\ell=1}^{\dim {T_1}}
e^{(1)}_{\ell \ell_1} f_1(\bar x_{11}, \ldots, \bar x_{1,2k d_1+m_1})
e^{(1)}_{s_1 \tilde \ell}\ \cdot\right.$$ $$\left. \tilde f_1(\bar x_1, \ldots, \bar x_{2\tilde k d_1+m_1})
 e^{(1)}_{\tilde s \ell}\right)\bar t_1, \bar u_{11}, \ldots, \bar u_{1q_1}],
 \ldots, $$
 $$
 [j_r\left(\sum_{\ell=1}^{\dim {T_r}}
e^{(r)}_{\ell \ell_r} f_r(\bar x_{r1}, \ldots, \bar x_{r, 2k d_r+m_r})
 e^{(r)}_{s_r \ell}\right)\bar t_r, \bar u_{r1},
  \ldots, \bar u_{rq_r}]]\ne 0.$$
since $f_i$ are alternating in each $X^{(i)}_{\ell}$
and, by Lemma~\ref{LemmaSiProperties}, $((\ad B_i)\oplus R_i)\tilde T_\ell = 0$
for $i > \ell$. Now we rewrite
$e^{(i)}_{\ell j}$ as polynomials in elements of $(\ad B_i)\oplus R_i$
and $\zeta(H)$.
Using linearity of $\tilde f$ in $e^{(i)}_{\ell j}$,
we can replace $e^{(i)}_{\ell j}$ with the products
of elements from $(\ad B_i)\oplus R_i$
and $\zeta(H)$, and the expression will not vanish
for some choice of the products. Using~(\ref{EqHLModule2}),
we can move all $\zeta(h)$ to the right.
 By Lemma~\ref{LemmaChange},
we can replace all elements from $(\ad B_i)\oplus R_i$
with elements from $B_i\oplus \tilde R_i$
and the expression will be still nonzero.
Denote by $\psi \colon \bigoplus_{i=1}^r (B_i \oplus \tilde R_i) \to
\bigoplus_{i=1}^r ((\ad B_i)\oplus R_i) $ the corresponding linear isomorphism.
Now we rewrite $j_i$ as polynomials in elements $\ad L$ and $\zeta(H)$.
Since $\tilde f$ is linear in $j_i$,
we can replace $j_i$ with one of the monomials,
i.e. with the product of elements from $\ad L$ and $\zeta(H)$.
Using~(\ref{EqHLModule2}),
we again move all $\zeta(h)$ to the right. Then
we replace the elements from $\ad L$ with new variables,
and
$$\hat f :=
 \Alt_1 \Alt_2 \ldots \Alt_{2k} \biggl[\Bigl[\Bigl[y_{11}, [y_{12}, \ldots
 [y_{1 \alpha_1}, \Bigl[z_{11}, [z_{12},
 \ldots, [z_{1 \beta_1},
$$ $$
  (f_1(\ad x_{11}, \ldots, \ad x_{1, 2k d_1+m_1}))^{h_1}
 [w_{11}, [w_{12}, \ldots, [w_{1 \theta_1},$$ $$
 (\tilde f_1(\ad x_1, \ldots, \ad x_{2\tilde k d_1+m_1}))^{\tilde h}
 [w_{1}, [w_{2}, \ldots, [w_{\tilde \theta},
  t_1^{h'_1}]\ldots \Bigr],
  u_{11}, \ldots, u_{1q_1}\Bigr],$$
  $$\Bigl[\Bigl[y_{21}, [y_{22}, \ldots
 [y_{2 \alpha_2}, \Bigl[z_{21}, [z_{22},
 \ldots, [z_{2 \beta_2},
$$ $$
  (f_2(\ad x_{21}, \ldots, \ad x_{2, 2k d_2+m_2}))^{h_2}
 [w_{21}, [w_{22}, \ldots, [w_{2 \theta_2},
  t_2^{h'_2}]\ldots \Bigr],
  u_{21}, \ldots, u_{2q_2}\Bigr],
 \ldots, $$
 $$\Bigl[\Bigl[y_{r1}, [y_{r2}, \ldots,
 [y_{r \alpha_r}, \Bigr[z_{r1},
  [z_{r2},
 \ldots, [z_{r \beta_r},
 $$ $$
 (f_r(\ad x_{r1}, \ldots, \ad x_{r, 2k d_r+m_r}))^{h_r}
 [w_{r1}, [w_{r2}, \ldots, [w_{r \theta_r}, t_r^{h'_r}]\ldots \Bigr],
  u_{r1}, \ldots, u_{rq_r}\Bigr]\biggr]$$
  for some  $0 \leqslant \alpha_i \leqslant \tilde m$,
  \quad
  $0 \leqslant \beta_i, \theta_i, \tilde \theta \leqslant m_0$,
  \quad $h_i, h'_i, \tilde h \in H$,\quad
  $\bar y_{i\ell}, \bar z_{i\ell},
  \bar w_{i\ell}, \bar w_i \in L$
 does not vanish under the substitution
 $t_i=\bar t_i$, $u_{i\ell}=\bar u_{i\ell}$,
 $x_{i\ell}=\psi^{-1}(\bar x_{i\ell})$, $x_i = \psi^{-1}(\bar x_i)$, $y_{i\ell}=\bar y_{i\ell}$,
 $z_{i\ell}=\bar z_{i\ell}$, $w_{i\ell}=\bar w_{i\ell}$, $w_i = \bar w_i$.
 
 Hence $$f_0 :=
 \Alt_1 \Alt_2 \ldots \Alt_{2k} \biggl[\Bigl[\Bigl[y_{11}, [y_{12}, \ldots
 [y_{1 \alpha_1}, \Bigl[z_{11}, [z_{12},
 \ldots, [z_{1 \beta_1},
$$ $$
  (f_1(\ad x_{11}, \ldots, \ad x_{1, 2k d_1+m_1}))^{h_1}
 [w_{11}, [w_{12}, \ldots, [w_{1 \theta_1}, t_1]\ldots \Bigr],
  u_{11}, \ldots, u_{1q_1}\Bigr],$$
  $$\Bigl[\Bigl[y_{21}, [y_{22}, \ldots
 [y_{2 \alpha_2}, \Bigl[z_{21}, [z_{22},
 \ldots, [z_{2 \beta_2},
$$ $$
  (f_2(\ad x_{21}, \ldots, \ad x_{2, 2k d_2+m_2}))^{h_2}
 [w_{21}, [w_{22}, \ldots, [w_{2 \theta_2},
  t_2^{h'_2}]\ldots \Bigr],
  u_{21}, \ldots, u_{2q_2}\Bigr],
 \ldots, $$
 $$\Bigl[\Bigl[y_{r1}, [y_{r2}, \ldots,
 [y_{r \alpha_r}, \Bigr[z_{r1},
  [z_{r2},
 \ldots, [z_{r \beta_r},
 $$ $$
 (f_r(\ad x_{r1}, \ldots, \ad x_{r, 2k d_r+m_r}))^{h_r}
 [w_{r1}, [w_{r2}, \ldots, [w_{r \theta_r}, t_r^{h'_r}]\ldots \Bigr],
  u_{r1}, \ldots, u_{rq_r}\Bigr]\biggr]$$
   does not vanish under the substitution
 $$t_1 = (\tilde f_1(\ad \bar x_1, \ldots, \ad \bar x_{2\tilde k d_1+m_1}))^{\tilde h}
 [\bar w_{1}, [\bar w_{2}, \ldots, [\bar w_{\tilde \theta},
 h'_1 \bar t_1]\ldots],$$
 $t_i=\bar t_i$ for $2 \leqslant i \leqslant r$; $u_{i\ell}=\bar u_{i\ell}$,
 $x_{i\ell}=\psi^{-1}(\bar x_{i\ell})$, $y_{i\ell}=\bar y_{i\ell}$,
 $z_{i\ell}=\bar z_{i\ell}$, $w_{i\ell}=\bar w_{i\ell}$.

Note that $f_0 \in V_{\tilde n}^H$,
  $\tilde n: = 2kd +r+ \sum_{i=1}^r (m_i + q_i + \alpha_i+\beta_i+\theta_i)
  \leqslant n$. If $n=\tilde n$, then we take $f:=f_0$.
  Suppose $n > \tilde n$.
Note that $(\tilde f_1(\ad \bar x_1, \ldots, \ad \bar x_{2\tilde k d_1+m_1}))^{\tilde h}
 [\bar w_{1}, [\bar w_{2}, \ldots, [\bar w_{\tilde \theta},
  h'_1 \bar t_1]\ldots]$ is a linear combination of long commutators.
  Each of these commutators contains at least $2\tilde k d_1+m_1+1 > n-\tilde n+1$
  elements of $L$.
       Hence $ f_0$ does not vanish under a substitution
 $t_1 = [\bar v_1, [\bar v_2, [\ldots, [\bar v_q,  h'_1\bar t_1]\ldots]$
 for some $q \geqslant n-\tilde n$, $\bar v_i \in L$;
  $t_i=\bar t_i$ for $2 \leqslant i \leqslant r$; $u_{i\ell}=\bar u_{i\ell}$,
 $x_{i\ell}=\psi^{-1}(\bar x_{i\ell})$, $y_{i\ell}=\bar y_{i\ell}$,
 $z_{i\ell}=\bar z_{i\ell}$, $w_{i\ell}=\bar w_{i\ell}$.
Therefore, $$f :=
 \Alt_1 \Alt_2 \ldots \Alt_{2k} \biggl[\Bigl[\Bigl[y_{11}, [y_{12}, \ldots
 [y_{1 \alpha_1}, \Bigl[z_{11}, [z_{12},
 \ldots, [z_{1 \beta_1},
$$ $$
  (f_1(\ad x_{11}, \ldots, \ad x_{1, 2k d_1+m_1}))^{h_1}
 [w_{11}, [w_{12}, \ldots, [w_{1 \theta_1},
  $$
 $$\bigl[v_1, [v_2, [\ldots, [v_{n-\tilde n}, t_1]\ldots\bigr]\ldots \Bigr],
  u_{11}, \ldots, u_{1q_1}\Bigr],$$
 $$
  \Bigl[\Bigl[y_{21}, [y_{22}, \ldots
 [y_{2 \alpha_2}, \Bigl[z_{21}, [z_{22},
 \ldots, [z_{2 \beta_2},
$$ $$
  (f_2(\ad x_{21}, \ldots, \ad x_{2, 2k d_2+m_2}))^{h_2}
 [w_{21}, [w_{22}, \ldots, [w_{2 \theta_2},
  t_2^{h'_2}]\ldots \Bigr],
  u_{21}, \ldots, u_{2q_2}\Bigr],  $$
$$
 \ldots, \Bigl[\Bigl[y_{r1}, [y_{r2},\ldots,
 [y_{r \alpha_r}, \Bigr[z_{r1},
  [z_{r2},
 \ldots, [z_{r \beta_r},
 $$ $$
 (f_r(\ad x_{r1}, \ldots, \ad x_{r, 2k d_r+m_r}))^{h_r}
 [w_{r1}, [w_{r2}, \ldots, [w_{r \theta_r}, t_r^{h'_r}]\ldots \Bigr],
  u_{r1}, \ldots, u_{rq_r}\Bigr]\biggr]$$
  does not vanish under the substitution
  $v_\ell = \bar v_\ell$, $1 \leqslant \ell \leqslant n-\tilde n$,
  $$t_1 = [\bar v_{n-\tilde n +1}, [\bar v_{n-\tilde n +2}, [\ldots, [\bar v_q,  h'_1\bar t_1]\ldots];$$
  $t_i=\bar t_i$ for $2 \leqslant i \leqslant r$; $u_{i\ell}=\bar u_{i\ell}$,
 $x_{i\ell}=\psi^{-1}(\bar x_{i\ell})$, $y_{i\ell}=\bar y_{i\ell}$,
 $z_{i\ell}=\bar z_{i\ell}$, $w_{i\ell}=\bar w_{i\ell}$.
 Note that $f \in V_n^H$ and satisfies all the conditions of the lemma.
\end{proof}

\begin{lemma}\label{LemmaCochar} Let
 $k, n_0$ be the numbers from
Lemma~\ref{LemmaAlt}.   Then for every $n \geqslant n_0$ there exists
a partition $\lambda = (\lambda_1, \ldots, \lambda_s) \vdash n$,
$\lambda_i \geqslant 2k-C$ for every $1 \leqslant i \leqslant d$,
with $m(L, H, \lambda) \ne 0$.
Here $C := p((\dim L)p + 3)((\dim L)-d)$ where $p \in \mathbb N$ is such number that $N^p=0$.
\end{lemma}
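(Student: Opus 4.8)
The plan is to convert the highly alternating polynomial furnished by Lemma~\ref{LemmaAlt} into information about the $n$th $H$-cocharacter by the usual branching argument, and then to intersect the resulting shape constraint with the shape constraint coming from the upper bound Lemma~\ref{LemmaUpper}.

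First, for $n\geqslant n_0$ take $f\in V^H_n\setminus\Id^H(L)$ as in Lemma~\ref{LemmaAlt}: it is multilinear in $x_1,\ldots,x_n$ and alternating in each of $2k$ pairwise disjoint subsets $X_1,\ldots,X_{2k}\subseteq\{x_1,\ldots,x_n\}$ with $|X_j|=d$. Put $M:=V^H_n/(V^H_n\cap\Id^H(L))$ with its $S_n$-action, let $\bar f\in M$ be the nonzero image of $f$, and fix the isotypic decomposition $M=\bigoplus_{\lambda\vdash n}M(\lambda)^{m(L,H,\lambda)}$, $\bar f=\sum_\lambda\bar f_\lambda$. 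Let $\mu$ be the Young diagram obtained from the $d\times 2k$ rectangle by adjoining the remaining $n-2kd$ boxes to the first row, so that the column lengths of $\mu$ are $\mu'_1=\cdots=\mu'_{2k}=d$ and $\mu'_{2k+1}=\cdots=1$. Choose a Young tableau $T_\mu$ of shape $\mu$ in which the $i$th column ($1\leqslant i\leqslant 2k$) consists precisely of the variables of $X_i$. Then $b_{T_\mu}=\prod_{i=1}^{2k}b_{X_i}$ (the extra columns being singletons), and since $f$ is alternating in each $X_i$ we get $b_{T_\mu}\bar f=(d!)^{2k}\,\bar f\neq 0$. As $b_{T_\mu}\in FS_n$ acts on each isotypic component separately and $(d!)^{2k}\neq 0$ in characteristic $0$, it follows that $b_{T_\mu}\bar f_\lambda=(d!)^{2k}\bar f_\lambda$ for every $\lambda$; choosing any $\lambda$ with $\bar f_\lambda\neq 0$ we conclude that $b_{T_\mu}M(\lambda)\neq 0$ and $m(L,H,\lambda)\neq 0$.

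Now $b_{T_\mu}M(\lambda)\neq 0$ says that $M(\lambda)$, restricted to the Young subgroup $S_{\mu'_1}\times S_{\mu'_2}\times\cdots$, contains the sign representation; by Young's rule (see, e.g., \cite{ZaiGia}) this happens if and only if $\lambda'\trianglerighteq\mu'$ in the dominance order, and in particular $\sum_{i=1}^{2k}\lambda'_i\geqslant\sum_{i=1}^{2k}\mu'_i=2kd$, i.e.\ $\lambda$ has at least $2kd$ boxes in its first $2k$ columns. On the other hand, from $m(L,H,\lambda)\neq 0$ and Lemma~\ref{LemmaUpper} the diagram $\lambda$ has at most $\dim L$ rows and $\lambda_{d+1}<p((\dim L)p+3)$, so the number of boxes of $\lambda$ lying below the $d$th row is at most $(\dim L-d)\lambda_{d+1}<(\dim L-d)\,p((\dim L)p+3)=C$. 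Subtracting, the number of boxes of $\lambda$ lying simultaneously in the first $2k$ columns and in the first $d$ rows --- which equals $\sum_{i=1}^{d}\min(\lambda_i,2k)$ --- exceeds $2kd-C$. If we had $\lambda_d<2k-C$, then $\sum_{i=1}^{d}\min(\lambda_i,2k)\leqslant(d-1)2k+\lambda_d<2kd-C$, a contradiction. Hence $\lambda_d\geqslant 2k-C$, and since $\lambda_1\geqslant\cdots\geqslant\lambda_d$, also $\lambda_i\geqslant 2k-C$ for every $1\leqslant i\leqslant d$.

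The only substantial inputs are the classical branching rule identifying the $\lambda$ with $b_{T_\mu}M(\lambda)\neq 0$ (valid over any field of characteristic $0$ and available in \cite{ZaiGia}) and the already-established upper bound Lemma~\ref{LemmaUpper}; everything else is elementary counting of boxes of Young diagrams. I therefore do not expect a genuine obstacle; the only point needing attention is the exact matching of constants, namely that the quantity $(\dim L-d)\,p((\dim L)p+3)$ produced by Lemma~\ref{LemmaUpper} is precisely the $C$ of the statement, together with the harmless observation that the conclusion is automatic (and indeed forces $\lambda$ to have at least $d$ parts) once $2k>C$ and is vacuous otherwise.
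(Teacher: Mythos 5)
Your proof is correct, and it reaches the conclusion by a genuinely different route in the key combinatorial step. The paper extracts a $\lambda$ and a tableau with $e^{*}_{T_\lambda} f \notin \Id^H(L)$ from the decomposition $FS_n = \bigoplus_{\lambda, T_\lambda} FS_n e^{*}_{T_\lambda}$ and then argues directly with the row symmetrizer: since $a_{T_\lambda}$ symmetrizes the entries of each row while $f$ alternates in each $X_i$, no row of $T_\lambda$ can contain two variables from the same $X_i$, whence $\sum_{i=1}^{d-1}\lambda_i \leqslant 2k(d-1)+(n-2kd)=n-2k$; combined with $\sum_{i=1}^{d}\lambda_i\geqslant n-C$ (the same consequence of Lemma~\ref{LemmaUpper} you use) this yields $\lambda_d\geqslant 2k-C$. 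You instead fix the shape $\mu$ determined by the alternating sets, observe $b_{T_\mu}\bar f=(d!)^{2k}\bar f\ne 0$, and invoke Young's rule to obtain the dominance relation $\lambda'\trianglerighteq\mu'$, i.e.\ a lower bound $\sum_{i=1}^{2k}\lambda_i'\geqslant 2kd$ on column sums, which you then intersect with the bound on the boxes below row $d$. The two arguments are dual box-counting estimates: yours requires the slightly heavier representation-theoretic input (Kostka numbers/Young's rule) where the paper needs only the elementary observation that symmetrizing two alternated variables annihilates $f$, but in exchange your version isolates the shape constraint cleanly as a dominance inequality, in the form this step usually takes in the associative literature. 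The constants match exactly, and your closing remark that the statement is vacuous when $2k\leqslant C$ (and otherwise forces $\lambda$ to have at least $d$ parts) is a correct and harmless observation.
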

\begin{proof}
Consider the polynomial $f$ from Lemma~\ref{LemmaAlt}.
It is sufficient to prove that $e^*_{T_\lambda} f \notin \Id^H(L)$
for some tableau $T_\lambda$ of the desired shape $\lambda$.
It is known that $FS_n = \bigoplus_{\lambda,T_\lambda} FS_n e^{*}_{T_\lambda}$ where the summation
runs over the set of all standard tableax $T_\lambda$,
$\lambda \vdash n$. Thus $FS_n f = \sum_{\lambda,T_\lambda} FS_n e^{*}_{T_\lambda}f
\not\subseteq \Id^H(L)$ and $e^{*}_{T_\lambda} f \notin \Id^H(L)$ for some $\lambda \vdash n$.
We claim that $\lambda$ is of the desired shape.
It is sufficient to prove that
$\lambda_d \geqslant 2k-C$, since
$\lambda_i \geqslant \lambda_d$ for every $1 \leqslant i \leqslant d$.
Each row of $T_\lambda$ includes numbers
of no more than one variable from each $X_i$,
since $e^{*}_{T_\lambda} = b_{T_\lambda} a_{T_\lambda}$
and $a_{T_\lambda}$ is symmetrizing the variables of each row.
Thus $\sum_{i=1}^{d-1} \lambda_i \leqslant 2k(d-1) + (n-2kd) = n-2k$.
In virtue of Lemma~\ref{LemmaUpper},
$\sum_{i=1}^d \lambda_i \geqslant n-C$. Therefore
$\lambda_d \geqslant 2k-C$.
\end{proof}

\begin{proof}[Proof of Theorem~\ref{TheoremMainH}]
The Young diagram~$D_\lambda$ from Lemma~\ref{LemmaCochar} contains
the rectangular subdiagram~$D_\mu$, $\mu=(\underbrace{2k-C, \ldots, 2k-C}_d)$.
The branching rule for $S_n$ implies that if we consider the restriction of
$S_n$-action on $M(\lambda)$ to $S_{n-1}$, then
$M(\lambda)$ becomes the direct sum of all non-isomorphic
$FS_{n-1}$-modules $M(\nu)$, $\nu \vdash (n-1)$, where each $D_\nu$ is obtained
from $D_\lambda$ by deleting one box. In particular,
$\dim M(\nu) \leqslant \dim M(\lambda)$.
Applying the rule $(n-d(2k-C))$ times, we obtain $\dim M(\mu) \leqslant \dim M(\lambda)$.
By the hook formula, $$\dim M(\mu) = \frac{(d(2k-C))!}{\prod_{i,j} h_{ij}}$$
where $h_{ij}$ is the length of the hook with edge in $(i, j)$.
By Stirling formula,
$$c_n^H(L)\geqslant \dim M(\lambda) \geqslant \dim M(\mu) \geqslant \frac{(d(2k-C))!}{((2k-C+d)!)^d}
\sim $$ $$\frac{
\sqrt{2\pi d(2k-C)} \left(\frac{d(2k-C)}{e}\right)^{d(2k-C)}
}
{
\left(\sqrt{2\pi (2k-C+d)}
\left(\frac{2k-C+d}{e}\right)^{2k-C+d}\right)^d
} \sim C_6 k^{r_6} d^{2kd}$$
for some constants $C_6 > 0$, $r_6 \in \mathbb Q$,
as $k \to \infty$.
Since $k = \left[\frac{n-n_0}{2d}\right]$,
this gives the lower bound.
The upper bound has been proved in Theorem~\ref{TheoremUpper}.
\end{proof}

\begin{proof}[Proof of Theorem~\ref{TheoremMainHSum}]
Suppose $L = L_1 \oplus \ldots \oplus L_q$ where $L_i$ are $H$-nice ideals.
First, $c^H_n(L) \geqslant c^H_n(L_i)$ for all $n\in\mathbb N$ and $1 \leqslant i \leqslant q$
since $L_i$ are $H$-invariant subalgebras of $L$. 
Hence $$\max_{1 \leqslant i \leqslant q} \PIexp^H(L_i) \leqslant \mathop{\underline{\lim}}_{n\to \infty}\sqrt[n]{c^H_n(L)}.$$

Suppose $f_0 \in V_n^H$, $n\in\mathbb N$. In order to prove that $f_0 \in \Id^H(L)$, it is sufficient to substitute only basis elements.
Choose a basis in $L$ that is the union of bases in $L_i$. Then if we substitute elements
from different $L_i$, the polynomial $f_0$ vanishes.
Hence it is sufficient to prove that $f_0 \in \Id^H(L_i)$
for all $1\leqslant i \leqslant s$. 
Let $d := \max_{1 \leqslant i \leqslant q} d(L_i)=\max_{1 \leqslant i \leqslant q} \PIexp^H(L_i)$. Then
Lemma~\ref{LemmaUpper} implies that 
if $\lambda = (\lambda_1, \ldots, \lambda_s) \vdash n$
and $\lambda_{d+1} \geqslant p((\dim L)p+3)$ or $\lambda_{\dim L+1} > 0$, then
$m(L, H, \lambda) = 0$. Repeating the arguments of Theorem~\ref{TheoremUpper}, we
apply Theorem~\ref{TheoremColength} and obtain that there exist constants $C_2 > 0$, $r_2 \in \mathbb R$
such that $c^H_n(L) \leqslant C_2 n^{r_2} d^n$ for all $n \in \mathbb N$.
Hence $$\mathop{\overline{\lim}}_{n\to \infty} \sqrt[n]{c^H_n(L)} \leqslant d = \max_{1 \leqslant i \leqslant q} \PIexp^H(L_i).$$
The lower bound has been already obtained.
\end{proof}

\section{Applications}\label{SectionAppl}

In this section we derive Theorems~\ref{TheoremMainGr}, \ref{TheoremMainGAffAlg}, and \ref{TheoremMainGFin} from Theorem~\ref{TheoremMainH} and 
Theorems~\ref{TheoremMainGrSum}, \ref{TheoremMainGAffAlgSum}, and \ref{TheoremMainGFinSum} from Theorem~\ref{TheoremMainHSum}.

\subsection{Applications to graded codimensions}\label{SubsectionApplGr}
\subsubsection{Gradings by finite groups}
In the case of an infinite group we will use a trick (see Lemma~\ref{LemmaInclExcl} below) to pass from an arbitrary group to a finitely generated
Abelian one. Unfortunately, this trick makes it impossible to use the explicit formula for the Hopf PI-exponent. However, in the case 
when the group is finite, we can avoid this trick and prove Lemma~\ref{LemmaGradAction} that enables us to derive properties of graded codimensions from properties of $H$-codimensions directly and, in particular, to calculate the graded PI-exponent using Subsection~\ref{SubsectionHopfPIexp}.

Let $L$ be a Lie algebra over a field $F$. Suppose $L$ is a right $H$-comodule for some Hopf algebra
$H$. Denote by $\rho \colon L \to L \otimes H$ the corresponding comodule map. We say that $L$ is an
\textit{$H$-comodule Lie algebra} if $\rho([a,b])=[a_{(0)},b_{(0)}] \otimes a_{(1)}b_{(1)}$
for all $a,b \in L$. Here we use
Sweedler's notation $\rho(a)=a_{(0)}\otimes a_{(1)}$.

\begin{example}\label{ExampleGraded}   
If $L=\bigoplus_{g \in G} L^{(g)}$ is a Lie
algebra over a field $F$ graded by a group $G$, then $L$ is an $FG$-comodule algebra where
$\rho(a^{(g)})=a^{(g)} \otimes g$ for all $g \in G$ and $a^{(g)} \in L^{(g)}$. 
Conversely, each $FG$-comodule Lie algebra $L$ has the $G$-grading
 $L=\bigoplus_{g\in G} L^{(g)}$ where $$L^{(g)}=\lbrace a \in L
 \mid \rho(a)  = a \otimes g\rbrace.$$
\end{example}

If $H$ is finite dimensional, then every $H$-comodule Lie algebra becomes an $H^*$-module
Lie algebra where $H^*:= \Hom_F(H, F)$ is the Hopf algebra dual to $H$ and $h^* a = h^*(a_{(1)}) a_{(0)}$, $h^* \in H^*$, $a \in L$.
In particular, if $G$
is finite, a $G$-graded Lie algebra $L$ is
an $(FG)^*$-module Lie algebra.
 Conversely, each $(FG)^*$-module Lie algebra $L$ has the $G$-grading
 $L=\bigoplus_{g\in G} L^{(g)}$ where $$L^{(g)}=\lbrace a \in L
 \mid h  a  = h(g) a \text{ for all } h \in (FG)^*\rbrace.$$
 Furthermore, any homomorphism of graded algebras is a homomorphism
 of $(FG)^*$-module algebras and vice versa.
  
 Let $(h_g)_{g\in G}$ be the basis in $(FG)^*$ dual
 to the basis $(g)_{g\in G}$ of $FG$, i.e. \begin{equation}\label{Eqhg1h2}
 h_{g_1}(g_2)=\left\lbrace
  \begin{array}{ll}
  1, & g_1 = g_2, \\
  0, & g_1 \ne g_2.
  \end{array} \right.
  \end{equation}
  Note that $$h_{g_1}h_{g_2}=\left\lbrace
  \begin{array}{ll}
  h_{g_1}, & g_1 = g_2, \\
  0, & g_1 \ne g_2,
  \end{array} \right.$$ i.e. $(FG)^*$ is isomorphic as an algebra to the direct sum of copies of $F$.
  
  We treat $L( X | (FG)^* )$ and $L( X^{\mathrm{gr}})$ as both graded and $(FG)^*$-module algebras.
 The homomorphism $\varphi \colon L( X | (FG)^* )
 \to L( X^{\mathrm{gr}})$ of $(FG)^*$-module algebras defined
 by $\varphi(x_j)=\sum_{g\in G} x^{(g)}_j$, $h\in (FG)^*$, $j\in \mathbb N$,
 is an isomorphism since the homomorphism $\xi \colon L( X^{\mathrm{gr}} )
 \to L( X | (FG)^* )$ of graded algebras defined by $\xi(x^{(g)}_j)
 = x^{h_g}_j$, $g\in G$, $j \in \mathbb N$, is the inverse of $\varphi$.

Indeed, $$\varphi(\xi(x^{(g)}_j))=\varphi(x^{h_g}_j)=h_g\varphi(x_j)=\sum_{g_0 \in G}h_g(g_0) x^{(g_0)}_j
=x^{(g)}_j$$
and $$\xi(\varphi(x_j))=\sum_{g\in G} \xi(x^{(g)}_j)
=\sum_{g\in G} x^{h_g}_j=x^{\sum_{g\in G} h_g}_j=x_j.$$

\begin{lemma}\label{LemmaGradAction}
Let $L$ be a $G$-graded Lie algebra where $G$ is a finite group.
Consider the corresponding $(FG)^*$-action on $L$. Then
 \begin{enumerate}
\item $\varphi\left(\Id^{(FG)^*}(L)\right)=\Id^{\mathrm{gr}}(L)$;
\item $c^{(FG)^*}_n(L)=c^{\mathrm{gr}}_n(L)$.
\end{enumerate}
\end{lemma}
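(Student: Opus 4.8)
The plan is to use the explicit isomorphism $\varphi \colon L(X \mid (FG)^*) \to L(X^{\mathrm{gr}})$ of $(FG)^*$-module algebras (equivalently, of $G$-graded algebras) constructed just above the lemma statement, together with its inverse $\xi$, to transport everything back and forth between the two settings. The two claims are really the same statement phrased first at the level of $T$-ideals and then at the level of codimensions, so proving (1) carefully will make (2) almost immediate.

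First I would prove the inclusion $\varphi\left(\Id^{(FG)^*}(L)\right)\subseteq \Id^{\mathrm{gr}}(L)$. Take $f\in \Id^{(FG)^*}(L)$; I must show $\varphi(f)$ vanishes under every homomorphism $\psi\colon L(X^{\mathrm{gr}})\to L$ of graded algebras, i.e.\ under every substitution $x^{(g)}_j\mapsto a^{(g)}_j$ with $a^{(g)}_j\in L^{(g)}$. Since any such graded homomorphism is the same thing as an $(FG)^*$-module homomorphism (this is exactly the ``furthermore'' remark preceding the lemma), and since $\varphi$ is an isomorphism of $(FG)^*$-module algebras, $\psi\circ\varphi\colon L(X\mid (FG)^*)\to L$ is an $(FG)^*$-module homomorphism; hence $\psi(\varphi(f))=(\psi\circ\varphi)(f)=0$ because $f$ is an $(FG)^*$-identity. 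For the reverse inclusion I run the same argument with the inverse isomorphism $\xi$: if $g\in \Id^{\mathrm{gr}}(L)$ and $\chi\colon L(X\mid (FG)^*)\to L$ is any $(FG)^*$-module homomorphism, then $\chi\circ\xi$ is a homomorphism of graded algebras, so $\chi(\xi(g))=0$, which shows $\xi\left(\Id^{\mathrm{gr}}(L)\right)\subseteq \Id^{(FG)^*}(L)$; applying $\varphi$ and using $\varphi\xi=\id$ gives $\Id^{\mathrm{gr}}(L)\subseteq \varphi\left(\Id^{(FG)^*}(L)\right)$. This proves (1).

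For (2), the key observation is that $\varphi$ maps the multilinear component $V^{(FG)^*}_n$ onto $V^{\mathrm{gr}}_n$. Indeed, expanding $\varphi(x_j)=\sum_{g\in G}x^{(g)}_j$ inside a multilinear Lie $(FG)^*$-monomial $[x^{h_{g_1}}_{\sigma(1)},\ldots,x^{h_{g_n}}_{\sigma(n)}]$ and using $\xi(x^{(g)}_j)=x^{h_g}_j$, one checks that $\varphi$ and $\xi$ restrict to mutually inverse linear isomorphisms between $V^{(FG)^*}_n$ and $V^{\mathrm{gr}}_n$; here one uses that $(FG)^*$ has the basis $(h_g)_{g\in G}$ and that $h_g$ picks out the homogeneous component of degree $g$. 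Combining this with part (1), $\varphi$ induces an isomorphism
$$\frac{V^{(FG)^*}_n}{V^{(FG)^*}_n\cap \Id^{(FG)^*}(L)}\;\xrightarrow{\ \sim\ }\;\frac{V^{\mathrm{gr}}_n}{V^{\mathrm{gr}}_n\cap \Id^{\mathrm{gr}}(L)},$$
and taking dimensions yields $c^{(FG)^*}_n(L)=c^{\mathrm{gr}}_n(L)$.

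The only genuinely delicate point — the ``main obstacle'', though it is more bookkeeping than difficulty — is the correspondence between graded homomorphisms $L(X^{\mathrm{gr}})\to L$ and $(FG)^*$-module homomorphisms $L(X\mid (FG)^*)\to L$, and the compatible identification of the free objects via $\varphi$ and $\xi$. This rests on the stated facts that a $G$-graded Lie algebra is the same as an $(FG)^*$-module Lie algebra with $L^{(g)}=\{a\in L\mid h\,a=h(g)a\ \forall h\in (FG)^*\}$, and that $\varphi,\xi$ are inverse isomorphisms of $(FG)^*$-module (equivalently graded) algebras — both established in the paragraphs preceding the lemma. Once these identifications are in place, everything else is a direct transport of structure, and no new ideas are needed.
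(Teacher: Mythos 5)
Your proposal is correct and follows essentially the same route as the paper: both directions of (1) are obtained by composing a homomorphism with the isomorphism $\varphi$ (or its inverse $\xi$) and using the equivalence of graded and $(FG)^*$-module homomorphisms, and (2) follows from (1) together with $\varphi\left(V^{(FG)^*}_n\right)=V^{\mathrm{gr}}_n$. No substantive differences from the paper's argument.
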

\begin{proof} Let $f\in \Id^{(FG)^*}(L)$. Suppose $\psi \colon L(X^{\mathrm{gr}}) \to L$
is a homomorphism of graded algebras.
Then $\psi\varphi \colon L(X|{(FG)^*}) \to L$ is a homomorphism of ${(FG)^*}$-module algebras.
Hence $\psi(\varphi(f))=0$ and $\varphi(f)\in \Id^{\mathrm{gr}}(L)$.

Conversely, let $f\in \Id^{\mathrm{gr}}(L)$. Suppose $\psi\colon L(X|{(FG)^*}) \to L$
is a homomorphism of ${(FG)^*}$-module algebras.
Then $\psi\varphi^{-1} \colon L(X^{\mathrm{gr}}) \to L$
is a homomorphism of graded algebras.
 Therefore $\psi(\varphi^{-1}(f))= 0$ and $\varphi^{-1}(f) \in 
\Id^{(FG)^*}(L)$. The first assertion is proved.

 The second assertion follows
from the first one and the equality
$\varphi\left(V^{(FG)^*}_n\right)=V^{\mathrm{gr}}_n$.
\end{proof}

 \begin{remark}
  The $H$-action and the polynomial $H$-identity from Example~\ref{ExampleIdH}
 correspond to the grading and the graded polynomial identity from Example~\ref{ExampleIdGr}.
 \end{remark}

Now we can easily derive Theorem~\ref{TheoremMainGr}, in the case when $G$ is finite, from
Theorem~\ref{TheoremMainHSS}, Lemma~\ref{LemmaGradAction}, and the fact that $(FG)^*$ is isomoriphic as an algebra to the direct sum of fields.

\subsubsection{Gradings by arbitrary groups}
The case when $G$ is infinite is treated using a similar duality. Although Lemma~\ref{LemmaAbelianDual} is known, we sketch the proof for the reader's convenience.

\begin{lemma}\label{LemmaAbelianDual}
Let $F$ be an algebraically closed field of characteristic $0$, let $G$ be a finitely generated Abelian group, and let $\hat G = \Hom(G, F^\times)$ be the group of homomorphisms from $G$ into the multiplicative
group $F^\times$ of the field $F$. Consider the elements of $F\hat G$ as functions on $G$.
Then for any pairwise distinct $\gamma_1, \ldots, \gamma_m \in G$ there exist $h_1, \ldots, h_m \in F\hat G$
that $h_i(\gamma_j)=\left\lbrace
  \begin{array}{ll}
  1, & i = j, \\
  0, & i \ne j.
  \end{array} \right.$
\end{lemma}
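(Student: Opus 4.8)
The plan is to prove this statement about finitely generated abelian groups by reducing to the structure theorem and handling the two basic cases: finite cyclic groups and infinite cyclic groups. The key point is that $F\hat G$, viewed as an algebra of functions on $G$, needs to separate a given finite set of points $\gamma_1,\dots,\gamma_m$, and moreover we need idempotent-like functions that take value $1$ at one point and $0$ at the others. Since $G$ is finitely generated abelian, write $G \cong \mathbb Z^k \times \mathbb Z/n_1 \times \dots \times \mathbb Z/n_s$. The character group $\hat G = \Hom(G, F^\times)$ decomposes correspondingly, and because $F$ is algebraically closed of characteristic $0$ it contains all roots of unity, so the characters of the torsion part are abundant, while the free part $\mathbb Z^k$ contributes characters $\chi_t$ determined by sending the $i$th generator to an arbitrary nonzero scalar $t_i \in F^\times$.

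\medskip

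\noindent\textbf{First I would} observe that it suffices to find, for the given pairwise distinct $\gamma_1,\dots,\gamma_m \in G$, a single character $\chi \in \hat G$ such that the values $\chi(\gamma_1),\dots,\chi(\gamma_m)$ are pairwise distinct elements of $F^\times$; then the subgroup $\langle \chi\rangle \subseteq \hat G$ it generates (or even just a cyclic group containing enough characters) lets me build the desired $h_i$ by a Lagrange interpolation argument inside the group algebra $F\langle\chi\rangle$. Concretely, if $\chi$ has the property that $\chi^j(\gamma_i) = \chi(\gamma_i)^j$ separates the points, then the matrix $\bigl(\chi(\gamma_i)^{j}\bigr)_{1\le i\le m,\, 0 \le j \le m-1}$ is a Vandermonde matrix in the distinct values $\chi(\gamma_1),\dots,\chi(\gamma_m)$, hence invertible; inverting it produces coefficients expressing the ``indicator'' functions $h_i = \sum_{j=0}^{m-1} c_{ij}\chi^j \in F\hat G$ with $h_i(\gamma_j) = \delta_{ij}$, which is exactly what is required.

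\medskip

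\noindent\textbf{The main obstacle} is producing the separating character $\chi$, and this is where the hypotheses on $F$ and on $G$ are used. For the torsion part, two distinct elements are separated by some character into distinct roots of unity precisely because $F$ contains enough roots of unity (here $\ch F = 0$ and $F$ algebraically closed is what we need; finitely generated abelian is what makes $\hat G$ large enough to separate points of $G$ at all). For the free part $\mathbb Z^k$, writing $\gamma_i - \gamma_j$ in coordinates, I need to choose scalars $t_1,\dots,t_k \in F^\times$ so that the monomial $t_1^{a_1}\cdots t_k^{a_k} \ne 1$ for each of the finitely many nonzero exponent vectors $(a_1,\dots,a_k)$ arising as coordinate differences of the torsion-free components of the $\gamma_i$; since $F^\times$ is an infinite field's multiplicative group and each condition $t_1^{a_1}\cdots t_k^{a_k} = 1$ cuts out a proper Zariski-closed subset of $(F^\times)^k$, a generic choice works. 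Combining the torsion-part character and the free-part character gives $\chi$ separating the $\gamma_i$; one must also be slightly careful when two $\gamma_i$ agree in the free part but differ in torsion, or vice versa, but in each coordinate direction one of the two mechanisms applies, and multiplying the corresponding characters (using that $\hat G$ is itself a direct product) handles it. After that the Vandermonde step is routine linear algebra, so the proof is essentially: pick a separating character, then interpolate.
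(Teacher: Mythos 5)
There is a genuine gap in your reduction to a single separating character. You claim it suffices to produce one $\chi\in\hat G$ with $\chi(\gamma_1),\ldots,\chi(\gamma_m)$ pairwise distinct and then invert the Vandermonde matrix $\bigl(\chi(\gamma_i)^j\bigr)$; the Vandermonde step is fine, but such a $\chi$ need not exist once the torsion part of $G$ is non-cyclic. For $G=\mathbb Z/2\mathbb Z\times\mathbb Z/2\mathbb Z$ every character takes values in $\lbrace \pm 1\rbrace$, so no single character assumes three distinct values, yet the lemma must hold for $m=3$ or $m=4$ pairwise distinct $\gamma_i$. The fact that $F$ contains enough roots of unity only gives you \emph{pairwise} separation (for each $i\ne j$ some character with $\chi(\gamma_i)\ne\chi(\gamma_j)$), which is strictly weaker than simultaneous separation by one character; and multiplying characters that each separate one pair can destroy the separation of another pair, so the ``multiply the corresponding characters'' step does not close this hole.

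The paper's proof never asks for a single separating character on the torsion part: for finite $G$ it uses the orthogonality relations, i.e.\ that the $|G|$ characters span the \emph{entire} space of $F$-valued functions on $G$, which yields the indicator functions $h_i$ at once; the single-character Vandermonde argument appears only for an infinite cyclic factor, where $\chi(g^k)=\lambda^k$ with $\lambda$ of infinite order genuinely takes distinct values at distinct elements; and the general case $G=G_1\times\mathbb Z^s$ is assembled by multiplying the resulting \emph{indicator functions} (elements of $F\hat G_1$ and $F\widehat{\mathbb Z^s}$), not the characters themselves. Your treatment of the free part (a generic point of $(F^\times)^k$ avoids the finitely many proper closed subsets $t^{a}=1$) and your interpolation step are correct; to repair the argument, replace the single-character claim on the torsion part by the spanning/orthogonality argument, or more uniformly note that the evaluations $\mathrm{ev}_{\gamma_i}\colon F\hat G\to F$ are pairwise distinct algebra homomorphisms (since characters separate points of $G$ pairwise) and hence linearly independent by Dedekind's lemma, so the joint evaluation map onto $F^m$ is surjective.
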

\begin{proof} First consider the case when $G$ is finite. Then, applying the orthogonality relations,
we obtain that  the space of functions on $G$ is a linear span of $\hat G$,
 and we can find such $h_i$.

Now consider the case when $G = \langle g\rangle$ is an infinite cyclic group.
Then we can take $\chi \in \hat G$, $\chi(g^k)=\lambda^k$ where $\lambda \in F^\times$
is a fixed element of an infinite order. Using the Vandermonde
argument, we obtain that $1$, $\chi$, $\chi^2$, \ldots, $\chi^{m-1}$ are linearly independent
as linear functions on $\gamma_1, \ldots, \gamma_m$, and we can find the required  $h_1, \ldots, h_m$. 

In the general case $G = G_1 \times \mathbb Z^s$ where $G_1$ is a finite group.
Hence $\hat G = \hat G_1 \times (F^\times)^s$ where $\hat G_1$ is the group of characters of $G_1$.
Now we choose the elements for each component, consider their products, and obtain
the required  $h_1, \ldots, h_m$.
\end{proof}

If $G$ is a finitely generated Abelian group and $\hat G = \Hom(G, F^\times)$, then each $G$-graded space $V=\bigoplus_{g\in G} V^{(g)}$ becomes an $F\hat G$-module:
$\chi v^{(g)} = \chi(g)v^{(g)}$ for all $\chi \in \hat G$ and $v^{(g)} \in V^{(g)}$.

We have the following important property:

\begin{lemma}\label{LemmaGrToHatG}
Let $L$ be a finite dimensional Lie algebra
over an algebraically closed field $F$ of characteristic $0$, graded by a finitely generated Abelian group $G$.
Consider the $F\hat G$-action on $L$ defined above. Then $c^{\mathrm{gr}}_n(L)=c^{F\hat G}(L)$ for all $n\in \mathbb N$.
\end{lemma}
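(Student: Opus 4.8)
The plan is to prove that $c^{\mathrm{gr}}_n(L)$ and $c^{F\hat{G}}_n(L)$ are both equal to the dimension of one and the same subspace of $\Hom_F(L^{\otimes n}, L)$, using the same realization of codimensions by $n$-linear maps that underlies Lemmas~\ref{LemmaCodimDim} and~\ref{LemmaOrdinaryAndHopf}. Since $L$ is finite dimensional, its support $\{g \in G \mid L^{(g)} \neq 0\}$ is a finite set $\{g_1, \ldots, g_m\}$; write $\pi_g \in \End_F(L)$ for the projection onto $L^{(g)}$ along the other homogeneous components, so that $\pi_g = 0$ for $g$ outside the support. Denoting by $\zeta \colon F\hat{G} \to \End_F(L)$ the homomorphism corresponding to the $F\hat{G}$-action, we have $\zeta(\chi) = \sum_{i=1}^m \chi(g_i)\pi_{g_i}$ for $\chi \in \hat{G}$.

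First I would determine $\zeta(F\hat{G})$. Applying Lemma~\ref{LemmaAbelianDual} to the pairwise distinct elements $g_1, \ldots, g_m$ produces $h_1, \ldots, h_m \in F\hat{G}$ with $h_i(g_j) = \delta_{ij}$, hence $\zeta(h_i) = \pi_{g_i}$; since the $\pi_{g_i}$ are linearly independent, $\zeta(F\hat{G}) = \bigoplus_{i=1}^m F\pi_{g_i}$. Next, exactly as in Lemma~\ref{LemmaCodimDim}, evaluation of $H$-polynomials identifies $V^{F\hat{G}}_n / (V^{F\hat{G}}_n \cap \Id^{F\hat{G}}(L))$ with the subspace $\Phi_n \subseteq \Hom_F(L^{\otimes n}, L)$ spanned by the maps $(a_1, \ldots, a_n) \mapsto [\zeta(h_1)a_{\sigma(1)}, \ldots, \zeta(h_n)a_{\sigma(n)}]$ over $\sigma \in S_n$ and $h_1, \ldots, h_n \in F\hat{G}$; by the previous step, $\Phi_n$ is the span of the maps $(a_i) \mapsto [\pi_{g_{j_1}}a_{\sigma(1)}, \ldots, \pi_{g_{j_n}}a_{\sigma(n)}]$ over $\sigma \in S_n$ and $j_1, \ldots, j_n \in \{1, \ldots, m\}$, so $c^{F\hat{G}}_n(L) = \dim\Phi_n$. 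On the graded side I would send $f \in V^{\mathrm{gr}}_n$ to the $n$-linear map $(a_1, \ldots, a_n) \mapsto f|_{x_i^{(g)} = \pi_g(a_i)}$; a brief multilinearity argument shows its kernel is exactly $V^{\mathrm{gr}}_n \cap \Id^{\mathrm{gr}}(L)$ (if $f \in \Id^{\mathrm{gr}}(L)$ the substitution $x_i^{(g)} \mapsto \pi_g(a_i)$ is a graded one, and conversely, for a fixed index $i$ the grades occurring in $f$ are distinct, so any prescribed homogeneous values $b_i^{(g)} \in L^{(g)}$ arise from $a_i := \sum_g b_i^{(g)}$). Hence $c^{\mathrm{gr}}_n(L)$ is the dimension of the image of this map, which is the span of the maps $(a_i) \mapsto [\pi_{g_{j_1}}a_{\sigma(1)}, \ldots, \pi_{g_{j_n}}a_{\sigma(n)}]$ over $\sigma \in S_n$ and $g_{j_1}, \ldots, g_{j_n} \in G$; since $\pi_g = 0$ outside the support, this span is precisely $\Phi_n$. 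Therefore $c^{\mathrm{gr}}_n(L) = \dim\Phi_n = c^{F\hat{G}}_n(L)$.

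The argument is essentially bookkeeping, and the only genuine input is Lemma~\ref{LemmaAbelianDual}, which guarantees that $F\hat{G}$ realizes every projection $\pi_{g_i}$ onto a homogeneous component; this is where finite generation of $G$ and algebraic closedness of $F$ in characteristic $0$, together with finite dimensionality of $L$, enter. The mildly delicate step, and the one I would write out in full, is the identification of the kernel of the graded evaluation map with $V^{\mathrm{gr}}_n \cap \Id^{\mathrm{gr}}(L)$; everything else is routine. (One could instead try to pass to the subgroup generated by the support and invoke Lemma~\ref{LemmaGradAction}, but that subgroup may be infinite when $G$ is, so the direct comparison above is preferable.)
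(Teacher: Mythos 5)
Your proof is correct, and it takes a genuinely different route from the paper's. The paper argues syntactically: it defines $\xi\colon L(X\mid F\hat G)\to L(X^{\mathrm{gr}})$ by $\xi(x_j)=\sum_i x_j^{(\gamma_i)}$ and $\eta\colon L(X^{\mathrm{gr}})\to L(X\mid F\hat G)$ by $\eta(x_j^{(\gamma_i)})=x_j^{h_i}$ (with the same $h_i$ from Lemma~\ref{LemmaAbelianDual} that you use), checks that both carry identities to identities, and verifies that the induced maps $\tilde\xi,\tilde\eta$ are mutually inverse on the relatively free algebras, yielding an isomorphism $L(X\mid F\hat G)/\Id^{F\hat G}(L)\cong L(X^{\mathrm{gr}})/\Id^{\mathrm{gr}}(L)$ of which the codimension equality is the multilinear shadow. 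You instead argue semantically, realizing both $V^{F\hat G}_n/(V^{F\hat G}_n\cap\Id^{F\hat G}(L))$ and $V^{\mathrm{gr}}_n/(V^{\mathrm{gr}}_n\cap\Id^{\mathrm{gr}}(L))$ as the same subspace $\Phi_n\subseteq\Hom_F(L^{\otimes n},L)$ spanned by the maps $(a_i)\mapsto[\pi_{g_{j_1}}a_{\sigma(1)},\ldots,\pi_{g_{j_n}}a_{\sigma(n)}]$; the two delicate points --- that $\zeta(F\hat G)$ is exactly the span of the projections $\pi_{g_i}$, and that the kernel of the graded evaluation is exactly $V^{\mathrm{gr}}_n\cap\Id^{\mathrm{gr}}(L)$ because $a_i:=\sum_g b_i^{(g)}$ realizes any prescribed homogeneous values --- are both handled correctly. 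The trade-off: the paper's version establishes the stronger fact that the full relatively free algebras are isomorphic (not just their multilinear components as $FS_n$-modules), which is structurally more informative; your version is more elementary, sits naturally alongside Lemmas~\ref{LemmaCodimDim} and~\ref{LemmaOrdinaryAndHopf}, and makes transparent exactly where finite dimensionality of $L$ and Lemma~\ref{LemmaAbelianDual} enter. Your closing remark is also apt: the subgroup generated by the support need not be finite, so one cannot simply fall back on Lemma~\ref{LemmaGradAction}.
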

\begin{proof}
Let $\lbrace \gamma_1, \ldots, \gamma_m \rbrace := \lbrace g\in G \mid L^{(g)}\ne 0\rbrace$.
This set is finite since $L$ is finite dimensional.

Define the homomorphism $\xi \colon L(X | F\hat G) \to L(X^{\mathrm{gr}})$ of algebras and $F\hat G$-modules
by the formula $\xi(x_i)=\sum_{i=1}^m x_i^{(\gamma_i)}$. Note that $\xi(\Id^{F\hat G}(L))\subseteq \Id^{\mathrm{gr}}(L)$
since for any homomorphism $\psi \colon L(X^{\mathrm{gr}}) \to L$ of $G$-graded algebras, $\psi$ is a homomorphism
of $F\hat G$-modules and if $f\in \Id^{F\hat G}(L)$, then $\psi(\xi(f))=0$.
Hence we can define $\tilde\xi \colon L(X | F\hat G)/\Id^{F\hat G}(L) \to L(X^{\mathrm{gr}})/\Id^{\mathrm{gr}}(L)$.

Let $h_1, \ldots, h_m \in F\hat G$ be the elements from Lemma~\ref{LemmaAbelianDual}
corresponding to $\gamma_1, \ldots, \gamma_m$. Then $a^{h_i}\in L^{(\gamma_i)}$
is the $\gamma_i$-component of $a$ for all $a \in L$ and $1 \leqslant i\leqslant m$.
In particular, \begin{equation}\label{Eqxhiequiv}
x^h - \sum_{i=1}^m h(\gamma_i)x^{h_i} \in \Id^{F\hat G} (L) \text { for all }h\in F\hat G.\end{equation}

 We define the homomorphism of algebras
$\eta \colon L(X^{\mathrm{gr}}) \to L(X | F\hat G)$ by the formula $\eta(x^{(\gamma_i)}_j)=x^{h_i}_j$
for all $1\leqslant i\leqslant m$, $j\in\mathbb N$, and $\eta(x^{(g)}_j)=0$
for $g\notin \lbrace \gamma_1, \ldots, \gamma_m \rbrace$. Note that
$\eta(\Id^{\mathrm{gr}}(L)) \subseteq \Id^{F\hat G}(L)$.
Indeed, if $\psi \colon L(X | F\hat G) \to L$ is a homomorphism of algebras and $F\hat G$-modules,
then $\psi(\eta(x^{(\gamma_i)}_j)) = \psi(x_j)^{h_i} \in L^{(\gamma_i)}$
for any choice $\psi(x_j)\in L$. Hence $\psi\eta \colon L(X^{\mathrm{gr}}) \to L$
is a graded homomorphism, $\psi(\eta(\Id^{\mathrm{gr}}(L)))=0$, and $\eta(\Id^{\mathrm{gr}}(L)) \subseteq \Id^{F\hat G}(L)$.
Thus we can define $\tilde\eta \colon L(X^{\mathrm{gr}})/\Id^{\mathrm{gr}}(L) \to  L(X | F\hat G)/\Id^{F\hat G}(L)$. 

Denote by $\bar f$ the image of a polynomial $f$ in a factor space.
Then $$\tilde\xi\tilde\eta(\bar x^{(\gamma_i)}_j)=\left(\sum_{k=1}^m \bar x^{(\gamma_k)}_j\right)^{h_i}
=\bar x^{(\gamma_i)}_j
\text{ for all }1 \leqslant i \leqslant m,\ j\in\mathbb N.$$ Since $x^{(g)}_j \in \Id^{\mathrm{gr}}(L)$
for all $g\notin \lbrace \gamma_1, \ldots, \gamma_m \rbrace$, the map $\tilde\xi\tilde\eta$ coincides with
the identity map on the generators. Hence  $\tilde\xi\tilde\eta = \id_{L(X^{\mathrm{gr}})/\Id^{\mathrm{gr}}(L)}$. By~(\ref{Eqxhiequiv}), $$\tilde\eta\tilde\xi(\bar x_j^h)=
\tilde\eta\left(\sum_{i=1}^m h(\gamma_i) \bar x_j^{(\gamma_i)}\right)=
 \sum_{i=1}^m h(\gamma_i)\bar x_j^{h_i} = \bar x_j^h \text{ for all } j \in\mathbb N,\ h \in F\hat G.$$
Similarly, we have $\tilde\eta\tilde\xi = \id_{L(X | F\hat G)/\Id^{F\hat G}}$.
Hence $$L(X | F\hat G)/\Id^{F\hat G}(L) \cong L(X^{\mathrm{gr}})/\Id^{\mathrm{gr}}(L).$$
In particular, $\frac{V^{F\hat G}_n}{V^{F\hat G}_n \cap \Id^{F\hat G}(L)}
\cong \frac{V^{\mathrm{gr}}_n}{V^{\mathrm{gr}}_n \cap \Id^{\mathrm{gr}}(L)}$
and $c^{\mathrm{gr}}_n(L)=c^{F\hat G}(L)$ for all $n\in \mathbb N$.
\end{proof}

Furthermore, $G$-graded algebras deliver us another example of an $H$-nice algebra:
\begin{example}\label{ExampleHniceGr}
Let $L$ be a finite dimensional Lie algebra
over an algebraically closed field $F$ of characteristic $0$, graded by a finitely generated Abelian group $G$.
Then $L$ is an $F\hat G$-nice algebra.
\end{example}
\begin{proof}
First, the nilpotent and the solvable radical are invariant under all automorphisms.
Hence they are $\hat G$- and $F\hat G$-invariant, and by Lemma~\ref{LemmaAbelianDual}, $G$-graded. By~\cite[Theorem~4]{ASGordienko4},
we have an $F\hat G$-invariant Levi decomposition. If $V=\bigoplus_{g\in G} V^{(g)}$ is a finite dimensional
 $G$-graded vector space, we have a natural $G$-grading on $\End_F(V)$: $\Hom_F(V^{(g_1)}, V^{(g_2)})
 \subseteq \End_F(V)^{(g_2 g_1^{-1})}$, $g_1, g_2 \in G$. (The action of the maps from 
 $\Hom_F(V^{(g_1)}, V^{(g_2)})$ on the other components is zero.)
 This $G$-grading corresponds to the natural $F\hat G$-action on $\End_F(V)$:
 $(h\psi)(v)=h_{(1)}\psi((Sh_{(2)})v)$ for $h\in F\hat G$, $\psi \in \End_F(V)$,
 $v\in V$. Hence the graded Wedderburn~--- Mal'cev theorem~\cite[Corollary~2.8]{SteVanOyst}
 implies the $F\hat G$-invariant one. (The Jacobson radical is invariant under all automorphisms.) Analogously, Condition~\ref{ConditionLComplHred} is
 a consequence of~\cite[Theorem~9]{ASGordienko4}. Hence $L$ is $F\hat G$-nice.
\end{proof}

Now we can prove a particular case of Theorem~\ref{TheoremMainGr}: 
 
\begin{theorem}\label{TheoremFinGenAbelianGr}
Let $L$ be a finite dimensional non-nilpotent Lie algebra
over a field $F$ of characteristic $0$, graded by a finitely generated Abelian group $G$. Then
there exist constants $C_1, C_2 > 0$, $r_1, r_2 \in \mathbb R$, $d \in \mathbb N$
such that $C_1 n^{r_1} d^n \leqslant c^{\mathrm{gr}}_n(L) \leqslant C_2 n^{r_2} d^n$
for all $n \in \mathbb N$.
\end{theorem}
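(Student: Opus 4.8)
The plan is to reduce Theorem~\ref{TheoremFinGenAbelianGr} to the already-established machinery for $H$-nice algebras. First I would observe that graded codimensions are insensitive to extension of the base field (the argument is the standard one, analogous to~\cite[Theorem~4.1.9]{ZaiGia} and~\cite[Section~2]{ZaiLie}), so without loss of generality we may assume that $F$ is algebraically closed. At this point Lemma~\ref{LemmaGrToHatG} becomes available: it identifies $c^{\mathrm{gr}}_n(L)$ with $c^{F\hat G}_n(L)$, where $\hat G=\Hom(G,F^\times)$ acts on $L$ via $\chi v^{(g)}=\chi(g)v^{(g)}$. Thus it suffices to prove the two-sided bound $C_1 n^{r_1} d^n\leqslant c^{F\hat G}_n(L)\leqslant C_2 n^{r_2} d^n$ for the $F\hat G$-codimensions.

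The second step is to feed this into Theorem~\ref{TheoremMainH}. For that I need two things: that $L$ is $F\hat G$-nice, and that $L$ is non-nilpotent as an $F\hat G$-module Lie algebra. The first is exactly the content of Example~\ref{ExampleHniceGr}, which shows that a finite dimensional Lie algebra over an algebraically closed field of characteristic $0$ graded by a finitely generated Abelian group is $F\hat G$-nice (the radicals are automorphism-invariant hence $\hat G$-invariant and graded, the Levi decomposition is $F\hat G$-invariant by~\cite[Theorem~4]{ASGordienko4}, the Wedderburn--Mal'cev condition follows from the graded version~\cite[Corollary~2.8]{SteVanOyst}, and Condition~\ref{ConditionLComplHred} follows from~\cite[Theorem~9]{ASGordienko4}). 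For the non-nilpotency: ordinary codimensions satisfy $c_n(L)\leqslant c^{F\hat G}_n(L)$ by the analog of Lemma~\ref{LemmaOrdinaryAndHopf} (or directly, since ordinary multilinear Lie polynomials sit inside the $F\hat G$-multilinear ones), and if $L$ is non-nilpotent as an ordinary Lie algebra then $c_n(L)\neq 0$ for all $n$ by the Jacobi-identity remark following Theorem~\ref{TheoremMainGr}; hence $L$ is not nilpotent in the relevant sense. So Theorem~\ref{TheoremMainH} applies directly and yields constants $C_1,C_2>0$, $r_1,r_2\in\mathbb R$, $d=d(L)\in\mathbb N$ with $C_1 n^{r_1} d^n\leqslant c^{F\hat G}_n(L)\leqslant C_2 n^{r_2} d^n$, which by Lemma~\ref{LemmaGrToHatG} is precisely the desired estimate for $c^{\mathrm{gr}}_n(L)$.

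I do not expect a genuine obstacle here, since the statement is essentially a packaging of three already-proved facts (field extension invariance, Lemma~\ref{LemmaGrToHatG}, and Example~\ref{ExampleHniceGr}) plugged into Theorem~\ref{TheoremMainH}. The only points that need a little care are: checking that the field-extension reduction is legitimate for \emph{graded} codimensions (one has to make sure the grading, and not just the algebra, behaves well under $F\subseteq \bar F$, which it does since one simply extends each homogeneous component by scalars); and making sure the group-theoretic hypotheses line up — Example~\ref{ExampleHniceGr} and Lemma~\ref{LemmaGrToHatG} both require $G$ to be \emph{finitely generated Abelian}, which is exactly the hypothesis of Theorem~\ref{TheoremFinGenAbelianGr}, so no extra reduction (such as the inclusion--exclusion trick of Lemma~\ref{LemmaInclExcl} needed for general groups) is invoked at this stage. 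The passage from finitely generated Abelian $G$ to arbitrary $G$, and the derivation of the full Theorem~\ref{TheoremMainGr}, is deferred to the subsequent lemmas.
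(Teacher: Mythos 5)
Your proposal is correct and follows exactly the paper's own argument: reduce to an algebraically closed base field, identify $c^{\mathrm{gr}}_n(L)$ with $c^{F\hat G}_n(L)$ via Lemma~\ref{LemmaGrToHatG}, verify $F\hat G$-niceness by Example~\ref{ExampleHniceGr}, and invoke Theorem~\ref{TheoremMainH}. The only addition is your explicit check that non-nilpotency carries over, which the paper leaves implicit.
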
 
\begin{proof}
Graded codimensions do not change upon an extension of the base field.
The proof is analogous to the cases of ordinary codimensions of
associative~\cite[Theorem~4.1.9]{ZaiGia} and
Lie algebras~\cite[Section~2]{ZaiLie}.
Thus without loss of generality we may assume
 $F$ to be algebraically closed.

%

By Example~\ref{ExampleHniceGr}, $L$ is $F\hat G$-nice,
 and Theorem~\ref{TheoremFinGenAbelianGr} is a consequence of Theorem~\ref{TheoremMainH}
 and Lemma~\ref{LemmaGrToHatG}.
\end{proof}
 
  Now we show that the case of an arbitrary group $G$ can be reduced to the case of a
 finitely generated Abelian group.
 
We need the following known result (see e.g.~\cite[Lemma 2.1]{PaReZai}): 
\begin{lemma}\label{LemmaGradedNonZero}
Let $L$ be a Lie algebra graded by a group $G$.
Suppose $[L^{(g_1)}, \ldots, L^{(g_k)}]\ne 0
$ for some $g_1, \ldots, g_k \in G$. Then
$g_i g_j = g_j g_i$ for all $1 \leqslant i, j \leqslant k$.
\end{lemma}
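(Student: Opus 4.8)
The plan is to reformulate the conclusion: since $[L^{(g_1)},\ldots,L^{(g_k)}]\ne 0$ means exactly that $[a_1,\ldots,a_k]\ne 0$ for some homogeneous $a_i\in L^{(g_i)}$ (left-normed commutator), I would prove by induction on $k$ that the existence of such $a_i$ forces $g_1,\ldots,g_k$ to pairwise commute. The only tools needed are the grading property $[L^{(g)},L^{(h)}]\subseteq L^{(gh)}$, directness of $L=\bigoplus_g L^{(g)}$ (an element lying in two distinct components is $0$), antisymmetry, and the Jacobi identity $[[u,v],w]=[[u,w],v]+[u,[v,w]]$. For $k=1$ there is nothing to prove; for $k=2$ the element $[a_1,a_2]$ lies in $L^{(g_1g_2)}$ and, being $-[a_2,a_1]$, also in $L^{(g_2g_1)}$, so if it is nonzero then $g_1g_2=g_2g_1$.

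For the inductive step ($k\ge 3$) I would note that $[a_1,\ldots,a_k]=[[a_1,\ldots,a_{k-1}],a_k]\ne 0$ forces $[a_1,\ldots,a_{k-1}]\ne 0$, so the induction hypothesis makes $g_1,\ldots,g_{k-1}$ pairwise commute; it then suffices to prove $g_ig_k=g_kg_i$ for every $i\le k-1$. A key auxiliary fact, itself a consequence of the induction hypothesis, is: if $[b_1,\ldots,b_k]\ne 0$ with $b_m$ homogeneous of degree $h_m$ and $h_1,\ldots,h_{k-1}$ pairwise commuting, then $h_{k-1}h_k=h_kh_{k-1}$. To see this, Jacobi gives
\[[b_1,\ldots,b_k]-[b_1,\ldots,b_{k-2},[b_{k-1},b_k]]=[b_1,\ldots,b_{k-2},b_k,b_{k-1}],\]
where the left side lies in $L^{((h_1\cdots h_{k-2})h_{k-1}h_k)}$ and the right side in $L^{((h_1\cdots h_{k-2})h_kh_{k-1})}$ (partial products make sense since $h_1,\ldots,h_{k-1}$ commute). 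If $h_{k-1}h_k\ne h_kh_{k-1}$ these two components are distinct, so both sides vanish; then $[b_1,\ldots,b_{k-2},[b_{k-1},b_k]]=[b_1,\ldots,b_k]\ne 0$ is a nonzero $(k-1)$-fold left-normed commutator, which forces $[b_{k-1},b_k]\ne 0$ and hence $h_{k-1}h_k=h_kh_{k-1}$ by the case $k=2$, a contradiction.

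With this in hand I would prove $g_ig_k=g_kg_i$ by a descending induction on $i$ from $k-1$ down to $1$. The base case $i=k-1$ is the auxiliary fact applied to $(a_1,\ldots,a_k)$. For the step, assuming $g_jg_k=g_kg_j$ for all $i<j\le k-1$, I would repeatedly apply Jacobi to $[a_1,\ldots,a_{k-1}]$ to move $a_i$ into the last slot, obtaining
\[[a_1,\ldots,a_{k-1}]=\pm[a_1,\ldots,\widehat{a_i},\ldots,a_{k-1},a_i]+\sum_t Q_t,\]
where each $Q_t$ is a left-normed commutator built from at most $k-2$ letters, one of which is a bracket $[a_i,a_j]$ with $i<j\le k-1$ and the rest distinct $a_m$'s. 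Applying $\ad a_k$ and using $[a_1,\ldots,a_k]\ne 0$, at least one of $[a_1,\ldots,\widehat{a_i},\ldots,a_{k-1},a_i,a_k]$ and the commutators $[Q_t,a_k]$ is nonzero. If the former is nonzero, its first $k-1$ letters have pairwise commuting degrees (they lie in $\{g_1,\ldots,g_{k-1}\}$), so the auxiliary fact gives $g_ig_k=g_kg_i$. If instead some $[Q_t,a_k]$ is nonzero, it is a nonzero $(k-1)$-fold left-normed commutator whose letters have degrees $g_m$ ($m\ne i,j$), $g_ig_j$ and $g_k$; the induction hypothesis makes these pairwise commute, so $g_ig_j$ commutes with $g_k$, and together with $g_jg_k=g_kg_j$ this gives that $g_i=(g_ig_j)g_j^{-1}$ commutes with $g_k$. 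This completes both the descending induction and the induction on $k$.

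The step I expect to be the main obstacle is the "move $a_i$ to the last slot" rewriting: one must verify, one transposition at a time, that every correction term really is a left-normed commutator in at most $k-2$ genuine letters containing exactly one bracket $[a_i,a_j]$ with $j>i$. This is routine once organized carefully; all the rest is forced by directness of the grading and cancellation in the group $G$.
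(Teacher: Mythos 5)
Your argument is correct. Note, however, that the paper does not prove this lemma at all: it is quoted as a known result with a reference to \cite[Lemma 2.1]{PaReZai}, so there is no in-paper proof to compare against, and what you have supplied is a complete, self-contained, elementary proof of the cited fact. The two-layer induction (outer on the length $k$, inner descending on the index $i$) is sound: the base cases are right, the Jacobi rewriting
$[b_1,\ldots,b_k]-[b_1,\ldots,b_{k-2},[b_{k-1},b_k]]=[b_1,\ldots,b_{k-2},b_k,b_{k-1}]$
correctly places the two sides in the homogeneous components of degree $(h_1\cdots h_{k-2})h_{k-1}h_k$ and $(h_1\cdots h_{k-2})h_kh_{k-1}$, and left cancellation in $G$ plus directness of the grading forces both to vanish when $h_{k-1}h_k\ne h_kh_{k-1}$; the bookkeeping in the ``move $a_i$ to the last slot'' step, which you rightly flag as the only delicate point, does work out exactly as you describe (each adjacent transposition in positions $\geqslant 2$ produces one left-normed correction term with the merged letter $[a_i,a_j]$, and the transposition in position $1$ produces only a sign). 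Two small remarks. First, your auxiliary fact holds without the hypothesis that $h_1,\ldots,h_{k-1}$ pairwise commute --- the degree comparison never uses it --- so you could state it simply as: the last two degrees of any nonzero left-normed homogeneous commutator commute; this slightly streamlines the write-up. Second, in Case B you could equally well conclude directly from the outer induction hypothesis applied to $[Q_t,a_k]$ together with $g_jg_k=g_kg_j$, exactly as you do; the group-theoretic deduction $g_i=(g_ig_j)g_j^{-1}$ commutes with $g_k$ is valid. Nothing needs to be repaired.
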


Let $G_0$ be a subgroup of $G$.
Denote $L_{G_0} := \bigoplus_{g\in G_0} L^{(g)}$.

\begin{lemma}\label{LemmaInclExcl}
Let $L$ be a finite dimensional Lie algebra over a field $F$ of characteristic $0$
graded by an arbitrary group $G$.
Then there exist finitely generated Abelian subgroups $G_1, \ldots, G_r \subseteq G$
such that
\begin{equation}\label{EqInclExcl}
c^{\mathrm{gr}}_n(L)=
 \sum_{i=1}^r c^{\mathrm{gr}}_n(L_{G_i})
- \sum_{i,j=1}^r c^{\mathrm{gr}}_n(L_{G_i \cap G_j})+ \sum_{i,j,k=1}^r c^{\mathrm{gr}}_n(L_{G_i \cap G_j \cap G_k}) - \ldots +(-1)^{r-1} 
c^{\mathrm{gr}}_n(L_{G_1 \cap G_2 \cap \ldots \cap G_r}).\end{equation}
\end{lemma}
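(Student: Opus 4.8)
The plan is to slice each graded codimension according to which grading degree is assigned to each variable, and then to recognize the claimed identity as a plain inclusion--exclusion at the level of these slices. Write $\mathrm{Supp}(L):=\{g\in G\mid L^{(g)}\neq 0\}$, a finite set. For a function $c\colon\{1,\dots,n\}\to G$ let $V^{\mathrm{gr}}_{n,c}\subseteq V^{\mathrm{gr}}_n$ be the span of those multilinear monomials in which $x_j$ carries the superscript $c(j)$ for every $j$; then $V^{\mathrm{gr}}_n=\bigoplus_c V^{\mathrm{gr}}_{n,c}$. Rescaling each $x^{(g)}_j$ independently and applying the usual Vandermonde argument (here $\ch F=0$) shows $\Id^{\mathrm{gr}}(L)$ is compatible with this decomposition, so $c^{\mathrm{gr}}_n(L)=\sum_c\mu_L(c)$ where $\mu_L(c):=\dim\bigl(V^{\mathrm{gr}}_{n,c}/(V^{\mathrm{gr}}_{n,c}\cap\Id^{\mathrm{gr}}(L))\bigr)$. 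The same holds verbatim for $L_{G_0}$ for any subgroup $G_0\leqslant G$, and moreover $\mu_{L_{G_0}}(c)=0$ whenever $\mathrm{im}(c)\not\subseteq G_0$, since then some variable of degree $g\notin G_0$ must be specialized inside $(L_{G_0})^{(g)}=0$.

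The next step is to pin down which $c$ contribute and to build the subgroups. If $V^{\mathrm{gr}}_{n,c}\not\subseteq\Id^{\mathrm{gr}}(L)$, there is a multilinear monomial with degree pattern $c$ and a substitution $a_j\in L^{(c(j))}$ producing a nonzero value; in particular every $L^{(c(j))}\neq 0$, so $\mathrm{im}(c)\subseteq\mathrm{Supp}(L)$, and by Lemma~\ref{LemmaGradedNonZero} the elements $c(1),\dots,c(n)$ pairwise commute. Thus $\mathrm{im}(c)$ is a commuting subset of $\mathrm{Supp}(L)$, hence contained in some maximal such subset. I would take $G_1,\dots,G_r$ to be the finitely many pairwise distinct subgroups $\langle S\rangle$ obtained as $S$ runs over the maximal commuting subsets of $\mathrm{Supp}(L)$; each $G_i$ is Abelian (its generators commute) and finitely generated ($S$ is finite). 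Setting $N(c):=\{i\mid\mathrm{im}(c)\subseteq G_i\}$, we have shown $\mu_L(c)\neq 0\Rightarrow N(c)\neq\varnothing$.

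The key local identity is: if $\mathrm{im}(c)\subseteq G_0$ for a subgroup $G_0\leqslant G$, then $V^{\mathrm{gr}}_{n,c}\cap\Id^{\mathrm{gr}}(L)=V^{\mathrm{gr}}_{n,c}\cap\Id^{\mathrm{gr}}(L_{G_0})$, hence $\mu_L(c)=\mu_{L_{G_0}}(c)$. Indeed $\Id^{\mathrm{gr}}(L)\subseteq\Id^{\mathrm{gr}}(L_{G_0})$ because $L_{G_0}$ is a graded subalgebra; conversely a polynomial in $V^{\mathrm{gr}}_{n,c}$ only involves the homogeneous components of degrees lying in $\mathrm{im}(c)\subseteq G_0$ under a graded substitution into $L$, and those components already lie in $L_{G_0}$, so vanishing on $L_{G_0}$ forces vanishing on $L$. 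Applying this with $G_0=\bigcap_{i\in T}G_i$ (again a subgroup), and using $\mathrm{im}(c)\subseteq\bigcap_{i\in T}G_i\iff T\subseteq N(c)$, one gets for every nonempty $T\subseteq\{1,\dots,r\}$ that $c^{\mathrm{gr}}_n\bigl(L_{\bigcap_{i\in T}G_i}\bigr)=\sum_{c\,:\,T\subseteq N(c)}\mu_L(c)$. Summing with signs and interchanging summations,
$$\sum_{\varnothing\neq T\subseteq\{1,\dots,r\}}(-1)^{|T|-1}\,c^{\mathrm{gr}}_n\bigl(L_{\bigcap_{i\in T}G_i}\bigr)=\sum_{c}\mu_L(c)\sum_{\varnothing\neq T\subseteq N(c)}(-1)^{|T|-1}=\sum_{c\,:\,N(c)\neq\varnothing}\mu_L(c)=c^{\mathrm{gr}}_n(L),$$
since $\sum_{\varnothing\neq T\subseteq N(c)}(-1)^{|T|-1}=1$ when $N(c)\neq\varnothing$ and the remaining $c$ contribute $\mu_L(c)=0$. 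Grouping the left-hand side by $|T|$ gives exactly~(\ref{EqInclExcl}).

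I expect the main obstacle to be the bookkeeping around the local identity $\mu_L(c)=\mu_{L_{G_0}}(c)$ and the compatibility of $\Id^{\mathrm{gr}}$ with the refinement of $V^{\mathrm{gr}}_n$ by degree patterns $c$; once these are in place the inclusion--exclusion is purely formal. Note that Lemma~\ref{LemmaGradedNonZero} is the single genuinely ``graded'' ingredient — it is precisely what forces every contributing $c$ to land inside one of the finitely many finitely generated Abelian subgroups $G_i$, which is what makes the reduction to Theorem~\ref{TheoremFinGenAbelianGr} possible.
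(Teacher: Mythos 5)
Your proposal is correct and follows essentially the same route as the paper: decompose $V^{\mathrm{gr}}_n$ by degree patterns, use Lemma~\ref{LemmaGradedNonZero} to show that only patterns with commuting image contribute, identify each such contribution with the corresponding slice of $c^{\mathrm{gr}}_n(L_{G_0})$ for a finitely generated Abelian $G_0$ generated by a commuting subset of the support, and finish by inclusion--exclusion. The only differences are cosmetic (the paper phrases the slices as multiplicity tuples with a discrete measure and takes \emph{all} subgroups generated by subsets of the support, whereas you index by degree functions and restrict to maximal commuting subsets), and neither affects the argument.
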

\begin{proof}
 First, we notice that
 $V^{\mathrm{gr}}_n = \bigoplus_{g_1, \ldots, g_n \in G} V_{g_1, \ldots, g_n}$
 where 
$$
 V_{g_1, \ldots, g_n} := \left\langle \left[x^{(g_{\sigma(1)})}_{\sigma(1)},
x^{(g_{\sigma(2)})}_{\sigma(2)}, \ldots, x^{(g_{\sigma(n)})}_{\sigma(n)}\right]
\mathrel{\Bigr|} \sigma\in S_n \right\rangle_F.
$$
Using the process of linearization (see e.g. \cite[Theorem 4.2.3]{Bahturin}),
we get \begin{equation}\label{EqVnGrDecomp}\frac{V^{\mathrm{gr}}_n}{V^{\mathrm{gr}}_n \cap \Id^{\mathrm{gr}}(L)}
\cong \bigoplus_{g_1, \ldots, g_n \in G} \frac{V_{g_1, \ldots, g_n}}{V_{g_1, \ldots, g_n} \cap \Id^{\mathrm{gr}}(L)}.\end{equation}

Let $\lbrace \gamma_1, \ldots, \gamma_m \rbrace := \lbrace g\in G \mid L^{(g)}\ne 0\rbrace$. This set is finite
since $L$ is finite dimensional. 
Denote $V_{n_1, \ldots, n_m} := V_{\bar g}$ where $$\bar g := (\underbrace{\gamma_1, \ldots, \gamma_1}_{n_1}, \underbrace{\gamma_2, \ldots, \gamma_2}_{n_2},
\ldots, \underbrace{\gamma_m, \ldots, \gamma_m}_{n_m}),\ n_i \in \mathbb Z_+.$$
 Then~(\ref{EqVnGrDecomp}) implies
\begin{equation}\label{EqCn1nm}
c^{\mathrm{gr}}_n(L)=\sum_{n_1+\ldots+n_m = n} \binom{n}{n_1, \ldots, n_m}
c_{n_1, \ldots, n_m}(L)
\end{equation} where $c_{n_1, \ldots, n_m}(L) := \dim \frac{V_{n_1,\ldots,n_m}}{V_{n_1,\ldots,n_m} \cap \Id^{\mathrm{gr}}(L)}$.

 Let $G_0$ be a subgroup of $G$.
 If $n_i = 0$ for all $\gamma_i \notin G_0$,
we have $c_{n_1, \ldots, n_m}(L)=c_{n_1, \ldots, n_m}(L_{G_0})$.
(By a remark in Subsection~\ref{SubsectionGraded}, it is not important whether we consider $G_0$-
or $G$-grading on $L_{G_0}$.)
 Fix $n\in\mathbb N$ and consider the sets $\Theta(G_0)$ of all $m$-tuples
$(n_1, \ldots, n_m)$ such that $n_i\geqslant 0$, $n_1 + \ldots + n_m = n$, and $n_i = 0$ for all $\gamma_i \notin G_0$.

Now we introduce a discrete measure $\mu$ on $\Theta(G)$
by the formula $$\mu\bigl((n_1, \ldots, n_m)\bigr)=\binom{n}{n_1, \ldots, n_m}
c_{n_1, \ldots, n_m}(L).$$ Then~(\ref{EqCn1nm}) implies
$c^{\mathrm{gr}}_n(L_{G_0})=\mu(\Theta(G_0))$.
By Lemma~\ref{LemmaGradedNonZero}, $c_{n_1, \ldots, n_m}(L)=0$
if $n_i,n_j \ne 0$ for some $\gamma_i \gamma_j \ne \gamma_j \gamma_i$.
Denote the set of such $m$-tuples by $\Theta_0$. Then $\mu(\Theta_0)=0$.
 Hence each nonzero
$c_{n_1, \ldots, n_m}(L)$ equals $c_{n_1, \ldots, n_m}(L_{G_0})$
for some finitely generated Abelian subgroup $G_0$ in $G$. Suppose $G_1, \ldots, G_r$ are all Abelian
subgroups in $G$ generated by subsets of $\lbrace \gamma_1, \ldots, \gamma_m \rbrace$. Then $\Theta(G)=\Theta_0 \cup \bigcup_{i=1}^r\Theta(G_i)$
and using the inclusion-exclusion principle, we get
 $$c^{\mathrm{gr}}_n(L)=\mu(\Theta(G))= \mu\left(\bigcup_{i=1}^r\Theta(G_i)\right)
= $$ $$\sum_{i=1}^r \mu(\Theta(G_i))
- \sum_{i,j=1}^r \mu(\Theta(G_i) \cap \Theta(G_j)) + \ldots +(-1)^{r-1} 
\mu(\Theta(G_1) \cap \Theta(G_2) \cap \ldots \cap \Theta(G_r))= $$ $$\sum_{i=1}^r \mu(\Theta(G_i))
- \sum_{i,j=1}^r \mu(\Theta(G_i \cap G_j)) + \ldots +(-1)^{r-1} 
\mu(\Theta(G_1 \cap G_2 \cap \ldots \cap G_r))
 =$$ $$
 \sum_{i=1}^r c^{\mathrm{gr}}_n(L_{G_i})
- \sum_{i,j=1}^r c^{\mathrm{gr}}_n(L_{G_i \cap G_j}) + \ldots +(-1)^{r-1} 
c^{\mathrm{gr}}_n(L_{G_1 \cap G_2 \cap \ldots \cap G_r}).$$\end{proof}
 
 \begin{proof}[Proof of Theorem~\ref{TheoremMainGr}]
By Theorem~\ref{TheoremFinGenAbelianGr}, each summand of~(\ref{EqInclExcl}) has the desired asymptotic behaviour for its own $d$. Take the maximal $d$. Then 
there exist $C_2 > 0$ and $r_2 \in\mathbb R$ such that
$c_n^\mathrm{gr}(L)\leqslant C_2 n^{r_2} d^n$ for all $n\in\mathbb N$. 
Our choice of $d$ implies that $\PIexp^\mathrm{gr}(L_{G_0})=d$
for some finitely generated Abelian subgroup $G_0 \subseteq G$.
Hence there exist $C_1 > 0$ and $r_1 \in\mathbb R$ such that
$C_1 n^{r_1} d^n \leqslant c_n^\mathrm{gr}(L_{G_0})\leqslant  c_n^\mathrm{gr}(L)$ for all $n\in\mathbb N$.
 Therefore, graded codimensions of $L$ satisfy the analog of the Amitsur's conjecture.
\end{proof}
 \begin{proof}[Proof of Theorem~\ref{TheoremMainGrSum}]
Suppose $$L = L_1 \oplus \ldots \oplus L_q$$ where $L_i$ are graded ideals.
First, $\PIexp^\mathrm{gr}(L)\geqslant  \PIexp^\mathrm{gr}(L_i)$ for all $1 \leqslant i \leqslant q$
since $L_i$ are graded subalgebras of $L$.
 By~Lemma~\ref{LemmaInclExcl}, it is sufficient to show that
 $$\PIexp^\mathrm{gr}(L_{G_0})\leqslant\max_{1\leqslant i\leqslant q}  \PIexp^\mathrm{gr}(L_i)$$
 for all finitely generated Abelian subgroups $G_0 \subseteq G$.
 However $L_{G_0} = (L_1)_{G_0} \oplus \ldots \oplus (L_q)_{G_0}$
 and by Lemma~\ref{LemmaGrToHatG}, Example~\ref{ExampleHniceGr}, and Theorem~\ref{TheoremMainHSum},
 $$\PIexp^\mathrm{gr}(L_{G_0})
 =\PIexp^{F\hat G_0}(L_{G_0})=\max_{1\leqslant i\leqslant q}  \PIexp^{F\hat G_0}((L_i)_{G_0})
=$$ $$\max_{1\leqslant i\leqslant q}  \PIexp^\mathrm{gr}((L_i)_{G_0}) \leqslant \max_{1\leqslant i\leqslant q}  \PIexp^\mathrm{gr}(L_i).$$
\end{proof}

Also we obtain the following upper bound:
\begin{lemma}\label{LemmaTrivialGrUpper}
Let $L$ be a finite dimensional $G$-graded Lie algebra
over a field $F$ of characteristic $0$ and let $G$ be any group.
If $G$ is finitely generated Abelian, then
$$c_n^\mathrm{gr}(L) \leqslant (\dim L)^{n+1}  \text{ for all } n\in\mathbb N.$$
Otherwise, there exists a finitely generated Abelian subgroup $G_0$
 and a constant $C \in \mathbb N$ such that
$$c_n^\mathrm{gr}(L) \leqslant C(\dim L_{G_0})^n  \text{ for all } n\in\mathbb N.$$
\end{lemma}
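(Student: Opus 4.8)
The plan is to treat the two cases in turn, deriving the general-group case from the finitely generated Abelian one via the inclusion--exclusion identity~(\ref{EqInclExcl}) of Lemma~\ref{LemmaInclExcl}. For the first assertion I would argue exactly as in Lemma~\ref{LemmaCodimDim}: regard a multilinear graded polynomial $f\in V^{\mathrm{gr}}_n$ as an $n$-linear map $L^{{}\otimes n}\to L$ by substituting for each variable $x^{(g)}_i$ the $g$-homogeneous component of the $i$-th argument. This assignment is linear in $f$, and since $L=\bigoplus_{g}L^{(g)}$ is finite dimensional and $f$ is multilinear, $f$ lies in its kernel exactly when $f$ vanishes on all tuples of homogeneous elements, i.e.\ when $f\in\Id^{\mathrm{gr}}(L)$. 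Hence $\frac{V^{\mathrm{gr}}_n}{V^{\mathrm{gr}}_n\cap\Id^{\mathrm{gr}}(L)}$ embeds into $\Hom_F(L^{{}\otimes n};L)$, giving $c^{\mathrm{gr}}_n(L)\leqslant\dim\Hom_F(L^{{}\otimes n};L)=(\dim L)^{n+1}$. (This bound in fact needs no hypothesis on $G$; alternatively, for finitely generated Abelian $G$ one may reduce to algebraically closed $F$, as graded codimensions are unchanged by field extension, and then combine Lemma~\ref{LemmaGrToHatG} with Lemma~\ref{LemmaCodimDim}.)

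For the second assertion, apply Lemma~\ref{LemmaInclExcl} to get finitely generated Abelian subgroups $G_1,\ldots,G_r\subseteq G$ for which $c^{\mathrm{gr}}_n(L)$ is the alternating sum~(\ref{EqInclExcl}) of the numbers $c^{\mathrm{gr}}_n(L_{G_{i_1}\cap\cdots\cap G_{i_k}})$. Each group $G_{i_1}\cap\cdots\cap G_{i_k}$ is a subgroup of the finitely generated Abelian group $G_{i_1}$, hence is itself finitely generated and Abelian (finitely generated Abelian groups are Noetherian, so their subgroups are finitely generated), and $L_{G_{i_1}\cap\cdots\cap G_{i_k}}\subseteq L_{G_{i_1}}$. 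Choose $G_0$ among $G_1,\ldots,G_r$ with $\dim L_{G_0}=\max_{1\leqslant i\leqslant r}\dim L_{G_i}$; then every graded subalgebra appearing in~(\ref{EqInclExcl}) has dimension at most $\dim L_{G_0}$, so the first assertion gives $c^{\mathrm{gr}}_n(L_{G_{i_1}\cap\cdots\cap G_{i_k}})\leqslant(\dim L_{G_0})^{n+1}$ for each of them. If $M$ is the (finite) number of summands in~(\ref{EqInclExcl}), the triangle inequality yields
$$c^{\mathrm{gr}}_n(L)\leqslant M\,(\dim L_{G_0})^{n+1}=C\,(\dim L_{G_0})^n,\qquad C:=M\dim L_{G_0}$$
(if $L=0$ the statement is trivial, and otherwise $\dim L_{G_0}\geqslant 1$), which is the claimed bound.

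The argument is essentially bookkeeping and I do not expect a genuine obstacle. The only points requiring attention are that intersections of finitely generated Abelian subgroups remain finitely generated, so that the first assertion applies to every term of~(\ref{EqInclExcl}), and that the extra factor $\dim L_{G_0}$ coming from the exponent $n+1$ must be absorbed into the constant $C$.
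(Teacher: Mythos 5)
Your proof is correct, and for the second assertion it is exactly the paper's argument: invoke Lemma~\ref{LemmaInclExcl}, bound each summand by the first assertion, and absorb the number of summands and the extra factor $\dim L_{G_0}$ into the constant $C$. The only divergence is in the first assertion. The paper first reduces to an algebraically closed base field and then combines the duality $c^{\mathrm{gr}}_n(L)=c^{F\hat G}_n(L)$ of Lemma~\ref{LemmaGrToHatG} with the bound $c^H_n(L)\leqslant(\dim L)^{n+1}$ of Lemma~\ref{LemmaCodimDim}; you instead embed $V^{\mathrm{gr}}_n/(V^{\mathrm{gr}}_n\cap\Id^{\mathrm{gr}}(L))$ directly into $\Hom_F(L^{\otimes n};L)$ by evaluating each $x^{(g)}_i$ on the $g$-component of the $i$-th argument. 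Your identification of the kernel is right: an arbitrary independent homogeneous substitution $x^{(g)}_i\mapsto b^{(g)}_i$ is recovered from the single argument $a_i=\sum_g b^{(g)}_i$, so the kernel is exactly $V^{\mathrm{gr}}_n\cap\Id^{\mathrm{gr}}(L)$. This buys you something the paper's route does not: the bound $c^{\mathrm{gr}}_n(L)\leqslant(\dim L)^{n+1}$ holds over any field and for any grading group, with no appeal to the $F\hat G$-action or to invariance of codimensions under field extension; the hypothesis that $G$ be finitely generated Abelian is not actually needed for that inequality (it matters only for the sharper base $\dim L_{G_0}$ in the second assertion). Your observation that subgroups of finitely generated Abelian groups are again finitely generated, so that the first assertion applies to every intersection $G_{i_1}\cap\dots\cap G_{i_k}$, is a detail the paper leaves implicit but is needed and is correctly supplied.
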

\begin{proof} Again, graded codimensions do not change upon an extension of the base field,
and without loss of generality we may assume
 $F$ to be algebraically closed.
 
 If $G$ is finitely generated Abelian, we apply Lemmas~\ref{LemmaCodimDim}, \ref{LemmaGrToHatG},
 and get the first assertion.
 
Using Lemma~\ref{LemmaInclExcl} and choosing the maximal $\dim L_{G_0}$,
we obtain the second assertion from the first one.
\end{proof}

\subsection{Applications to $G$-codimensions}\label{SubsectionApplG}

First, we prove that in the case of Lie algebras we can restrict ourselves to the
case when a group acts by automorphisms only.

\begin{lemma}\label{LemmaGtoAut} Let $G$ be a group with a fixed subgroup
$G_0$ of index ${} \leqslant 2$. Let $L$ be a Lie $G$-algebra.
Denote by $\tilde G$ the group isomorphic to $G$
and by $\tilde g \in \tilde G$ the element corresponding to $g\in G$
 under the isomorphism $\tilde G \cong G$.
 Then $L$ is a $\tilde G$-algebra
where $\tilde G$ acts by automorphisms only and the $\tilde G$-action is defined
by the formula
$$a^{\tilde g}=\left\lbrace\begin{array}{rll}
 a^g & \text{if} & g\in G_0, \\
 -a^g & \text{if} & g\in G\backslash G_0 
 \end{array}\right. \text{ for all } a\in L.$$
 Conversely, each Lie $\tilde G$-algebra
 where the $\tilde G$ acts by automorphisms
 is a Lie $G$-algebra where $G_0$ acts by automorphisms
 and $G\backslash G_0$ acts by anti-automorphisms.
  Moreover, $c_n^{\tilde G}(L)=c_n^G(L)$ for all $n \in \mathbb N$. If $G$
  is an affine algebraic group acting on $L$ rationally,
  then $\tilde G$ acts on $L$ rationally too.
  (We assume that the structure of an affine algebraic variety on $\tilde G$ is
  the same as on $G$.)
\end{lemma}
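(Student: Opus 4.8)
The plan is to reduce everything to the sign character of the pair $(G,G_0)$. Define $\epsilon\colon G\to\{\pm 1\}$ by $\epsilon(g)=1$ for $g\in G_0$ and $\epsilon(g)=-1$ for $g\in G\backslash G_0$; since $G_0$ has index ${}\leqslant 2$, this is a group homomorphism. Writing $\zeta\colon G\to\Aut^{*}(L)$ for the given action, I would set $\tilde\zeta(\tilde g):=\epsilon(g)\,\zeta(g)$. Because scalar multiplication by $\pm 1$ commutes with composition of linear maps, $\tilde\zeta(\tilde g)\tilde\zeta(\tilde h)=\epsilon(g)\epsilon(h)\,\zeta(g)\zeta(h)=\epsilon(gh)\,\zeta(gh)=\tilde\zeta(\widetilde{gh})$, so $\tilde\zeta$ is a homomorphism $\tilde G\to\GL(L)$; for $g\in G_0$ its value is the automorphism $\zeta(g)$, and for $g\in G\backslash G_0$ it is $-\zeta(g)$, which is an automorphism since $\zeta(g)$ is an anti-automorphism (recall that $\psi$ is an anti-automorphism iff $-\psi$ is an automorphism). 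Hence $L$ is a $\tilde G$-algebra with $\tilde G$ acting by automorphisms. Running the same computation with $\zeta(g):=\epsilon(g)\,\tilde\zeta(\tilde g)$ proves the converse, since then $\zeta(g)\in\Aut(L)$ for $g\in G_0$ and $\zeta(g)=-\tilde\zeta(\tilde g)$ is an anti-automorphism for $g\in G\backslash G_0$. For the rationality assertion: if $G$ is affine algebraic acting rationally, then $G_0$ is closed of index ${}\leqslant 2$, hence also open, so $\epsilon$ is a regular function on $G$; thus $\tilde g\mapsto\epsilon(g)\,\zeta(g)$ is a morphism $\tilde G\to\GL(L)$, i.e.\ the $\tilde G$-action is rational.

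For the equality of codimensions I would exhibit an isomorphism of free Lie algebras. Let $\Phi\colon L(X\,|\,G)\to L(X\,|\,\tilde G)$ be the Lie homomorphism determined on the free generators by $\Phi(x_j^g)=\epsilon(g)\,x_j^{\tilde g}$; it is an isomorphism because the elements $\epsilon(g)\,x_j^{\tilde g}$ are again free generators of $L(X\,|\,\tilde G)$. Since $\Phi$ is multiplicative, $\Phi\bigl([x_{\sigma(1)}^{g_1},\dots,x_{\sigma(n)}^{g_n}]\bigr)=\epsilon(g_1)\cdots\epsilon(g_n)\,[x_{\sigma(1)}^{\tilde g_1},\dots,x_{\sigma(n)}^{\tilde g_n}]$, so $\Phi(V^{G}_n)=V^{\tilde G}_n$. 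Now fix $a=(a_1,a_2,\dots)\in L^{\mathbb N}$ and let $\mathrm{ev}_a\colon L(X\,|\,G)\to L$ and $\widetilde{\mathrm{ev}}_a\colon L(X\,|\,\tilde G)\to L$ be the evaluation homomorphisms with $x_j\mapsto a_j$, so that $\mathrm{ev}_a(x_j^g)=a_j^g$ and $\widetilde{\mathrm{ev}}_a(x_j^{\tilde g})=a_j^{\tilde g}$. On a free generator one computes $\widetilde{\mathrm{ev}}_a(\Phi(x_j^g))=\epsilon(g)\,a_j^{\tilde g}=\epsilon(g)^2\,a_j^g=a_j^g=\mathrm{ev}_a(x_j^g)$, so the Lie homomorphisms $\widetilde{\mathrm{ev}}_a\circ\Phi$ and $\mathrm{ev}_a$ agree on a generating set and hence coincide. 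Therefore $f\in\Id^{G}(L)$ iff $\mathrm{ev}_a(f)=0$ for all $a$ iff $\widetilde{\mathrm{ev}}_a(\Phi(f))=0$ for all $a$ iff $\Phi(f)\in\Id^{\tilde G}(L)$, i.e.\ $\Phi(\Id^{G}(L))=\Id^{\tilde G}(L)$. Combined with $\Phi(V^{G}_n)=V^{\tilde G}_n$, this yields an isomorphism $V^{G}_n/(V^{G}_n\cap\Id^{G}(L))\cong V^{\tilde G}_n/(V^{\tilde G}_n\cap\Id^{\tilde G}(L))$, and hence $c^{G}_n(L)=c^{\tilde G}_n(L)$.

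The only genuinely delicate point is the sign bookkeeping in the passage to the free algebras: one must define $\Phi$ on the free generators $x_j^g$ and check it is a well-defined Lie homomorphism, rather than writing a formula on the spanning commutators of $V^{G}_n$ (which are subject to the Jacobi and anticommutativity relations of $L(X\,|\,G)$), and then compare $\Phi$ with the evaluation maps only on generators, as above. Everything else is routine: the homomorphism property of $\epsilon$, the remark that scalars commute with bracketing and with composition up to a product of signs, and the fact that a closed subgroup of finite index in an affine algebraic group is clopen.
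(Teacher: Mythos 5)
Your proposal is correct and follows essentially the same route as the paper, which simply asserts the existence of an isomorphism $\psi \colon L(X\,|\,G)\to L(X\,|\,\tilde G)$ carrying $\Id^G(L)$ to $\Id^{\tilde G}(L)$ and $V^G_n$ to $V^{\tilde G}_n$, and handles rationality by producing the regular function equal to $\pm 1$ on the two cosets via the Nullstellensatz (the same fact you get from $G_0$ being clopen). Your explicit sign $\epsilon(g)$ in the definition of $\Phi$ on the free generators, making $\widetilde{\mathrm{ev}}_a\circ\Phi=\mathrm{ev}_a$ exactly, is the right way to fill in the detail the paper leaves implicit.
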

\begin{proof} 
Note that
$$[a,b]^{\tilde g}=[a^{\tilde g}, b^{\tilde g}]\text{
for all }g \in G.$$ Hence $L$ is a $\tilde G$-algebra. The converse implication is evident.
In this case we define
$$a^g=\left\lbrace\begin{array}{rll}
 a^{\tilde g} & \text{if} & g\in G_0, \\
 -a^{\tilde g} & \text{if} & g\in G\backslash G_0 
 \end{array}\right. \text{ for all } a\in L.$$

Therefore, we have an isomorphism $\psi \colon L(X | G) \to L(X|\tilde G)$ of
$G$- and $\tilde G$-algebras such that $\psi(\Id^G(L))=\Id^{\tilde G}(L)$, $\psi(V^G_n)=V^{\tilde G}_n$,
and therefore $c_n^{\tilde G}(L)=c_n^G(L)$ for all $n \in \mathbb N$.

If $G$ is an affine algebraic group, then $G_0$ is a closed subgroup of index $\leqslant 2$ and $G$ is the disjoint union of closed subsets $G_0$ and $G\backslash G_0$.
Denote by $\mathcal O(G)$ the algebra of polynomial functions on $G$.
Let $$I_1 = \lbrace f \in \mathcal O(G) \mid f(g)=0 \text{ for all } g \in G_0 \rbrace$$
and $$I_2 = \lbrace f \in \mathcal O(G) \mid f(g)=0 \text{ for all } g \in G\backslash G_0 \rbrace.$$
Since $G_0 \cap (G\backslash G_0) = \varnothing$, by Hilbert's Nullstellensatz, $I_1 + I_2 = \mathcal O(G)$.
Hence $1 = f_1 + f_2$ where $f_1 \in I_1$, $f_2 \in I_2$. Then $f_2(g) - f_1(g) = 1$ for all $g\in G_0$
and $f_2(g) - f_1(g)= -1$ for all $g\in G\backslash G_0$.
Therefore, if we multiply all operators from $G\backslash G_0$ by $(-1)$, the representation still will be 
rational.
\end{proof}

  The following lemma enables to derive properties of $G$-codimensions
from properties of $H$-codimensions.

\begin{lemma}\label{LemmaFGcodim} Let $L$ be a Lie algebra with the action
of a group $G$ by automorphisms. Then $L$ is an $FG$-module algebra
where the action of the Hopf algebra $FG$ on $L$
is the extension of the $G$-action by linearity.
 Conversely, each Lie $FG$-module algebra
 is a Lie $G$-algebra.
 Moreover, $c_n^{FG}(L)=c_n^G(L)$ for all $n \in \mathbb N$.
\end{lemma}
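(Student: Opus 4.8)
The plan is to observe that for the group algebra $FG$ the notion of an $FG$-module Lie algebra coincides verbatim with that of a Lie $G$-algebra on which $G$ acts by automorphisms, and then to identify the two free objects. Recall that $FG$ is a Hopf algebra with $\Delta(g)=g\otimes g$, $\varepsilon(g)=1$, and $S(g)=g^{-1}$ for $g\in G$, so that each $g$ is group-like and $\Delta^{(n-1)}(g)=g\otimes\cdots\otimes g$. First I would verify the two directions. If $L$ is a Lie $G$-algebra with $G$ acting by automorphisms, extend the action to $FG$ by linearity; this gives an algebra homomorphism $FG\to\End_F(L)$, and for $h=g\in G$ the axiom~(\ref{EqHmoduleLieAlgebra}) reads $g[a,b]=[g_{(1)}a,g_{(2)}b]=[ga,gb]$, which holds because $g\in\Aut(L)$, whence it holds for all $h\in FG$ by linearity. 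Conversely, if $L$ is an $FG$-module Lie algebra with associated homomorphism $\zeta\colon FG\to\End_F(L)$, then $\zeta(g)\zeta(g^{-1})=\zeta(1)=\id_L$ gives $\zeta(g)\in\GL(L)$, and the same computation gives $\zeta(g)[a,b]=[\zeta(g)a,\zeta(g)b]$, so $\zeta(g)\in\Aut(L)$; thus $\zeta$ restricts to a group homomorphism $G\to\Aut(L)$ and $L$ is a Lie $G$-algebra (with $G_0=G$).

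For the codimension equality I would produce an isomorphism $\varphi\colon L(X\mid FG)\to L(X\mid G)$ of algebras which carries the $FG$-action to the $G$-action and satisfies $\varphi|_X=\id_X$. Such a $\varphi$ exists by the universal properties: $L(X\mid FG)$ is free on $X$ in the category of $FG$-module Lie algebras, $L(X\mid G)$ is free on $X$ in the category of Lie $G$-algebras with $G$ acting by automorphisms, and by the previous paragraph these two categories are the same; hence each of the two free objects corepresents the functor $A\mapsto\mathrm{Map}(X,A)$ and they are canonically isomorphic. Concretely $\varphi(x_j^g)=\varphi(gx_j)=g\varphi(x_j)=x_j^g$. (If $\ch F\neq 2$ one may instead invoke the Remark after the definition of $L(X\mid H)$, which identifies $L(X\mid FG)$ with the ordinary free Lie algebra on the $x_j^g$, and then compare the two $G$-actions on a monomial $[x_{i_1}^{g_1},\dots,x_{i_n}^{g_n}]$ directly, using $\Delta^{(n-1)}(g)=g\otimes\cdots\otimes g$ and that $g$ acts as an algebra automorphism, so that the $FG$-action agrees with the $G$-action defined in Subsection~\ref{SubsectionG} in the case $g\in G_0=G$.)

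Finally I would transport identities and multilinear components along $\varphi$. Via $\varphi$, homomorphisms $L(X\mid FG)\to L$ of $FG$-module algebras correspond exactly to homomorphisms $L(X\mid G)\to L$ of Lie $G$-algebras, and both are precisely the maps induced by arbitrary substitutions $X\to L$; hence $\varphi(\Id^{FG}(L))=\Id^G(L)$. Expanding each $h_i\in FG$ in the basis $G$ shows $V^{FG}_n=\langle[x_{\sigma(1)}^{g_1},\dots,x_{\sigma(n)}^{g_n}]\mid g_i\in G,\ \sigma\in S_n\rangle_F$, so $\varphi(V^{FG}_n)=V^G_n$. Therefore $\varphi$ induces an isomorphism $V^{FG}_n/(V^{FG}_n\cap\Id^{FG}(L))\cong V^G_n/(V^G_n\cap\Id^G(L))$, and $c_n^{FG}(L)=c_n^G(L)$ for all $n\in\mathbb N$. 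The whole argument is formal; the only place needing a moment of care is checking that the two group actions on the free algebra coincide, which comes down to the cocommutativity of $FG$ and to group-like elements acting as algebra automorphisms.
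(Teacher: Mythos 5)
Your proposal is correct and follows essentially the same route as the paper's (much terser) proof: verify that for group-like elements the $H$-module axiom $g[a,b]=[g_{(1)}a,g_{(2)}b]$ reduces to $g$ being an automorphism, then identify $L(X\mid G)$ with $L(X\mid FG)$ as a map preserving both actions, and transport $\Id$ and $V_n$ along it. The extra care you take with the universal-property identification and with $\zeta(g)\zeta(g^{-1})=\id_L$ in the converse direction is exactly what the paper leaves implicit.
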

\begin{proof} 
If we treat $G$ as a subgroup in $FG$, we obtain
$$g[a,b]=[ga, gb] = [g_{(1)}a,g_{(2)}b]\text{
for all }g \in G.$$ Using the linearity, we get~(\ref{EqHmoduleLieAlgebra})
for $H=FG$. Hence $L$ is an $FG$-module algebra. The converse implication is evident.

Therefore, we have an isomorphism $\psi \colon L(X | G) \to L(X|FG)$ of
$FG$-module and $G$-algebras such that $\psi(\Id^G(L))=\Id^{FG}(L)$, $\psi(V^G_n)=V^{FG}_n$,
and therefore $c_n^{FG}(L)=c_n^G(L)$ for all $n \in \mathbb N$.
\end{proof}

\begin{proof}[Proof of Theorem~\ref{TheoremMainGAffAlg}]
We apply Lemmas~\ref{LemmaGtoAut}, \ref{LemmaFGcodim}, Example~\ref{ExampleHniceAffAlg}, and Theorem~\ref{TheoremMainH}.
\end{proof}
\begin{proof}[Proof of Theorem~\ref{TheoremMainGFin}]
We apply Lemmas~\ref{LemmaGtoAut}, \ref{LemmaFGcodim}, and Theorem~\ref{TheoremMainHSS}.
\end{proof}
\begin{proof}[Proof of Theorem~\ref{TheoremMainGAffAlgSum}]
We apply Lemmas~\ref{LemmaGtoAut}, \ref{LemmaFGcodim}, Example~\ref{ExampleHniceAffAlg}, and Theorem
\ref{TheoremMainHSum}.
\end{proof}
\begin{proof}[Proof of Theorem~\ref{TheoremMainGFinSum}]
We apply Lemmas~\ref{LemmaGtoAut}, \ref{LemmaFGcodim}, and Theorem~\ref{TheoremMainHSSSum}.
\end{proof}

Also we obtain the following propositions.

\begin{lemma}
Let $L$ be a finite dimensional Lie algebra with $G$-action
over any field $F$ and let $G$ be any group. Then
$$c_n^G(L) \leqslant (\dim L)^{n+1}  \text{ for all } n\in\mathbb N.$$
\end{lemma}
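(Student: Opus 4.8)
The plan is to mimic the proof of Lemma~\ref{LemmaCodimDim}, which bounds $H$-codimensions of an $H$-module Lie algebra by $(\dim L)^{n+1}$; the point is that the only feature of the $H$-action used there is that each generator $x_i^{h}$ acts on $L$ as a (fixed) linear operator, and the same is true for each generator $x_i^{g}$ of a Lie $G$-algebra. Concretely, I would first recall that the $n$th $G$-codimension is $c_n^G(L)=\dim\bigl(V_n^G/(V_n^G\cap\Id^G(L))\bigr)$, where $V_n^G$ is spanned by the multilinear $G$-monomials $[x_{\sigma(1)}^{g_1},\ldots,x_{\sigma(n)}^{g_n}]$.

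Next I would regard each $G$-polynomial $f(x_1,\ldots,x_n)\in V_n^G$ as an $n$-linear map $L^{\otimes n}\to L$, via the substitution $x_i\mapsto a_i$ (so that $x_i^{g}$ becomes $a_i^{g}=\zeta(g)(a_i)$, using either the automorphism or anti-automorphism induced by $g$). This gives a linear map $V_n^G\to\Hom_F(L^{\otimes n};L)$ whose kernel is exactly $V_n^G\cap\Id^G(L)$ — a $G$-polynomial is a $G$-identity precisely when it vanishes under every substitution of elements of $L$ — and hence an embedding
$$\frac{V_n^G}{V_n^G\cap\Id^G(L)}\hookrightarrow\Hom_F(L^{\otimes n};L).$$
Therefore $c_n^G(L)=\dim\bigl(V_n^G/(V_n^G\cap\Id^G(L))\bigr)\leqslant\dim\Hom_F(L^{\otimes n};L)=(\dim L)^{n}\cdot(\dim L)=(\dim L)^{n+1}$, which is the claimed bound.

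I expect no serious obstacle here: the only mild subtlety is the presence of anti-automorphisms, but by Lemma~\ref{LemmaGtoAut} one may replace $G$ by $\tilde G$ acting by automorphisms only, without changing $c_n^G(L)$, so one never even needs to worry about that; alternatively one just notes directly that for $g\in G\setminus G_0$ the substitution still lands $x_i^{g}$ in a well-defined fixed operator $\zeta(g)\in\End_F(L)$, and the map $V_n^G\to\Hom_F(L^{\otimes n};L)$ is still well-defined and linear, so the dimension count goes through verbatim. Thus the argument is essentially a one-line adaptation of Lemma~\ref{LemmaCodimDim}, and requires no restriction on $G$ or on $F$.
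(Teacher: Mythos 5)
Your proposal is correct and is essentially the paper's argument: the paper proves this by composing Lemmas~\ref{LemmaGtoAut} and~\ref{LemmaFGcodim} with Lemma~\ref{LemmaCodimDim}, and the latter is exactly the evaluation-map embedding $V_n^G/(V_n^G\cap\Id^G(L))\hookrightarrow\Hom_F(L^{\otimes n};L)$ that you carry out directly. Inlining the argument (and observing that anti-automorphisms still give fixed linear operators, so the reduction to $\tilde G$ is not even needed) is a harmless simplification of the same idea.
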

\begin{proof}
We apply Lemmas~\ref{LemmaCodimDim}, \ref{LemmaGtoAut}, and~\ref{LemmaFGcodim}.
\end{proof}

\begin{lemma}
Let $L$ be a Lie algebra with $G$-action 
over any field $F$ and let $G$ be any group.
Then
$$c_n(L) \leqslant c_n^G(L) \leqslant |G|^n c_n(L) \text{ for all } n\in\mathbb N$$
 where
$c_n(L)$ are ordinary codimensions.
\end{lemma}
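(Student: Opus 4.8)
The plan is to reduce the statement to the case of an $H$-module Lie algebra with $H=FG$ and then to invoke Lemma~\ref{LemmaOrdinaryAndHopf}. Since a $G$-action on $L$ may involve anti-automorphisms, I would first apply Lemma~\ref{LemmaGtoAut} to replace $G$ by the isomorphic group $\tilde G$ acting on $L$ by automorphisms only; this affects none of the relevant quantities, as $c_n^{\tilde G}(L)=c_n^G(L)$ for all $n$ and $|\tilde G|=|G|$. Next, by Lemma~\ref{LemmaFGcodim}, $L$ becomes an $F\tilde G$-module Lie algebra (the $\tilde G$-action extended linearly to the group algebra), and $c_n^{F\tilde G}(L)=c_n^{\tilde G}(L)=c_n^G(L)$ for all $n\in\mathbb N$.

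Then I would apply Lemma~\ref{LemmaOrdinaryAndHopf} to the Hopf algebra $H=F\tilde G$ and the structure homomorphism $\zeta\colon F\tilde G\to\End_F(L)$, which immediately yields
$$c_n(L)\leqslant c_n^{F\tilde G}(L)\leqslant (\dim\zeta(F\tilde G))^n\, c_n(L)$$
for all $n\in\mathbb N$. The lower bound is already the one we want (it also follows directly: an ordinary multilinear Lie polynomial lies in $V_n^G$ under the identification $x_i=x_i^1$, and it is a $G$-identity precisely when it is an ordinary identity, so $\frac{V_n}{V_n\cap\Id(L)}\hookrightarrow\frac{V^G_n}{V^G_n\cap\Id^G(L)}$). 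For the upper bound it remains to note that $\zeta$ maps $F\tilde G$ \emph{onto} $\zeta(F\tilde G)$, whence $\dim\zeta(F\tilde G)\leqslant\dim F\tilde G=|\tilde G|=|G|$ because the group elements form a basis of the group algebra; monotonicity of exponentiation then gives $c_n^G(L)=c_n^{F\tilde G}(L)\leqslant |G|^n c_n(L)$.

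The argument is entirely routine and I do not expect a genuine obstacle. The only points deserving a word of care are that $L$ is not assumed finite dimensional, so $c_n(L)$, $c_n^G(L)$ and $|G|^n$ are to be read as (possibly infinite) cardinals — the spanning set exhibited in the proof of Lemma~\ref{LemmaOrdinaryAndHopf} consists of $(\dim\zeta(F\tilde G))^n\cdot c_n(L)$ polynomials regardless of finiteness, so the cardinal inequality is valid in all cases — and that passing to automorphisms via Lemma~\ref{LemmaGtoAut} is necessary because Lemma~\ref{LemmaFGcodim} is stated only for groups acting by automorphisms.
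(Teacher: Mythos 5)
Your proposal is correct and follows exactly the paper's route: the paper's proof is precisely the chain Lemma~\ref{LemmaGtoAut} (pass to the automorphism action of $\tilde G$), Lemma~\ref{LemmaFGcodim} (view $L$ as an $F\tilde G$-module algebra with $c_n^{F\tilde G}(L)=c_n^G(L)$), and Lemma~\ref{LemmaOrdinaryAndHopf} together with $\dim\zeta(F\tilde G)\leqslant|G|$. Your additional remarks on the lower bound and on the case of infinite $G$ or infinite dimensional $L$ are consistent with the argument and do not change it.
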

\begin{proof}
We apply Lemmas~\ref{LemmaOrdinaryAndHopf}, \ref{LemmaGtoAut}, and~\ref{LemmaFGcodim}.
\end{proof}

\section{Examples and criteria for simplicity}
\label{SectionExamples}

In this section we assume $F$ to be an algebraically closed field
  of characteristic $0$.
  
    \begin{example}\label{ExampleHSimple}
 Let $B$ be a finite dimensional semisimple $H$-module Lie algebra where
  $H$ is a Hopf algebra. If $B$ is $H$-simple, then there exist $C > 0$, $r \in \mathbb R$
  such that $$C n^r (\dim B)^n \leqslant c_n^{H}(B)
  \leqslant (\dim B)^{n+1} \text{ for all } n\in\mathbb N.$$
 \end{example}
 \begin{proof}
 The upper bound follows from Lemma~\ref{LemmaCodimDim}.
 The lower bound is a consequence of Theorem~\ref{TheoremMainH}
 and Example~\ref{ExamplePIexpBSS}. 
 \end{proof}
 
 By~\cite[Theorem~6]{ASGordienko4}, every finite dimensional 
 semisimple $H$-module Lie algebra is the direct sum of $H$-simple
 Lie algebras. Here we calculate its Hopf PI-exponent.

  \begin{example}\label{ExampleHSemiSimple}
 Let $L=B_1 \oplus B_2 \oplus \ldots \oplus B_q$
  be a finite dimensional semisimple $H$-module Lie algebra where
  $H$ is a Hopf algebra and $B_i$ are $H$-simple Lie
  algebras. Let $d := \max_{1 \leqslant k
  \leqslant q} \dim B_k$. Then there exist $C_1, C_2 > 0$, $r_1, r_2 \in \mathbb R$
  such that $$C_1 n^{r_1} d^n \leqslant c_n^{H}(L)
  \leqslant C_2 n^{r_2} d^n \text{ for all } n\in\mathbb N.$$
 \end{example}
 \begin{proof}
 This is a consequence of Theorem~\ref{TheoremMainH} and Example~\ref{ExamplePIexpBSS}.
 \end{proof}
 
 Now we obtain a criterion for $H$-simplicity:

\begin{theorem}\label{TheoremHCrSimple}
Let $L$ be an $H$-nice Lie algebra where $H$ is a Hopf algebra over $F$.
Then $\PIexp^H(L)=\dim L$ if and only if $L$ is semisimple and $H$-simple.
\end{theorem}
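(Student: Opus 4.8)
The plan is to prove the two implications separately, using the formula $\PIexp^H(L)=d(L)$ established in Subsection~\ref{SubsectionHopfPIexp} together with Theorem~\ref{TheoremMainH}. First I would dispose of the easy direction: if $L$ is semisimple and $H$-simple, then by Example~\ref{ExamplePIexpBSS} we have $d(L)=\max_{1\leqslant k\leqslant q}\dim B_k$ where $L=B_1\oplus\cdots\oplus B_q$ is the decomposition into $H$-simple ideals; $H$-simplicity forces $q=1$, so $d(L)=\dim L$ and hence $\PIexp^H(L)=\dim L$.

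For the converse, suppose $\PIexp^H(L)=\dim L$, i.e. $d(L)=\dim L$. By definition of $d(L)$ there exist $H$-invariant ideals $J_k\subseteq I_k$, $1\leqslant k\leqslant r$, satisfying Conditions~1--2 of Subsection~\ref{SubsectionHopfPIexp} with
$$\dim\frac{L}{\Ann(I_1/J_1)\cap\cdots\cap\Ann(I_r/J_r)}=\dim L,$$
which means $\Ann(I_1/J_1)\cap\cdots\cap\Ann(I_r/J_r)=0$. The first step is to rule out the possibility that $L$ is not semisimple. If $L$ is not semisimple then the nilpotent radical $N$ is a nonzero $H$-invariant ideal (it is nonzero whenever $R\ne 0$ for a non-nilpotent finite-dimensional Lie algebra; the nilpotent case is excluded since $\PIexp^H(L)=\dim L>0$ forces $L$ non-nilpotent by the remark in Section~\ref{SectionUpper}). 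But in the proof of Lemma~\ref{LemmaIrrAnnBS} it is shown that $N\subseteq\Ann(I/J)$ for every pair of $H$-invariant ideals $J\subseteq I$ with $I/J$ an irreducible $(H,L)$-module; hence $N\subseteq\Ann(I_k/J_k)$ for all $k$ and so $0\ne N\subseteq\bigcap_k\Ann(I_k/J_k)=0$, a contradiction. Therefore $L=B$ is semisimple.

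It remains to show that a semisimple $L$ with $d(L)=\dim L$ is $H$-simple. Write $L=B_1\oplus\cdots\oplus B_q$ (direct sum of $H$-invariant ideals) with $B_i$ being $H$-simple, via \cite[Theorem~6]{ASGordienko4}. By Example~\ref{ExamplePIexpBSS}, $d(L)=\max_{1\leqslant k\leqslant q}\dim B_k$, so $d(L)=\dim L=\sum_{i=1}^q\dim B_i$ forces $q=1$; that is, $L=B_1$ is $H$-simple. This completes the argument. The routine parts are the manipulations with $d(L)$; the one step that needs slight care is the extraction of the fact $N\subseteq\Ann(I/J)$ from the interior of the proof of Lemma~\ref{LemmaIrrAnnBS} and the verification that non-nilpotence of $L$ (hence $N\ne L$, and $R\ne0\Rightarrow N\ne0$) holds — but this is immediate once we observe $\PIexp^H(L)=\dim L\geqslant 1$, so $L$ is not nilpotent and the machinery of Sections~\ref{SectionAux}--\ref{SectionLower} (in particular the Levi/radical structure for $H$-nice algebras) applies.
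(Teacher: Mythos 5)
Your proof is correct and follows essentially the same route as the paper: both directions rest on the identity $\PIexp^H(L)=d(L)$, the inclusion $N\subseteq\Ann(I_k/J_k)$ supplied by Lemma~\ref{LemmaIrrAnnBS} to force $N=0$ and hence semisimplicity, and Example~\ref{ExamplePIexpBSS} (equivalently Example~\ref{ExampleHSemiSimple}) to identify $d(L)$ with the largest $H$-simple summand. The only cosmetic difference is that you deduce semisimplicity from $N=0$ by the contrapositive (a nonzero solvable radical forces a nonzero nilpotent radical), whereas the paper argues $[L,R]\subseteq N=0$ and then $R=Z(L)\subseteq N=0$; both are standard.
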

\begin{proof} If $L$ is an $H$-simple semisimple algebra, then $\PIexp^H(L)=\dim L$
by Example~\ref{ExampleHSimple}.

Suppose $\PIexp^H(L)=\dim L$.
Let $N$ be the nilpotent radical of $L$. Let $I_1, \ldots, I_r$, $J_1, \ldots, J_r$
are $H$-invariant ideals of $L$ satisfying Conditions 1--2 (see Subsection~\ref{SubsectionHopfPIexp}).
By Lemma~\ref{LemmaIrrAnnBS}, $N \subseteq \Ann(I_1/J_1) \cap \dots \cap \Ann(I_r/J_r)$.
Hence $\PIexp^H(L) \leqslant (\dim L) - (\dim N)$. Therefore $N=0$ and by~\cite[Proposition 2.1.7]{GotoGrosshans}, $[L, R] \subseteq N = 0$ where $R$ is the solvable radical of $L$. Hence $R = Z(L)\subseteq N=0$ and $L$ is semisimple. Now we apply Example~\ref{ExampleHSemiSimple}.
\end{proof}

In particular, Theorem~\ref{TheoremHCrSimple}
holds for finite dimensional $H$-module Lie algebras in the case when $H$ is finite dimensional semisimple (see Example~\ref{ExampleHniceHSS}).

Applying Lemmas~\ref{LemmaGtoAut} and~\ref{LemmaFGcodim}, we get
\begin{theorem}\label{TheoremGFinCrSimple}
Let $L$ be a finite dimensional Lie algebra over $F$. Suppose a finite
  group $G$ acts on $L$ by automorphisms and anti-automorphisms.
Then $\PIexp^G(L)=\dim L$ if and only if $L$ is a $G$-simple algebra.
\end{theorem}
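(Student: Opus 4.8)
The plan is to deduce Theorem~\ref{TheoremGFinCrSimple} from the Hopf-algebraic criterion Theorem~\ref{TheoremHCrSimple} by transporting the problem along the two translation lemmas. First I would invoke Lemma~\ref{LemmaGtoAut}: replacing the action of $G$ by automorphisms and anti-automorphisms with the action of the isomorphic group $\tilde G$ by automorphisms only leaves the codimensions unchanged, $c^{\tilde G}_n(L)=c^G_n(L)$, hence $\PIexp^{\tilde G}(L)=\PIexp^G(L)$; and since $a^{\tilde g}=\pm a^g$, a subspace of $L$ is $\tilde G$-invariant exactly when it is $G$-invariant, so $L$ is $\tilde G$-simple if and only if it is $G$-simple. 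Next, by Lemma~\ref{LemmaFGcodim} the $\tilde G$-algebra $L$ is an $F\tilde G$-module Lie algebra with $c^{F\tilde G}_n(L)=c^{\tilde G}_n(L)$, and a subspace is $F\tilde G$-invariant precisely when it is $\tilde G$-invariant because the $F\tilde G$-action is the linear extension of the $\tilde G$-action. Combining the two steps gives $\PIexp^G(L)=\PIexp^{F\tilde G}(L)$ together with the equivalence of $G$-simplicity and $F\tilde G$-simplicity of $L$.

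Since $\ch F = 0$ and $\tilde G$ is finite, the group algebra $F\tilde G$ is a finite dimensional semisimple Hopf algebra, so by Example~\ref{ExampleHniceHSS} the algebra $L$ is $F\tilde G$-nice and Theorem~\ref{TheoremHCrSimple} applies: $\PIexp^{F\tilde G}(L)=\dim L$ if and only if $L$ is semisimple and $F\tilde G$-simple. It then remains to reconcile ``semisimple and $F\tilde G$-simple'' with the bare conclusion ``$G$-simple''. For this I would check that $G$-simplicity already forces semisimplicity: the solvable radical $R$ of $L$ is a characteristic ideal, hence invariant under $\Aut(L)$, and also invariant under every anti-automorphism $\psi$ since $-\psi\in\Aut(L)$ preserves $R$ and $R=-R$; thus $R$ is a $G$-invariant ideal, so $R=0$ or $R=L$, and $R=L$ is excluded because a nonzero $G$-simple algebra is non-abelian, so that its $G$-invariant ideal $[L,L]$ must equal $L$. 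Hence $L$ is semisimple, ``$G$-simple'' coincides with ``semisimple and $G$-simple'', which via the translations coincides with ``semisimple and $F\tilde G$-simple''. Chaining everything, $\PIexp^G(L)=\dim L$ if and only if $L$ is $G$-simple.

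The whole argument is bookkeeping built on the cited lemmas and Theorem~\ref{TheoremHCrSimple}; the only point that genuinely needs attention is the last paragraph, namely making precise that the convention for $G$-simplicity excludes the degenerate abelian cases (otherwise a one-dimensional algebra with the trivial action would violate the ``if'' direction) and that the solvable radical is stable under anti-automorphisms and not merely under automorphisms. Everything else --- the preservation of codimensions, the coincidence of the three notions of invariant ideal, and the fact that $L$ is $F\tilde G$-nice --- is supplied directly by Lemmas~\ref{LemmaGtoAut} and~\ref{LemmaFGcodim} and Example~\ref{ExampleHniceHSS}.
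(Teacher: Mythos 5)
Your proposal is correct and follows essentially the same route as the paper, which derives Theorem~\ref{TheoremGFinCrSimple} by applying Lemmas~\ref{LemmaGtoAut} and~\ref{LemmaFGcodim} to reduce to Theorem~\ref{TheoremHCrSimple} via Example~\ref{ExampleHniceHSS}. The only difference is that you spell out the reconciliation of ``semisimple and $F\tilde G$-simple'' with ``$G$-simple'' (via the $G$-invariance of the solvable radical), a point the paper leaves implicit.
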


Applying Lemmas~\ref{LemmaGtoAut}, \ref{LemmaFGcodim} and Example~\ref{ExampleHniceAffAlg}, we get
\begin{theorem}\label{TheoremGAffAlgCrSimple}
Let $L$ be a finite dimensional Lie algebra over $F$. Suppose a reductive affine algebraic group
  group $G$ acts on $L$ rationally by automorphisms and anti-automorphisms.
Then $\PIexp^G(L)=\dim L$ if and only if $L$ is a $G$-simple algebra.
\end{theorem}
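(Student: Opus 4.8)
The plan is to obtain Theorem~\ref{TheoremGAffAlgCrSimple} from the Hopf-algebraic criterion Theorem~\ref{TheoremHCrSimple} by the same transport-of-structure argument that was used to prove Theorem~\ref{TheoremMainGAffAlg}, so that no new analytic work is required. First I would apply Lemma~\ref{LemmaGtoAut}: the given rational $G$-action by automorphisms and anti-automorphisms produces a rational action of the isomorphic algebraic group $\tilde G$ on $L$ by automorphisms only, with $c^{\tilde G}_n(L)=c^G_n(L)$ for all $n$; since $\tilde G$ carries the same variety and group structure as $G$, it is again a reductive affine algebraic group. Next, Lemma~\ref{LemmaFGcodim} endows $L$ with the structure of an $F\tilde G$-module Lie algebra and gives $c^{F\tilde G}_n(L)=c^{\tilde G}_n(L)=c^G_n(L)$, hence $\PIexp^{F\tilde G}(L)=\PIexp^G(L)$ (both existing, by Theorem~\ref{TheoremMainGAffAlg} when $L$ is non-nilpotent and trivially equal to $0$ otherwise). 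By Example~\ref{ExampleHniceAffAlg} the algebra $L$ is $F\tilde G$-nice, so Theorem~\ref{TheoremHCrSimple} applies directly and yields: $\PIexp^{F\tilde G}(L)=\dim L$ if and only if $L$ is semisimple and $F\tilde G$-simple.

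It then remains to identify "$L$ semisimple and $F\tilde G$-simple" with "$L$ is $G$-simple". The point is that a subspace of $L$ is an $F\tilde G$-submodule exactly when it is $\tilde G$-invariant, and $\tilde G$-invariant subspaces coincide with $G$-invariant ones, because the operators coming from $\tilde G$ differ from those coming from $G$ only by signs (recall that $-\psi$ is an automorphism iff $\psi$ is an anti-automorphism). Hence the $F\tilde G$-invariant ideals of $L$ are precisely its $G$-invariant ideals, so $L$ is $F\tilde G$-simple iff $L$ is $G$-simple. Furthermore, if $L$ is $G$-simple then its solvable radical, being invariant under all automorphisms and anti-automorphisms, is a $G$-invariant ideal, hence $0$ or $L$; since $[L,L]$ is also $G$-invariant and nonzero (the definition of $G$-simplicity rules out the nilpotent case), the radical is $0$, i.e.\ $L$ is semisimple. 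Combined with the first paragraph this gives the asserted equivalence.

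I expect essentially no genuine obstacle here: the reductive hypothesis is used exactly once, to invoke Example~\ref{ExampleHniceAffAlg} (which itself rests on Theorem~\ref{TheoremMainH}), and everything else is a formal translation along the isomorphisms furnished by Lemmas~\ref{LemmaGtoAut} and~\ref{LemmaFGcodim}. The only step that requires care is the bookkeeping in the second paragraph, namely verifying that the notion of "$G$-invariant ideal" matches the notion of "$F\tilde G$-submodule" and that the definition of $G$-simplicity in force already excludes the one-dimensional (more generally nilpotent) case; without the latter the left-hand side $\dim L$ would fail to match the value $\PIexp^G(L)=0$ on the right. I would close the proof by simply citing Lemmas~\ref{LemmaGtoAut}, \ref{LemmaFGcodim}, Example~\ref{ExampleHniceAffAlg}, and Theorem~\ref{TheoremHCrSimple}, exactly as done for the finite-group analogue Theorem~\ref{TheoremGFinCrSimple}.
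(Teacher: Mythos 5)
Your proposal is correct and follows exactly the paper's route: the paper proves this theorem by citing Lemmas~\ref{LemmaGtoAut} and~\ref{LemmaFGcodim} together with Example~\ref{ExampleHniceAffAlg} to reduce to Theorem~\ref{TheoremHCrSimple}, which is precisely your chain of reductions. Your additional bookkeeping (matching $F\tilde G$-invariant ideals with $G$-invariant ones and noting that $G$-simplicity forces semisimplicity via the invariance of the solvable radical) just makes explicit what the paper leaves implicit.
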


Moreover, we can apply the results above to graded Lie algebras.

  \begin{example}\label{ExampleGrSimple}
 Let $B=\bigoplus_{g\in G} B^{(g)}$ be a
  finite dimensional graded simple Lie algebra where $G$ is an arbitrary group.
  Then there exist $C > 0$, $r \in \mathbb R$
  such that $$C n^r (\dim B)^n \leqslant c_n^\mathrm{gr}(B)
  \leqslant (\dim B)^{n+1} \text{ for all } n\in\mathbb N.$$
 \end{example}
 \begin{proof} Suppose $g_1, g_2 \in G$, $g_1g_2 \ne g_2 g_1$.
 Denote by $I_j$ the (graded) ideal generated by $B^{(g_j)}$, $j=1,2$.
 Using Lemma~\ref{LemmaGradedNonZero} and the Jacobi identity, we get $[I_1, I_2]=0$.
 Since $B$ is graded simple, the finite set $\lbrace g \in G \mid  B^{(g)} \ne 0\rbrace$
 consists of commuting elements, and we may assume that $G$ is finitely generated Abelian.
 Note that the solvable radical of $L$ is $\hat G$-invariant and hence graded.
 Thus $B$ is an $F\hat G$-simple semisimple Lie algebra,
 and we get the bounds from Lemma~\ref{LemmaGrToHatG} and Example~\ref{ExampleHSimple}.
 \end{proof}
 
 Note that if $L$ is a finite dimensional semisimple graded Lie algebra, then
 by~\cite[Theorem~7]{ASGordienko4}, $L$ is isomorphic to the direct sum
 of simple graded algebras.

  \begin{example}\label{ExampleGrSemiSimple}
 Let $L=B_1 \oplus B_2 \oplus \ldots \oplus B_q$
  be a finite dimensional semisimple Lie algebra graded by any group, where $B_i$ are graded simple
  algebras. Let $d := \max_{1 \leqslant k
  \leqslant q} \dim B_k$. Then there exist $C_1, C_2 > 0$, $r_1, r_2 \in \mathbb R$
  such that $C_1 n^{r_1} d^n \leqslant c_n^{\mathrm{gr}}(L)
  \leqslant C_2 n^{r_2} d^n$ for all $n\in\mathbb N$.
 \end{example}
 \begin{proof}
 This follows immediately from Theorems~\ref{TheoremMainGr}, \ref{TheoremMainGrSum}  and Example~\ref{ExampleGrSimple}.
 \end{proof}

\begin{theorem}\label{TheoremGrCrSimple}
Let $L$ be a finite dimensional Lie algebra over $F$ graded by an arbitrary group.
Then $\PIexp^\mathrm{gr}(L)=\dim L$ if and only if $L$ is a graded simple algebra.
\end{theorem}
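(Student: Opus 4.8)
The plan is to derive the ``if'' part directly from Example~\ref{ExampleGrSimple} and to reduce the ``only if'' part, via the finitely generated Abelian reduction of Subsection~\ref{SubsectionApplGr}, to the Hopf-algebraic criterion Theorem~\ref{TheoremHCrSimple}. For the ``if'' direction: if $L$ is graded simple it is non-nilpotent, and Example~\ref{ExampleGrSimple} gives $Cn^r(\dim L)^n\leqslant c_n^{\mathrm{gr}}(L)\leqslant(\dim L)^{n+1}$, so $\PIexp^{\mathrm{gr}}(L)=\dim L$. (If $\dim L=0$ the statement is trivial; if $L$ is nilpotent of positive dimension then $c_n^{\mathrm{gr}}(L)=0$ eventually, so $\PIexp^{\mathrm{gr}}(L)=0<\dim L$ and $L$ is not simple. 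Thus for the ``only if'' direction we may assume $L$ is non-nilpotent and $\PIexp^{\mathrm{gr}}(L)=\dim L$.)

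For the ``only if'' direction, the first step is to shrink the grading group. By Lemma~\ref{LemmaInclExcl} and the proof of Theorem~\ref{TheoremMainGr}, there is a finitely generated Abelian subgroup $G_0\subseteq G$, generated by a subset of the (finite) support of $L$, with $\PIexp^{\mathrm{gr}}(L_{G_0})=\PIexp^{\mathrm{gr}}(L)=\dim L$. By Lemma~\ref{LemmaTrivialGrUpper}, $\PIexp^{\mathrm{gr}}(L_{G_0})\leqslant\dim L_{G_0}\leqslant\dim L$, so in fact $\dim L_{G_0}=\dim L$. Since $L_{G_0}$ is a homogeneous subspace of $L$, this forces $L_{G_0}=L$; equivalently, the support of $L$ is contained in $G_0$, and a subspace of $L$ is a $G$-graded ideal if and only if it is a $G_0$-graded ideal. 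Hence it suffices to prove that $L$, viewed as a $G_0$-graded algebra, is graded simple.

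Now I would pass to the dual Hopf algebra. By Lemma~\ref{LemmaGrToHatG}, $\PIexp^{F\hat G_0}(L)=\PIexp^{\mathrm{gr}}(L)=\dim L$, and by Example~\ref{ExampleHniceGr}, $L$ is $F\hat G_0$-nice; therefore Theorem~\ref{TheoremHCrSimple} applies and shows $L$ is semisimple and $F\hat G_0$-simple. It remains to note, using Lemma~\ref{LemmaAbelianDual} exactly as in the proof of Lemma~\ref{LemmaGrToHatG}, that the $\gamma$-homogeneous component of any $a\in L$ equals $a^{h}$ for a suitable $h\in F\hat G_0$; consequently the $F\hat G_0$-submodules of $L$ are precisely its $G_0$-homogeneous subspaces, so $F\hat G_0$-invariant ideals are exactly $G_0$-graded ideals. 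Thus $F\hat G_0$-simplicity of $L$ is the same as $G_0$-graded simplicity, which, by the previous paragraph, is the same as $G$-graded simplicity. This completes the proof.

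The main obstacle is the first step of the ``only if'' direction: one must know that $\PIexp^{\mathrm{gr}}(L)$ is actually realised on $L_{G_0}$ for some finitely generated Abelian $G_0$ --- this is precisely what the inclusion--exclusion argument of Lemma~\ref{LemmaInclExcl}, fed by Theorem~\ref{TheoremFinGenAbelianGr}, delivers --- and then to combine this with the sharp upper bound $\dim L_{G_0}$ to squeeze $L_{G_0}=L$. Once the grading group is finitely generated Abelian, the remainder is a routine dictionary between $G_0$-gradings and $F\hat G_0$-module structures plus the already established criterion Theorem~\ref{TheoremHCrSimple}.
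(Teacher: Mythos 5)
Your proof is correct and follows essentially the same route as the paper: Example~\ref{ExampleGrSimple} for the ``if'' direction, then the reduction to a finitely generated Abelian subgroup $G_0$ with $L_{G_0}=L$ (the same squeeze $\dim L=\PIexp^{\mathrm{gr}}(L)\leqslant\dim L_{G_0}\leqslant\dim L$ that underlies the paper's appeal to Lemma~\ref{LemmaTrivialGrUpper}), followed by Lemma~\ref{LemmaGrToHatG}, Example~\ref{ExampleHniceGr}, and Theorem~\ref{TheoremHCrSimple}. You merely make explicit the dictionary between $F\hat G_0$-invariant ideals and graded ideals and the degenerate nilpotent case, both of which the paper leaves implicit.
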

\begin{proof}
If $L$ is a graded simple algebra, then $\PIexp^\mathrm{gr}(L)=\dim L$
by Example~\ref{ExampleHSimple}. 

Suppose $\PIexp^\mathrm{gr}(L)=\dim L$. By Lemma~\ref{LemmaTrivialGrUpper},
we may assume $G$ to be finitely generated Abelian. Now we use Lemma~\ref{LemmaGrToHatG},
Example~\ref{ExampleHniceGr}, and Theorem~\ref{TheoremHCrSimple}.
\end{proof}

We conclude the section with an example of a non-semisimple algebra graded by a non-Abelian group.
\begin{example}\label{Example2gl2S3}
Let $G=S_3$ and $L=\mathfrak{gl}_2(F)\oplus \mathfrak{gl}_2(F)$. Consider the following $G$-grading
on~$L$: $$L^{(e)} = \left\lbrace\left(\begin{array}{rr}
\alpha & 0 \\
 0 & \beta 
\end{array} \right)\right\rbrace \oplus  \left\lbrace\left(\begin{array}{rr}
\gamma & 0 \\
 0 & \mu 
\end{array} \right)\right\rbrace,$$ $$L^{\bigl((12)\bigr)} = \left\lbrace\left(\begin{array}{rr}
0 & \alpha \\
 \beta & 0  
\end{array} \right)\right\rbrace \oplus  0,\qquad L^{\bigl((23)\bigr)} = 0 \oplus \left\lbrace\left(\begin{array}{rr}
0 & \alpha \\
 \beta & 0  
\end{array} \right)\right\rbrace,$$ the other components are zero.
Then there exist $C_1, C_2 > 0$, $r_1, r_2 \in \mathbb R$
  such that $$C_1 n^{r_1} 3^n \leqslant c_n^{\mathrm{gr}}(L)
  \leqslant C_2 n^{r_2} 3^n \text{ for all } n \in\mathbb N.$$
\end{example}
\begin{proof}
Note that $L=\mathfrak{sl}_2(F)\oplus \mathfrak{sl}_2(F)\oplus Z(L)$ where
the center $Z(L)$ consists of scalar matrices of both copies of $\mathfrak{gl}_2(F)$.
Hence $V^{\mathrm{gr}}_n\cap\Id^{\mathrm{gr}}(L)=V^{\mathrm{gr}}_n\cap\Id^{\mathrm{gr}}(\mathfrak{sl}_2(F) \oplus \mathfrak{sl}_2(F))$ for all $n\in \mathbb N$.
Now we notice that both copies of $\mathfrak{sl}_2(F)$ are simple graded ideals of $L$ and apply Example~\ref{ExampleGrSemiSimple}.
\end{proof}

Many examples of Lie algebras with a $G$-grading and $G$-action are considered in~\cite[Subsection 1.5]{ASGordienko2}.

\section*{Acknowledgements}

I am grateful to Yuri Bahturin who suggested that I study polynomial $H$-identities.
In addition, I appreciate Mikhail Kotchetov for helpful
discussions.

\end{document}